\documentclass{article}

\usepackage[T1]{fontenc} 
\usepackage{mathrsfs}
\usepackage{enumerate}
\usepackage{indentfirst}
\usepackage{ulem}
\usepackage[dvips]{graphicx}
\usepackage{titlesec,float}
\usepackage{amsmath}
\usepackage{xcolor}
\usepackage{listings}
\usepackage{appendix}
\usepackage{titlesec,titletoc}
\usepackage{times}
\usepackage{mathtools}
\usepackage[top=30mm,bottom=30mm,left=30mm,right=30mm]{geometry}
\usepackage{amsfonts}
\usepackage{amssymb}
\usepackage{setspace}
\usepackage{accents}
\usepackage{statex}
\usepackage{amssymb}
\usepackage{tikz}

\numberwithin{equation}{section}
\makeatletter
\def\wideubar{\underaccent{{\cc@style\underline{\mskip10mu}}}}
\makeatother

\makeatletter
\def\widebar{\accentset{{\cc@style\underline{\mskip10mu}}}}
\makeatother

\begin{document}

\newtheorem{theorem}{Theorem}[section]
\newtheorem{assume}[theorem]{Assumption}
\newtheorem{prop}[theorem]{Proposition}
\newtheorem{corollary}[theorem]{Corollary}
\newtheorem{lemma}[theorem]{Lemma}
\newtheorem{definition}[theorem]{Definition}
\newtheorem{remark}[theorem]{Remark}
\newtheorem{claim}[theorem]{Claim}

\newenvironment{proof}{{\noindent \it{Proof}}\ }{\hfill $\square$\par}
\newenvironment{pf}{{\noindent\it Proof of the claim.}\ }{\hfill $\square$\par}

\title{Spreading properties of a three-component reaction-diffusion model for the population of farmers and hunter-gatherers} 

\author{Dongyuan Xiao, Ryunosuke Mori}
\date{February 26 2019}
\maketitle

\begin{abstract}
In this paper, we investigate the spreading properties of solutions of farmer and hunter-gatherer model which is a three-component reaction-diffusion system. Ecologically, the model describes the geographical spreading of an initially localized population of farmers into a region occupied by hunter-gatherers. This model was proposed by Aoki, Shida and Shigesada in 1996. By numerical simulations and some formal linearization arguments, they concluded that there are four different types of spreading behaviors depending on the parameter values. Despite such intriguing observations, no mathematically rigorous studies have been made to justify their claims. The main difficulty comes from the fact that the comparison principle does not hold for the entire system. In this paper, we give theoretical justification to all of the four types of spreading behaviors observed by Aoki {\it et al.}. Furthermore, we show that a logarithmic phase drift of the front position occurs as in the scalar KPP equation. We also investigate the case where the motility of the hunter-gatherers is larger than that of the farmers, which is not discussed in the paper of Aoki {\it et al.}.

\vspace{10pt}
\hspace{-14pt}\textbf{Key words:\:Farmer and hunter-gatherer model;\:long time behavior;\:spreading speed;\: logarithmic correction}
\end{abstract}

\section{Introduction} 

Early in the stone age, our ancestors lived as hunter-gatherers, which means, instead of growing their food, they lived on hunting, fishing and gathering berries and eggs of birds in the forest.
As humans evolved, agriculture gradually appeared independently in different parts of the globe.
At least 11 separate regions of the Old and New World were identified as independent centers of the origin of agriculture. Tracing the origins of early farming and its spreading has been the major subject of interest for a long time. For instance, the study of the origins of farming in the Near East and its dispersal to Europe has been done by many archaeologists, anthropologists, linguists, and geneticists. 
Many archaeological evidences certified that agriculture emerged about 11,000 years ago in the Near East before reaching North Europe about 5,000 years later. Furthermore, genetic studies tended to support that farmers in the North Europe have noticeable genetic affinity to Near East populations. It suggests that agriculture did not only spread solely across Europe as a cultural process, but also in concert with a migration of people. This fact motivated ecologists to model this kind of geographical spreading of an initially localized farmers into a region occupied by hunters and gatherers as a reaction-diffusion process in an infinite habitat.

In 1996, Aoki, Shida and Shigesada \cite{shigesada model} proposed the following three-component reaction-diffusion system to study the process of Neolithic transition from hunter-gatherers to farmers (actually, in \cite{shigesada model}, they only considered the case $N=1$, but in the present paper we formulate the problem in a general space dimension $N\ge 1$): 
\begin{equation*}
\left\{
\begin{aligned}
&\partial_tF=D\Delta F+r_fF(1-(F+C)/K),\\
&\partial_tC=D\Delta C+r_cC(1-(F+C)/K)+e(F+C)H,\ \ \ \ \ \ \mbox{in}\ \mathbb{R}^N,\\
&\partial_tH=D_h\Delta H+r_hH(1-H/L)-e(F+C)H.
\end{aligned}
\right.
\end{equation*}
The population densities of initial farmers, converted farmers and hunter-gatherers are represented by $F$, $C$ and $H$, respectively.
This model contains seven positive parameters: $D_h$ is the diffusion coefficient of hunter-gatherers, which is assumed to be greater than or equal to the diffusion coefficient $D>0$ of farmers; $r_f$, $r_c$ and $r_h$ are the intrinsic growth rates; $K$ and $L$ are the carrying capacities of farmers and hunter-gatherers; $e$ is the conversion rate of hunter-gatherers to farmers. Note that, in \cite{shigesada model}, $D_h$ is always assumed to be equal to $D$. However, we do not see any reason to assume that the hunter-gatherers diffuse at the same speed as the farmers. It may be more natural to imagine that the hunter-gatherers would diffuse faster than the hunters. For this reason, in the present paper, we assume $D_h\ge D$ rather than $D_h=D$. As we will see later, the case $D_h>D$ is much harder to analyze than the case $D_h=D$.

As shown in \cite{shigesada model}, by a suitable change of variables, the above system is converted to:
\begin{equation}\label{fch-equation}
\left\{
\begin{aligned}
&\partial_tF=\Delta F+aF(1-F-C),\\
&\partial_tC=\Delta C+C(1-F-C)+sH(F+C),\ \ \ \ \ \ \mbox{in}\ \mathbb{R}^N,\\
&\partial_tH=d\Delta H+bH(1-H-g(F+C)),
\end{aligned}
\right.
\end{equation}
where $d=D_h/D\ge1$, $a=r_f/r_c$, $b=r_h/r_c$, $s=eL/r_c$ and $g=eK/r_h$.
We consider the following initial condition
\begin{equation}\label{intial data}
H(0,x)\equiv 1,\ \ C(0,x)\equiv 0,\ \ F(0,x)=F_0(x)\ge0\ \ (F_0\not\equiv0),
\end{equation}
where $F_0(x)$ is a compactly supported continuous function. The reason why we consider such initial data is because our goal is to understand how agriculture spread over a region that was originally occupied by hunter-gatherers. Thus our problem is to analyze the "spreading fronts" of the farmer populations that start from localized initial data. 

In the special case $a=1$, by setting $G=F+C$, the total population density of all farmers, the system (\ref{fch-equation}) reduces to the two-component system of predator-prey type:
\begin{equation}\label{pp system}
\left\{
\begin{aligned}
&\partial_tG=\Delta G+G(1+sH-G),\\
&\partial_tH=d\Delta H+bH(1-H-gG).
\end{aligned}
\right.
\end{equation}
The long time behavior of the system \eqref{pp system} (with $a=1$) has been studied by Hilhost, Mimura and Weidenfel in \cite{wp}. Among other things they proved that the solution converges to a spatially constant steady state as $t\rightarrow\infty$. They also considered the case where the coefficients $s$, $g$ are replaced by $s/\varepsilon$, $g/\varepsilon$ and studied the limit of the solution as $\varepsilon\rightarrow0$. However, how fast the front propagates to infinity was not discussed in \cite{wp}. The first result on the spreading speeds for predator-prey systems was obtained by Ducrot, Giletti and Matano in \cite{pp}, where they treated two-species predator-prey system that are different from \eqref{pp system}.

In the general case $a\neq 1$, the ODE system corresponding to (\ref{fch-equation}) possesses the following four different types of steady states: 
\[
(F,C,H)=
\left\{
\begin{aligned}
&(0,0,0),\ (0,0,1),\\
&(\widehat{F},\widehat{C},0),\ \ \mbox{where}\ \ \widehat{F}+\widehat{C}=1,\ \widehat{F},\widehat{C}\ge0,\\
&(0, C^*, H^*),\ \ \mbox{where}\ \  C^*=(1+s)/(1+sg),\ H^*=(1-g)/(1+sg)).
\end{aligned}
\right.
\] 
The first two steady states always exist and unstable. The third one is a line of neutral equilibria, which always exists and stable if $g\ge 1$. The fourth one  exists and is stable if and only if $g<1$. It implies that there exist different expanding patterns which is determined by the value of $g$. Let us remark that $g=eK/r_h$, which means the value of $g$ highly depends on the value of the conversion rate $e$. Note that, throughout this paper, we call the case $g\ge 1$ by high conversion rate case and the case $g<1$ by low conversion rate case.  Ecologically speaking, if the conversion rate is sufficiently high, hunter-gatherers will completely convert into farmers,  whereas hunter-gatherers and farmers can coexist if the conversion rate is low enough. 

\subsection{Observations by Aoki {\it et al.}}

In the above mentioned paper \cite{shigesada model}, the authors observed four different types of spreading behaviors depending on the parameter values, namely those of $a$, $s$ and $g$ in the system (\ref{fch-equation}). Their observation of the spreading fronts was done by numerical simulations. Strictly speaking, they did not consider truly localized initial data for the farmers, but they set $C_0\equiv 0$ and chose $F_0$ to be a Heaviside function: $F_0=1\ (x\le 0)$, $F_0(x)=0\ (x>0)$. In order to estimate the speed of the spreading fronts, they again relied mainly on numerical simulations, but also calculated by formal analysis of the minimal speed of the traveling wave (comprising of the advancing front of the farmers and the retreating front of the hunter-gatherers) and confirmed that the numerically observed spreading speed well agreed with the formally calculated minimal traveling wave speed. The following figures illustrate the shape of what they call the transient waveforms.
\vspace{10pt}
\begin{center}
\begin{tikzpicture}[scale = 1.6]
\draw[<->](0,1.65)-- (0,0) -- (4.7,0) node[right] {$x$};
\draw[dotted, thick] (0,1) to [out=0,in=180] (1,0.8)--(3,0.8) to [out=0, in=180] (3.5,0)--(4.5,0.025);
\draw[very thick] (0,0.02) to [out=0,in=175] (1,0.3)--(3.1,0.3) to [out=5,in=175] (3.4,0.025)--(4.5,0.025);
\draw[thin] (0,0.02)--(3,0.02)to[out=0,in=180](3.5,1)--(4.5,1);
\draw[dashed] (3.25,1.5)--(3.25,-0.2);
\node[left] at (0,1) {\footnotesize{1}};
\node[left] at (0,0) {\footnotesize{0}};
\node[above] at (0.25,1) {\small{$F$}};
\node[above] at (2,0.4) {\small{$C$}};
\node[above] at (4,1) {\small{$H$}};
\node[right] at (3.25,0.5) {\textbf{$\rightarrow$}\ $c^*$};
\node[below] at (1.75,0) {\footnotesize{Final zone (for $F+C$)}};
\node[below] at (4.25,0) {\footnotesize{Leading edge}};
\node[below] at (2.5,-0.4) {\textbf{Figure 1:} High conversion rate case, $a> 1+s$.};
\end{tikzpicture}
\vspace{10pt}

\begin{tikzpicture}[scale = 1.6]
\draw[<->](0,1.65)-- (0,0) -- (4.7,0) node[right] {$x$};
\draw[dotted, thick] (0,1) to [out=0,in=180] (1,0.03)--(4.5,0.03);
\draw[very thick] (0,0.02) to [out=0,in=180] (1,1)--(3,1) to [out=0,in=180] (3.5,0.025)--(4.5,0.025);
\draw[thin] (0,0.02)--(3,0.02)to[out=0,in=180](3.5,1)--(4.5,1);
\draw[dashed] (3.25,1.5)--(3.25,-0.2);
\node[left] at (0,1) {\footnotesize{1}};
\node[left] at (0,0) {\footnotesize{0}};
\node[above] at (0.25,1) {\small{$F$}};
\node[above] at (2,1) {\small{$C$}};
\node[above] at (4,1) {\small{$H$}};
\node[right] at (3.25,0.5) {\textbf{$\rightarrow$}\ $c^*$};
\node[below] at (1.75,0) {\footnotesize{Final zone (for $F+C$)}};
\node[below] at (4.25,0) {\footnotesize{Leading edge}};
\node[below] at (2.5,-0.4) {\textbf{Figure 2:} High conversion rate case, $a<1+s$.};
\end{tikzpicture}
\vspace{10pt}

\begin{tikzpicture}[scale = 1.6]
\draw[<->](0,1.65)-- (0,0) -- (4.7,0) node[right] {$x$};
\draw[dotted, thick] (0,0.03) -- (2.8,0.03) to[out=0,in=210](3.25,0.5) to[out=350,in=170](3.6,0.03)--(4.5,0.03);
\draw[very thick] (0,1.3) --(2.7,1.3)to [out=0,in=180] (3.5,0.025)--(4.5,0.025);
\draw[thin] (0,0.8)--(3,0.8)to[out=0,in=180](3.5,1)--(4.5,1);
\draw[dashed] (3.25,1.5)--(3.25,-0.2);
\node[left] at (0,1) {\footnotesize{1}};
\node[left] at (0,0) {\footnotesize{0}};
\node[left] at (3.1,0.5) {\small{$F$}};
\node[above] at (1.5,1.3) {\small{$C$}};
\node[above] at (4,1) {\small{$H$}};
\node[right] at (3.25,0.6) {\textbf{$\rightarrow$}\ $c^*$};
\node[below] at (1.75,0) {\footnotesize{Final zone}};
\node[below] at (4.25,0) {\footnotesize{Leading edge}};
\node[below] at (2.5,-0.4) {\textbf{Figure 3:} Low conversion rate case, $a> 1+s$.};
\end{tikzpicture}
\vspace{10pt}

\begin{tikzpicture}[scale = 1.6]
\draw[<->](0,1.65)-- (0,0) -- (4.7,0) node[right] {$x$};
\draw[dotted, thick] (0,0.02)--(4.5,0.02);
\draw[very thick] (0,1.3) --(2.7,1.3)to [out=0,in=180] (3.5,0.025)--(4.5,0.025);
\draw[thin] (0,0.8)--(3,0.8)to[out=0,in=180](3.5,1)--(4.5,1);
\draw[dashed] (3.25,1.5)--(3.25,-0.2);
\node[left] at (0,1) {\footnotesize{1}};
\node[left] at (0,0) {\footnotesize{0}};
\node[left] at (1.6,0.3) {\small{$F$}};
\node[above] at (1.5,1.3) {\small{$C$}};
\node[above] at (4,1) {\small{$H$}};
\node[right] at (3.25,0.6) {\textbf{$\rightarrow$}\ $c^*$};
\node[below] at (1.75,0) {\footnotesize{Final zone}};
\node[below] at (4.25,0) {\footnotesize{Leading edge}};
\node[below] at (2.5,-0.4) {\textbf{Figure 4:} Low conversion rate case, $a<1+s$.};
\end{tikzpicture}
\end{center}

In order to make our motivation clearer, we give a brief explanation of what was observed by Aoki {\it et al.} \cite{shigesada model} before stating our main results. As mentioned above, their observation was done by numerical simulation combined with a formal analysis of the minimal speed of traveling wave.
\begin{itemize}
\item The spreading speed of the solution of the system \eqref{fch-equation} is always determined by $\max\{2\sqrt{a},2\sqrt{1+s}\}$.
\item The behaviors of solutions on the leading edge where the hunter-gatherers have little contact with the farmers are almost the same between the high conversion rate case and the low conversion rate case (see Figure 1-4, Leading edge).
\item In the case where $a<1+s$, a wave of advance of initial farmers $F$ is not generated (see Figure 2 and Figure 4). If, in addition that $g<1$, the initial farmers $F$ disappear entirely (see Figure 4). Whereas, if $g\ge 1$, behind the wavefront, farmers have almost reached carrying capacity and hunter-gatherers have just about disappeared (see Figure 2, Final zone).
\item In the case where $a>1+s$, an advancing wavefront of initial farmers $F$ is also generated (see Figure 1 and Figure 3). If, in addition that $g<1$, the waveform is a small peak with leading edge and trailing edge that converge to $0$ (see Figure 3).
\end{itemize}

The goal of the present paper is to give rigorous justification to all of the above observations, and also to discuss the case where $d>1$ that has not been treated in \cite{shigesada model}. The case $d>1$, turns out to be much harder to analyze. Further, we also prove logarithmic drift of the fronts for some cases.

\subsection{Outline of the paper}

We will present our main results in Section 2. Some results are stated for the special case $d=1$ (Theorems \ref{thm final zone 1+s>a} and \ref{thm profile g>1 1+s>a} and part of Theorems \ref{thm profile g<1} and \ref{thm of log delay}), which is the case that was treated in \cite{shigesada model}. Other results are stated for $d\ge 1$.The proof of the  main results will be carried out in the subsequent sections.

 In Section 3, we prove Theorem \ref{thm leading edge} which is concerned with the behaviors at the leading edge that appears in all of Figure 1-4. Since there is little interaction between the farmers and the hunter-gatherers on the leading edge, the analysis of this zone is rather straightforward.

In Section 4, we prove Theorem \ref{thm of log delay} on the logarithmic phase drift of the front. The logarithmic phase drift for the scalar KPP equation was studied in detail by Bramson \cite{log delay 1, log delay 2} by using a probabilistic method. More recently, a much simpler PDE proof has been proposed by Hamel, Nolen, Requejoffre and Ryzhik \cite{log delay 3}. Their method is based on a super and sub-solution argument. Our proof of Theorem \ref{thm of log delay} is also based on a super and sub-solution argument, but the sub-solution is quite different from that in \cite{log delay 3} since their sub-solution does not work for systems of equations. 

In Section 5, we prove Theorems \ref{thm final zone 1+s>a} and \ref{thm final zone 1+s<a} that are concerned with the uniform positivity of solution in the final zone. Theorem \ref{thm final zone 1+s>a} is concerned with the case $1+s\ge a$ (which corresponds to Figure 2 and 4 above), while Theorem \ref{thm final zone 1+s<a} is concerned with the case $1+s<a$ (which corresponds to Figure 1 and 3 above). Intriguingly, the proof of the two theorem are quite different: the proof of Theorem \ref{thm final zone 1+s>a} uses a limit argument that was employed in \cite{pp} to show "pointwise spreading". The proof of Theorem \ref{thm final zone 1+s<a} uses the result on the logarithmic drift stated in Theorem \ref{thm of log delay}.

In Section 6, we will study final asymptotic profiles of solutions in the final zone and complete the proof of Theorems \ref{thm profile g>1 1+s>a},  \ref{thm profile g>1 1+s<a} and \ref{thm profile g<1}, thus comfirming the profiles shown in Figure 1-4. The proof is based on the conclusion of the results in Theorems \ref{thm final zone 1+s>a} and \ref{thm final zone 1+s<a}, and a certain limit argument.

Finally in the appendix, we give the proof of Proposition \ref{Dirichlet eq} which plays an important role in the proof of Theorem \ref{thm of log delay} in Section 4

\section{Main results}
Front propagation for scalar reaction-diffusion equations has been studied extensively and there is vast literature on this theme. Early in 1937, Fisher \cite{F} and Kolmogorov, Petrovsky and Piskunov \cite{KPP} introduced a scalar reaction-diffusion equation with monostable nonlinearity as a model equation in population genetics, which studies the propagation of dominant gene in homogeneous environment. In particular, \cite{KPP} made an important early analysis of the structure of the set of traveling waves for a special class of monostable reaction-diffusion equation, the so-called KPP equation. Application of reaction-diffusion equations to ecology was pioneered by Skellam \cite{S} in 1951. As regards the propagation of fronts for solutions with compactly supported initial data (that is, the so-called ``spreading front'') is concerned, the first rigorous mathematical results in multi-dimensional homogeneous environment were provided by Aronson and Weinberger \cite{AW}, for the case of monostable nonlinearity and also for bistable nonlinearity. The paper \cite{AW} introduced the notion of what we now call ``spreading speed'', the meaning of which is specified Remark \ref{remark spreading speed} below. 
However, apart from some recent works such as \cite{pp}, much less is known about the spreading properties for systems of equations for which the comparison principle does not hold.

Note that the nonlinear terms of the first two equations in the system (\ref{fch-equation}) are similar to the well-known KPP nonlinearity if we neglect the term $H$. Hence,
we first recall the classical spreading
result for the monostable scalar equation from Aronson and Weinberger \cite{AW}:
\begin{prop}[\textbf{Spreading for the scalar KPP equation} (\cite{AW})]\label{prop of kpp equation}
Consider the so-called KPP equation

\begin{equation}\label{kpp equation}
\partial_tu(t,x)=d\Delta u(t,x)+uf(u(t,x)),\ t>0,\ x\in\mathbb{R}^N,
\end{equation}
wherein $f\in C^1(\mathbb{R})$ satisfies
$$f(1)=0,\ f(u)>0\ \ \mbox{and}\ \ f(u)\le f(0)\ \ \mbox{for all}\ \ u\in [0,1).$$
Define $c^*(d,f)=2\sqrt{df(0)}$. Then, for any nontrivial compactly supported and continuous function $u_0(x)$, the solution $u\equiv u(t,x;u_0(x))$ of (\ref{kpp equation}) with the initial data $u_0(x)$ satisfies:
$$\underset{t\to+\infty}{\lim}\underset{\lVert x\rVert \le ct}{\sup}|1-u(t,x)|=0,\ 0<c<c^*(d,f),$$
and
$$\underset{t\to+\infty}{\lim}\underset{\lVert x\rVert\ge ct}{\sup}u(t,x)=0,\ c>c^*(d,f).$$
Moreover, the quantity $c^*(d,f)$ coincides with the minimal speed of traveling wave solutions of the equation (\ref{kpp equation}) connecting $0$ to $1$.
\end{prop}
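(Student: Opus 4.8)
The plan is to prove the two limits and the traveling‑wave statement separately, using the comparison principle, which is available because \eqref{kpp equation} is scalar. Since $f(1)=0$, the constant $1$ is a stationary supersolution and $0$ a subsolution, so we may assume $0\le u_0\le 1$; then $0<u(t,\cdot)<1$ for $t>0$, and by the strong maximum principle $u(t,\cdot)>0$ everywhere. For the \emph{outer} limit, note that $f(u)\le f(0)$ on $[0,1]$ makes $u$ a subsolution of the linear equation $v_t=d\Delta v+f(0)v$, whose solution with datum $u_0$ is $v(t,x)=e^{f(0)t}(\Gamma_{dt}\ast u_0)(x)$, with $\Gamma$ the heat kernel. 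If $\operatorname{supp}u_0\subset B_{R_0}$, a Gaussian estimate gives $u(t,x)\le v(t,x)\le Ct^{-N/2}\exp\!\big(f(0)t-(\|x\|-R_0)_+^2/(4dt)\big)$, and on $\|x\|\ge ct$ with $c>c^*=2\sqrt{df(0)}$ the exponent is dominated by $(f(0)-c^2/(4d))t$, which is negative; hence $\sup_{\|x\|\ge ct}u(t,x)\to 0$.

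For the \emph{inner} limit I would first establish that $u(t,\cdot)\to 1$ locally uniformly. Fix $\varepsilon\in(0,f(0))$ and $\delta>0$ with $f\ge f(0)-\varepsilon$ on $[0,\delta]$, and choose $R$ so large that the principal Dirichlet eigenvalue $\mu_1(R)$ of $-d\Delta$ on $B_R$ satisfies $\mu_1(R)<f(0)-\varepsilon$. Then for $\eta\in(0,\delta]$ the function $\eta\phi_1^R$ (with $\phi_1^R>0$ the principal eigenfunction, normalized $\le 1$) is a stationary subsolution of \eqref{kpp equation} on $B_R$, since $d\Delta(\eta\phi_1^R)+\eta\phi_1^R f(\eta\phi_1^R)\ge\eta\phi_1^R\big(f(0)-\varepsilon-\mu_1(R)\big)>0$. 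Because $u(1,\cdot)>0$ on $\overline{B_R}$, we have $u(1,\cdot)\ge\eta\phi_1^R$ there for $\eta$ small; comparing $u$ on $B_R\times[1,\infty)$ with the Dirichlet problem on $B_R$ gives $\liminf_{t\to\infty}u(t,\cdot)\ge\theta_R$ locally uniformly, where $\theta_R$ is the minimal positive steady state of that Dirichlet problem. Letting $R\to\infty$, elliptic estimates together with a Liouville‑type property of the KPP equation (its only bounded nonnegative nontrivial entire steady state is $1$) give $\theta_R\uparrow 1$ locally uniformly, and with $u\le 1$ this yields $u(t,\cdot)\to 1$ locally uniformly.

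Next I would upgrade this to $u\to 1$ uniformly on $\{\|x\|\le ct\}$ for every $c<c^*$. Fix such a $c$ and pick $\varepsilon,\delta$ as above but also with $c<2\sqrt{d(f(0)-\varepsilon)}$. For $\lambda=c/(2d)$, $\rho$ large enough that $\mu_1(\rho)\le f(0)-\varepsilon-c^2/(4d)$, and any unit vector $e$, the function
\[
w(t,x)=\eta\, e^{-\lambda(e\cdot x-ct)}\,\phi_1^{\rho}(x-cte)\ \ \text{on }B_\rho(cte),\qquad w=0\ \text{outside},
\]
is a (weak) subsolution of $w_t=d\Delta w+(f(0)-\varepsilon)w$ — the choice $\lambda=c/(2d)$ kills the gradient term and leaves exactly the requirement $c^2/(4d)+\mu_1(\rho)\le f(0)-\varepsilon$, which is where $c<2\sqrt{d(f(0)-\varepsilon)}$ enters — and for $\eta$ small $w\le\delta$, so $w$ is a subsolution of \eqref{kpp equation}. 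By the local convergence above we can dominate $w$ by $u$ at a suitable initial time; comparison then gives $\liminf_{t\to\infty}u>0$ uniformly on the moving ball $B_{\rho/2}(cte)$. Sweeping $e$ over the unit sphere, and then re‑running the argument of the previous paragraph with ``ignition balls'' of a fixed positive amplitude placed throughout $\{\|x\|\le c't\}$, we bootstrap to $u\to 1$ uniformly on $\{\|x\|\le c't\}$ for each $c'<c$; since $c<c^*$ is arbitrary, the inner limit follows. Finally, monotone fronts $U_c$ with $U_c(-\infty)=1$, $U_c(+\infty)=0$ exist for every $c\ge c^*$ by a standard phase‑plane analysis of $dU''+cU'+Uf(U)=0$ (the KPP condition $f\le f(0)$ ensures the trajectory leaving the saddle at $U=1$ reaches $U=0$ monotonically precisely when $c\ge c^*$), while for $0<c<c^*$ no such front exists, since linearizing at $U=0$ forces the characteristic roots $(-c\pm\sqrt{c^2-4df(0)})/(2d)$ to be real; hence $c^*$ is the minimal front speed, consistently with the two limits.

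The outer estimate and the traveling‑wave ODE are routine. The main obstacle is the lower bound: constructing the genuinely compactly supported, outward‑moving subsolution of the linearized equation in $\mathbb{R}^N$ and splicing it with the local convergence without losing the propagation rate as $c\uparrow c^*$, together with the Liouville‑type identification $\theta_R\uparrow 1$, which itself requires a sliding or sub/supersolution argument.
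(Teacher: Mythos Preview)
The paper does not prove this proposition: it is quoted as a classical result from Aronson--Weinberger \cite{AW} and used as background, so there is no ``paper's own proof'' to compare against. Your sketch follows the standard Aronson--Weinberger strategy (heat-kernel supersolution for the outer estimate, principal Dirichlet eigenfunction subsolutions---first static, then traveling---for the inner estimate, and phase-plane analysis for the fronts), and the overall architecture is correct.

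That said, two steps deserve tightening if you intend this as a self-contained proof rather than a sketch. First, the ``bootstrapping'' from a uniform positive lower bound on moving balls $B_{\rho/2}(cte)$ to $u\to 1$ uniformly on $\{\lVert x\rVert\le c't\}$ is asserted but not carried out: you need to explain why, once $u\ge\alpha>0$ on such balls for large $t$, comparison with a solution starting from the constant $\alpha$ (or with a family of Dirichlet problems whose steady states converge to $1$) actually forces $u$ up to $1$ \emph{uniformly in the moving frame}, not just locally. Second, the Liouville-type claim that ``the only bounded nonnegative nontrivial entire steady state is $1$'' is true under your hypotheses on $f$ but is itself a nontrivial ingredient (it follows, e.g., from a sliding argument or from the fact that any such steady state $\theta$ satisfies $-d\Delta\theta=\theta f(\theta)\ge0$ where $\theta<1$, forcing $\theta\equiv\mathrm{const}$); you should either cite it or supply the argument. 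These are the places where a referee would ask for detail; the rest is routine.
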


\begin{remark}\label{remark spreading speed}
Usually the term ``spreading front'' regards to a front of a solution that propagates to infinity from localized initial data, typically those that are compactly supported. The term ``spreading speed'' refers to a quantity $c^*$ for which the last two estimates of Proposition \ref{prop of kpp equation} hold. Thus the distance between the sphere of radius $c^*t$ and the actual position of the front is of order $o(t)$. 
\end{remark}

\subsection{Uniform spreading properties}
In this subsection, we present our main results on the spreading properties of solutions of the system (\ref{fch-equation}). 
Throughout this paper, we define the two quantities $c^*$ and $c^{**}$ as $c^*:=\max\{2\sqrt{a},2\sqrt{1+s}\}$ and $c^{**}:=\min\{2\sqrt{a},2\sqrt{1+s}\}$.
Our first result is about the analysis of the leading edge, which provides an upper estimate on the spreading speed. As we observed from Figure $1$-$4$, for all four cases, the behaviors of solutions are almost same on the leading edge. 
The first three theorems hold regardless of the size of the conversion rate. 

\begin{theorem}\label{thm leading edge}
For any given $c>c^*$, the solution $(F,C,H)$ of the system \textnormal{(\ref{fch-equation})} with the initial data \textnormal{(\ref{intial data})} satisfies:
 \begin{equation}\label{upper estimate  c>c*}
\lim_{ t \to \infty}\sup_{ \lVert x\rVert\ge ct}\Big(\|1-H(t,x)\|+F(t,x)+C(t,x)\Big)=0.
\end{equation}
\end{theorem}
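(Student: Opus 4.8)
The plan is to peel the three components off one at a time by scalar comparison, exploiting the fact that on the leading edge the coupling is essentially one-directional: $F$ drives $C$, and $F+C$ drives $H$. Throughout, fix $c>c^*$ and choose an intermediate speed $c''\in(c^*,c)$.

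First I would record the elementary global bounds on the solution. Comparison with the trivial subsolution $0$ gives $F,C,H\ge 0$ (the reaction term of each equation has the right sign on the relevant coordinate hyperplane, once the other two components are known to be nonnegative); comparing the third equation with the ODE $\dot h=bh(1-h)$, $h(0)=1$ gives $H\le 1$ (note $-bgHG\le 0$); and then $\partial_tF\le\Delta F+aF(1-F)$ gives $F\le M_F$, while $\partial_tC\le\Delta C-C^2+(1+s)C+sM_F$ gives $C\le M_C$, for explicit constants $M_F,M_C$. Global existence follows from these bounds together with local well-posedness.

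Next I would treat the leading edge for $F$ and $C$. Since $c''>c^*=\max\{2\sqrt a,2\sqrt{1+s}\}$, we have $(c'')^2/4>\max\{a,1+s\}$, so one can fix an exponent $\alpha>0$ with $\alpha c''-\alpha^2>\max\{a,1+s\}$ --- the crucial point being that a \emph{single} $\alpha$ serves both $a$ and $1+s$ at once, which is what yields the sharp speed $c^*$. From $\partial_tF\le\Delta F+aF$ and the compact support of $F_0$, the function $\overline F(t,x)=A_1e^{-\alpha(\|x\|-c''t)}$ (with $A_1$ large) is a supersolution lying above $F$ at $t=0$, because $\Delta\overline F\le\alpha^2\overline F$ away from the origin and $\alpha c''-\alpha^2\ge a$; the non-smoothness of $\|x\|$ at the origin is absorbed by the uniform bound $M_F$. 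Hence $F\le\overline F$. Plugging this into the $C$-equation and using $H\le 1$ yields $\partial_tC\le\Delta C+(1+s)C+sA_1e^{-\alpha(\|x\|-c''t)}$, and since $\alpha c''-\alpha^2-(1+s)>0$ the function $\overline C(t,x)=A_2e^{-\alpha(\|x\|-c''t)}$ with $A_2\ge sA_1/(\alpha c''-\alpha^2-(1+s))$ is a supersolution, so $C\le\overline C$. Therefore $F+C\le(A_1+A_2)e^{-\alpha(\|x\|-c''t)}$, which tends to $0$ uniformly on $\{\|x\|\ge ct\}$ since $c>c''$.

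Finally I would handle the leading edge for $H$, which is the one genuinely delicate step when $d>1$. Writing $V:=1-H\in[0,1]$, the third equation gives $\partial_tV=d\Delta V-bV(1-V)+bg(1-V)G\le d\Delta V+bgG$ with $G=F+C$, and combining the previous step with the uniform bound one has $G\le\min\{(A_1+A_2)e^{-\alpha(\|x\|-c''t)},\,M_F+M_C\}$. I would then seek a supersolution $\overline V(t,x)=A_3e^{-\beta(\|x\|-c''t)}$ with a \emph{small} exponent $0<\beta<\min\{\alpha,\,c''/d\}$. The hard part is precisely this choice of $\beta$: the naive guess $\beta=\alpha$ fails because $\beta c''-d\beta^2$ may be negative, and a barrier carrying the ``correct'' decay rate for the $H$-equation cannot be matched to the source generated by $F+C$. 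With $\beta<c''/d$ one keeps $\beta c''-d\beta^2>0$, and since $e^{-\beta(\|x\|-c''t)}\ge e^{-\alpha(\|x\|-c''t)}$ on $\{\|x\|\ge c''t\}$ while $e^{-\beta(\|x\|-c''t)}\ge 1$ on $\{\|x\|\le c''t\}$, the supersolution inequality $\partial_t\overline V-d\Delta\overline V\ge bgG$ reduces to the single condition $A_3(\beta c''-d\beta^2)\ge bg\max\{A_1+A_2,\,M_F+M_C\}$, which holds for $A_3$ large; moreover $\overline V(0,\cdot)\ge 0=V(0,\cdot)$ and $\overline V$ is bounded on each time slice, so comparison gives $1-H=V\le A_3e^{-\beta(\|x\|-c''t)}$. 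Since $H\le 1$, this shows that $\|1-H(t,\cdot)\|$ tends to $0$ uniformly on $\{\|x\|\ge ct\}$, and together with the bound on $F+C$ this gives the conclusion of Theorem~\ref{thm leading edge}. Apart from the $d>1$ obstruction just described, everything is routine scalar maximum-principle bookkeeping, in line with the heuristic that the three populations scarcely interact on the leading edge.
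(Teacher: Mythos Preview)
Your proof is correct and follows essentially the same strategy as the paper: exponential barriers for $F$ and $C$, then a barrier for $1-H$ (equivalently, a subsolution for $H$) carrying a smaller decay exponent to accommodate the diffusion coefficient $d\ge1$. The only differences are implementation details. The paper builds \emph{planar} barriers $A\,e^{-\lambda(x\cdot e-ct)}$ for each direction $e\in S^{N-1}$ and then takes $\inf_e$ (respectively $\sup_e$), which sidesteps the mild singularity of $\lVert x\rVert$ at the origin that you instead absorb via the global bounds. For the $H$-component the paper works on the moving half-space $\{x\cdot e\ge ct\}$ and checks the boundary inequality at $x\cdot e=ct$, whereas you run a global comparison by invoking the uniform bound $G\le M_F+M_C$ inside the ball $\{\lVert x\rVert\le c''t\}$; your linearization $\partial_t V\le d\Delta V+bgG$ is a touch slicker than the paper's direct nonlinear subsolution check, but both lead to the same exponent reduction (your $\beta<c''/d$, the paper's $c^*/(2d)$).
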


The most difficult part of the analysis is the behaviors of the solutions in the final zone, where original hunter-gatherers, initial farmers and converted farmers heavily interact with each other. A large part of the present thesis is devoted to the analysis in this final zone.
Our third result deals with the propagation of farmers in the final zone, which provides an lower estimate on the spreading speed. As we observed from Figure $1$-$4$, for all four cases, the spreading speed is determined by the larger value of $2\sqrt{1+s}$ and $2\sqrt{a}$. 

\begin{theorem}\label{thm final zone 1+s>a}
If $d=1$ and $1+s\ge a$, then for any given $0\le c<c^*$, there exists $\varepsilon>0$ such that the solution $(F,C,H)$ of the system \textnormal{(\ref{fch-equation})} with the initial data \textnormal{(\ref{intial data})} satisfies:
\begin{equation}\label{esimate of F+C 1+s>a c<c*}
\underset{t\to\infty}{\liminf}\underset{\lVert x\rVert\le ct}{\inf}(F+C)(t,x)\ge\varepsilon,
\end{equation}
\begin{equation}\label{estimate of H 1+s>a c<c*}
\underset{t\to\infty}{\limsup}\underset{\lVert x\rVert\le ct}{\sup}H(t,x)\le 1-\varepsilon.
\end{equation}
\end{theorem}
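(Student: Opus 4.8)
The plan is to obtain a uniform lower bound on $F+C$ and a uniform upper bound on $H$ inside the ball of radius $ct$ by combining a KPP-type comparison for a scalar quantity with a limiting (entire-solution) argument of the type used in \cite{pp}. First I would observe that, since $d=1$, the first two equations of \eqref{fch-equation} can be added: setting $G=F+C$ and noting the cross term $sH(F+C)=sHG\ge 0$ while $aF+C\ge \min\{a,1\}\,G$ on the relevant range, one sees that $G$ is a supersolution-type quantity is not quite available directly because of the factor $aF+C$ rather than $\alpha G$; so instead I would first produce a \emph{lower} barrier for $G$. Concretely, because $H\ge 0$ and $1-F-C=1-G$, each of $F$ and $C$ satisfies $\partial_t F\ge \Delta F+aF(1-G)$ and $\partial_t C\ge \Delta C+C(1-G)$, hence $G=F+C$ satisfies
\begin{equation*}
\partial_t G\ge \Delta G+c^{**}\text{-type growth};\qquad \partial_t G\ge \Delta G+\min\{a,1\}\,G(1-G)
\end{equation*}
is too weak to give speed $c^*$. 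The correct route is: on the leading part of the front where $C$ is still small, $F$ alone obeys a KPP inequality $\partial_t F\ge \Delta F+aF(1-F-C)\ge \Delta F+aF(1-\delta-F)$ once $C<\delta$, giving spreading of $F$ at speed close to $2\sqrt{a(1-\delta)}$; and independently, feeding any fixed small amount of $F+C$ into the $C$-equation, $C$ obeys $\partial_t C\ge \Delta C+C(1-F-C)+s\underline H\,(F+C)$ with $\underline H$ a lower bound for $H$ near the front, which (since near the leading edge $H\approx 1$ by Theorem \ref{thm leading edge}-type reasoning, only the \emph{lower} bound $H\to 1$ being needed there) yields spreading of $C$ at speed close to $2\sqrt{1+s}$. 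Taking the better of the two exponents produces invasion at every speed $c<c^*=\max\{2\sqrt a,2\sqrt{1+s}\}$, but only a conclusion of the form ``$F+C$ does not go to $0$'' pointwise, not yet a uniform positive $\varepsilon$.

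To upgrade pointwise non-extinction to the uniform estimate \eqref{esimate of F+C 1+s>a c<c*}, the key step is a compactness/limiting argument. Fix $c<c^*$ and suppose for contradiction that \eqref{esimate of F+C 1+s>a c<c*} fails; then there are times $t_n\to\infty$ and points $x_n$ with $\|x_n\|\le ct_n$ and $(F+C)(t_n,x_n)\to 0$. Translate: set $(F_n,C_n,H_n)(t,x):=(F,C,H)(t+t_n,x+x_n)$. By parabolic estimates (the nonlinearities are smooth and the solution is bounded in $[0,1]^3$ by the structure of the system and the initial data) a subsequence converges locally uniformly to an entire-in-time solution $(F_\infty,C_\infty,H_\infty)$ of \eqref{fch-equation} on $\mathbb R\times\mathbb R^N$, with $(F_\infty+C_\infty)(0,0)=0$. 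Since $F_\infty,C_\infty\ge 0$, the strong maximum principle applied to each of the two scalar inequalities above forces $F_\infty\equiv 0$ and $C_\infty\equiv 0$ for all $t\le 0$, hence, again by the equations, for all $t\in\mathbb R$. Then $H_\infty$ solves $\partial_t H_\infty=\Delta H_\infty+bH_\infty(1-H_\infty)$ on all of $\mathbb R\times\mathbb R^N$, so — and here one invokes that the solution came from the point $x_n$ lying \emph{strictly inside} the invasion region, i.e.\ $\|x_n\|\le ct_n$ with $c<c^*$ — one derives a contradiction with the spreading established in the previous paragraph, which guarantees that along \emph{any} sequence with $\|x_n\|/t_n$ bounded below $c^*$, $\liminf (F+C)(t_n,x_n)>0$. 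The estimate \eqref{estimate of H 1+s>a c<c*} then follows because where $F+C\ge\varepsilon$ uniformly, the $H$-equation $\partial_t H= \Delta H+bH(1-H-g(F+C))\le \Delta H+bH(1-g\varepsilon-H)$ pushes $\limsup H\le 1-g\varepsilon$ (shrink $\varepsilon$ if $g\ge 1$ would overshoot; in any case a uniform strict bound below $1$ results), again via a comparison argument localized in the ball of radius $c't$ for $c<c'<c^*$.

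The main obstacle I expect is organizing the mutual bootstrapping between $F$, $C$, and $H$ so that it is genuinely uniform in space over the expanding ball, rather than merely pointwise: the $C$-equation needs a spatially uniform lower bound on $H$ near the front to generate speed $2\sqrt{1+s}$, and the $H$-upper-bound needs a spatially uniform lower bound on $F+C$; one must therefore run the limiting argument carefully and, most likely, iterate it, or alternatively construct explicit compactly-moving subsolutions (of the form $\eta(t)\,\phi(x-\xi(t))$ with $\eta$ solving a logistic ODE and $\phi$ a principal Dirichlet eigenfunction on an expanding ball, as in \cite{pp}) that can be placed under $F$ and under $C$ simultaneously. Controlling the sign of the coupling term $sH(F+C)$ — which is favorable, hence helpful — is easy; the delicate point is that the comparison principle fails for the full system, so every comparison must be done on a \emph{single} scalar equation with the other components frozen into coefficients, and one must check that the frozen coefficients stay in the admissible range throughout the ball $\{\|x\|\le ct\}$. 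This is precisely where the hypothesis $d=1$ is used, since it is what lets us treat $G=F+C$ (or $F$ and $C$ separately with the \emph{same} Laplacian) rather than a system with mismatched diffusions.
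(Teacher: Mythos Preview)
Your limiting argument has a real gap, and in fact it is circular. You pass to a subsequential limit $(F_\infty,C_\infty,H_\infty)$ with $(F_\infty+C_\infty)(0,0)=0$, correctly deduce $F_\infty\equiv C_\infty\equiv 0$, and then say this ``derives a contradiction with the spreading established in the previous paragraph, which guarantees that along any sequence with $\|x_n\|/t_n$ bounded below $c^*$, $\liminf(F+C)(t_n,x_n)>0$.'' But your previous paragraph does not establish that; it only sketches that $F$ and $C$ \emph{might} each generate KPP-type fronts at speeds near $2\sqrt a$ and $2\sqrt{1+s}$ respectively, without turning this into a statement that would be violated by the sequence $(t_n,x_n)$ you have produced. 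As written, you are assuming the conclusion.

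The paper's proof supplies the missing mechanism, and it hinges on two points you do not have. First, from $d=1$ one shows that $F+C+H$ is a supersolution of a scalar KPP equation, so that $F+C+H\ge\varepsilon^*>0$ uniformly for all time (Proposition~\ref{prop of F+C+H>0 2}); this is the place $d=1$ is actually used---note that $F$ and $C$ \emph{always} have the same diffusion, so your explanation of the role of $d=1$ is incorrect. Second, once $F_\infty+C_\infty\equiv 0$ one gets $H_\infty\ge\varepsilon^*$ and hence $H_\infty\equiv 1$; more to the point, along the shifted sequence one has $(F_n+C_n)\to 0$ and $H_n\to 1$ locally uniformly. Then, \emph{for the original (pre-limit) functions} $C_n$, the $C$-equation reads approximately $\partial_t C_n\ge \Delta C_n + c_n\nabla C_n\cdot e_n + C_n(1+s-\delta-C_n)$ on a large ball, and the assumption $1+s\ge a$ (which you never use) gives $c^0<c^*=2\sqrt{1+s}$, so a compactly supported stationary sub-solution of the form $\eta e^{-c_n x\cdot e_n/2}\phi_R(x)$ fits underneath $C_n$ and forces $\liminf_{t\to\infty}C_n\ge p>0$. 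That is the contradiction. The paper then upgrades this ``weak pointwise'' statement ($\limsup(F+C)(t,x+cte)\ge\varepsilon_1$) to ``strong pointwise'' ($\liminf\ge\varepsilon_2$) and finally to the uniform estimate \eqref{esimate of F+C 1+s>a c<c*} by two further limiting arguments (Lemmas~\ref{lemma of strong spreading of H c<c*} and~\ref{lemma of uniform spreading of H and  F+C c<c* on x}); jumping directly to the uniform statement, as you attempt, does not work because a single sequence $(t_n,x_n)$ with $(F+C)(t_n,x_n)\to 0$ does not by itself contradict a $\limsup$-type spreading bound.
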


\begin{theorem}\label{thm final zone 1+s<a}
If $1+s<a$, then for any given $0\le c<c^*$, there exists $\varepsilon>0$ such that  the solution $(F,C,H)$ of the system \textnormal{(\ref{fch-equation})} with the initial data \textnormal{(\ref{intial data})} satisfies:
\begin{equation}\label{esimate of F+C 1+s<a c<c*}
\underset{t\to\infty}{\liminf}\underset{\lVert x\rVert\le ct}{\inf}(F+C)(t,x)\ge \varepsilon,
\end{equation}
\begin{equation}\label{estimate of H 1+s<a c<c*}
\underset{t\to\infty}{\limsup}\underset{\lVert x\rVert\le ct}{\sup}H(t,x)\le 1-\varepsilon.
\end{equation}
\end{theorem}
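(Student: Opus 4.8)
The plan is to obtain both conclusions from the logarithmic‑drift Theorem~\ref{thm of log delay}. The starting point is that in the regime $1+s<a$ one has $c^*=2\sqrt a$, and this speed is dictated by the first equation $\partial_tF=\Delta F+aF(1-F-C)$: near the leading edge $H\simeq 1$ and $F,C$ are small, so the pair $(F,C)$ is governed by the KPP operator $\partial_t-\Delta-a$, and (as one checks from the linearized front ansatz) $C$ is asymptotically proportional to $F$ there, so that $F+C$ advances at the full KPP speed $2\sqrt a$. Theorem~\ref{thm of log delay} makes this precise by locating the front of the total farmer density $F+C$ at $c^*t-\frac{3}{2c^*}\log t+O(1)$; concretely I would use it in the form: there exist $\theta>0$, $\rho\in\mathbb R$ and $T_0>0$ with
\[
(F+C)(t,x)\ge\theta\qquad\text{for all }t\ge T_0,\ \|x\|\le c^*t-\tfrac{3}{2c^*}\log t-\rho .
\]

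\emph{The lower bound $(\ref{esimate of F+C 1+s<a c<c*})$.} Given $0\le c<c^*$, since $\tfrac{3}{2c^*}\log t+\rho=o(t)$ we have $ct\le c^*t-\tfrac{3}{2c^*}\log t-\rho$ for all $t$ large, hence $\inf_{\|x\|\le ct}(F+C)(t,x)\ge\theta$ for all $t$ large; this is $(\ref{esimate of F+C 1+s<a c<c*})$ with $\varepsilon=\theta$. It is precisely the logarithmic (rather than a cruder $o(t)$, let alone linear) sharpness of Theorem~\ref{thm of log delay} that lets $c$ be taken arbitrarily close to $c^*$: a sub‑solution travelling at speed $c^*-\delta$ would only give the estimate for $c<c^*-\delta$. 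This is also why the argument here is genuinely different from that of Theorem~\ref{thm final zone 1+s>a}, in which the sharpness at speed $2\sqrt{1+s}$ is instead extracted from the pointwise‑spreading limit argument.

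\emph{The upper bound $(\ref{estimate of H 1+s<a c<c*})$.} This I would deduce from the lower bound by a scalar comparison argument on an expanding ball. Fix $c'$ with $c<c'<c^*$; by the previous step (shrinking $\theta$ if necessary so that $g\theta<1$) there are $\varepsilon_1\in(0,1/g)$ and $T_1>0$ with $(F+C)(t,x)\ge\varepsilon_1$ for $t\ge T_1$, $\|x\|\le c't$. Using $0\le H\le 1$ — the upper bound because $H(0,\cdot)\equiv1$ and $H$ is a sub‑solution of $\partial_tw=d\Delta w+bw(1-w)$ — and $F+C\ge\varepsilon_1$, on the cylinder $Q=\{t>T_1,\ \|x\|<c't\}$ the function $H$ is a sub‑solution of the logistic equation $\partial_tw=d\Delta w+bw(1-g\varepsilon_1-w)$, whose stable state is $1-g\varepsilon_1<1$, and $H\le1$ on the parabolic boundary of $Q$. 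Comparing $H$ with the solution $\overline H$ of that logistic equation on $Q$ carrying boundary value $1$ gives $H\le\overline H$ on $Q$; and $\overline H-(1-g\varepsilon_1)$ is a nonnegative sub‑solution of the linear equation $\partial_tz=d\Delta z-b(1-g\varepsilon_1)z$ with boundary/initial datum $g\varepsilon_1$, so a standard exponential barrier (a multiple of $\cosh(\kappa\|x\|)$ with $d\kappa^2=b(1-g\varepsilon_1)$, plus a decaying spatially constant term) shows its overshoot above $1-g\varepsilon_1$ decays exponentially in the distance to the lateral boundary $\{\|x\|=c't\}$. Since that distance is at least $(c'-c)t\to\infty$ on $\{\|x\|\le ct\}$, we get $\limsup_{t\to\infty}\sup_{\|x\|\le ct}\overline H\le 1-g\varepsilon_1$, whence $(\ref{estimate of H 1+s<a c<c*})$ holds; taking $\varepsilon=\min\{\theta,g\theta\}$ serves for both estimates simultaneously. (The same exponential‑penetration mechanism is what makes Proposition~\ref{Dirichlet eq} work.)

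\emph{Main obstacle.} The substance sits entirely in Theorem~\ref{thm of log delay}, which the present proof merely repackages, together with the elementary barrier above; the one point requiring care locally is that one needs a positive lower bound on the \emph{total} farmer density $F+C$ throughout the ball $\|x\|\le c^*t-\tfrac{3}{2c^*}\log t-\rho$, not merely on $F$ near the front. This matters because in the low‑conversion subcase $g<1$ the initial‑farmer density $F$ decays to $0$ far behind the front while the converted density $C$ persists near $C^*=(1+s)/(1+sg)$; so if Theorem~\ref{thm of log delay} controlled only $F$ near the front, one would additionally have to transport positivity of $F+C$ inward through the conversion dynamics (for instance by an invariant‑region / monotone‑iteration argument exploiting that $C$ keeps being produced by the source $sH(F+C)$ as long as $H>0$). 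The cleanest route — and the one I would follow — is to have Theorem~\ref{thm of log delay} phrased directly for $F+C$, so that $(\ref{esimate of F+C 1+s<a c<c*})$ is immediate and only the comparison step is needed for $(\ref{estimate of H 1+s<a c<c*})$.
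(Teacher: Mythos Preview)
Your overall strategy---use the logarithmic-drift estimate of Theorem~\ref{thm of log delay} to pin down positivity of $F+C$ near $c^*t-O(\log t)$, then feed this into the $H$-equation---is exactly the route the paper takes. But there is a genuine gap in your first step: Theorem~\ref{thm of log delay} (specifically \eqref{log delay for F}) does \emph{not} assert the uniform lower bound you write, namely $(F+C)(t,x)\ge\theta$ for all $\|x\|\le c^*t-O(\log t)$. It only gives positivity on a ball $B_R$ of \emph{fixed} radius centred at the moving front position. From that alone you cannot immediately read off positivity everywhere behind the front; this is precisely the step your ``Main obstacle'' paragraph flags, and neither of the fixes you suggest works as stated (Theorem~\ref{thm of log delay} is already phrased for $F+C$ when $a>1+s$, so rephrasing it buys nothing, and the invariant-region sketch is too vague).

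The paper closes this gap with a clean device you did not mention: since $G:=F+C$ satisfies the scalar KPP inequality $\partial_tG\ge\Delta G+\min\{1,a\}\,G(1-G)$, any sufficiently small compactly supported bump $\eta\phi_R$ is a \emph{stationary} subsolution. By Theorem~\ref{thm of log delay} one can slide this bump under $G(t',\cdot)$ centred at the front position $r(t')=c^*t'-O(\log t')$ for every $t'\ge T_0$; by comparison it then stays there for all $t\ge t'$. Hence $G(t,r(t')e)\ge\eta\phi_R(0)$ for every $t\ge t'\ge T_0$ and $e\in S^{N-1}$. Now any point $x$ with $c_1t\le\|x\|\le c_2t$ and $t$ large can be written as $r(t')e$ for some $t'\in[T_0,t]$, which yields the uniform lower bound by contradiction. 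This ``drop a stationary bump at the front and let it sit'' mechanism is the missing bridge between the annular estimate of Theorem~\ref{thm of log delay} and the full-ball estimate \eqref{esimate of F+C 1+s<a c<c*}.

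Two minor remarks. First, the logarithmic coefficient in Theorem~\ref{thm of log delay} for $a>1+s$ is $\frac{(N+2)c^*}{\min\{1,a\}}$, not the scalar-KPP value $\frac{3}{2c^*}$; this does not affect the argument since any $O(\log t)$ drift suffices here. Second, the paper does not spell out the $H$ upper bound \eqref{estimate of H 1+s<a c<c*} in its proof of Theorem~\ref{thm final zone 1+s<a}; your comparison on the expanding cylinder is a perfectly valid way to obtain it (the paper would presumably use the limit argument of Section~6 instead), though the exponential barrier you propose needs a little more care on the moving lateral boundary than a stationary $\cosh$ profile provides.
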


\begin{remark}
In present paper, we use two different approaches to prove the results of Theorem \ref{thm final zone 1+s>a} and Theorem \ref{thm final zone 1+s<a}, respectively. We specially note that the result of Theorem \ref{thm final zone 1+s>a} could be extended to the general case $d\ge 1$ by applying a similar argument to that in section 5, and the details will be provided in our future work.
\end{remark}

\begin{remark}\label{remark of hair trigger}
The results of Theorem \ref{thm leading edge},  Theorem \ref{thm final zone 1+s>a} and Theorem \ref{thm final zone 1+s<a} can be regarded as an analogue of the well-known "hair-trigger effect" for scalar monostable equation \cite{AW}. Moreover, one may find that the spreading speed of the system is always equal to $c^*$, which is only determined by the parameters in the $F$-equation and $C$-equation. This is due to the initial data $H_0(x)\equiv 1$, which means hunter-gatherers have already spread to the whole region.  
\end{remark}

At last, we show our main results about the asymptotic profiles of solutions in the final zone. The precise results will be presented in two cases, the high conversion rate case ($g\ge 1$) and low conversion rate case ($g<1$), respectively. For the high conversion rate case, our results read as:
\begin{theorem}[High conversion rate case, $1+s\geq a$]\label{thm profile g>1 1+s>a}
If $d=1$, $1+s\ge a$ and $g\ge 1$, then for any given $0\le c< c^*$, the solution $(F,C,H)$ of the system \textnormal{(\ref{fch-equation})} with the initial data \textnormal{(\ref{intial data})} satisfies:
\begin{equation}\label{profile of H g>1 1+s>a}
\limsup_{ t \to \infty}\sup_{ \lVert x\rVert\le ct}H(t,x)=0,
\end{equation}
\begin{equation}\label{profile of F+C g>1 1+s>a}
\limsup_{ t \to \infty}\sup_{ \lVert x\rVert\le ct}|1-(F+C)(t,x)|=0.
\end{equation}
Moreover, for any given $c^{**}< c_1<c_2< c^*$, it holds:
\begin{equation}\label{profile of F and C g>1 1+s>a}
\limsup_{ t \to \infty}\sup_{ c_1t\le\lVert x\rVert\le c_2t}\Big(F(t,x)+|1-C(t,x)|\Big)=0.
\end{equation}
\end{theorem}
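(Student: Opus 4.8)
The plan is to establish \eqref{profile of H g>1 1+s>a} and \eqref{profile of F+C g>1 1+s>a} together, by a contradiction argument that uses parabolic compactness to pass to a bounded entire solution of \eqref{fch-equation} and then a Liouville-type rigidity statement for such solutions; the remaining assertion \eqref{profile of F and C g>1 1+s>a} will then follow cheaply by comparing the $F$-component with a scalar KPP equation. Throughout I use that $(F,C,H)$ is globally bounded ($0\le H\le1$ by comparison, and then $F$ and $G:=F+C$ are bounded by standard comparison with logistic-type equations), so that the space--time translates of the solution are precompact in $C^{1,2}_{loc}$ by interior parabolic estimates.

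Suppose \eqref{profile of H g>1 1+s>a} fails for some $c\in[0,c^*)$: then there are $\delta>0$, times $t_n\to\infty$ and points $x_n$ with $\|x_n\|\le ct_n$ such that $H(t_n,x_n)\ge\delta$. Fix $c''\in(c,c^*)$ and let $\varepsilon>0$ be the constant provided by Theorem \ref{thm final zone 1+s>a} for the speed $c''$. The translates $(t,x)\mapsto(F,C,H)(t+t_n,x+x_n)$ converge, along a subsequence, in $C^{1,2}_{loc}(\mathbb{R}\times\mathbb{R}^N)$ to a bounded nonnegative entire solution $(\widetilde F,\widetilde C,\widetilde H)$ of \eqref{fch-equation} with $d=1$. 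For any fixed $(t,x)$ we have $\|x+x_n\|\le\|x\|+ct_n\le c''(t+t_n)$ once $n$ is large (because $c''>c$), so \eqref{esimate of F+C 1+s>a c<c*} of Theorem \ref{thm final zone 1+s>a} gives $(F+C)(t+t_n,x+x_n)\ge\varepsilon$ for $n$ large; hence $\widetilde G:=\widetilde F+\widetilde C\ge\varepsilon$ on $\mathbb{R}\times\mathbb{R}^N$, while $\widetilde H(0,0)\ge\delta$. Running the same construction at a would-be counterexample to \eqref{profile of F+C g>1 1+s>a} yields a bounded nonnegative entire solution with $\widetilde G\ge\varepsilon$ and $|1-\widetilde G(0,0)|\ge\delta$. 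Both assertions therefore reduce to the rigidity claim: \emph{every bounded nonnegative entire solution of \eqref{fch-equation} with $\inf\widetilde G>0$ satisfies $\widetilde H\equiv0$ and $\widetilde G\equiv1$.}

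I would prove the rigidity claim in three steps, each exploiting that the solution is entire by comparing on a half-line $[T,t_0]$ and letting $T\to-\infty$. Step 1: adding the $\widetilde F$- and $\widetilde C$-equations and dropping the nonnegative term $s\widetilde H\widetilde G$ shows that $\widetilde G$ is a supersolution of $\partial_tv=\Delta v+h(v)$ for a Lipschitz $h$ with $h(1)=0$, $h>0$ on $[0,1)$ and $h<0$ on $(1,\infty)$; one may take $h(v)=m_0(1-v)$ for $v\le1$ and $h(v)=M_0(1-v)$ for $v>1$, with $m_0:=\min\{a,1\}\inf\widetilde G>0$ and $M_0:=\sup(a\widetilde F+\widetilde C)$, using $\min\{a,1\}\widetilde G\le a\widetilde F+\widetilde C\le\max\{a,1\}\widetilde G$. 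Comparing, for each $T$, with the spatially constant subsolution solving $\dot\phi=h(\phi)$ with $\phi(T)=\inf\widetilde G$ (which converges to $1$ as $t\to\infty$) and letting $T\to-\infty$ gives $\widetilde G\ge1$ everywhere. Step 2: since $g\ge1$ and $\widetilde G\ge1$, the $\widetilde H$-equation gives $\partial_t\widetilde H\le\Delta\widetilde H-b(g-1)\widetilde H$ when $g>1$, and $\partial_t\widetilde H\le\Delta\widetilde H-b\widetilde H^2$ when $g=1$; comparing with the spatially constant supersolutions $\|\widetilde H\|_\infty e^{-b(g-1)(t-T)}$, respectively $\big(b(t-T)\big)^{-1}$, and letting $T\to-\infty$ forces $\widetilde H\equiv0$. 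Step 3: once $\widetilde H\equiv0$ the coupling term disappears and $z:=\widetilde G-1\ge0$ solves $\partial_tz=\Delta z-(a\widetilde F+\widetilde C)z\le\Delta z-\min\{a,1\}z$ (using $a\widetilde F+\widetilde C\ge\min\{a,1\}\widetilde G\ge\min\{a,1\}$); comparing with $\|z\|_\infty e^{-\min\{a,1\}(t-T)}$ and letting $T\to-\infty$ gives $z\equiv0$, i.e.\ $\widetilde G\equiv1$. Either conclusion contradicts the entire solution constructed above, which proves \eqref{profile of H g>1 1+s>a} and \eqref{profile of F+C g>1 1+s>a}.

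Finally, for \eqref{profile of F and C g>1 1+s>a} note that $1+s\ge a$ forces $c^{**}=2\sqrt a$. Since $C\ge0$, the $F$-equation yields $\partial_tF\le\Delta F+aF(1-F)$, so $F\le\bar F$ by the comparison principle, where $\bar F$ solves the scalar KPP equation $\partial_t\bar F=\Delta\bar F+a\bar F(1-\bar F)$ with $\bar F(0,\cdot)=F_0$. By Proposition \ref{prop of kpp equation}, $\sup_{\|x\|\ge c_1t}F\le\sup_{\|x\|\ge c_1t}\bar F\to0$ for any $c_1>2\sqrt a=c^{**}$, hence $\sup_{c_1t\le\|x\|\le c_2t}F\to0$; and since $c_2<c^*$, \eqref{profile of F+C g>1 1+s>a} gives $\sup_{c_1t\le\|x\|\le c_2t}|1-(F+C)|\to0$, so $|1-C|\le F+|1-(F+C)|\to0$ on the same annulus, which is \eqref{profile of F and C g>1 1+s>a}. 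The main obstacle is the rigidity claim: the individual scalar comparisons there are harmless once set up, but obtaining $\widetilde G\ge1$ from $\inf\widetilde G>0$, and handling the borderline case $g=1$ in Step 2, genuinely require the one-sided-in-time comparison with $T\to-\infty$ that is available only because the limit solution is entire; the a priori boundedness of $G=F+C$, needed for the compactness, should be recorded beforehand.
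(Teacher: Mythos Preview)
Your proof is correct and follows essentially the same approach as the paper: pass to an entire limit solution via parabolic compactness, use Theorem~\ref{thm final zone 1+s>a} to obtain $\inf\widetilde G>0$, then run the three comparison arguments with $T\to-\infty$ (equivalently $t_0\to+\infty$) to conclude $\widetilde G\ge1$, then $\widetilde H\equiv0$, then $\widetilde G\equiv1$; finally, derive \eqref{profile of F and C g>1 1+s>a} from the scalar KPP upper bound for $F$ combined with \eqref{profile of F+C g>1 1+s>a}. The only cosmetic differences are that the paper does not isolate the rigidity claim as a separate lemma (it argues directly with arbitrary sequences rather than by contradiction), and for \eqref{profile of F and C g>1 1+s>a} it invokes the exponential super-solution of Remark~\ref{remark of F leading edge} rather than Proposition~\ref{prop of kpp equation}, but both give $\sup_{\lVert x\rVert\ge c_1t}F\to0$ for $c_1>2\sqrt{a}=c^{**}$. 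One trivial imprecision: in your Step~2 for $g=1$, the barrier $(b(t-T))^{-1}$ is infinite at $t=T$; replace it by $(1+b(t-T))^{-1}$ (or the paper's $1/(b(t+t_0+1))$) so that the initial comparison $\widetilde H(T,\cdot)\le1$ is honest.
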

\begin{theorem}[High conversion rate case, $1+s<a$]\label{thm profile g>1 1+s<a}
If  $1+s< a$ and $g\ge 1$, then for any given $0\le c< c^*$, the solution $(F,C,H)$ of the system \textnormal{(\ref{fch-equation})} with the initial data \textnormal{(\ref{intial data})} satisfies:
\begin{equation}\label{profile of H g>1 1+s<a}
\limsup_{ t \to \infty}\sup_{ \lVert x\rVert\le ct}H(t,x)=0,
\end{equation}
\begin{equation}\label{profile of F+C g>1 1+s<a}
\limsup_{ t \to \infty}\sup_{ \lVert x\rVert\le ct}|1-(F+C)(t,x)|=0.
\end{equation}
\end{theorem}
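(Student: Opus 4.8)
The strategy follows the ``pointwise spreading via entire solutions'' method of \cite{pp}: we extract entire solutions of \eqref{fch-equation} from space-time translates of $(F,C,H)$, feed them the uniform positivity of Theorem \ref{thm final zone 1+s<a}, and appeal to a rigidity (Liouville-type) property. Put $G:=F+C$. Since $1+s<a$ and $s>0$ we have $a>1$, and adding the first two equations of \eqref{fch-equation} gives
\[
\partial_t G-\Delta G-G(1-G)=(a-1)F(1-G)+sHG,
\]
whose right-hand side is nonnegative on $\{G\le 1\}$ (there $a-1\ge 0$, $F\ge 0$, $H\ge 0$, $1-G\ge 0$); hence $G$ is a supersolution of the scalar KPP equation $\partial_t u=\Delta u+u(1-u)$ at every point where $G\le 1$. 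Moreover $0\le H\le 1$ by comparing the $H$-equation with $\partial_t H=d\Delta H+bH(1-H)$, and $F,C$ are globally bounded by an elementary scalar comparison (using $C,H\ge 0$ and $H\le 1$). Consequently $(F,C,H)$ has uniform interior parabolic estimates, and any sequence of space-time translates of $(F,C,H)$ has a subsequence converging in $C^{1,2}_{\mathrm{loc}}(\mathbb{R}\times\mathbb{R}^N)$ to a bounded nonnegative entire solution of \eqref{fch-equation}.

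The core of the argument is the following rigidity statement, which I would prove first. \emph{Key Lemma:} if $(\widetilde F,\widetilde C,\widetilde H)$ is a bounded nonnegative entire solution of \eqref{fch-equation} (with $a>1$ and $g\ge 1$) and $\inf_{\mathbb{R}\times\mathbb{R}^N}(\widetilde F+\widetilde C)>0$, then $\widetilde F+\widetilde C\equiv 1$ and $\widetilde H\equiv 0$. I would prove this by three one-sided comparisons against spatially homogeneous solutions of scalar ODEs, each performed on a slab $[T,t_0]\times\mathbb{R}^N$ and followed by the limit $T\to-\infty$. First, setting $\widetilde G:=\widetilde F+\widetilde C$ and $m_0:=\inf\widetilde G>0$ and using that $\widetilde G$ is a KPP-supersolution on $\{\widetilde G\le 1\}$, compare $\widetilde G$ with the solution $y$ of $\dot y=y(1-y)$ with $y(T)=m_0$: since $y$ stays below $1$ and $y(t)\uparrow 1$ as $T\to-\infty$, a first contact with $\widetilde G$ must occur where $\widetilde G\le 1$, so the comparison is legitimate and yields $\widetilde G\ge 1$ everywhere. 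Second, $g\ge 1$ and $\widetilde G\ge 1$ give $1-\widetilde H-g\widetilde G\le-\widetilde H$, hence $\partial_t\widetilde H\le d\Delta\widetilde H-b\widetilde H^{2}$; comparing with the solution of $\dot y=-by^{2}$ started at $\sup\widetilde H$ forces $\widetilde H\equiv 0$. Third, with $\widetilde H\equiv 0$ the $G$-equation becomes $\partial_t\widetilde G=\Delta\widetilde G+(a\widetilde F+\widetilde C)(1-\widetilde G)$, and since $a\widetilde F+\widetilde C\ge\widetilde G\ge 1$ while $1-\widetilde G\le 0$ this gives $\partial_t\widetilde G\le\Delta\widetilde G+(1-\widetilde G)$; thus $w:=\widetilde G-1\ge 0$ is a bounded subsolution of $\partial_t w=\Delta w-w$, so comparison with $\|w\|_\infty e^{-(t-T)}$ gives $\widetilde G\equiv 1$.

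Granting the Key Lemma, we argue by contradiction. Fix $0\le c<c^{*}$ and choose $c<c'<c^{*}$. If \eqref{profile of H g>1 1+s<a} failed there would be $t_n\to\infty$ and $x_n$ with $\|x_n\|\le c t_n$ and $\inf_n H(t_n,x_n)>0$; passing to a locally uniform limit of $(F,C,H)(\,\cdot+t_n,\,\cdot+x_n)$ gives a bounded nonnegative entire solution $(\widetilde F,\widetilde C,\widetilde H)$ with $\widetilde H(0,0)>0$. For any fixed $(t,x)$ and $n$ large, $\|x+x_n\|\le c t_n+\|x\|\le c'(t_n+t)$, so $(t+t_n,x+x_n)$ lies in the final zone $\{\|y\|\le c'\tau\}$ with $\tau=t+t_n\to\infty$; by Theorem \ref{thm final zone 1+s<a} (applied with speed $c'$) this yields $\inf_{\mathbb{R}\times\mathbb{R}^N}(\widetilde F+\widetilde C)>0$, so the Key Lemma forces $\widetilde H\equiv 0$, a contradiction. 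This proves \eqref{profile of H g>1 1+s<a}; running the same argument along a sequence on which $|1-(F+C)|$ stays bounded away from $0$ in the final zone contradicts $\widetilde F+\widetilde C\equiv 1$ and proves \eqref{profile of F+C g>1 1+s<a}.

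The main obstacle is the first step of the Key Lemma. The displayed identity shows that $G$ is a supersolution of the KPP equation \emph{only} on the sublevel set $\{G\le 1\}$ --- off this set the term $(a-1)F(1-G)$ has the wrong sign --- so the comparison with the homogeneous sub-barrier $y(t)<1$ must be organized so that a first contact can only happen where $\widetilde G\le 1$, the one place the supersolution inequality is available. The other delicate point, shared by all three comparisons, is justifying the parabolic comparison principle on the \emph{unbounded} slab $[T,t_0]\times\mathbb{R}^N$ for merely bounded sub/supersolutions (a Phragm\'en--Lindel\"of-type argument) and then letting $T\to-\infty$. Once these are in place, the extraction of the entire solution and the second and third steps of the Key Lemma are routine.
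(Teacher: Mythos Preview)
Your proposal is correct and follows essentially the same route as the paper: extract an entire solution along a sequence in the final zone, use Theorem~\ref{thm final zone 1+s<a} to get $\inf(\widetilde F+\widetilde C)>0$, then run three ODE comparisons with $T\to-\infty$ to force successively $\widetilde G\ge 1$, $\widetilde H\equiv 0$, and $\widetilde G\equiv 1$ --- this is exactly the argument the paper gives (it defers to the proof of Theorem~\ref{thm profile g>1 1+s>a} combined with Proposition~\ref{thm profile c<1,a}). Your ``Key Lemma'' packages the rigidity step more cleanly, and you are more explicit than the paper about the first-contact issue on $\{\widetilde G\le 1\}$ and the Phragm\'en--Lindel\"of justification, but the substance is identical.
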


For the high conversion rate case, Theorem \ref{thm profile g>1 1+s>a} and Theorem \ref{thm profile g>1 1+s<a} imply that all of the original hunter-gatherers convert to farmers at last. However, the explicit profiles of the $F$-component and $C$-component in the final zone are yet to be investigated. Nevertheless, for the low conversion rate case, we will show that the $F$-component always converges to $0$ in the final zone. In order to do this, one need first prove that the population density of hunter-gatherers would stay positive uniformly. From some ecological observations, this may happen if the conversion rate is small enough or the product of the intrinsic growth rate and the diffusion speed of hunter-gatherers is large enough. Then,
we can investigate how the $C$-component and $H$-component behave in the finial zone by considering the dynamics of the underlying ODE system:
\begin{equation}\label{ode system}
\left\{
\begin{aligned}
&C_t=C(1-C)+sCH,\\
&H_t=bH(1-H-gC).
\end{aligned}
\right.
\end{equation}
We expect the solution of the PDE system (\ref{fch-equation}) to converge uniformly to the equilibrium $(C^*,H^*)$ in the final zone as $t\to+\infty$. We prove the conjecture by the established fact that a strict Lyapunov function exists. Note that, the ODE system (\ref{ode system}) is a well-known Lotka-Volterra system with logistic growth rate. In population dynamics, especially prey-predator systems, ODE models always admit a strict Lyapunov function. The concrete example will be given in the later section. 
\begin{theorem}[Low conversion rate case]\label{thm profile g<1}
If $g<1$, then for any given $0\le c<c^*$, for the solution $(F,C,H)$ of the system \textnormal{(\ref{fch-equation})} with the initial data \textnormal{(\ref{intial data})}, it holds:
\begin{itemize}
\item[(1)] there exists $\varepsilon>0$, such that: 
\begin{equation}\label{estimate of the profile H g<1}
\liminf_{ t \to \infty}\inf_{\lVert x\rVert\le ct}H(t,x)\ge \varepsilon,
\end{equation}
\begin{equation}\label{estimate of the profile C g<1}
\liminf_{ t \to \infty}\inf_{\lVert x\rVert\le ct}C(t,x)\ge 1+\varepsilon,
\end{equation}
and
\begin{equation}\label{estimate of the profile F g<1}
\lim_{ t \to \infty}\sup_{\lVert x\rVert\le ct}F(t,x)=0,
\end{equation}
provided that
$$g<\frac{\min\{1,a\}}{\min\{1,a\}+s}\ \ \mbox{or}\ \ bd\ge \frac{c^*}{1-g}.$$
\item[(2)] if $d=1$, one has:
\begin{equation}\label{estimate of the profile}
\lim_{ t \to \infty}\sup_{\lVert x\rVert\le ct}\Big(F(t,x)+|C^*-C(t,x)|+|H^*-H(t,x)|\Big)=0,
\end{equation}
provided that
$$g<\frac{\min\{1,a\}}{\min\{1,a\}+s}\ \ \mbox{or}\ \ b\ge \frac{c^*}{1-g}.$$
\end{itemize}
\end{theorem}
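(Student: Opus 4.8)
The plan is to build on the elementary a priori bounds $0\le H\le 1$ (since $H\equiv1$ is a supersolution of the $H$-equation) and $F\le\max\{1,\|F_0\|_{L^\infty}\}$, on the sharper bound $\limsup_{t\to\infty}\sup_x(F+C)(t,x)\le G^\sharp:=\frac{\min\{1,a\}+s}{\min\{1,a\}}$ obtained by comparing $G:=F+C$ with a scalar monostable equation (using $(aF+C)(1-G)+sHG\le\min\{a,1\}G(1-G)+sG$ when $G\ge1$), and on Theorems \ref{thm leading edge}, \ref{thm final zone 1+s>a}, \ref{thm final zone 1+s<a} (the farmer front advances no faster than $c^*$, while $F+C\ge\varepsilon_0$ and $H\le 1-\varepsilon_0$ on $\{\|x\|\le ct\}$ for every $c<c^*$ and $t$ large). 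Note that $gG^\sharp<1$ is exactly the first of the two alternatives in the hypotheses. The argument proceeds in three steps: (I) a lower bound for $H$ in the bulk; (II) a bootstrap to $F+C>1$ and then to $F\to0$ in the bulk, giving part (1); (III) a Lyapunov/entire-solution argument, valid when $d=1$, identifying the limit of $(C,H)$ as $(C^*,H^*)$, giving part (2).

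\textbf{Step I (a lower bound for $H$).} If $gG^\sharp<1$, fix $\eta>0$ with $gG^\sharp<1-2\eta$; then $g(F+C)\le1-\eta$ for $t\ge T$, so from time $T$ on $H$ is a supersolution of $\partial_t w=d\Delta w+bw(\eta-w)$. Since $H(T,\cdot)>0$ is continuous and $\ge1-o(1)$ near the leading edge by Theorem \ref{thm leading edge}, one has $\inf_xH(T,\cdot)\ge\kappa>0$, and comparison with the spatially constant subsolution solving $\dot z=bz(\eta-z)$, $z(T)=\kappa$, gives $\liminf_{t\to\infty}\inf_xH(t,x)\ge\eta$. If instead $bd\ge c^*/(1-g)$ (so that $gG^\sharp$ may exceed $1$ and a constant subsolution fails), the plan is to construct a subsolution of the $H$-equation that stays uniformly positive behind a front moving at speed $c^*$: by Theorem \ref{thm leading edge} the ``killing potential'' $bg(F+C)$ is, for large $t$, essentially confined to the expanding ball $\{\|x\|\le c_1t\}$ for any $c_1>c^*$, and the hypothesis $bd(1-g)\ge c^*$ is used precisely to make the exponential rate of such a subsolution admissible, i.e.\ so that the natural speed $2\sqrt{bd(1-g)}$ associated with the $H$-equation dominates $c^*$. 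I expect this construction to be the main obstacle of the proof: away from the favourable regime $gG^\sharp<1$ one really must exploit the fast diffusion of $H$, and the comparison principle is available only for the $H$-equation with $F+C$ treated as a given coefficient. Either way one obtains $\eta>0$ with $\liminf_{t\to\infty}\inf_{\|x\|\le ct}H\ge\eta$ for all $c<c^*$.

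\textbf{Step II ($F+C>1$, then $F\to0$).} With $H\ge\eta$ in the bulk, $G$ is a supersolution of an equation $\partial_t v=\Delta v+\psi(v)$ for some Lipschitz monostable $\psi$ with $\psi(0)=0$, $\psi>0$ on $(0,G_\eta)$, $\psi(G_\eta)=0$, where $G_\eta:=1+\tfrac{s\eta}{\max\{1,a\}}>1$ (this uses $aF+C\ge\min\{a,1\}G$ and $sHG\ge s\eta G$). I claim $\liminf_{t\to\infty}\inf_{\|x\|\le ct}G\ge G_\eta$ for every $c<c^*$. Otherwise there are $t_n\to\infty$, $\|x_n\|\le ct_n$ with $G(t_n,x_n)\le G_\eta-\delta_0$; by parabolic regularity the translates $(F,C,H)(\cdot+t_n,\cdot+x_n)$ converge along a subsequence to a bounded entire solution whose $G$-component $G_\infty$ satisfies $\partial_tG_\infty\ge\Delta G_\infty+\psi(G_\infty)$ with $\varepsilon_0\le G_\infty\le G^\sharp$ on all of $\mathbb R\times\mathbb R^N$ (the lower bound from Theorems \ref{thm final zone 1+s>a}/\ref{thm final zone 1+s<a}, using $c<c^*$) and $G_\infty(0,0)\le G_\eta-\delta_0$; translating once more to a point where $G_\infty$ nearly attains its infimum and applying the supersolution inequality there forces $\psi(\inf G_\infty)\le0$, hence $\inf G_\infty\ge G_\eta$, a contradiction. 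With $F+C\ge G_\eta$ in the bulk, $F$ there obeys $\partial_tF\le\Delta F-a(G_\eta-1)F$; the same compactness argument yields an entire $F_\infty\ge0$, bounded, satisfying $\partial_tF_\infty\le\Delta F_\infty-a(G_\eta-1)F_\infty$ on all of $\mathbb R\times\mathbb R^N$ (the limiting $G$-component being $\ge G_\eta$ everywhere), hence $F_\infty\equiv0$ (because $e^{a(G_\eta-1)t}F_\infty$ is a bounded nonnegative entire subsolution of the heat equation tending to $0$ as $t\to-\infty$); thus $\lim_{t\to\infty}\sup_{\|x\|\le ct}F=0$. Then $C=G-F$ gives $\liminf_{t\to\infty}\inf_{\|x\|\le ct}C\ge G_\eta>1$, and part (1), namely \eqref{estimate of the profile H g<1}, \eqref{estimate of the profile C g<1}, \eqref{estimate of the profile F g<1}, follows with $\varepsilon:=\tfrac{1}{2}\min\{\eta,G_\eta-1\}$.

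\textbf{Step III (part (2), $d=1$).} After Step II, on each cone $\{\|x\|\le ct\}$ the solution is eventually trapped with $F\to0$, $G_\eta-o(1)\le C\le G^\sharp$, $\eta\le H\le1$, and $C$ satisfies $\partial_tC=\Delta C+C(1+sH-C)+o(1)$ uniformly; thus the effective dynamics is that of the equal-diffusion system $\partial_tC=\Delta C+C(1+sH-C)$, $\partial_tH=\Delta H+bH(1-H-gC)$, which is precisely the system \eqref{pp system} with $d=1$ and $C$ in place of $G$. Taking an entire limit $(0,C_\infty,H_\infty)$ of this system along an arbitrary sequence $t_n\to\infty$, $\|x_n\|\le ct_n$ — trapped in a compact subset of $(0,\infty)^2$ — I would use the strict Lyapunov function of the ODE \eqref{ode system}, namely $V(C,H)=\alpha(C-C^*\ln C)+\beta(H-H^*\ln H)$ with $\beta bg=\alpha s$, for which $\dot V=-\alpha(C-C^*)^2-\beta b(H-H^*)^2$. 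Since both components carry the same Laplacian, $W:=V(C_\infty,H_\infty)-V(C^*,H^*)\ge0$ satisfies $\partial_tW=\Delta W-\alpha\frac{C^*}{C_\infty^2}|\nabla C_\infty|^2-\beta\frac{H^*}{H_\infty^2}|\nabla H_\infty|^2-\alpha(C_\infty-C^*)^2-\beta b(H_\infty-H^*)^2\le\Delta W$, a bounded nonnegative entire subsolution of the heat equation; a Liouville-type argument exploiting that the dissipation must vanish then forces $W\equiv0$, i.e.\ $(C_\infty,H_\infty)\equiv(C^*,H^*)$. As the sequence was arbitrary, $\sup_{\|x\|\le ct}(|C-C^*|+|H-H^*|)\to0$, which together with Step II gives \eqref{estimate of the profile}; the condition $b\ge c^*/(1-g)$ enters only through Step I (with $d=1$, so $bd=b$). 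The two genuinely delicate points are thus the fast-diffusion subsolution in Step I — the reason the hypothesis $bd\ge c^*/(1-g)$ appears — and this Liouville step on the unbounded domain, where the equality $d=1$ is essential; the remaining steps are comparison or standard compactness arguments.
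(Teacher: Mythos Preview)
Your overall architecture matches the paper's proof in Section~6.2 closely: Step~I corresponds to Lemmas~\ref{low estimate of H 1} and~\ref{low estimate of H 2}, Step~II to Lemmas~\ref{lemma profile F+C>0 g<1} and~\ref{lemma profile F g<1} (via the same entire-limit bootstrap), and Step~III is exactly the paper's Lyapunov argument (Claim~\ref{claim 4}), with the same function $\Phi(C,H)=bg\int_{C^*}^C\frac{\eta-C^*}{\eta}\,d\eta+s\int_{H^*}^H\frac{\xi-H^*}{\xi}\,d\xi$, which is your $V$ up to constants. Steps~II and~III are fine.

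The one genuine gap is your Step~I under the hypothesis $bd\ge c^*/(1-g)$. A direct moving subsolution ``that stays uniformly positive behind a front moving at speed $c^*$'' does not work as stated: in the bulk $g(F+C)$ may well exceed $1$ (indeed up to $gG^\sharp>1$ when the first alternative fails), so the $H$-equation there has no sign and no such global barrier is available. The paper's mechanism (Lemma~\ref{low estimate of H 3}) is indirect: assume by contradiction that $H_n\to 0$ along some moving frame $(t_n,x_n+c_nt_ne_n)$, pass to an entire limit, and observe that $H_\infty\equiv 0$ forces $(F_\infty+C_\infty)\le 1$ \emph{locally} (Claim~analogous to~\ref{claim from pointwise converge H F+C c<c*}). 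Only then does one place a compactly supported eigenfunction subsolution $\psi_n=\eta\,e^{-c_nx\cdot e_n/(2d)}\phi_R(x)$ for the $H$-equation in that moving frame; the computation $c_n^2/(4d)+\mu_R<b(1-g-\delta)$ is exactly where the hypothesis enters (one needs $c^0<2\sqrt{db(1-g)}$ for every $c^0<c^*$). This yields the weak pointwise bound, and the strong and uniform versions follow by the same bootstrap as in Section~5.1. So your intuition that the condition compares $c^*$ to the intrinsic $H$-speed $2\sqrt{bd(1-g)}$ is right, but the implementation is a local eigenfunction in an entire-limit contradiction, not a global traveling subsolution.
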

\begin{remark}
Note that, if we assume $b\le 1$ as in \cite{shigesada model} for the ecological motivation that the intrinsic growth rate of hunter-gatherers is supposed to be smaller than or equal to that of converted farmers, then for the first statement in Theorem \ref{thm profile g<1}, the condition $bd\ge c^*/(1-g)$ could hold for large enough $d$. 
\end{remark}

\subsection{Logarithmic correction}

Recall that, in Figure 3 and Figure 4, the behaviors of the $F$-component are sightly different on the wavefront. In Figure 4, it is described by Corollary \ref{col} that the distribution of initial farmers converges to $0$ everywhere. However, in Figure 3, the distribution of initial farmers is peaked with leading and trailing edges converging to $0$. The results stated in Theorem \ref{thm final zone 1+s>a} and Theorem \ref{thm final zone 1+s<a} can not explain why a small peak may occur on the wavefront. More precisely, the spreading speed being equal to $c^*$ does not mean that the front propagates parallel to the traveling wave of speed $c^*$. Even for the scalar KPP equation, this does not hold true. Therefore, we are motivated to consider the behaviors of solutions in the area enough close to $c^*t$.

A famous result of Bramson in \cite{log delay 1,log delay 2}
showed that for the scalar KPP equation, there is a backward phase drift of order $O(\log t)$ from the position $c^*(f)t$. More precisely, Bramson gave a sharp asymptotics of the location of the level sets of the solution $u(t,x)$ of the scalar KPP equation by using some probabilistic arguments. Let $E_m(t)$ be the set of points in $(0,+\infty)$ where $u(t,\cdot)=m$ and $m\in(0,1)$. Then, there exists a constant $B$ and a shift $x_m$ depending on $m$ and the initial data $u_0$ such that
$$E_m(t)\subset[c^*(f)t-\frac{3}{2\lambda^*}\ln t-x_m-\frac{B}{t}, c^*(f)t-\frac{3}{2\lambda^*}\ln t-x_m+\frac{B}{t}]\ \ \mbox{for}\ \ t\ \ \mbox{large enough},$$
with $\lambda^*=c^*(f)/2$.
Recently, this result  has been explained in simple PDE terms by Hamel et al. \cite{log delay 3}. They showed that, for every $m\in(0,1)$ there exists $B>0$ such that
$$E_m(t)\subset[c^*(f)t-\frac{3}{2\lambda^*}\ln t-B, c^*(f)t-\frac{3}{2\lambda^*}\ln t+B]\ \ \mbox{for}\ \ t\ \ \mbox{large enough}.$$
Moreover, Ducrot \cite{AD} extended this proposition to the multi-dimensional case by showing that there is a logarithmic backward phase drift. 

For the system (\ref{fch-equation}), the estimate of the upper bound for the level set is rather straightforward. However, in the final zone, since three components heavily interact with each other, it is hard to estimate the level set of the $F$-component or $C$-component separately. Hence, we just provide a slightly weak result in the present paper. The estimate of the low bound for the level set is yet to investigated. Our precise result reads as:
\begin{theorem}\label{thm of log delay}
For any $R>0$, for the solution $(F,C,H)$ of the system \textnormal{(\ref{fch-equation})} with the initial data \textnormal{(\ref{intial data})}, it holds:

\begin{equation}\label{log delay for F}
\underset{t\to+\infty}{\liminf}\inf_{x\in B_R, e\in S^{N-1}}\ (F+C)\Big(t,x+\Big(c^*t-\frac{(N+2)c^*}{\min\{1,a\}}\ln t\Big)e\Big)>0\ \ \mbox{if}\ \ a> 1+s,
\end{equation}
\begin{equation}\label{log delay for H 3}
\underset{t\to+\infty}{\limsup}\sup_{x\in B_R, e\in S^{N-1}}\ H\Big(t,x+\Big(c^*t-\frac{(N+2)c^*}{\min\{1,a\}}\ln t\Big)e\Big)<1\ \ \mbox{if}\ \ a> 1+s. 
\end{equation}
Moreover, if $d=1$, it holds:
\begin{equation}\label{log delay for C}
\underset{t\to+\infty}{\liminf}\inf_{x\in B_R, e\in S^{N-1}}\ C\Big(t,x+\Big(c^*t-\frac{N+2}{c^*}\ln t\Big)e\Big)>0\ \ \mbox{if}\ \ a< 1+s,
\end{equation}
\begin{equation}\label{log delay for H 1}
\underset{t\to+\infty}{\limsup}\sup_{x\in B_R, e\in S^{N-1}}\ H\Big(t,x+\Big(c^*t-\frac{N+2}{c^*}\ln t\Big)e\Big)<1\ \ \mbox{if}\ \ a< 1+s, 
\end{equation}
\begin{equation}\label{log delay for H 2}
\underset{t\to+\infty}{\liminf}\inf_{x\in B_R, e\in S^{N-1}}\ H\Big(t,x+\Big(c^*t-\frac{N+2}{c^*}\ln t\Big)e\Big)>0\ \ \mbox{if}\ \ a\neq 1+s. 
\end{equation}
Furthermore, for the special case $a=1+s$ and $d=1$, it holds:
\begin{equation}\label{log delay for F a=1+s}
\underset{t\to+\infty}{\liminf}\inf_{x\in B_R, e\in S^{N-1}}\ (F+C)\Big(t,x+\Big(c^*t-\frac{N+2}{c^*}\ln t\Big)e\Big)>0,
\end{equation}
\begin{equation}\label{log delay for H 1 a=1+s}
\underset{t\to+\infty}{\limsup}\sup_{x\in B_R, e\in S^{N-1}}\ H\Big(t,x+\Big(c^*t-\frac{N+2}{c^*}\ln t\Big)e\Big)<1, 
\end{equation}
\begin{equation}\label{log delay for H 2 a=1+s}
\underset{t\to+\infty}{\liminf}\inf_{x\in B_R, e\in S^{N-1}}\ H\Big(t,x+\Big(c^*t-\frac{N}{c^*}\ln t\Big)e\Big)>0. 
\end{equation}

\end{theorem}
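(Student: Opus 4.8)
The plan is to prove Theorem~\ref{thm of log delay} by a super/sub-solution argument, exploiting the structure that the system \eqref{fch-equation} dominates, on the leading edge, a decoupled pair of scalar KPP equations with diffusivities $1$ (for $F$, rate $a$) and $1$ (for $C$, rate $1+s$), while $H$ is controlled from below by a scalar logistic equation. More precisely, since $F,C\ge 0$ and $F+C$ is small on the leading edge, the $C$-equation satisfies $\partial_t C\ge \Delta C + C(1-F-C) + sHC \ge \Delta C + (1+s-\delta)C - C^2$ once $H\ge 1-\delta'$ and $F+C\le\delta''$ there; a symmetric lower bound $\partial_t F\ge \Delta F + (a-\delta)F - aF^2$ holds for $F$. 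Thus $F$ (resp.\ $C$) is a supersolution-controlled-from-below by a scalar KPP front of speed $2\sqrt{a}$ (resp.\ $2\sqrt{1+s}$), and the decisive estimate of Hamel--Nolen--Roquejoffre--Ryzhik \cite{log delay 3} and its multidimensional version by Ducrot \cite{AD} applies: for a scalar KPP equation with speed $c$ and decay rate $\lambda=c/2$, the level sets lag by $\tfrac{3}{2\lambda}\ln t + O(1) = \tfrac{3}{c}\ln t+O(1)$ in dimension one, and by an analogous $\tfrac{N+2}{c}\ln t$-type correction in dimension $N$ (the $\tfrac{3}{2\lambda^*}$ of the $1$-D statement being the $N=1$ instance of a general $\tfrac{N+2}{2\lambda^*}$ pattern coming from the heat-kernel factor $t^{-N/2}$). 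This is exactly the coefficient appearing in \eqref{log delay for F}--\eqref{log delay for H 2 a=1+s}, with $c^*=2\sqrt{a}$ when $a>1+s$ (hence $\tfrac{3}{c^*}=\tfrac{(N+2)c^*}{4a}$, rewritten as $\tfrac{(N+2)c^*}{\min\{1,a\}}$ only after a crude bound --- so in fact the relevant exponent is the honest KPP one and the stated form is a convenient over-estimate), and $c^*=2\sqrt{1+s}$ when $a<1+s$.

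The steps, in order, are as follows. First, using Theorem~\ref{thm leading edge} and a continuity/parabolic-regularity argument, fix $\delta>0$ and establish that in the moving window $\{\|x\|\le (c^*-\eta)t\}$ one has $F+C$ bounded away from $1$ is \emph{not} needed here; what is needed on the \emph{outer} part $\{\|x\|\ge (c^{**}+\eta)t\}$ (where the relevant component is still exponentially small) is the scalar comparison above. Second, construct the scalar KPP sub-solution: for the case $a>1+s$, compare $F$ from below with the solution $\underline u$ of $\partial_t\underline u=\Delta\underline u+a\underline u(1-\underline u)$ started from a small bump inside the support of $F_0$; since $F_0\not\equiv 0$ and, on the region where $F$ remains small, the reaction term dominates $a\underline u(1-\underline u)$, the comparison principle for the \emph{scalar} equation gives $F\ge \underline u$ there. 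Third, invoke the Bramson/Hamel et al./Ducrot asymptotics for $\underline u$ to conclude that $\underline u\big(t, x+(c^*t-\tfrac{N+2}{c^*}\ln t\,(\text{up to the }\min\{1,a\}\text{ slack}))e\big)$ stays bounded below uniformly for $x\in B_R$, $e\in S^{N-1}$; combined with $F+C\ge F\ge\underline u$ this yields \eqref{log delay for F}, and then \eqref{log delay for H 3} follows because where $F+C$ is bounded below the $H$-equation $\partial_t H=d\Delta H+bH(1-H-g(F+C))$ forces $\limsup H<1$ (a standard ODE-comparison/parabolic argument on the moving frame). Fourth, for $a<1+s$ run the identical scheme with $C$ in place of $F$ and $1+s$ in place of $a$, using that $C(0,x)\equiv 0$ is irrelevant because $C$ becomes positive instantly (strong maximum principle) and is fed by the term $sH(F+C)$; this gives \eqref{log delay for C}, \eqref{log delay for H 1}. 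Fifth, \eqref{log delay for H 2} (the lower bound $H>0$ at the logarithmically-delayed position, valid whenever $a\ne 1+s$) follows from Theorem~\ref{thm leading edge}: just to the right of the front $F+C\to 0$, so $H$ is an almost-solution of $\partial_tH=d\Delta H+bH(1-H)$, whose level sets at the KPP position are bounded below --- here one needs a scalar \emph{lower} KPP barrier for $H$ of the Bramson type, which is where Proposition~\ref{Dirichlet eq} (the Dirichlet-eigenvalue estimate promised for the appendix) enters. Finally, the borderline case $a=1+s$ is handled by combining the $F$- and $C$-arguments; the extra gain in \eqref{log delay for H 2 a=1+s} (exponent $N/c^*$ rather than $(N+2)/c^*$) reflects that when the two KPP speeds coincide the $H$-depletion region is pushed only $\tfrac{N}{c^*}\ln t$ behind $c^*t$, a sharper barrier one gets from a refined sub-solution with polynomial prefactor $t^{-N/2}\cdot\text{(something)}$ rather than $t^{-(N+2)/2}$.

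The main obstacle is the construction of a genuine KPP-type sub-solution that (i) is valid for the \emph{system} rather than the scalar equation --- the coupling term $sH(F+C)$ and the nonlocal-in-the-other-components reaction mean one cannot simply quote \cite{log delay 3}, whose sub-solution the authors explicitly note ``does not work for systems'' --- and (ii) simultaneously captures the correct logarithmic shift. Concretely, on the leading edge $H$ is close to $1$ but \emph{not} equal to $1$, and the error $1-H$ is itself only of the order of a KPP tail; one must show this error does not destroy the $\tfrac32$-type logarithmic correction. The device I would use is a sub-solution of the form $\underline C(t,x)=t^{-\alpha}\phi\big(\tfrac{x-c^*t+\beta\ln t}{\sqrt t}\big)e^{-\lambda^*(x-c^*t)}$ with $\phi$ a truncated Gaussian-type profile and $\alpha=(N+2)/2$ (this is morally the Bramson ansatz), plugged into the \emph{reduced} inequality $\partial_t\underline C\le\Delta\underline C+(1+s)\underline C-\underline C^2-(\text{small error})$, and then absorb the error terms $s(1-H)\underline C$ and the quadratic/coupling terms into the slack created by the $t^{-\alpha}$ decay on the relevant space-time region; the fact that the stated exponent is $\tfrac{N+2}{c^*}$ (rather than the sharp $\tfrac{3}{c^*}$ in $N=1$) is precisely the room one spends to make this absorption go through for the system. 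The remaining routine work --- passing from the leading-edge region to a full moving window, transferring a bound on $F+C$ into a bound on $H$ via its own equation, and the appendix's Dirichlet-eigenvalue computation underpinning the $H>0$ lower barrier --- is comparatively standard.
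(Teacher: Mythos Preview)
Your plan has a genuine gap in the case $a>1+s$. You propose to compare $F$ from below with the scalar KPP solution $\underline u$ of $\partial_t\underline u=\Delta\underline u+a\underline u(1-\underline u)$, but the $F$-equation reads $\partial_tF=\Delta F+aF(1-F-C)$ and $C>0$ for all $t>0$ by the strong maximum principle, so $aF(1-F-C)<aF(1-F)$ and the comparison $F\ge\underline u$ fails immediately. Saying ``on the region where $F$ remains small'' does not help: smallness of $F$ does not control $C$, and in fact near the front $C$ is of the same order as $F$. The coupling through $-C$ is precisely the obstruction the paper flags when it says the sub-solution of \cite{log delay 3} does not work for systems, and your Bramson-type ansatz $\underline C=t^{-\alpha}\phi(\cdot)e^{-\lambda^*(\cdot)}$ does not address it either: you still have to know that the ``error'' you plan to absorb is small, and that requires an \emph{a priori upper bound} on the other component with the correct logarithmic shift.

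The paper's route is the reverse of yours. It first builds \emph{super}-solutions $(\widebar F,\widebar C)$ for the pair $(F,C)$ by solving a coupled \emph{linear} Dirichlet problem in the moving frame (this is where Proposition~\ref{Dirichlet eq} actually enters, not for the $H$ barrier as you suggest): because $a>1+s$, the linear system decouples with $\bar v=\frac{s}{a-1-s}\bar u$ and one reads off from Proposition~\ref{Dirichlet eq} that $C\le\widebar C\lesssim (\log t)\,t^{-(N+2)/2}$ in the window $c^*(t+t_0)\le\|x\|\le c^*(t+t_0)+\sqrt{t+t_0}$. Only then can one construct a sub-solution for $F$ that honestly accounts for the $-C$ term (the paper's $F^\varepsilon=\varepsilon\omega(t)z_{t_0}$ with a damping factor $\omega$ absorbing the now-quantified $C$), yielding $F\gtrsim t^{-(N+2)/2}\xi e^{-\lambda^*\xi}$ on that same outer window. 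Finally, this lower bound is pushed inward not by a Bramson sub-solution but by a Dirichlet eigenfunction bump $\phi=\eta e^{\frac{\min\{1,a\}}{2}(\tau-\tau^*)}\varphi_R$ for the cruder inequality $\partial_tG-\Delta G\ge\min\{1,a\}G(1-G)$; matching $e^{\frac{\min\{1,a\}}{2}\tau^*}$ against $t^{-(N+2)/2}$ forces $\tau^*=\frac{N+2}{\min\{1,a\}}\log t$, whence the shift $c^*\tau^*=\frac{(N+2)c^*}{\min\{1,a\}}\ln t$. So the coefficient is not ``slack spent to make an absorption go through'' but the output of a specific two-stage argument (outer linear estimate $+$ inner eigenfunction transport), and your reading of it as an over-estimate of the scalar $\frac{N+2}{c^*}$ misses the mechanism. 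The cases $a<1+s$ and $a=1+s$ follow the same template (upper bounds first, then sub-solutions), with the $d=1$ hypothesis used to get $H\ge 1-\max\{1,g\}(F+C)$ and hence a clean KPP inequality for $C$; and \eqref{log delay for H 2} comes from the super-solution side via a limit/strong-maximum-principle argument, not from a separate KPP barrier for $H$.
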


\begin{prop}\label{prop of small peak}
If $a>1+s$, then for the solution $(F,C,H)$ of the system \eqref{fch-equation} with initial data \eqref{intial data}, for any $c<c^*$, it holds:
\begin{equation}\label{small peak}
\limsup_{t\rightarrow\infty}\sup_{\lVert x\rVert\ge ct}F(t,x)>0.
\end{equation}
\end{prop}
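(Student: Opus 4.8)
The plan is to argue by contradiction, playing off the lower bound on $F+C$ furnished by Theorem~\ref{thm of log delay} against the fact that $C$ alone cannot propagate faster than $c^{**}$ in the region where $F$ is negligible. Suppose the conclusion fails for some $c<c^*$. Since $c\mapsto\sup_{\|x\|\ge ct}F(t,x)$ is nonincreasing and $c^{**}<c^*$ (here $a>1+s$), we may fix $c_0\in(c^{**},c^*)$ for which $\eta(t):=\sup_{\|x\|\ge c_0t}F(t,x)\to0$. I will use the a priori bounds $0\le F\le M_F$, $0\le C\le M_C$, $0\le H\le1$ (componentwise comparison, using $F,C\ge0$ in the $H$-equation and $H_0\equiv1$) and, crucially, that $F$ is a subsolution of $\partial_tF=\Delta F+aF$ with compactly supported datum, whence
\[
F(\tau,y)\le\overline F(\tau,y):=C_0(4\pi\tau)^{-N/2}e^{\,a\tau-(\|y\|-R_0)_+^2/(4\tau)},
\]
so $F(\tau,\cdot)$ is exponentially small once $\|y\|\ge(c^*+\varepsilon)\tau$.

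Since $a>1+s>1$ we have $\min\{1,a\}=1$, so \eqref{log delay for F} yields $\delta_0>0$ with $(F+C)\bigl(t,x+(c^*t-(N+2)c^*\ln t)e\bigr)\ge2\delta_0$ for all $x\in B_R$, $e\in S^{N-1}$ and all large $t$. For such $t$ every point $x+(c^*t-(N+2)c^*\ln t)e$ lies in $\{\|y\|\ge c_0t\}$ because $(c^*-c_0)t-(N+2)c^*\ln t-R\to+\infty$; there $F\le\eta(t)\to0$, so
\[
\liminf_{t\to\infty}\ \inf_{x\in B_R,\,e\in S^{N-1}}C\bigl(t,\,x+(c^*t-(N+2)c^*\ln t)e\bigr)\ \ge\ \delta_0>0.
\]

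The heart of the proof is to contradict this by showing $C(t,x)\to0$ uniformly on $\{\|x\|\ge c^*t-(N+2)c^*\ln t\}$. On the moving domain $\{\|x\|\ge c_0t\}$, using $H\le1$ and $F\ge0$, one has $\partial_tC\le\Delta C+C(1+s-C)+sF$, a KPP inequality of intrinsic speed $c^{**}<c^*$, with lateral data $C\le M_C$ on $\{\|x\|=c_0t\}$ and zero initial data. I would run a comparison argument in which $C$ is dominated by a function that decays exponentially once $\|x\|-c^{**}t$ is linear in $t$; the key estimate is that the source $sF$, split according to $\|y\|\le c_0\tau$ (where the weight $e^{(1+s)(t-\tau)}$ against $\|x-y\|\ge(c^*-c_0)t-O(\ln t)$ carries an exponent negative and linear in $t$, since $a>1+s$), $c_0\tau\le\|y\|\lesssim c^*\tau$ (where $F\le\eta(\tau)$ and the weight against $\|x-y\|\ge c^*(t-\tau)-O(\ln t)$ carries the exponent $(1+s-a)(t-\tau)+O(\ln t)$, again negative because $a>1+s$), and $\|y\|\gtrsim c^*\tau$ (where $F\le\overline F$ is super-exponentially small), contributes only $o(1)$ at the position $\|x\|\approx c^*t-(N+2)c^*\ln t$. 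Together with the a priori cap $C\lesssim 1+s$ this gives $C(t,x)\to0$ there, contradicting the previous display and proving the proposition.

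The main obstacle is precisely this last step. First, the source cannot be controlled by the uniform bound $sF\le s\eta(t)$ alone: a spatially homogeneous positive source, however small, drives the KPP reaction up to level $1+s$ (the comparison ODE $\dot w=w(1+s-w)+s\eta$ converges to $1+s$, not to $0$), and dropping the $-C^2$ term is equally fatal — the resulting linear equation has solution $\tfrac{s}{a-1-s}(e^{at}-e^{(1+s)t})e^{t\Delta}F_0$, which is polynomially large at $\|x\|\approx c^*t$. One must genuinely exploit that $F$ is \emph{spatially localized} ahead of $\|y\|\approx c^*\tau$ (via $\overline F$), together with the quantitative sign $1+s-a<0$. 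Second, the contributions of source times $\tau$ within $O(\ln t)$ of $t$ are still amplified by $e^{(1+s)(t-\tau)}=t^{O(1)}$, which $\eta(\tau)$ need not absorb; handling this last time-slab requires a short-time parabolic estimate near $(t,x)$, using that the relevant spatial neighbourhood still lies in $\{\|y\|\ge c_0\tau\}$ so that $F$ there is $\le\eta(\tau)$ — and it is here that the precise coefficient $(N+2)c^*$ appearing in Theorem~\ref{thm of log delay} makes the bookkeeping close.
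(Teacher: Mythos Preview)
Your overall strategy matches the paper's: argue by contradiction, assume $F$ vanishes uniformly outside a cone of speed $c_0<c^*$, and contradict the logarithmic-drift lower bound \eqref{log delay for F} by showing that $C$ must then also vanish in the same region. The difference is in how the source term $sF$ in the $C$-inequality is handled, and here your argument has a genuine gap that you yourself identify: the Duhamel splitting does not close because $\eta(\tau)=\sup_{\|x\|\ge c_0\tau}F(\tau,x)$ carries no quantitative decay rate, so on the time-slab $\tau\in[t-O(\ln t),t]$ the amplification $e^{(1+s)(t-\tau)}=t^{O(1)}$ cannot be absorbed. Nothing in the hypotheses forces $\eta$ to decay fast enough, and the short-time parabolic patch you propose does not obviously resolve this.

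The paper sidesteps the difficulty by a change of unknown rather than a Duhamel analysis. Set $\wideubar{C}:=C-\alpha F$ for $\alpha>0$ to be chosen. Using only $F<\varepsilon$ on $\{\|x\|\ge c_0t,\ t\ge T\}$, one computes
\[
\partial_t\wideubar{C}-\Delta\wideubar{C}\le(1+s+\varepsilon a\alpha)\wideubar{C}+\bigl((1+s+\varepsilon a\alpha)\alpha+s-(1-\varepsilon)a\alpha\bigr)F.
\]
The coefficient of $F$ vanishes for $\alpha$ solving a quadratic; for small $\varepsilon$ the positive root exists and exceeds $s/(a-1-s)$. This leaves a \emph{homogeneous} linear inequality $\partial_t\wideubar{C}-\Delta\wideubar{C}\le(1+s+\varepsilon a\alpha)\wideubar{C}$ with coefficient still strictly below $a$. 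Comparison with the explicit super-solution $A_1e^{-\lambda_1(e\cdot x-2\lambda_1t)}$ for any $\lambda_1\in\bigl(\max\{c_0/2,\sqrt{1+s+\varepsilon a\alpha}\},\sqrt{a}\bigr)$ then gives $\wideubar{C}\to0$ (hence $C\le\alpha\varepsilon$) outside speed $2\lambda_1$, and since $\varepsilon$ is arbitrary, $C\to0$ on a cone of speed strictly between $c^{**}$ and $c^*$, contradicting \eqref{log delay for F}. The subtraction of $\alpha F$ is precisely what absorbs the source into the linear term and eliminates the time-slab bookkeeping you flagged.
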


\begin{remark}
By combining with Theorem \ref{thm leading edge} and Theorem \ref{thm profile g<1}, Proposition \ref{prop of small peak} explains the reason why a small peak of initial farmers can be observed on the wavefront just in the case $a> 1+s$ and $g<1$.
\end{remark}

\section{Upper estimates on the spreading speeds}
In this section, we deal with the spreading properties of solutions of the system (\ref{fch-equation}) on the leading edge and complete the proof of Theorem \ref{thm leading edge}. Since there is no interaction between farmers and hunter-gatherers on the leading edge, the analysis of this zone is rather straightforward. 
We first prove that the initial farmers $F$ and converted farmers $C$ cannot propagate faster than the speed $c^*$. Hence, on the leading edge, the contact between farmers and hunter-gatherers will never happen. Therefore, 
the population density of hunter-gatherers remain unchanged in this zone. This follows from a simple comparison argument.

We first concern with the analysis of the $F$-component and $C$-component on the leading edge and begin the proof with some simple upper estimates on the spreading speed. Note that, the nonlinear term $aF(1-C-F)$ in the $F$-equation is nonincreasing on the value of $C$, and the $C$-component is always nonnegative.    
Since $c^*\ge 2\sqrt{a}$, for all $e\in S^{N-1}$, one can construct a well-known super-solution $\widebar{F}_e:=A_1e^{-c^*(x\cdot e-c^*t)/2}$ of the equation
$$\partial_tF=\Delta F+aF(1-F).$$
Applying the comparison principle, one has 
\begin{equation}\label{eq of upper estimate of F}
\mbox{for all}\ \ c>c^*,\ \ \underset{t\to+\infty}{\lim}\underset{\lVert x\rVert\ge ct}{\sup}F(t,x)\le \underset{t\to+\infty}{\lim}\underset{\lVert x\rVert\ge ct}{\sup}\ \ \underset{e\in S^{N-1}}{\inf}\widebar{F}_e(t,x)=0,
\end{equation}
provided that $A_1$ is large enough such that $\widebar{F}_e(0,x)\ge F(0,x)$.

\begin{remark}\label{remark of F leading edge}
Even if $1+s>a$,  one may find that the function $\widebar{F}^*_e=A^*e^{-\sqrt{a}(x\cdot e-2\sqrt{a}t)}$ is always a super-solution of the $F$-component, provided that $A^*$ is large enough. It implies that , if $1+s>a$, then for any $c> 2\sqrt{a}$, it holds: 
$$\limsup_{t\to+\infty}\sup_{\lVert x\lVert\ge ct}\ F(t,x)=0.$$
\end{remark}

Moreover, concluding from Theorem \ref{thm profile g<1} and Remark \ref{remark of F leading edge}, the following corollary is an immediate result.
\begin{corollary}\label{col}
If $g<1$ and $a<1+s$, then the solution of the system \textnormal{(\ref{fch-equation})} with the initial data \textnormal{(\ref{intial data})} satisfies:
\begin{equation}\label{F converge to 0 every where}
\underset{t\to\infty}{\limsup}\underset{ x\in\mathbb{R}^N}{\sup}F(t,x)=0,
\end{equation}
provided that
$$g<\frac{\min\{1,a\}}{\min\{1,a\}+s}\ \ \mbox{or}\ \ db\ge \frac{c^*}{1-g}.$$
\end{corollary}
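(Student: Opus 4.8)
The plan is to cover $\mathbb{R}^N$ by two overlapping space-time regions on each of which the decay of $F$ is already available, and then simply patch the two estimates together. Concretely, since the hypothesis $a<1+s$ forces $c^*=2\sqrt{1+s}>2\sqrt{a}$, I would fix any speed $c_0$ with $2\sqrt{a}<c_0<c^*$ and split according to whether $\lVert x\rVert\ge c_0t$ or $\lVert x\rVert\le c_0t$.

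On the outer region $\{\lVert x\rVert\ge c_0t\}$ the decay is exactly the content of Remark \ref{remark of F leading edge}: because $1+s>a$, the function $\widebar{F}^*_e=A^*e^{-\sqrt{a}(x\cdot e-2\sqrt{a}t)}$ is, for $A^*$ large, a super-solution of the $F$-equation (the extra $C$- and $H$-dependent terms only help), so the comparison principle together with $c_0>2\sqrt{a}$ yields $\limsup_{t\to\infty}\sup_{\lVert x\rVert\ge c_0t}F(t,x)=0$. On the inner region $\{\lVert x\rVert\le c_0t\}$ I would invoke Theorem \ref{thm profile g<1}(1): since $g<1$, since $c_0<c^*$, and since one of the two side conditions $g<\min\{1,a\}/(\min\{1,a\}+s)$ or $db\ge c^*/(1-g)$ is assumed in the corollary, that theorem gives $\lim_{t\to\infty}\sup_{\lVert x\rVert\le c_0t}F(t,x)=0$.

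Combining the two limits, for every $\delta>0$ there is $T$ with $F(t,x)<\delta$ for every $x\in\mathbb{R}^N$ whenever $t\ge T$, which is precisely \eqref{F converge to 0 every where}. There is no genuine obstacle here --- the corollary is really immediate from the two cited results --- and the only step meriting a moment's attention is the strict inequality $2\sqrt{a}<c^*$, which is what makes the inner and outer ranges of admissible speeds overlap, so that no intermediate annulus $\{c_1t\le\lVert x\rVert\le c_2t\}$ is left uncovered; the side conditions on $g$, $b$, $d$ are inherited verbatim from Theorem \ref{thm profile g<1}.
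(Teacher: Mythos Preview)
Your proof is correct and is precisely the argument the paper has in mind: the paper states the corollary as ``an immediate result'' of Theorem \ref{thm profile g<1} and Remark \ref{remark of F leading edge}, and your splitting at a speed $c_0\in(2\sqrt{a},c^*)$ is exactly how those two results are patched together.
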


To get the estimate of the $C$-component, we need to construct another suitable super-solution. Note that the nonlinear term $C(1-C-F)+s(C+F)H$ in the $C$-equation is neither monotone increasing nor decreasing with respect to the value of $F$. Hence, to construct a super-solution for the $C$-component, it is necessary to control the value of $F$ suitably. However, thanks to the fact that the $F$-component is sufficient small on the leading edge, one just need to deal with the following equation
\begin{equation}\label{super sol of C}
\partial_tC=\Delta C+C(1-C)+s(C+\widebar{F}_e).
\end{equation}
Then, we introduce the new function
$$\widebar{C}_e:=A_2e^{-\lambda(x\cdot e-ct)}\ \ \mbox{for all}\ \ c>c^*,\ e\in S^{N-1}.$$ 
One may find that, for large enough $A_2>0$, there exists $\lambda=c^*/2$ such that $\widebar{C}_e$ satisfies
\begin{equation*}
\begin{aligned}
&\partial_t\widebar{C}_e-\Delta\widebar{C}_e-\widebar{C}_e(1+s-\widebar{C}_e)-s\widebar{F}_e\\
\ge& \Big((c\lambda-\lambda^2-1-s)A_2-sA_1\Big)e^{-c^*/2(x\cdot e-ct)}\\
\ge&\ 0.
\end{aligned}
\end{equation*}
Thus $\widebar{C}_e$ is a super-solution of the equation (\ref{super sol of C}) for any $t\ge 0$ and $x\in\mathbb{R}^N$.

Moreover, $C(0,x)=0$ for any $x\in\mathbb{R}^N$. We can now apply the comparison principle again to conclude that
\begin{equation}\label{eq of upper estimate of C}
\mbox{for all}\ \ c'>c,\ \ \underset{t\to+\infty}{\lim}\underset{\lVert x\rVert\ge c't}{\sup} C(t,x)\le \underset{t\to+\infty}{\lim}\underset{\lVert x\rVert\ge c't}{\sup}\ \underset{e\in S^{N-1}}{\inf}\widebar{C}_e(t,x)=0.
\end{equation}
For the reason that $c$ can be chosen arbitrarily close to $c^*$, one can conclude that the $C$-component does not spread faster that the spreading speed $c^*$.

To complete this section , we deal with the upper estimate of the $H$-component on leading edge. One may find that the $H$-component is always able to stay positive outside of the farmer's range. 
The main idea is to construct a suitable sub-solution for the $H$-component as follows:
$$\wideubar{H}_e:=1-g(A_1+A_2)e^{-c^*(x\cdot e-ct)/(2d)}\ \ \mbox{for all}\ \  c>c^*,\ e\in S^{N-1}. $$

In previous part of this subsection, we have shown that the $F$-component and $C$-component can be controlled from above for all $c>c^*$ and $e\in S^{N-1}$ by 
$$A_1e^{-c^*(x\cdot e-ct)/2}\ \ \mbox{and}\ \ A_2e^{-c^*(x\cdot e-ct)/2}.$$
Since $bH(1-H-g(F+C))$ is nonincreasing with respect to the value of $F+C$, we just need to construct sub-solution for $\wideubar{H}$ that satisfies
\begin{equation}\label{eq of sub-solution of H}
\left\{
\begin{aligned}
&\partial_t\wideubar{H}=d\Delta\wideubar{H}+b\wideubar{H}(1-g(A_1+A_2)e^{-c^*(x\cdot e-ct)/2}-\wideubar{H}),\\
&\wideubar{H}(0,x)=1.
\end{aligned}
\right.
\end{equation}
Furthermore, since $d\ge 1$, one has 
$$(A_1+A_2)e^{-c^*(x\cdot e-ct)/(2d)}\ge (A_1+A_2)e^{-c^*(x\cdot e-ct)/2}\ \  \mbox{for all}\ \ x\cdot e\ge ct.$$
Hence, for each $e\in S^{N-1}$, $\wideubar{H}_e$ is a sub-solution of the equation (\ref{eq of sub-solution of H}). More precisely, one has
\begin{equation*}
\begin{aligned}
&\partial_t\wideubar{H}_e-d\Delta\wideubar{H}_e-b\wideubar{H}_e\Big(1-g(A_1+A_2)e^{-c^*(x\cdot e-ct)/(2d)}-\wideubar{H}_e\Big)\\
\le& -\Big(cc^*/2d-(c^*)^2/4d\Big)\Big(g(A_1+A_2)e^{-c^*(x\cdot e-ct)/(2d)}\Big)\\
\le&\ 0.
\end{aligned}
\end{equation*}

Moreover, on the one hand, since $\wideubar{H}(0,x)=1$, one gets that 
$$\wideubar{H}(0,x)\ge\wideubar{H}_e(0,x).$$
On the other hand, one can choose $A_1$ and $A_2$ large enough such that, for any $e\in S^{N-1}$,
$$\wideubar{H}_e(t,x)<0\le\wideubar{H}(t,x),\ \ t\ge 0,\ \ x\cdot e=ct.$$
Now,  one can now infer from the comparison principle that 
$$\underset{t\to\infty}{\lim}\underset{\lVert x\lVert\ge c't}{\inf}H(t,x)\ge\underset{t\to\infty}{\lim}\underset{\lVert x\lVert\ge c't}{\inf}\ \underset{e\in S^{N-1}}{\sup}\wideubar{H}_e(t,x)=1\ \ \mbox{for all}\ \ c'>c.$$
Since we can choose $c$ arbitrarily close to $c^*$, the proof of Theorem \ref{thm leading edge} is complete. 
\section{Logarithmic Bramson correction}
At first, we would like to introduce an important result provided in \cite{AD,log delay 3}. Consider the linear equation with drift term as follows:
\begin{equation}\label{symmetric linear eq}
\partial_t z=\partial_{\xi}^2 z + \Big(c^*-\frac{\delta}{t+t_0}+\frac{N-1}{\xi+\xi^{\delta}_{t_0}(t)}\Big)\partial_{\xi} z + \lambda^{*2}z,\ \ t>0,\ \xi>0,
\end{equation}
where 
$$\lambda^*=c^*/2\ \ \mbox{and}\ \ \xi^{\delta}_{t_0}(t):=c^*(t+t_0)-\delta\ln\frac{t+t_0}{t_0}.$$
Then the following proposition holds:
\begin{prop}\label{Dirichlet eq}
Let $z^{\delta}_{t_0}(\xi,t)$ be the solution of the equation \eqref{symmetric linear eq} with boundary condition 
$$z^{\delta}_{t_0}(t,0)=0\ \ \mbox{for all}\ \ t>0,$$ 
and the initial data
$$z^{\delta}_{t_0}(0,\xi)=e^{-\lambda^*\xi}\zeta_0(t_0^{-1/2}\xi)\geq0\ \ \mbox{for all}\ \ \xi\geq0,$$
where $\zeta_0(\cdot)$ is an nontrivial compactly supported smooth function.
Then it holds: 
\begin{equation}\label{estimate Dirichlet eq}
z^{\delta}_{t_0}(t,\xi)=\frac{(t+t_0)^{\gamma-\frac{1}{2}}}{t_0^{\gamma}}\xi e^{-\lambda^*\xi}\Big\{\frac{\int_0^{\infty}\zeta_0(\rho)\rho d\rho+h_1(t,t_0)}{\sqrt{\pi}}e^{-\frac{\xi^2}{4(t+t_0)}}+h_2(t,\xi,t_0)\Big\},\ \ \xi\geq0,\ t\geq0,
\end{equation}
where $\gamma:=\delta\lambda^*-\frac{N+1}{2}$, $h_1$ and $h_2$ are smooth functions satisfying
\[
\left\{
\begin{aligned}
&|h_1(t,t_0)|\leq B_1t_0^{-1/2}\lVert\zeta_0\rVert_m,\\
&|h_2(t,\xi,t_0)|\leq B_2\Big\{\frac{t_0^{1/4}\lVert\zeta_0\rVert_m}{(t+t_0)^{1/2}}+\Big(\frac{t_0}{t+t_0}\Big)^{1-\frac{B_2}{\sqrt{t_0}}}\lVert\partial_{\rho}^2\zeta_0\rVert_m\Big\}e^{-\frac{\xi^2}{8(t+t_0)}},
\end{aligned}
\right.\ \ \ \ \xi\ge0,\ t\ge 0,
\]
for some positive constants $B_1$ and $B_2$. Here, the norm $\lVert\cdot\rVert_m$ is defined as
\[
\lVert\zeta_0\rVert_m^2:=\int_0^{\infty}\zeta_0(\rho)^2e^{\frac{\rho^2}{4}}d\rho.
\]  
\end{prop}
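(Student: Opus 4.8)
The plan is to view \eqref{symmetric linear eq} as a perturbation of an Ornstein--Uhlenbeck (Hermite) operator on the half-line, after peeling off the obvious growth factors, and to read \eqref{estimate Dirichlet eq} off its spectral decomposition. Write $T:=t+t_0$. Since $c^*=2\lambda^*$, the substitution $z=e^{-\lambda^*\xi}(T/t_0)^{\delta\lambda^*}v$ cancels the constant drift $c^*\partial_\xi z$, the term $\lambda^{*2}z$, and the zeroth-order term generated by $-\delta/(t+t_0)$, so that $v$ solves $\partial_T v=\partial_\xi^2 v+\big(\tfrac{N-1}{\xi+\xi^{\delta}_{t_0}(t)}-\tfrac{\delta}{T}\big)\partial_\xi v-\tfrac{(N-1)\lambda^*}{\xi+\xi^{\delta}_{t_0}(t)}v$ on $\{\xi>0\}$, with $v|_{\xi=0}=0$ and $v|_{T=t_0}=\zeta_0(t_0^{-1/2}\xi)$. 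Because $\xi+\xi^{\delta}_{t_0}(t)=\xi+c^*T-\delta\ln(T/t_0)$ and $c^*=2\lambda^*$, the last coefficient equals $-\tfrac{N-1}{2T}$ up to a remainder of size $O\big((\xi+\ln T)/T^2\big)$; I would strip off that leading part by a further substitution $v=(t_0/T)^{(N-1)/2}u$. Simultaneously, a maximum-principle comparison with an explicit Gaussian super-solution yields the a priori bound $0\le z(t,\xi)\le C\,\xi\,e^{-\lambda^*\xi}(t+t_0)^{\gamma-1/2}t_0^{-\gamma}e^{-\xi^2/(C(t+t_0))}$, which is what makes the ensuing estimates legitimate in a Gaussian-weighted space.

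Next I would pass to the parabolic self-similar variables $y=\xi/\sqrt{T}$, $\tau=\ln(T/t_0)$, $u=u(\tau,y)$. Multiplying through by $T$ turns the equation into $\partial_\tau u=\mathcal L u+\mathcal R(\tau)u$ on $\{y>0\}$, $u|_{y=0}=0$, $u|_{\tau=0}=\zeta_0$, where $\mathcal L:=\partial_y^2+\tfrac{y}{2}\partial_y$ (in divergence form, $e^{-y^2/4}\partial_y(e^{y^2/4}\partial_y\cdot)$) is self-adjoint and $\le0$ on $L^2\big((0,\infty),e^{y^2/4}dy\big)$ --- precisely the Hilbert space carrying the norm $\lVert\cdot\rVert_m$ --- and $\mathcal R(\tau)$ gathers the two drift terms and the zeroth-order remainder. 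Using $\xi=\sqrt{T}y$, $\sqrt{T}=\sqrt{t_0}\,e^{\tau/2}$ and $\xi+\xi^{\delta}_{t_0}(t)\ge c^*T/2$, one checks that $\mathcal R(\tau)$ is a first-order operator with coefficients $O\big(t_0^{-1/2}e^{-\tau/2}(1+|y|)\big)$ plus a faster-decaying $O\big(t_0^{-1}(\ln t_0+\tau)e^{-\tau}\big)$ piece, so it is a genuine perturbation, small once $t_0$ is large. With Dirichlet condition at $y=0$ the spectrum of $\mathcal L$ is $\{-k:k\ge1\}$, with top eigenfunction $\phi_1(y)=y\,e^{-y^2/4}$ (eigenvalue $-1$); setting $u=e^{-\tau}\psi$, the operator $\mathcal L+1$ has a simple zero eigenvalue spanned by $\phi_1$ and spectral gap $1$ on $\phi_1^{\perp}$.

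Then I would run the standard projection argument. Writing $\psi=\alpha(\tau)\phi_1+\psi^{\perp}$ with $\psi^{\perp}\perp\phi_1$ in $\lVert\cdot\rVert_m$, pairing the $\psi$-equation with $\phi_1$ gives $\alpha'(\tau)=\lVert\phi_1\rVert_m^{-2}\langle\mathcal R(\tau)\psi,\phi_1\rangle_m=O\big(t_0^{-1/2}e^{-\tau/2}\lVert\psi(\tau)\rVert_m\big)$, which is integrable in $\tau$; hence $\alpha(\tau)\to\alpha_\infty$ with $|\alpha(\tau)-\alpha_\infty|=O\big(t_0^{-1/2}e^{-\tau/2}\lVert\zeta_0\rVert_m\big)$ and $\alpha_\infty=\lVert\phi_1\rVert_m^{-2}\langle\zeta_0,\phi_1\rangle_m+O\big(t_0^{-1/2}\lVert\zeta_0\rVert_m\big)$. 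Since $\langle\zeta_0,\phi_1\rangle_m=\int_0^\infty\zeta_0(\rho)\,\rho\,e^{-\rho^2/4}e^{\rho^2/4}\,d\rho=\int_0^\infty\zeta_0(\rho)\rho\,d\rho$, this is exactly how that integral (and the normalising $\sqrt\pi$, a multiple of $\lVert\phi_1\rVert_m^2$) enters the leading term. For the transversal part the gap gives $\lVert\psi^{\perp}(\tau)\rVert_m\le e^{-\tau}\lVert\psi^{\perp}(0)\rVert_m+\int_0^\tau e^{-(\tau-\sigma)}\lVert\mathcal R(\sigma)\psi(\sigma)\rVert_m\,d\sigma$, with $\lVert\psi^{\perp}(0)\rVert_m\lesssim\lVert\mathcal L\zeta_0\rVert_m\lesssim\lVert\partial_\rho^2\zeta_0\rVert_m$ by pairing against the eigenvalue equations; the integral is $O\big(t_0^{-1/2}e^{-\tau/2}\lVert\zeta_0\rVert_m\big)$, so $\lVert\psi^{\perp}(\tau)\rVert_m=O\big(e^{-\tau}\lVert\partial_\rho^2\zeta_0\rVert_m+t_0^{-1/2}e^{-\tau/2}\lVert\zeta_0\rVert_m\big)$, the $O(t_0^{-1/2})$ perturbation of the gap accounting for the exponent $1-B_2/\sqrt{t_0}$. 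Undoing $u=e^{-\tau}\psi$ and the two substitutions, and recording $e^{-\tau}=t_0/T$, $y=\xi/\sqrt{T}$, $\tau=\ln(T/t_0)$, recombines the powers of $T=t+t_0$ into $(t+t_0)^{\gamma-1/2}t_0^{-\gamma}$ with $\gamma=\delta\lambda^*-\tfrac{N+1}{2}$, produces the factors $\xi e^{-\lambda^*\xi}$ and $e^{-\xi^2/(4(t+t_0))}$ from $\phi_1(\xi/\sqrt{T})=(\xi/\sqrt{T})e^{-\xi^2/(4T)}$, turns $\alpha_\infty$ into $\big(\int_0^\infty\zeta_0(\rho)\rho\,d\rho+h_1\big)/\sqrt\pi$, and packages the remaining errors ($\psi^{\perp}$, the initial layer, and the $O((\xi+\ln T)/T^2)$ discrepancies) into $h_2$ with its Gaussian factor $e^{-\xi^2/(8(t+t_0))}$.

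The main obstacle is the perturbation estimate for $\mathcal R(\tau)$ in the Gaussian-weighted space, uniformly in $(\tau,y,t_0)$. The coefficient $\tfrac{N-1}{\xi+\xi^{\delta}_{t_0}(t)}$ is only $O(1/T)$ and, worse, the residual $\tfrac{N-1}{2}\,\tfrac{\xi-\delta\ln(T/t_0)}{T(\xi+\xi^{\delta}_{t_0}(t))}$ left after subtracting $\tfrac{N-1}{2T}$ is comparable to $1/T$ for $\xi\sim c^*T$, so after multiplying by $T$ it is only $O(1)$ in the far field $|y|\sim\sqrt{T}$; the smallness $O(t_0^{-1/2}e^{-\tau/2}|y|)$ holds only on $|y|=O(1)$, where the profile actually lives. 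Closing the estimate therefore requires splitting $\mathcal R(\tau)$ into a near-field part (controlled in $L^2(e^{y^2/4}dy)$ by the computations above) and a far-field part (absorbed using the a priori Gaussian bound, which shows $u$ is exponentially small where that part is not small), together with semigroup and commutator bounds for $e^{\tau(\mathcal L+1)}$ against the unbounded weight near the Dirichlet boundary $y=0$. Producing these uniform bounds --- rather than the routine bookkeeping of the exponents --- is the technical heart, and is exactly why the statement is phrased with the weighted norm $\lVert\cdot\rVert_m$ and with Gaussian factors $e^{-\xi^2/(8(t+t_0))}$ in $h_2$.
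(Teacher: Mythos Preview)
Your approach is essentially the paper's: peel off $e^{-\lambda^*\xi}$ and a power of $T=t+t_0$, pass to self-similar variables $\rho=\xi/\sqrt{T}$, $\tau=\ln(T/t_0)$, land on the Dirichlet Ornstein--Uhlenbeck operator in $L^2_m=(L^2,e^{\rho^2/4}d\rho)$ with top eigenfunction $\rho e^{-\rho^2/4}$, and project. The paper does the power-law substitution in one step (with exponent $\gamma$) rather than your two, but that is cosmetic.

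Where you diverge is in the ``main obstacle''. You anticipate a near-field/far-field splitting together with an a priori Gaussian supersolution bound to tame the perturbation where $|y|\sim\sqrt{T}$. The paper avoids this entirely: it rewrites the full perturbation as
\[
\frac{e^{-\tau/2}}{\sqrt{t_0}}\big\{\alpha_{1,t_0}(\tau,\rho)\,\partial_\rho\zeta+(\rho+1)\,\alpha_{2,t_0}(\tau,\rho)\,\zeta\big\},
\]
and checks directly that $\alpha_{1,t_0},\alpha_{2,t_0}\in L^\infty([0,\infty)^2)$ uniformly in $t_0$ (the apparent growth you worry about is absorbed into the denominator $2\lambda^*+\frac{e^{-\tau/2}}{\sqrt{t_0}}\rho-\cdots$, which itself grows in $\rho$). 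The single unbounded factor $(\rho+1)$ is then handled by the weighted Hardy-type inequality
\[
\|(\rho+1)\zeta\|_m\le 3\sqrt{2}\,\|\partial_\rho\zeta\|_m+\sqrt{6}\,\|\zeta\|_m,
\]
valid in $L^2_m$, so the energy estimates for $\|\zeta\|_m$, $\langle\zeta,\varphi_1\rangle_m$, $\|Q\zeta\|_m$ and $\|\widetilde{\mathcal L}Q\zeta\|_m$ close with no splitting and no maximum-principle input. Your proposed route would work, but the paper's is shorter and explains why the answer is stated purely in terms of $\|\cdot\|_m$.
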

Note that, this proposition is slightly different from those of \cite{AD,log delay 3}. Hence, we will give the proof in the appendix for the sake of completeness.

Now, let us consider the solution $(F,C,H)$ of the system \eqref{fch-equation} with spherically symmetric initial data $(F_0,C_0,H_0)$. By spatially homogeneity of the system and the uniqueness of the solution, the solution $(F,C,H)$ is also spherically symmetric. By changing the variables, the solution $(F(t,r),C(t,r),H(t,r))$ where $r=\lVert x\lVert$ satisfies the following one-dimensional system
\begin{equation}\label{fch-symmetric}
\left\{
\begin{aligned}
&\partial_tF=\partial_r^2 F+\frac{N-1}{r}\partial_r F+aF(1-F-C),\\
&\partial_tC=\partial_r^2 C+\frac{N-1}{r}\partial_r C+C(1-F-C)+sH(F+C),\\
&\partial_tH=d\partial_r^2 H+d\frac{N-1}{r}\partial_r H+bH(1-H-g(F+C)),
\end{aligned}
\right.\quad\quad\xi>0,\ t>0.
\end{equation}
We consider the moving frame as $\xi=r-\xi_{t_0}(t)$. Then, the functions 
\[
(\mathcal{F}(t,\xi),\mathcal{C}(t,\xi),\mathcal{H}(t,\xi)):=(F(t,\xi+\xi_{t_0}(t)),C(t,\xi+\xi_{t_0}(t)),H(t,\xi+\xi_{t_0}(t))
\] 
satisfy the system as follows:
\begin{equation*}
\left\{
\begin{aligned}
&\partial_t \mathcal{F}=\partial_{\xi}^2 \mathcal{F}+\Big(c^*-\frac{N+2}{2\lambda^*(t+t_0)}+\frac{N-1}{\xi+\xi_{t_0}(t)}\Big)\partial_{\xi} \mathcal{F}+a\mathcal{F} (1-\mathcal{F}-\mathcal{C}),\\
&\partial_t \mathcal{C}=\partial_{\xi}^2 \mathcal{C}+\Big(c^*-\frac{N+2}{2\lambda^*(t+t_0)}+\frac{N-1}{\xi+\xi_{t_0}(t)}\Big)\partial_{\xi} \mathcal{C}+\mathcal{C}(1-\mathcal{F}-\mathcal{C})+sH(\mathcal{F}+\mathcal{C}),\\
&\partial_t \mathcal{H}=d\partial_{\xi}^2 \mathcal{H}+d\Big(c^*-\frac{N+2}{2\lambda^*(t+t_0)}+\frac{N-1}{\xi+\xi_{t_0}(t)}\Big)\partial_\xi \mathcal{H}+b\mathcal{H}(1-\mathcal{H}-g(\mathcal{F}+\mathcal{C})),
\end{aligned}
\right.\quad\quad\xi>-\xi^{\delta}_{t_0}(t),\ t>0.
\end{equation*}

\subsection{Upper estimates on the location of the wavefront}
In this subsection, we first construct suitable super-solutions for the $F$-component and the $C$-component, and sub-solution for the $H$-component, respectively. The constructions for the cases $1+s>a$, $1+s<a$ and $a=1+s$ are different. 

We first deal with the case $a>1+s$, which means the $F$-component always moves faster than the $C$-component. The construction for this case is rather straightforward.
Let $\delta=\delta^*:=\frac{N+2}{2\lambda^*}$ and $(\bar{u}(t,\xi),\bar{v}(t,\xi))$ be the solution of 
\[
\left\{
\begin{aligned}
&\partial_t \bar{u}=\partial^2_{\xi}\bar{u}+\Big(2c^*-\frac{\delta^*}{t+t_0}+\frac{N-1}{\xi+\xi^{\delta^*}_{t_0}(t)}\Big)\partial_{\xi}\bar{u}+a \bar{u},\\
&\partial_t \bar{v}=\partial^2_{\xi}\bar{v}+\Big(2c^*-\frac{\delta^*}{t+t_0}+\frac{N-1}{\xi+\xi^{\delta^*}_{t_0}(t)}\Big)\partial_{\xi}\bar{v}+(1+s)\bar{v}+s \bar{u},
\end{aligned}
\right.\quad\quad\xi>0,\ t>0,
\]
with the boundary condition $(\bar{u}(t,0),\bar{v}(t,0))=(0,0)$, and with compactly supported initial data 
$$\bar{u}_0(\xi)=\frac{a-1-s}{s}\bar{v}_0(\xi)=e^{-\lambda^*\xi}\zeta_0(t_0^{-1/2}\xi)\ \ \mbox{for all}\ \ \xi\ge0.$$
Then under the same notation as in Proposition \ref{Dirichlet eq}, one has
\[
(\bar{u}(t,\xi),\bar{v}(t,\xi))=\Big(z^{\delta^*}_{t_0}(t,\xi),\frac{s}{a-1-s}z^{\delta^*}_{t_0}(t,\xi)\Big)\ \ \mbox{for all}\ \ \xi\ge0,\ t\ge0.
\]
Moreover, for any $\mu>0$, $(\bar{u}^*(t,x),\bar{v}^*(t,x)):=\mu(\bar{u}(t,\lVert x\rVert-\xi^{\delta^*}_{t_0}(t)),\bar{v}(t,\lVert x\rVert-\xi^{\delta^*}_{t_0}(t)))$ satisfies 
\[
\left\{
\begin{aligned}
&\partial_t \bar{u}^*\ge\Delta \bar{u}^*+a\bar{u}^*(1-F-C),\\
&\partial_t \bar{v}^*\ge\Delta \bar{v}^*+\bar{v}^*(1-F-C)+s H (\bar{u}^*+\bar{v}^*),
\end{aligned}
\right.\quad\quad \xi>\xi^{\delta^*}_{t_0}(t),\ t>0,
\]
where $(F,C,H)$ is the solution of the system \eqref{fch-equation} with the initial data \eqref{intial data}. 

Next, we define the functions $\widebar{F}(t,x)$ and $\widebar{C}(t,x)$ as: 
\begin{equation*}
\widebar{F}(t,x)=\left\{
\begin{array}{lcl}\vspace{6pt}
&B^*,&\ \lVert x\rVert\le\xi^{\delta^*}_{t_0}(t)+ A,\\
&\min\{B^*, \mu \bar{u}(t,\lVert x\rVert-\xi^{\delta^*}_{t_0}(t))\},&\ \lVert x\rVert \ge\xi^{\delta^*}_{t_0}(t)+ A,
\end{array}
\right.
\end{equation*}
\begin{equation*}
\widebar{C}(t,x)=\left\{
\begin{array}{lcl}\vspace{6pt}
&1+s,&\ \lVert x\rVert\le\xi^{\delta^*}_{t_0}(t)+ A,\\
&\min\{1+s, \mu  \bar{v} (t,\lVert x\rVert-\xi^{\delta^*}_{t_0}(t))\},&\ \lVert x\rVert \ge\xi^{\delta^*}_{t_0}(t) +A,
\end{array}
\right.
\end{equation*}
where $B^*:=\sup_{(t,x)\in\mathbb{R}_+\times\mathbb{R}^N}F(t,x)$.
Furthermore, in the case of $d=1$, we define $\widebar{H}(t,x)$ as
\begin{equation*}
\widebar{H}(t,x)=\left\{
\begin{array}{lcl}\vspace{6pt}
&0,&\ \lVert x\rVert\le\xi^{\delta^*}_{t_0}(t)+ A,\\
&\max\{0, 1-g\mu  (u(t,\lVert x\rVert-\xi^{\delta^*}_{t_0}(t))+v(t,\lVert x\rVert-\xi^{\delta^*}_{t_0}(t)))\},&\ \lVert x\rVert \ge\xi^{\delta^*}_{t_0}(t) +A.
\end{array}
\right.
\end{equation*}
 Note that, we choose constant $\mu>0$ large enough such that, for all $t\ge 0$, 
\[
\begin{aligned}
&\mu \bar{u}(t,A)=\mu z^{\delta^*}_{t_0}(t,A)>B^*,\ \ \mu \bar{v}(t,A)=\frac{\mu s z^{\delta^*}_{t_0}(t,A)}{a-1-s}>1+s,\\ &\mu g(\bar{u}(t,A)+\bar{v}(t,A))=\frac{\mu g(a-1)}{a-1-s}z^{\delta^*}_{t_0}(t,A)>1.
\end{aligned}
\]
We remark that by applying Proposition \ref{Dirichlet eq}, such $\mu>0$ always exists for sufficiently large $t_0>0$. Then by applying the comparison principle, one obtain
\begin{equation}\label{log delay upper bound}
F(t,x)\le\widebar{F}(t,x),\ C(t,x)\le \widebar{C}(t,x),\ H(t,x)\ge \widebar{H}(t,x)\ \ \mbox{for all}\ \ x\in{\mathbb R}^N,\ t>0.
\end{equation}

\begin{remark}
Note that, for the case $d>1$, the functions $\widebar{F}$ and $\widebar{C}$ are still super-solutions for the $F$-component and the $C$-component. However,
the function $\widebar{H}$ could no longer be a suitable sub-solution for the $H$-component.
\end{remark}

Next, we deal with the construction for the case $a=1+s$ and $d=1$.
Under the same notation as in Proposition \ref{Dirichlet eq}, we consider
\[
(\hat{u}(t,\xi),\hat{v}(t,\xi))=\Big(z^{\delta^*}_{t_0}(t,\xi),(1+st)z^{\delta^*}_{t_0}(t,\xi)\Big)\ \ \mbox{for all}\ \ \xi\ge0,\ t\ge0,
\]
where
$z^{\delta^*}_{t_0}(\xi,t)$ is the solution of the equation \eqref{symmetric linear eq} with boundary condition 
$z^{\delta^*}_{t_0}(t,0)=0$ for all $t>0$.Then, we define 
$$(\hat{u}^*(t,x),\hat{v}^*(t,x)):=\mu(\hat{u}(t,\lVert x\rVert-\xi^{\delta^*}_{t_0}(t)),\hat{v}(t,\lVert x\rVert-\xi^{\delta^*}_{t_0}(t))),$$
which satisfies
\[
\left\{
\begin{aligned}
&\partial_t \hat{u}^*=\Delta \hat{u}^*+a\hat{u}^*,\\
&\partial_t \hat{v}^*=\Delta \hat{v}^*+(1+s)\hat{v}^*+s\hat{u}^*,
\end{aligned}
\right.\quad\quad \xi>\xi^{\delta^*}_{t_0}(t),\ t>0.
\]
Then we choose constant $\mu>0$ large enough such that, for all $t\ge 0$, 
\[
\begin{aligned}
&\mu \hat{u}(t,A)=\mu z^{\delta^*}_{t_0}(t,A)>B^*,\ \ \mu \hat{v}(t,A)=\mu (1+st) z^{\delta^*}_{t_0}(t,A)>1+s,\\ & g\mu(\hat{u}(t,A)+\hat{v}(t,A))=\mu g(2+st)z^{\delta^*}_{t_0}(t,A)>1.
\end{aligned}
\]
Then we construct super and sub-solutions as that for the case $a>1+s$,
\begin{equation*}
\widehat{F}(t,x)=\left\{
\begin{array}{lcl}\vspace{6pt}
&B^*,&\ \lVert x\rVert\le\xi^{\delta^*}_{t_0}(t)+ A,\\
&\min\{B^*, \mu \hat{u}(t,\lVert x\rVert-\xi^{\delta^*}_{t_0}(t))\},&\ \lVert x\rVert \ge\xi^{\delta^*}_{t_0}(t)+ A,
\end{array}
\right.
\end{equation*}
\begin{equation*}
\widehat{C}(t,x)=\left\{
\begin{array}{lcl}\vspace{6pt}
&1+s,&\ \lVert x\rVert\le\xi^{\delta^*}_{t_0}(t)+ A,\\
&\min\{1+s, \mu  \hat{v}(t,\lVert x\rVert-\xi^{\delta^*}_{t_0}(t))\},&\ \lVert x\rVert \ge\xi^{\delta^*}_{t_0}(t) +A,
\end{array}
\right.
\end{equation*}
\begin{equation*}
\widehat{H}(t,x)=\left\{
\begin{array}{lcl}\vspace{6pt}
&0,&\ \lVert x\rVert\le\xi^{\delta^*}_{t_0}(t)+ A,\\
&\max\{0, 1-g\mu  (\hat{u}(t,\lVert x\rVert-\xi^{\delta^*}_{t_0}(t))+\hat{v}(t,\lVert x\rVert-\xi^{\delta^*}_{t_0}(t)))\},&\ \lVert x\rVert \ge\xi^{\delta^*}_{t_0}(t) +A.
\end{array}
\right.
\end{equation*}
where $B^*:=\sup_{(t,x)\in\mathbb{R}_+\times\mathbb{R}^N}F(t,x)$.
By applying the comparison principle, one has
\begin{equation}\label{log delay upper bound 3}
F(t,x)\le\widehat{F}(t,x),\ \ C(t,x)\le\widehat{C}(t,x),\ \ H(t,x)\ge\widehat{H}(t,x)\ \ \mbox{for all}\ \ x\in{\mathbb R}^N,\ t>0.
\end{equation}

Next, we deal with the construction for the case $a< 1+s$.
Recall that, in Remark \ref{remark of F leading edge}, we showed that $\widebar{F}^*(t,x)=\min\{A^*,A^*e^{-\sqrt{a}(\lVert x\rVert-2\sqrt{a}t)}\}\ge F(t,x)$ for all $(t,x)\in\mathbb{R}_+\times\mathbb{R}^N$. Now, 
let us consider super-solutions of the $F$-component and $C$-component as
\begin{equation*}
\widetilde{F}(t,x)=\left\{
\begin{array}{lcl}
&\widebar{F}^*(t,x),&\ \lVert x\rVert\le\xi^{\delta^*}_{t_0}(t)+ A,\\
&\min\Big\{\widebar{F}^*(t,x), \mu \tilde{u}(t,\lVert x\rVert-\xi^{\delta^*}_{t_0}(t))\Big\},&\ \lVert x\rVert\ge\xi^{\delta^*}_{t_0}(t)+ A,
\end{array}
\right.
\end{equation*}
\begin{equation*}
\widetilde{C}(t,x)=\left\{
\begin{array}{lcl}
&1+s,&\ \lVert x\rVert\le\xi^{\delta^*}_{t_0}(t)+ A,\\
&\min\Big\{1+s, \mu \tilde{v}(t,\lVert x\rVert-\xi^{\delta^*}_{t_0}(t))\Big\},&\ \lVert x\rVert\ge\xi^{\delta^*}_{t_0}(t)+ A,
\end{array}
\right.
\end{equation*}
and in the case of $d=1$, we consider a sub-solution of the $H$-component as 
\begin{equation*}
\widetilde{H}(t,x)=\left\{
\begin{array}{lcl}
&0,&\ \lVert x\rVert\le\xi^{\delta^*}_{t_0}(t)+ A,\\
&\min\Big\{0,1-g \mu( \tilde{u}(t,\lVert x\rVert-\xi^{\delta^*}_{t_0}(t))+\tilde{v}(t,\lVert x\rVert-\xi^{\delta^*}_{t_0}(t)))\Big\},&\ \lVert x\rVert\ge\xi^{\delta^*}_{t_0}(t)+ A,
\end{array}
\right.
\end{equation*}
where the function $(\tilde{u}(t,\xi),\tilde{v}(t,\xi))$ is the solution of 
\[
\left\{
\begin{aligned}
&\partial_t\tilde{u}=\partial_{\xi}^2\tilde{u}+\Big(2c^*-\frac{\delta^*}{t+t_0}+\frac{N-1}{\xi+\xi^{\delta^*}_{t_0}(t)}\Big)\partial_{\xi}\tilde{u}+(1+s-\varepsilon)\tilde{u},\\
&\partial_t\tilde{v}=\partial_{\xi}^2\tilde{v}+\Big(2c^*-\frac{\delta^*}{t+t_0}+\frac{N-1}{\xi+\xi^{\delta^*}_{t_0}(t)}\Big)\partial_{\xi}\tilde{v}+(1+s)\tilde{v}+s\tilde{u},
\end{aligned}
\right.\quad\quad\xi>0,\ t>0,
\]
with Dirichlet boundary condition $(\tilde{u}(t,0),\tilde{v}(t,0))=(0,0)$ for all $t>0$ and compactly supported initial data $\tilde{u}(0,\xi)=\frac{s}{\varepsilon}\tilde{v}(0,\xi)=e^{-\lambda^*\xi}\zeta_0(t_0^{-1/2}\xi)$. Let us choose $\varepsilon=\sqrt{a}(\sqrt{1+s}-\sqrt{a})$. Then under the same notation as that in Proposition \ref{Dirichlet eq}, 
\[
(\tilde{u}(t,\xi),\tilde{v}(t,\xi))=\Big(e^{-\varepsilon t}z^{\delta^*}_{t_0}(t,\xi),\frac{s(2-e^{-\varepsilon t})}{\varepsilon}z^{\delta^*}_{t_0}(t,\xi)\Big)
\]
Here, we choose $\mu>0$ large enough such that
\[
\begin{aligned}
&\mu \tilde{u}(t,A)=\mu e^{-\varepsilon t}z^{\delta^*}_{t_0}(t,A)>\overline{F}^*(t,x)\big|_{\lVert x\rVert=\xi^{\delta^*}_{t_0}(t)+A},\ \ \mu\tilde{v}(t,A)=\mu\frac{s(2-e^{\varepsilon t})}{\varepsilon}z^{\delta^*}_{t_0}(t,A)>1+s,\\
&\mu g(\tilde{u}(t,A)+\tilde{v}(t,A))=\mu g\frac{2s+(1-s)e^{-\varepsilon t}}{\varepsilon}z^{\delta^*}_{t_0}(t,A)>1.
\end{aligned}
\]
We remark that inferring from Proposition \ref{Dirichlet eq}, such $\mu$ always exists for sufficiently large $t_0>0$. 
Then, for the solution $(F,C,H)$ of the system \eqref{fch-equation} with the initial data \eqref{intial data},  
$$(\tilde{u}^*(t,x),\tilde{v}^*(t,x)):=(\mu\tilde{u}(t,\lVert x\rVert-\xi^{\delta^*}_{t_0}(t)),\mu\tilde{v}(t,\lVert x\rVert-\xi^{\delta^*}_{t_0}))$$
satisfies
\[
\left\{
\begin{aligned}
&\partial_t \tilde{u}^*\ge\Delta \tilde{u}^*+a \tilde{u}^*(1-F-C),\\
&\partial_t \tilde{v}^*\ge\Delta \tilde{v}^*+\tilde{v}^*(1-F-C)+sH(\tilde{u}^*+\tilde{v}^*),
\end{aligned}
\right.\quad\qquad\lVert x\rVert>\xi^{\delta^*}_{t_0}(t),\ t>0.
\]
By applying the comparison principle, one has
\begin{equation}\label{log delay upper bound 2}
F(t,x)\le\widetilde{F}(t,x),\ \ C(t,x)\le\widetilde{C}(t,x),\ \ H(t,x)\ge\widetilde{H}(t,x),\ \ \mbox{for all}\ \ x\in{\mathbb R}^N,\ t>0.
\end{equation}

Then, the following propositions are an immediate result from the upper estimates \eqref{log delay upper bound}, \eqref{log delay upper bound 3} and \eqref{log delay upper bound 2}.
\begin{prop}\label{prop 1 of log delay}
If $a\neq 1+s$, for the solution $(F,C,H)$ of the system \textnormal{(\ref{fch-equation})} with the initial data \textnormal{(\ref{intial data})}, it holds:
\begin{equation}\label{log delay upper bound on F 2}
\underset{t\to+\infty}{\limsup}\underset{c^*t-\frac{N+2}{2\lambda^*}\ln t+r\le\lVert x\rVert}{\sup}F(t,x)\to 0\ \ \mbox{as}\ \ r\to+\infty,
\end{equation}
\begin{equation}\label{log delay upper bound on C 2}
\underset{t\to+\infty}{\limsup}\underset{c^*t-\frac{N+2}{2\lambda^*}\ln t+r\le\lVert x\rVert}{\sup}C(t,x)\to 0\ \ \mbox{as}\ \ r\to+\infty.
\end{equation}
Moreover, if $d=1$, one has:
\begin{equation}\label{log delay upper bound on H 2}
\underset{t\to+\infty}{\liminf}\underset{c^*t-\frac{N+2}{2\lambda^*}\ln t+r\le\lVert x\rVert}{\inf}H(t,x)\to 1\ \ \mbox{as}\ \ r\to+\infty.
\end{equation}
\end{prop}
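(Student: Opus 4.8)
The plan is to read off the three limits directly from the comparison estimates \eqref{log delay upper bound} (valid when $a>1+s$) and \eqref{log delay upper bound 2} (valid when $a<1+s$), combined with the precise asymptotics of $z^{\delta^{*}}_{t_0}$ given by Proposition \ref{Dirichlet eq}; the case $a=1+s$, where $\hat{v}$ grows linearly in $t$, never enters because it is excluded by hypothesis. The key algebraic remark is that $\delta^{*}=\frac{N+2}{2\lambda^{*}}$ is exactly the value making $\gamma=\delta^{*}\lambda^{*}-\frac{N+1}{2}=\frac12$, so that $\gamma-\frac12=0$ and the time-dependent prefactor $\frac{(t+t_0)^{\gamma-1/2}}{t_0^{\gamma}}$ in \eqref{estimate Dirichlet eq} is simply the constant $t_0^{-1/2}$. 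Since the Gaussian factors satisfy $e^{-\xi^2/(4(t+t_0))},\,e^{-\xi^2/(8(t+t_0))}\le1$ and, for $t_0$ large (so that $1-B_2/\sqrt{t_0}>0$), the remainders obey $|h_1|+|h_2|\le M_0$ uniformly in $(t,\xi)$ by the bounds in Proposition \ref{Dirichlet eq}, one obtains a $t$-independent estimate
\[
0\le z^{\delta^{*}}_{t_0}(t,\xi)\le M\,\xi\,e^{-\lambda^{*}\xi}\qquad\text{for all }t\ge0,\ \xi\ge0,
\]
for some $M=M(t_0,\zeta_0)>0$ (the lower bound being the maximum principle).

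Next I would align the algebraic moving frame with the logarithmically shifted sphere. Writing $\xi^{\delta^{*}}_{t_0}(t)=c^{*}(t+t_0)-\delta^{*}\ln\frac{t+t_0}{t_0}=c^{*}t-\frac{N+2}{2\lambda^{*}}\ln t+\rho(t)$, the remainder $\rho(t)=c^{*}t_0+\frac{N+2}{2\lambda^{*}}\ln t_0-\frac{N+2}{2\lambda^{*}}\ln\frac{t+t_0}{t}$ is bounded for $t\ge1$, say $|\rho(t)|\le K_0$. Hence, as soon as $r>K_0+A$, for every $t\ge1$ the set $\{\,\|x\|\ge c^{*}t-\frac{N+2}{2\lambda^{*}}\ln t+r\,\}$ is contained in $\{\,\|x\|\ge\xi^{\delta^{*}}_{t_0}(t)+A\,\}$, where the super- and sub-solutions are given by their non-constant branches, and on it $\xi:=\|x\|-\xi^{\delta^{*}}_{t_0}(t)\ge r-K_0$. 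As $\sigma\mapsto\sigma e^{-\lambda^{*}\sigma}$ is nonincreasing on $[1/\lambda^{*},\infty)$, for $r>K_0+1/\lambda^{*}$ this yields, uniformly in $t\ge1$ and in $x$ in that set,
\[
z^{\delta^{*}}_{t_0}\bigl(t,\|x\|-\xi^{\delta^{*}}_{t_0}(t)\bigr)\le M\,(r-K_0)\,e^{-\lambda^{*}(r-K_0)}=:\eta(r),\qquad \eta(r)\to0\ \ (r\to\infty).
\]

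It then remains to insert this into the comparison bounds. For $a>1+s$, on that set $\widebar{F}\le\mu\,\bar{u}=\mu\,z^{\delta^{*}}_{t_0}(t,\cdot)\le\mu\,\eta(r)$ and $\widebar{C}\le\mu\,\bar{v}=\frac{\mu s}{a-1-s}z^{\delta^{*}}_{t_0}(t,\cdot)\le\frac{\mu s}{a-1-s}\eta(r)$, so \eqref{log delay upper bound} gives $F\le\mu\eta(r)$ and $C\le\frac{\mu s}{a-1-s}\eta(r)$ there for all $t\ge1$, hence \eqref{log delay upper bound on F 2}--\eqref{log delay upper bound on C 2}; when $d=1$, $H\ge\widebar{H}\ge1-g\mu(\bar{u}+\bar{v})\ge1-\frac{g\mu(a-1)}{a-1-s}\eta(r)$, which gives \eqref{log delay upper bound on H 2}. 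For $a<1+s$ the computation is identical once one notes $\tilde{u}=e^{-\varepsilon t}z^{\delta^{*}}_{t_0}\le z^{\delta^{*}}_{t_0}$ and $\tilde{v}=\frac{s(2-e^{-\varepsilon t})}{\varepsilon}z^{\delta^{*}}_{t_0}\le\frac{2s}{\varepsilon}z^{\delta^{*}}_{t_0}$, so that $\widetilde{F}\le\mu\eta(r)$, $\widetilde{C}\le\frac{2\mu s}{\varepsilon}\eta(r)$ and (for $d=1$) $H\ge\widetilde{H}\ge1-g\mu(\tilde{u}+\tilde{v})$ on the set, and \eqref{log delay upper bound 2} closes the case; taking $\limsup_{t\to\infty}$ (resp.\ $\liminf_{t\to\infty}$) and then letting $r\to\infty$ concludes.

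No deep difficulty arises — everything reduces to bookkeeping around Proposition \ref{Dirichlet eq} — but two checks are essential: that $\gamma=\frac12$ really cancels all $t$-dependence in the prefactor and that the remainders $h_1,h_2$ are bounded uniformly in $(t,\xi)$ once $t_0$ is chosen large; and that the $O(1)$ gap between the algebraic frame $\xi^{\delta^{*}}_{t_0}(t)$ and $c^{*}t-\frac{N+2}{2\lambda^{*}}\ln t$ is uniformly bounded for large $t$, so that a single threshold on $r$ works simultaneously for all $t$.
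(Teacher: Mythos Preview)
Your proposal is correct and follows essentially the same approach as the paper, which simply states that the proposition is ``an immediate result from the upper estimates \eqref{log delay upper bound}, \eqref{log delay upper bound 3} and \eqref{log delay upper bound 2}'' without further detail. You have filled in precisely the bookkeeping the paper omits: the observation that $\gamma=\frac12$ kills the time-dependent prefactor in \eqref{estimate Dirichlet eq}, the resulting uniform bound $z^{\delta^{*}}_{t_0}(t,\xi)\le M\xi e^{-\lambda^{*}\xi}$, and the $O(1)$ alignment between $\xi^{\delta^{*}}_{t_0}(t)$ and $c^{*}t-\frac{N+2}{2\lambda^{*}}\ln t$.
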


\begin{prop}\label{prop 1 of log delay 1}
If $a=1+s$, for the solution $(F,C,H)$ of the system \textnormal{(\ref{fch-equation})} with the initial data \textnormal{(\ref{intial data})}, it holds:
\begin{eqnarray}
\label{log delay upper bound on F 2 1}
&&\underset{t\to+\infty}{\limsup}\underset{c^*t-\frac{N+2}{2\lambda^*}\ln t+r\le\lVert x\rVert}{\sup}F(t,x)\to 0\ \ \mbox{as}\ \ r\to+\infty\\,
\label{log delay upper bound on C 2 1}
&&\underset{t\to+\infty}{\limsup}\underset{c^*t-\frac{N}{2\lambda^*}\ln t+r\le\lVert x\rVert}{\sup}C(t,x)\to 0\ \ \mbox{as}\ \ r\to+\infty.
\end{eqnarray}
Moreover, if $d=1$, one has:
\begin{equation}\label{log delay upper bound on H 2 1}
\underset{t\to+\infty}{\liminf}\underset{c^*t-\frac{N}{2\lambda^*}\ln t+r\le\lVert x\rVert}{\inf}H(t,x)\to 1\ \ \mbox{as}\ \ r\to+\infty.
\end{equation}
\end{prop}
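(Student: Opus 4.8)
\noindent\textbf{Proof proposal for Proposition \ref{prop 1 of log delay 1}.}
The plan is to read off all three statements directly from the comparison estimate \eqref{log delay upper bound 3} (which holds precisely in the regime $a=1+s$, $d=1$) by inserting the sharp asymptotics of $z^{\delta^*}_{t_0}$ supplied by Proposition \ref{Dirichlet eq}. First I would record the key simplification caused by the choice $\delta=\delta^*=\tfrac{N+2}{2\lambda^*}$: one then has $\gamma=\delta^*\lambda^*-\tfrac{N+1}{2}=\tfrac12$, so the prefactor $(t+t_0)^{\gamma-1/2}$ in \eqref{estimate Dirichlet eq} equals $1$, and, after fixing $t_0$ large once and for all, the bounds on $h_1,h_2$ produce a constant $K>0$ such that $z^{\delta^*}_{t_0}(t,\xi)\le K\,\xi\,e^{-\lambda^*\xi}$ for all $\xi\ge0$, $t\ge0$. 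Next I would observe that $\xi^{\delta^*}_{t_0}(t)=c^*(t+t_0)-\delta^*\ln\frac{t+t_0}{t_0}=c^*t-\tfrac{N+2}{2\lambda^*}\ln t+O(1)$, so that in particular $\xi^{\delta^*}_{t_0}(t)+\tfrac1{\lambda^*}\ln t=c^*t-\tfrac{N}{2\lambda^*}\ln t+O(1)$; this identity is exactly why the shift drops from $\tfrac{N+2}{2\lambda^*}$ in the $F$-statement to $\tfrac{N}{2\lambda^*}$ in the $C$- and $H$-statements.

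For the $F$-component the argument is then immediate: if $\lVert x\rVert\ge c^*t-\tfrac{N+2}{2\lambda^*}\ln t+r$ with $t$ large, the moving-frame variable $\xi:=\lVert x\rVert-\xi^{\delta^*}_{t_0}(t)$ satisfies $\xi\ge r-O(1)$, and since $\widehat F\le\mu z^{\delta^*}_{t_0}(t,\xi)\le\mu K\,\xi e^{-\lambda^*\xi}$ while $\rho\mapsto\rho e^{-\lambda^*\rho}$ is decreasing for $\rho>1/\lambda^*$, we get $F(t,x)\le\mu K\,(r-O(1))\,e^{-\lambda^*(r-O(1))}$, which is independent of $t$ and tends to $0$ as $r\to+\infty$; this is \eqref{log delay upper bound on F 2 1}. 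For the $C$- and $H$-components the new feature is the factor $1+st=O(t)$ that multiplies $z^{\delta^*}_{t_0}$ in $\widehat C$ (and the factor $2+st$ in $\widehat H$); this is the signature of the resonance $a=1+s=\lambda^{*2}$, since in the linearised system the $\hat v$-equation carries a source $s\hat u$ at the very exponential rate $\lambda^{*2}$ that already drives $\hat v$, so $\hat v$ acquires an extra amplitude linear in $t$. On the region $\lVert x\rVert\ge c^*t-\tfrac{N}{2\lambda^*}\ln t+r$ one has $\xi\ge\tfrac1{\lambda^*}\ln t+r-O(1)$, hence $e^{-\lambda^*\xi}\le O(1)\,t^{-1}e^{-\lambda^*r}$, and the $t^{-1}$ cancels the $O(t)$: one is led to $(1+st)\,z^{\delta^*}_{t_0}(t,\xi)\le O(1)\,\xi\,e^{-\lambda^*r}$ on the region, and likewise for $(2+st)\,z^{\delta^*}_{t_0}$. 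Using the monotonicity of $\rho\mapsto\rho e^{-\lambda^*\rho}$ on the region and the truncations $\widehat C\le 1+s$, $\widehat H\ge0$ built into the definitions, this yields \eqref{log delay upper bound on C 2 1} and \eqref{log delay upper bound on H 2 1}.

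I expect the main obstacle to lie entirely in this last step for $C$ and $H$: one must check that the resonance amplitude $1+st$ is genuinely absorbed by the decay $e^{-\lambda^*\xi}=O(t^{-1})$ bought by the extra $\tfrac1{\lambda^*}\ln t$ of front retreat, while simultaneously keeping under control the residual polynomial weight $\xi\sim\lambda^{*-1}\ln t$ that appears on the inner boundary of the region. This is precisely the place where the delicate choice of the comparison front $\xi^{\delta^*}_{t_0}$ and the precise Gaussian-corrected form of $z^{\delta^*}_{t_0}$ in Proposition \ref{Dirichlet eq}, rather than a cruder exponential super-solution, are indispensable; everything else (the $F$-estimate, and the deduction of the pointwise statement \eqref{log delay for H 2 a=1+s} of Theorem \ref{thm of log delay} from the region-statement \eqref{log delay upper bound on H 2 1}) is routine.
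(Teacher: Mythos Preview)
Your treatment of the $F$-component is correct and is exactly how the paper intends the estimate to be read off from \eqref{log delay upper bound 3}: with $\gamma=\tfrac12$ the prefactor in \eqref{estimate Dirichlet eq} is $O(1)$, so $z_{t_0}^{\delta^*}(t,\xi)\le K\,\xi e^{-\lambda^*\xi}$ uniformly, and the monotonicity of $\rho\mapsto\rho e^{-\lambda^*\rho}$ on $\xi\ge r-O(1)$ gives \eqref{log delay upper bound on F 2 1}.

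For the $C$- and $H$-components, however, your argument has a genuine gap precisely at the place you flag as the ``main obstacle''. At the inner boundary of the region $\lVert x\rVert\ge c^*t-\tfrac{N}{2\lambda^*}\ln t+r$ one has $\xi=\tfrac{1}{\lambda^*}\ln t+r+O(1)$, and hence, using only $z\le K\xi e^{-\lambda^*\xi}$,
\[
(1+st)\,z_{t_0}^{\delta^*}(t,\xi)\ \le\ K(1+st)\,\xi\,e^{-\lambda^*\xi}\ \sim\ Ks\Bigl(\tfrac{1}{\lambda^*}\ln t+r\Bigr)e^{-\lambda^*r}\qquad(t\to\infty),
\]
which diverges logarithmically in $t$: the $t^{-1}$ coming from $e^{-\lambda^*\xi}$ cancels the amplitude $1+st$ \emph{exactly}, leaving nothing to absorb the linear weight $\xi\sim\tfrac{1}{\lambda^*}\ln t$. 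The Gaussian correction $e^{-\xi^2/(4(t+t_0))}$ in \eqref{estimate Dirichlet eq} that you appeal to cannot help here, since $\xi^2/(t+t_0)=O((\ln t)^2/t)\to0$, and the $h_2$-bounds carry only the same Gaussian tail. Consequently the super-solution $\widehat C$ of \eqref{log delay upper bound 3} satisfies $\limsup_{t}\sup_{\text{region}}\widehat C=1+s$ for every fixed $r$ (the truncation at $1+s$ eventually activates), and \eqref{log delay upper bound on C 2 1}--\eqref{log delay upper bound on H 2 1} do \emph{not} follow from \eqref{log delay upper bound 3} by the mechanism you sketch. The paper declares the proposition an ``immediate result'' of the upper estimates without further detail, so the difficulty you have located is real and not resolved by the construction as written; closing it at the exact borderline shift $\tfrac{N}{2\lambda^*}\ln t$ would require either a sharper super-solution for $C$ in the resonant case $a=1+s$ (one whose amplitude grows more slowly than $t$) or an additional argument beyond the comparison \eqref{log delay upper bound 3}.
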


Furthermore, the following proposition which is concerned with the upper estimates of the location of the wavefront holds:
\begin{prop}\label{prop 2 of log delay}
If $d=1$, then for the solution $(F,C,H)$ of the system \textnormal{(\ref{fch-equation})} with the initial data \textnormal{(\ref{intial data})} and for any $R>0$, it holds:
\begin{equation}\label{log delay low bound on H}
\underset{t\to+\infty}{\liminf}\inf_{|x|\le R,e\in S^{N-1}}\ H\Big(t,x+\Big(c^*t-\frac{N+2}{2\lambda^*}\ln t\Big)e\Big)>0\ \ \mbox{if}\ \ a\neq 1+s,
\end{equation}
\begin{equation}\label{log delay low bound on H 1}
\underset{t\to+\infty}{\liminf}\inf_{|x|\le R,e\in S^{N-1}}\ H\Big(t,x+\Big(c^*t-\frac{N}{2\lambda^*}\ln t\Big)e\Big)>0\ \ \mbox{if}\ \ a= 1+s.
\end{equation}
\end{prop}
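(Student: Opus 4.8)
\medskip
\noindent\emph{Proof proposal.} The plan is to deduce this lower bound from the ``ahead‑of‑the‑front'' estimates already established in Propositions \ref{prop 1 of log delay} and \ref{prop 1 of log delay 1} (whose $H$‑component parts are exactly the ones requiring $d=1$), and then to propagate the resulting positivity of $H$ a \emph{bounded} distance backward in space by a soft parabolic comparison argument. Throughout, set $\delta:=\frac{N+2}{2\lambda^*}$ when $a\neq 1+s$ and $\delta:=\frac{N}{2\lambda^*}$ when $a=1+s$. First I would record two preliminary facts. From \eqref{log delay upper bound on H 2} (resp. \eqref{log delay upper bound on H 2 1}) there are $r_0>0$ and $T_0>0$ with $H(t,x)\ge\frac12$ whenever $\|x\|\ge c^*t-\delta\ln t+r_0$ and $t\ge T_0$. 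Moreover $F$ and $C$ are globally bounded: $F\le B^*:=\sup F<\infty$ by comparison with $\partial_tF=\Delta F+aF(1-F)$, and then $C$ is bounded above by comparison with the scalar equation $\partial_t C\le\Delta C+(1+s)C-C^2+sB^*$, whose constant supersolution is the positive root of $y^2-(1+s)y-sB^*=0$ (recall $C_0\equiv 0$). Write $F+C\le M$ for the resulting constant, and note $0<H\le 1$ by comparison with $\partial_tH=d\Delta H+bH(1-H)$.

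Next, fix $R>0$ and choose a constant $T>0$, depending only on $r_0,R,c^*$, so large that $c^*T-R-r_0>c^*/4$. Given $e\in S^{N-1}$, a point $x$ with $|x|\le R$, and a large time $t_1$, put $x_0:=x+(c^*t_1-\delta\ln t_1)e$ and $\tau:=t_1-T$; note $\bigl|\,\|x_0\|-(c^*t_1-\delta\ln t_1)\bigr|\le R$. Since the logarithmic shift is sublinear, $\delta(\ln t_1-\ln\tau)\to 0$ as $t_1\to\infty$, so for all $t_1$ large enough one has $\tau\ge T_0$ and
\[
\|x_0\|-\bigl(c^*\tau-\delta\ln\tau+r_0\bigr)\;\ge\;c^*T-R-r_0-\delta\,\bigl|\ln(\tau/t_1)\bigr|\;\ge\;\frac{c^*}{4}.
\]
Hence every $y$ in the ball $B:=B_{c^*/4}(x_0)$ satisfies $\|y\|\ge c^*\tau-\delta\ln\tau+r_0$ with $\tau\ge T_0$, so $H(\tau,\cdot)\ge\frac12$ on $B$ while $H(\tau,\cdot)\ge 0$ everywhere on $\mathbb R^N$.

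Finally, on the time interval $[\tau,t_1]$ I would use $H(1-H)\ge 0$ together with $F+C\le M$ and $H\ge 0$ to get $\partial_tH\ge d\Delta H-bgM\,H$, so that $v(t,\cdot):=e^{bgM(t-\tau)}H(t,\cdot)$ is a bounded supersolution of the heat equation on $\mathbb R^N$. Comparing $v$ with the heat flow started from $\frac12\mathbf 1_B$ and using that $t_1-\tau=T$ is a fixed constant gives
\[
H(t_1,x_0)=e^{-bgMT}v(t_1,x_0)\;\ge\;\frac{e^{-bgMT}}{2}\int_{B_{c^*/4}(0)}\frac{e^{-|z|^2/(4dT)}}{(4\pi dT)^{N/2}}\,dz\;=:\;\varepsilon_0>0,
\]
a constant independent of $t_1$, $x$ and $e$. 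Taking the infimum over $|x|\le R$, $e\in S^{N-1}$ and then the $\liminf$ in $t_1$ yields the two claimed inequalities (with the appropriate $\delta$).

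The only genuinely delicate point is the geometric bookkeeping in the middle step: one must guarantee that a ball of \emph{fixed} radius around the target point still lies ahead of the logarithmically shifted front at the slightly earlier time $\tau=t_1-T$. This works precisely because $c^*T$ is a genuine linear gain in the radial variable, whereas the difference of logarithmic shifts $\delta(\ln t_1-\ln\tau)\to 0$; everything else is a routine maximum‑principle computation once $F+C$ is known to be globally bounded.
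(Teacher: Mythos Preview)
Your argument is correct, but it follows a genuinely different route from the paper's. The paper proceeds by contradiction and a limit argument: assuming the $\liminf$ vanishes, one extracts (via parabolic estimates) a locally uniform limit $(F_\infty,C_\infty,H_\infty)$ of the shifted solutions along a sequence $t_n\to\infty$; since $H_\infty\ge 0$ solves a parabolic equation and $H_\infty(0,0)=0$, the strong maximum principle forces $H_\infty\equiv 0$, which contradicts the fact that $H_\infty(0,re_\infty)\ge 1/2$ for large $r$ coming directly from the ahead-of-the-front estimate \eqref{log delay upper bound on H 2} (resp.\ \eqref{log delay upper bound on H 2 1}).

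Your approach instead propagates the positivity from the ahead-of-the-front region a \emph{fixed} amount $T$ forward in time via an explicit heat-kernel comparison, using only the crude linear sub-inequality $\partial_t H\ge d\Delta H-bgM H$. This is more elementary in that it avoids compactness and the strong maximum principle, and it yields an explicit constant lower bound $\varepsilon_0$ depending only on $R$, $M$, $b$, $g$, $d$, $c^*$. The paper's argument is shorter and more in keeping with the limit-argument machinery used throughout the paper (Sections~5--6), and does not require isolating the uniform bound $F+C\le M$ explicitly. Both methods rely identically on the input estimates \eqref{log delay upper bound on H 2}/\eqref{log delay upper bound on H 2 1}, which is where the hypothesis $d=1$ actually enters.
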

\begin{proof}\it{of Proposition \ref{prop 2 of log delay}.}
\normalfont
We first deal with the proof the statement \eqref{log delay low bound on H}. To proceed by contradiction, we  assume that there exist $r_0\in\mathbb{R}$, $e_{\infty}\in S^{N-1}$ and a sequence of times $t_n\to+\infty$ such that 
$$H\Big(t_n,\Big(c^*t_n-\frac{N+2}{2\lambda^*}\ln t_n+r_0\Big)e_{\infty}\Big)\to 0\ \ \mbox{as}\ \ n\to+\infty.$$
Up to extraction of a subsequence, the functions $H_n(t,x)=H(t+t_n,x+(c^*t_n-\frac{N+2}{2\lambda^*}\ln t_n+r_0)e_{\infty})$ converge locally uniformly to an entire solution $H_{\infty}$ that satisfies
$$\partial_t H_{\infty}=\Delta H_{\infty}+bH_{\infty}(1-gF_{\infty}-gC_{\infty}-H_{\infty}).$$
Since $0\le H_{\infty}\le 1$ in $\mathbb{R}\times\mathbb{R}$ and $H_{\infty}(0,0)=0$. The strong maximum principle implies that $H_{\infty}\equiv 0$. However, from the estimate (\ref{log delay upper bound on H 2}), $H_{\infty}(0,re_{\infty})\ge 1/2$ for $r$ large enough. One has reached a contradiction which implies that the estimate (\ref{log delay low bound on H}) holds true. Since the \eqref{log delay low bound on H 1} follows from the same argument, the proof of this proposition is thereby complete.
\end{proof}
\subsection{Lower estimates on the location of the wavefront}
In this subsection, we complete the proof of Theorem \ref{thm of log delay} by showing lower estimates for the total population $F+C$ of farmers, and the upper estimate for the $H$ at the position $c^*t-O(\ln t)$. 
Let us first consider the case $a= 1+s$, which can be proved by simple comparison argument.
\begin{prop}\label{prop 3 of log delay 1}
Let $(F,C,H)$ be the solution of the system \textnormal{(\ref{fch-equation})} with the initial data \textnormal{(\ref{intial data})} and $G(t,x):=F(t,x)+C(t,x)$. If $a= 1+s$ and $d=1$, then for any $R>0$, it holds:
\begin{equation}\label{log delay lower bound on F 1}
\underset{t\to+\infty}{\liminf}\inf_{|x|\le R,e\in S^{N-1}}\ G\Big(t,x+\Big(c^*t-\frac{N+2}{c^*}\ln t\Big)e\Big)>0,
\end{equation}
\begin{equation}\label{log delay upper bound on H 1}
\underset{t\to+\infty}{\limsup}\sup_{|x|\le R,e\in S^{N-1}}\ H\Big(t,x+\Big(c^* t-\frac{N+2}{c^*}\ln t\Big)e\Big)<1.
\end{equation}
\end{prop}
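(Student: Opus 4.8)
The plan is to establish the lower bound \eqref{log delay lower bound on F 1} for $G:=F+C$ first, and then to deduce the upper bound \eqref{log delay upper bound on H 1} from it by a compactness argument. For the latter, if \eqref{log delay upper bound on H 1} failed there would be $e_\infty\in S^{N-1}$, $r_0\in\mathbb{R}$ and $t_n\to+\infty$ with $H\big(t_n,(c^*t_n-\frac{N+2}{c^*}\ln t_n+r_0)e_\infty\big)\to 1$. Translating by $(c^*t_n-\frac{N+2}{c^*}\ln t_n+r_0)e_\infty$ and passing to a locally uniform limit along a subsequence (the curvature terms $\frac{N-1}{r}\partial_r$ vanish in the limit because $r\to+\infty$), one obtains an entire solution $(F_\infty,C_\infty,H_\infty)$ of \eqref{fch-equation} on $\mathbb{R}\times\mathbb{R}^N$ with $F_\infty,C_\infty\ge0$, $0\le H_\infty\le1$ and $H_\infty(0,0)=1$. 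Since $F_\infty+C_\infty\ge0$, the constant $1$ is a super-solution of the equation satisfied by $H_\infty$, so $H_\infty\equiv1$ by the strong maximum principle, whence $F_\infty+C_\infty\equiv0$. But \eqref{log delay lower bound on F 1}, once proved, gives $G\ge\varepsilon'>0$ in a fixed ball (centred at $-r_0e_\infty$) in the translated frame, hence $F_\infty+C_\infty\ge\varepsilon'$ there, a contradiction. So everything reduces to \eqref{log delay lower bound on F 1}.

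For \eqref{log delay lower bound on F 1} the key observation is that, because $a=1+s$, $G=F+C$ satisfies an \emph{effective scalar KPP inequality with the exact growth rate} $a=\lambda^{*2}$. Indeed $aF+C=G+sF$, so using $F\ge0$ and $0\le H\le1$,
\begin{equation*}
\partial_t G=\Delta G+(G+sF)(1-G)+sHG\ge\Delta G+aG-aG^2-sG(1-H).
\end{equation*}
The crucial and $d=1$–dependent step is the bound $1-H\le K_1G$ at every point where $G\le\frac12$, for a suitably large constant $K_1$ (depending only on $b,g,s$). This is itself a comparison argument: $w:=1-H$ obeys $\partial_t w\le\Delta w+bgG$ (here $d=1$, $0\le H\le1$, $w\ge0$ are used) with $w|_{t=0}=0$, while on $\{G<\frac12\}$ the function $K_1G$ is a super-solution of $\partial_t v=\Delta v+bgG$ because there $(G+sF)(1-G)+sHG\ge G(1-G)\ge\frac12G$, so $\partial_t(K_1G)-\Delta(K_1G)\ge\frac{K_1}{2}G\ge bgG$; since also $w\le1\le K_1G$ on $\{G=\frac12\}$, comparison on $\{G<\frac12\}$ gives $w\le K_1G$ there. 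Hence $G$ satisfies $\partial_tG\ge\Delta G+\Phi(G)$ wherever $G\le\frac12$, where $\Phi(v):=av-(a+sK_1)v^2$ is monostable with $\Phi'(0)=a$ and carrying capacity $\frac{a}{a+sK_1}$, which we may take $\le\frac12$ by enlarging $K_1$.

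Next I fix a large $T_0$ and use Theorem \ref{thm final zone 1+s>a} (applicable since $d=1$ and $1+s\ge a$) together with the strong maximum principle to choose a smooth compactly supported $\underline g_0$ with $0\le\underline g_0\le\min\{\frac{a}{a+sK_1},\,G(T_0,\cdot)\}$ and $\underline g_0>0$ on a large ball. Let $\underline g$ solve the scalar equation $\partial_tv=\Delta v+\Phi(v)$ in $\mathbb{R}^N$ with $\underline g(T_0,\cdot)=\underline g_0$; then $0\le\underline g\le\frac{a}{a+sK_1}\le\frac12$. I claim $\underline g\le G$ for all $t\ge T_0$: at a first contact point one would have $G=\underline g\le\frac12$, so there $G$ super-solves $\partial_tv=\Delta v+\Phi(v)$ while $\underline g$ solves it, contradicting the comparison principle for this scalar equation (the only care needed is at spatial infinity, handled by the exponential decay of $\underline g$ ahead of its front, or by a small perturbation). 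Finally, the multidimensional logarithmic Bramson correction of Ducrot \cite{AD} applied to $\partial_tv=\Delta v+\Phi(v)$ — whose spreading speed is $2\sqrt{\Phi'(0)}=2\sqrt a=c^*$ and whose backward logarithmic drift is $\frac{N+2}{2\lambda^*}\ln t=\frac{N+2}{c^*}\ln t$ — yields $\liminf_{t\to+\infty}\inf_{x\in B_R,\,e\in S^{N-1}}\underline g\big(t,x+(c^*t-\frac{N+2}{c^*}\ln t)e\big)>0$, and since $G\ge\underline g$ this is exactly \eqref{log delay lower bound on F 1}.

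I expect the two main obstacles to be: (i) the estimate $1-H\le K_1G$ on $\{G\le\frac12\}$, which is precisely what breaks down when $d>1$ (then the hunter deficit $1-H$ may diffuse ahead of $G$) and explains the hypothesis $d=1$; and (ii) matching the logarithmic shift produced by the comparison above \emph{exactly} with the one in \cite{AD}, together with the first-contact argument in the unbounded domain. If one prefers not to invoke \cite{AD}, the sub-solution of $\partial_tv=\Delta v+\Phi(v)$ can instead be built directly from the function $z^{\delta^*}_{t_0}$ of Proposition \ref{Dirichlet eq} together with a nonlinear correction of size $\varepsilon^{1+\beta}$ absorbing $\Phi(v)-av$; this is the construction used for the cases $a\neq1+s$ and it carries over here with only notational changes.
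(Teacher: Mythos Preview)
Your proposal is correct and follows essentially the same strategy as the paper: reduce to a scalar KPP inequality for $G$ with linear growth rate exactly $a=(c^*)^2/4$, then invoke the logarithmic correction for the scalar problem; the bound on $H$ follows by the same limit/strong-maximum-principle argument you describe (which is exactly how the paper handles the analogous statements).

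The one place where the paper is cleaner is the estimate $1-H\lesssim G$. Instead of comparing $w=1-H$ with $K_1G$ on the moving set $\{G<\tfrac12\}$, the paper simply checks that $\underline H:=1-\max\{1,g\}G$ is a sub-solution of the $H$-equation wherever $\underline H\ge0$ (and is trivially $\le H$ where $\underline H<0$); since $d=1$ and $\underline H(0,\cdot)\le1=H_0$, this yields $H\ge1-\max\{1,g\}G$ globally. Plugging this into the $G$-equation gives, after a one-line computation using $a=1+s$, the clean global inequality $\partial_tG-\Delta G\ge aG-kG^2$ for a constant $k>0$, so no first-contact argument on $\{G\le\tfrac12\}$ is needed at all. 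In fact your own inequality also extends globally: on $\{G>\tfrac12\}$ one has $1-H\le1\le 2G$, so with $K_1\ge2$ the bound $1-H\le K_1G$ holds everywhere and your $\partial_tG\ge\Delta G+\Phi(G)$ is valid on all of $\mathbb{R}^N$, which removes the delicacy you flagged in point (ii). Also, you do not need Theorem~\ref{thm final zone 1+s>a} to initialize $\underline g$: the strong maximum principle already gives $G(T_0,\cdot)>0$ for any $T_0>0$, which suffices to place a compactly supported $\underline g_0$ beneath it.
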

\begin{proof}{\it of Proposition \ref{prop 3 of log delay 1}.}
In the case of $d=1$, we know that $1-\max\{1,g\}(F+C)$ is a suitable sub-solution of $H$, such that $H\ge 1-\max\{1,g\}(F+C)$ for all $x\in\mathbb{R}^N$ and $t> 0$. Then, since $a=1+s$, one has
\begin{equation*}
\begin{aligned}
\partial_t G-\Delta G&\ge(aF+C)(1-F-C)+sHC\ge(aF+C)(1-G)+s(1-\max\{1,g\}G)C\\
&=aG-(aF+C+\max\{1,g\}sC)G\ge(a-kG)G,
\end{aligned}
\end{equation*}
where $k:=\max\{a,1+\max\{1,g\}sC\}$. Then by applying the argument in \cite{log delay 3} for scalar KPP equation and Proposition \ref{Dirichlet eq}, one can complete the proof of Proposition \ref{prop 3 of log delay 1}.
\end{proof}
\vspace{10pt}

Next, we deal with the case $a>1+s$. Note that, in this case, our estimate is not very sharp, since the coefficient $c^*(N+2)/\min\{1,a\}$ in front of the $\ln t$ is greater than $(N+2)/c^*$ which has been proved by Bramson and Hamel {\it et al.} for the scalar KPP equation.

\begin{prop}\label{prop 3 of log delay}
Let $(F,C,H)$ be the solution of the system \textnormal{(\ref{fch-equation})} with the initial data \textnormal{(\ref{intial data})} and $G(t,x):=F(t,x)+C(t,x)$. If $a> 1+s$, then for any $R>0$, it holds:
\begin{equation}\label{log delay lower bound on F}
\underset{t\to+\infty}{\liminf}\inf_{|x|\le R,e\in S^{N-1}}\ G\Big(t,x+\Big(c^*t-\frac{c^*(N+2)}{\min\{1,a\}}\ln t\Big)e\Big)>0,
\end{equation}
\begin{equation}\label{log delay upper bound on H}
\underset{t\to+\infty}{\limsup}\sup_{|x|\le R,e\in S^{N-1}}\ H\Big(t,x+\Big(c^* t-\frac{c^*(N+2)}{\min\{1,a\}}\ln t\Big)e\Big)<1.
\end{equation}
\end{prop}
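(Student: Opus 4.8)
{\it (plan) of Proposition \ref{prop 3 of log delay}.}\normalfont
The plan is to establish the lower bound \eqref{log delay lower bound on F} for $G=F+C$ first, and then to deduce the upper bound \eqref{log delay upper bound on H} for $H$ from it by the compactness/Liouville argument already used for Proposition \ref{prop 2 of log delay}, run in the opposite direction. Indeed, were \eqref{log delay upper bound on H} to fail, there would be $|x_n|\le R$, $e_n\in S^{N-1}$ and $t_n\to+\infty$ with $H$ tending to $1$ at the corresponding point; after translating and extracting a locally uniform limit (the $(N-1)/r$ terms disappear because the base point recedes to infinity) one obtains an entire solution $(F_\infty,C_\infty,H_\infty)$ of the Cartesian system with $0\le H_\infty\le1$ and $H_\infty(0,0)=1$, hence $H_\infty\equiv1$ by the strong maximum principle, hence $g(F_\infty+C_\infty)\equiv0$ and $F_\infty+C_\infty\equiv0$, which contradicts \eqref{log delay lower bound on F} (the latter passing to the limit). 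This step uses only the strong maximum principle for the $H$-equation, so it works for every $d\ge1$; everything therefore reduces to \eqref{log delay lower bound on F}.

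For \eqref{log delay lower bound on F} I would exploit the fast spreading of the $F$-component: its equation does not involve $d$, and its linearization at $F=0$ has reaction coefficient exactly $a=\lambda^{*2}$, which is the coefficient appearing in \eqref{symmetric linear eq}. \emph{Step one} is a near-front lower bound for $F$ alone. On the leading part of the front the super-solutions $\widebar F,\widebar C$ constructed above for the case $a>1+s$ give $F+C\le c_0\, z^{\delta^*}_{t_0}\big(t,\lVert x\rVert-\xi^{\delta^*}_{t_0}(t)\big)$ for some constant $c_0$, and by \eqref{estimate Dirichlet eq} with $\delta=\delta^*$ (so that the polynomial prefactor there is bounded in $t$) the right-hand side decays like $\xi e^{-\lambda^*\xi}$ in the self-similar variable. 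Hence, in the range where a sub-solution for $F$ is non-negligible, the term $aF(1-F-C)$ differs from its linearization $aF$ only by a quantity that is super-linearly small. Working in the radial frame of \eqref{fch-symmetric} and the moving variable $\xi=\lVert x\rVert-\xi^{\delta}_{t_0}(t)$, one can then run the sub-solution construction of Hamel {\it et al.} \cite{log delay 3}: take $\underline F$ to be $\mu\, z^{\delta}_{t_0}(t,\xi)$ (a small multiple of the Dirichlet solution) modified by a lower-order correction so as to turn the over-estimating linear solution into a genuine sub-solution of the nonlinear $F$-equation, with Dirichlet value $0\le F$ on $\lVert x\rVert=\xi^{\delta}_{t_0}(t)$ and with $\underline F(0,\cdot)\le F(t_0,\cdot)$ (possible since $F(t_0,\cdot)>0$ on a ball by the strong maximum principle, choosing $\zeta_0$ supported there). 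The comparison principle then gives, for every fixed $A>0$, $\liminf_{t\to+\infty}\inf_{|y|\le R,\,e\in S^{N-1}}F\big(t,y+(c^*t-\delta^*\ln t+A)e\big)>0$.

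\emph{Step two} turns this into the lower bound for $G$ and improves the trivial estimate $\liminf\inf_{\lVert x\rVert\le ct}(F+C)>0$ ($c<c^*$) of Theorem \ref{thm final zone 1+s<a} to the announced position $c^*t-O(\ln t)$. The near-front point from Step one feeds the $C$-component through the term $sH(F+C)$, so $F+C$ is bounded below there and cannot return to zero; it then approaches its positive limiting value — equal to $1$ if $g\ge1$ and to $C^*$ if $g<1$ — at a rate that can be bounded below using the behaviour of $H$ furnished by the other theorems of Section 2. Since the position $c^*t-\tfrac{c^*(N+2)}{\min\{1,a\}}\ln t$ lies a further distance $\big(\tfrac{c^*(N+2)}{\min\{1,a\}}-\delta^*\big)\ln t$ behind, this relaxation has had time $\gtrsim\ln t$ to take effect there, and a crude estimate of the whole process yields \eqref{log delay lower bound on F}. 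The generous coefficient $\tfrac{c^*(N+2)}{\min\{1,a\}}$ — much larger than the sharp Bramson value $\tfrac{N+2}{c^*}=\delta^*$ — is precisely what this crude bookkeeping forces; optimality is not claimed.

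The main obstacle is Step one: the linear Dirichlet solution $z^{\delta}_{t_0}$ is only a \emph{super}-solution of the $F$-equation, and engineering a correction that keeps it below the true solution while preserving the $O(\ln t)$ lag is delicate because the full system has no comparison principle and the coupling term $sH(F+C)$ in the $C$-equation has indefinite sign; it is here that the quantitative near-front control of $C$ and $H$ coming from the super-solutions of Section 3 and the sharp form of \eqref{estimate Dirichlet eq} must be used. A secondary difficulty is to quantify the relaxation of $F+C$ in Step two uniformly over the position $c^*t-O(\ln t)$.
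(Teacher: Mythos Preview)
Your reduction of \eqref{log delay upper bound on H} to \eqref{log delay lower bound on F} by a limit/strong-maximum-principle argument is exactly what the paper does. The route to \eqref{log delay lower bound on F}, however, is quite different.

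You aim in Step one for $\liminf F>0$ directly at the Bramson position $c^*t-\delta^*\ln t+A$ by running the Hamel \emph{et al.}\ sub-solution with $\delta=\delta^*$. The paper does not attempt this. Instead (Lemma~\ref{lm:log delay lower bdd of F}) it takes $\delta=0$ and proves only the polynomially decaying bound
\[
F(t,x)\ \gtrsim\ (t+t_0)^{-\frac{N+2}{2}}\,\xi\,e^{-\lambda^*\xi},\qquad \xi=\lVert x\rVert-c^*(t+t_0)\in[0,\sqrt{t+t_0}],
\]
i.e.\ \emph{ahead} of $c^*t$ by $c^*t_0+O(\sqrt t)$. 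The point of working there is that the super-solution control on $C$ gives $C\le A_3(\log t)\,t^{-(N+2)/2}$, which is integrable in $t$, so a single factor $\omega(t)=\exp\!\big(-\!\int_0^t(\varepsilon a+A_3\delta^*\log\frac{\tau+t_0}{t_0})\tau^{-(N+2)/2}d\tau\big)$ bounded away from zero absorbs the whole perturbation; no delicate Hamel-type correction is needed. The paper then \emph{switches to the $G$ inequality} $\partial_tG-\Delta G\ge\min\{1,a\}G(1-G)$ and amplifies the seed $G\ge F\gtrsim t^{-(N+2)/2}$ with a compactly supported principal-eigenfunction sub-solution
\[
\phi(\tau,x)=\eta\,e^{\frac{\min\{1,a\}}{2}(\tau-\tau^*(t))}\,\varphi_{\sqrt{t_1}/2}\Big(x-\big(c^*(t+t_0)+\tfrac{\sqrt{t_1+t_0}}{2}\big)e\Big).
\]
Running it forward for time $\tau^*(t)=\frac{N+2}{\min\{1,a\}}\log t$ makes the exponential exactly cancel the $t^{-(N+2)/2}$ decay, and the resulting spatial lag $c^*\tau^*(t)=\frac{c^*(N+2)}{\min\{1,a\}}\log t$ is precisely the coefficient in \eqref{log delay lower bound on F}. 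So the non-sharp constant is not ``crude bookkeeping'' but the exact amplification time multiplied by the front speed.

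Your Step two, as written, has a genuine gap: the ``relaxation toward the limiting value $1$ or $C^*$'' you invoke is the content of Theorems~\ref{thm profile g>1 1+s<a}--\ref{thm profile g<1}, whose proofs in Section~6 rest on Theorem~\ref{thm final zone 1+s<a}, which in turn (Section~5.2) is proved \emph{using} the present Proposition via Theorem~\ref{thm of log delay}. That route is circular. If your Step one really did deliver $\liminf F>0$ at $c^*t-\delta^*\ln t+A$, the correct Step two would be the stationary-bump device already used in Section~5.2: place a compactly supported eigenfunction sub-solution of the $G$ inequality at that forward position at each time $t'$, freeze it, and read off $G(t,\cdot)\ge\eta\varphi_R(0)$ at the further-back position for suitable $t>t'$ --- no asymptotic-profile information required.
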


\begin{remark}
Proposition \ref{prop 3 of log delay} holds for any $d\ge 1$. Indeed, for all $d\ge 1$, the functions $\widebar{F}$ and $\widebar{C}$ introduced in subsection 4.1 are always suitable super-solutions of the $F$-component and the $C$-component. However, the function $\widebar{H}$ is not a sub-solution of the $H$-component anymore if $d\neq 1$.  As a matter of fact, in the proof of Proposition \ref{prop 3 of log delay}, only the properties of $\widebar{F}$ and $\widebar{C}$ will be used.
\end{remark}

To prove Proposition \ref{prop 3 of log delay}, one need to construct a suitable sub-solution of $G$ near the wavefront. To do this, a lower estimate of $F$ in the region which moves with a speed slightly faster that $c^*$
is very important.
\begin{lemma}\label{lm:log delay lower bdd of F}
Let $(F,C,H)$ be the solution of the system \eqref{fch-equation} with the initial data \eqref{intial data}. If $a>1+s$, then there exist $A_2>0$ and $t_0>0$ such that the following holds
\begin{equation}\label{eq:log delay lower bdd of F}
F(t,x)\ge A_2 (t+t_0)^{-\frac{N+2}{2}}(\lVert x\rVert-c^*(t+t_0))e^{-\lambda^*(\lVert x\rVert-c^*(t+t_0))},
\end{equation}
where
$$c^*(t+t_0)\le\lVert x\rVert\le c^*(t+t_0)+\sqrt{t+t_0}\ \ \mbox{and}\ \ t>1.$$
\end{lemma}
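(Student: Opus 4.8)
The plan is to bound $F$ from below, just ahead of the leading edge, by a sub-solution drawn from the linear Dirichlet problem of Proposition \ref{Dirichlet eq} in the case $\delta=0$. Since $a>1+s$ we have $c^*=2\sqrt a$, $\lambda^*=\sqrt a$ and $\lambda^{*2}=a$; writing the $F$-equation of \eqref{fch-symmetric} in the frame $\xi=\lVert x\rVert-c^*(t+t_0)$ and noting that $\xi^{0}_{t_0}(t)=c^*(t+t_0)$ carries no logarithmic term, the resulting drift and reaction coefficient $a$ match exactly those of \eqref{symmetric linear eq} with $\delta=0$ — which is why the front position in \eqref{eq:log delay lower bdd of F} is exactly $c^*(t+t_0)$, with no $\ln t$. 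First, the upper bounds \eqref{log delay upper bound} of Section 4.1 together with Proposition \ref{Dirichlet eq} (applied with $\delta=\delta^*$) give a function $\psi$ with $\psi(t)\to0$ and $\int_0^\infty\psi(s)\,ds<+\infty$ — its tail is $O\!\big(\ln(t+t_0)\,(t+t_0)^{-(N+2)/2}\big)$, integrable since $(N+2)/2\ge 3/2>1$ — such that $(F+C)(t,x)\le\psi(t)$ whenever $\lVert x\rVert\ge c^*(t+t_0)$. Hence on $\{\lVert x\rVert>c^*(t+t_0)\}$ the function $F$ is a super-solution of the scalar equation $\partial_t u=\Delta u+a(1-\psi(t))u$.

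Next I build the sub-solution. Let $z:=z^{0}_{t_0}$ be the solution of Proposition \ref{Dirichlet eq} with $\delta=0$, $z(t,0)=0$, and a nonnegative compactly supported profile $\zeta_0$ of small amplitude. For $\delta=0$ one has $\gamma=-\tfrac{N+1}{2}$, so the prefactor in \eqref{estimate Dirichlet eq} is $(t+t_0)^{\gamma-\frac12}=(t+t_0)^{-\frac{N+2}{2}}$, and for $0\le\xi\le\sqrt{t+t_0}$ the curly bracket there is bounded below by a positive constant (the Gaussian is $\ge e^{-1/4}$, $\int_0^\infty\zeta_0(\rho)\rho\,d\rho>0$, and $h_1,h_2$ are made negligible by taking $t_0$ large and $\zeta_0$ flat); thus $z(t,\xi)\ge c_0\,(t+t_0)^{-\frac{N+2}{2}}\,\xi\,e^{-\lambda^*\xi}$ on that range. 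Put $w(t,x):=M(t)\,z\big(t,\lVert x\rVert-c^*(t+t_0)\big)$ with $M(t):=\exp\!\big(-a\int_0^t\psi(s)\,ds\big)\ge M_\infty:=\exp\!\big(-a\int_0^\infty\psi\big)>0$. Because $M'=-a\psi M$ and $z$ solves its linear equation with reaction $\lambda^{*2}=a$, a short computation shows that $w$ is in fact an exact solution of $\partial_t u=\Delta u+a(1-\psi(t))u$ on $\{\lVert x\rVert>c^*(t+t_0)\}$, hence a sub-solution of the equation of which $F$ is a super-solution. The time-dependent gauge $M(t)$ is what makes this work: replacing $a$ by a fixed $a-\varepsilon$ would lower the effective speed below $c^*$ and wreck the estimate, whereas $\psi(t)\to0$ keeps the effective speed at $c^*$.

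Finally one runs the comparison on the moving exterior domain $\{\lVert x\rVert>c^*(t+t_0)\}$, past a starting time $T_\star$. On the inner boundary $\lVert x\rVert=c^*(t+t_0)$ we have $w=M(t)z(t,0)=0\le F$, and on the outer part $w$ is super-exponentially small, hence $\le F$. The main obstacle is the inequality $w\le F$ at $t=T_\star$: it requires an exponential-tail lower bound for $F$ slightly \emph{ahead} of $c^*(t+t_0)$, and this is exactly where $a>1+s$ enters. Since the $F$-equation is then the faster of the first two, the initial farmers form a genuine pulse advancing at speed $c^*$ — the mechanism behind Proposition \ref{prop of small peak} — so that $F$ stays bounded below near $\lVert x\rVert\approx c^*t$; one transports this positivity a bounded multiple of $\sqrt t$ further out using a parabolic Harnack inequality and the super-solution property above, and then absorbs the remaining fixed constants (in particular $e^{\lambda^*c^*t_0}$) by taking the amplitude of $\zeta_0$ small and $t_0$ large. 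Once $w\le F$ at $t=T_\star$, the comparison principle for $\partial_t u=\Delta u+a(1-\psi(t))u$ gives $F\ge w$ for all $t\ge T_\star$; restricting to $0\le\lVert x\rVert-c^*(t+t_0)\le\sqrt{t+t_0}$ and using the lower bound on $z$ yields \eqref{eq:log delay lower bdd of F} with $A_2:=M_\infty c_0$ for $t\ge T_\star$, and for $1<t\le T_\star$ it holds after further shrinking $A_2$, since $F$ is bounded below by a positive constant on the relevant compact set. The hardest ingredient is clearly the exponential-tail lower bound on $F$ at the starting time.
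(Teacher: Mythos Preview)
Your overall strategy is exactly the paper's: take $z=z^{0}_{t_0}$ from Proposition~\ref{Dirichlet eq} with $\delta=0$, multiply by a time-gauge $M(t)=\exp\!\big(-a\int_0^t\psi\big)$ to absorb the small correction coming from the upper bound on $F+C$ ahead of the front, and run a comparison on $\{\lVert x\rVert>c^*(t+t_0)\}$ with the Dirichlet condition $z(t,0)=0$. The paper does the same, writing the gauge as $\omega(t)=\exp\!\big(-\int_0^t(\varepsilon a+c_3\delta^*\log\frac{\tau+t_0}{t_0})\tau^{-\frac{N+2}{2}}d\tau\big)$ and bounding $C$ rather than $F+C$, but this is cosmetic.

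The genuine gap is your treatment of the initial inequality $w\le F$ at the starting time, which you call ``the hardest ingredient'' and for which you invoke Harnack estimates and the mechanism of Proposition~\ref{prop of small peak}. This is both unnecessary and circular. It is unnecessary because the initial datum $z(0,\xi)=e^{-\lambda^*\xi}\zeta_0(t_0^{-1/2}\xi)$ is \emph{compactly supported} in $\xi$ (since $\zeta_0$ is), so $w(0,\cdot)$ is supported in a fixed annulus $\{c^*t_0\le\lVert x\rVert\le c^*t_0+K\sqrt{t_0}\}$; since $F(1,\cdot)>0$ everywhere by the strong maximum principle, the inequality $F(1,x)\ge w(0,x)$ holds on the whole of $\mathbb{R}^N$ simply by taking the amplitude of $\zeta_0$ (equivalently, a multiplicative $\varepsilon$) small enough. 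No exponential-tail lower bound on $F$, no Harnack transport over a $\sqrt{t}$ range, and no appeal to a ``pulse advancing at speed $c^*$'' is needed. It is circular because Proposition~\ref{prop of small peak} is proved in the paper by contradiction against \eqref{log delay lower bound on F} of Proposition~\ref{prop 3 of log delay}, whose proof rests precisely on Lemma~\ref{lm:log delay lower bdd of F}.

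Relatedly, you misplace the role of the hypothesis $a>1+s$. It is \emph{not} used at the initial comparison; it enters only through the super-solution $\bar C=\frac{s}{a-1-s}z^{\delta^*}_{t_0}$ of Section~4.1 (the bound \eqref{log delay upper bound} you already cited), which is what makes your $\psi(t)$ exist with the integrable tail $O\big(\ln(t+t_0)\,(t+t_0)^{-(N+2)/2}\big)$. Once you replace your last paragraph by the one-line observation that $\zeta_0$ has compact support and $F(1,\cdot)>0$, your argument is complete and coincides with the paper's.
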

\begin{proof}\it{of Lemma \ref{lm:log delay lower bdd of F}.}
Let $z_{t_0}(t,\xi)$ $(\xi\ge\xi_{t_0}^0(t),\ t\ge0)$ be the solution of 
\[
\left\{\begin{aligned}
&\partial_t z_{t_0}=\Delta z_{t_0}+\Big(c^*+\frac{N+2}{\xi+\xi^0_{t_0}(t)}\Big)\partial_{\xi}z_{t_0}+\lambda^{*2}z_{t_0},\quad \xi>0,\ t>0,\\
&z_{t_0}(t,0)=0,\quad t>0,\\
&z_{t_0}(0,\xi)=e^{-\lambda^*\xi}\zeta_0(t_0^{-1/2}\xi),\quad \xi\ge0,
\end{aligned}\right.
\]
where $\zeta_0\ge0$ is a nontrivial compactly supported smooth function and 
\[
c^*:=2\sqrt{a},\ \lambda^*=c^*/2,\ \xi_{t_0}^0(t):=c^*(t+t_0).
\]
By applying Proposition \ref{Dirichlet eq}, there exist $t_0>0$, $A_1>0$ and $A_2>0$ such that 
\begin{eqnarray}
\label{eq:upper 1}
&&z_{t_0}(t,\xi)\le A_1(t+t_0)^{-\frac{N+2}{2}}\ \ \mbox{for all}\ \  \xi>0,\ t>0,\\
\label{eq:lower 1}
&&z_{t_0}(t,\xi)\ge A_2(t+t_0)^{-\frac{N+2}{2}}\xi e^{\lambda^*\xi}\ \ \mbox{for all}\ \   0<\xi<\sqrt{t+t_0},\ t>0.
\end{eqnarray}
From \eqref{log delay upper bound}, one may obtain that, for $t>0$ and $\xi^0_{t_0}(t)\le\lVert x\rVert\le\xi^0_{t_0}(t)+\sqrt{t+t_0}$,
\[
\begin{split}
C(t,x)&\leq \overline{C}(t,x)=\mu v(t,\lVert x\rVert-\xi^{\delta^*}_{t_0}(t))\\
&\leq A_3 (\lVert x\rVert-\xi^{\delta^*}_{t_0}(t))e^{-\lambda^*(\lVert x\rVert-\xi^{\delta^*}_{t_0}(t))}\\
&\le \Big(A_3\delta^*\log\frac{t+t_0}{t_0}\Big)t^{-\frac{N+2}{2}},
\end{split}
\]
where $\delta^*:=\frac{N+2}{2\lambda^*}$ and $\xi^{\delta}_{t_0}(t):=c^*(t+t_0)-\delta\log\frac{t+t_0}{t_0}$.

Let us introduce a new function $F^{\varepsilon}(t,x):=\varepsilon\omega(t)z_{t_0}(t,\lVert x\rVert-c^*(t+t_0))$. One may find that
\[
\begin{split}
\partial_t F^{\varepsilon}&-\Delta F^{\varepsilon}- a F^{\varepsilon}(1-F^{\varepsilon}-C)
=\Big(\frac{\dot{\omega}}{\omega}+a F^{\varepsilon}+C\Big)F^{\varepsilon}\\
&\le\Big(\frac{\dot{\omega}}{\omega}+(\varepsilon a\omega +c_3\delta^*\log\frac{t+t_0}{t_0})t^{-\frac{N+2}{2}}\Big)F^{\varepsilon}.
\end{split}
\]
Then, we choose a suitable $\omega(t)$  as
\[
\omega(t)=e^{-\int_0^{t}(\varepsilon a +c_3\delta^*\log\frac{\tau+t_0}{t_0})\tau^{-\frac{N+2}{2}}d\tau}.
\]
It is not difficult to check that
$0<\omega\le1$, ${\inf}_{t\ge0}\omega(t)=\omega(\infty)>0$ and 
\[
\partial_t F^{\varepsilon}-\Delta F^{\varepsilon}- a F^{\varepsilon}(1-F^{\varepsilon}-C)\le0\ \ \mbox{for all}\ \ \lVert x\rVert\ge c^*(t+t_0)\ \ \mbox{and}\ \ t>0.
\]
For sufficiently small $\varepsilon>0$, it holds $F(1,x)\ge F^{\varepsilon}(0,x)$ for all $x\in{\mathbb R}^N$. Then, by applying the comparison principle, one obtains
\[
F(t,x)\ge F^{\varepsilon}(t-1,x),\quad x\in{\mathbb R^N},\ t>1.
\]
Therefore, from the estimate \eqref{eq:lower 1}, one can conclude that
\[
F(t,x)\ge \varepsilon \omega(\infty)c_2(t+t_0)^{-\frac{N+2}{2}}(\lVert x\rVert-c^*(t+t_0))e^{-\lambda^*(\lVert x\rVert-c^*(t+t_0))},
\]
where
\[
c^*(t+t_0)\le\lVert x\rVert\le c^*(t+t_0)+\sqrt{t+t_0}\ \ \mbox{and}\ \  t>1.
\]
The proof is complete.
\end{proof}
\vspace{10pt}

Now we are ready to deal with the proof of Proposition \ref{prop 3 of log delay}.

\begin{proof}{\it of Proposition \ref{prop 3 of log delay}.}
Let us denote the total population density of the farmers as $G(t,x):=F(t,x)+C(t,x)$. Then, one may find that
\[
\partial_t G-\Delta G\ge \min\{1,a\}G(1-G)\quad{\rm for}\quad\{(t,x)\mid G(t,x)\le1\}.
\]
We consider a new function as 
\[
\phi(\tau,x):=\phi_{\sqrt{t_1}/2,\eta,e}(\tau,x,t):=\eta e^{\frac{\min\{1,a\}}{2}(\tau-\tau^*(t))}\varphi_{\sqrt{t_1}/2}\Big(x-\Big(c^*(t+t_0)+\frac{\sqrt{t_1+t_0}}{2}\Big)e\Big),
\]
where $\eta>0$, $e\in S^{N-1}$, $\tau^*(t):=\frac{N+2}{\min\{1,a\}}\log t$ and $\varphi_{R}(x)>0$ is the eigenfunction satisfies
\[
\left\{
\begin{aligned}
-\Delta\varphi_R&=\mu_R\varphi_R,\quad \lVert x\rVert<R,\\
\varphi_R(x)&=0,\quad \lVert x\rVert=R,\\
\varphi_R(0)&=1.
\end{aligned}
\right.
\]
Since the eigenvalue satisfies $\mu_R=\mu_1 R^{-2}$, for sufficiently small $\eta>0$ and sufficiently large $t_1>0$, one has
\[
\begin{split}
\partial_{\tau}\phi-\Delta\phi-\min\{1,a\}\phi(1-\phi)&=\Big(\mu_{\sqrt{t_1}/2}-\frac{\min\{1,a\}}{2}+\phi\Big)\phi\\&\le\Big(4\mu_1 t_1^{-1}-\frac{\min\{1,a\}}{2}+\eta\Big)\phi\le0.
\end{split}
\] 
By applying the estimate \eqref{eq:log delay lower bdd of F}, if we choose 
$$\eta=A_2\frac{\sqrt{t_1+t_0}+\sqrt{t_1}}{2}e^{-\lambda^*\frac{\sqrt{t_1+t_0}+\sqrt{t_1}}{2}}(1+t_0/t_1)^{-\frac{N+2}{2}},$$
then for $\lVert x-(c^*(t+t_0)+\frac{\sqrt{t_1+t_0}}{2})e\rVert< \frac{\sqrt{t_1}}{2}$, $t>t_1$ and $e\in S^{N-1}$, one may obtain
\[
\begin{split}
F(t,x)&\ge A_2\frac{\sqrt{t_1+t_0}+\sqrt{t_1}}{2}e^{-\lambda^*\frac{\sqrt{t_1+t_0}+\sqrt{t_1}}{2}}(t+t_0)^{-\frac{N+2}{2}}\\&\ge\eta t^{-\frac{N+2}{2}}\varphi_{\sqrt{t_1}/2}\Big(x-\Big(c^*(t+t_0)+\frac{\sqrt{t_1+t_0}}{2}\Big)e\Big) =\phi_{\sqrt{t_1}/2,\eta,e}(0,x,t).
\end{split}
\]
On the other hand, for $\lVert x-(c^*(t+t_0)+\frac{\sqrt{t_1+t_0}}{2})e\rVert\ge \frac{\sqrt{t_1}}{2}$, $t>t_1$ and $e\in S^{N-1}$, it holds obviously that
\[
F(t,x)>\phi_{\sqrt{t_1}/2,\eta,e}(0,x,t)=0.
\] 
Hence, one has 
\[
G(t,x)\ge F(t,x)\ge\phi_{\sqrt{t_1}/2,\eta,e}(0,x,t)\ \ \mbox{for all}\ \  x\in{\mathbb R}^N,\ t>t_1\ \ \mbox{and}\ \ e\in S^{N-1}.
\]
Thus, by applying the comparison principle, one can conclude that 
\[
G(\tau+t,x)\ge \phi_{\sqrt{t_1}/2,\eta,e}(\tau,x,t)\ \ \mbox{for all}\ \ x\in{\mathbb R}^N,\ 0\le\tau\le\tau^*(t),\ t>t_1\ \ \mbox{and}\ \ e\in S^{N-1}.
\]
Therefore, for any $t_*>t_1+\tau^*(t_1)$, it holds
\[
G(\tau^*(t_*)+t_*,x)\ge\phi_{\sqrt{t_1}/2,\eta,e}(\tau^*(t_*),x,t_*)=\eta\varphi_{\sqrt{t_1}/2}\Big(x-\Big(c^*(t_*+t_0)+\frac{\sqrt{t_1+t_0}}{2}\Big)e\Big).
\]
If we denote $t=t_*+\tau^*(t_*)$, then one can find
\[
\begin{split}
t_*&=t-\tau^*(t-\tau^*(t_*))=t-\frac{N+2}{\min\{1,a\}}\log(t-\tau^*(t_*))\\
&=t-\frac{N+2}{\min\{1,a\}}\log t-\frac{N+2}{\min\{1,a\}}\log\Big(1-\frac{\tau^*(t_*)}{t}\Big)\\
&=t-\frac{N+2}{\min\{1,a\}}\log t-\frac{N+2}{\min\{1,a\}}\log\Big(1-\frac{\tau^*(t_*)}{t_*+\tau^*(t_*)}\Big)
=t-\frac{N+2}{\min\{1,a\}}\log t+\epsilon(t),
\end{split}
\]
where $\underset{t\rightarrow\infty}{\lim} \epsilon(t)=0$. 
Hence, there exist a bounded function $m(t)$ such that, for any large $t$, it holds
\[
\begin{split}
G(t,x)&\ge\phi_{\sqrt{t_1}/2,\eta,e}(\tau^*(t_*),x,t_*)\\&=\eta\varphi_{\sqrt{t_1}/2}\Big(x-\Big(c^*\Big(t-\frac{N+2}{\min\{1,a\}}\log t+\epsilon(t)+t_0\Big)+\frac{\sqrt{t_1+t_0}}{2}\Big)e\Big)\\&=\eta\varphi_{\sqrt{t_1}/2}\Big(x-\Big(c^*t-\frac{(N+2)c^*}{\min\{1,a\}}\log t+m(t)\Big)e\Big).
\end{split}
\]
This estimate implies that the statement \eqref{log delay lower bound on F} holds true.

By applying \eqref{log delay lower bound on F}, the statement \eqref{log delay upper bound on H} follows from a simple limit argument. Thus the proof of Proposition \ref{prop 3 of log delay}  is complete.
\end{proof}

\vspace{10pt}

Next, we complete the proof of Proposition \ref{prop of small peak} which explain the reason why $F$ would not uniformly converge to $0$ on the wavefront in the case of $a>1+s$.

\begin{proof}{\it of Proposition \ref{prop of small peak}.}
We prove by contradiction and assume that 
\[
\limsup_{t\rightarrow\infty}\sup_{c_0t\le\lVert x\rVert}F(t,x)=0\quad {\rm for\ some}\ c_0<c^*=2\sqrt{a}.
\]
Then for any $\varepsilon>0$ there exists $T>0$ such that 
\[
F(t,x)<\varepsilon\ \ \mbox{for}\ \ \lVert x\rVert\ge c_0t,\ t\ge T.
\] 
Hence, if we denote $\wideubar{C}:=C-\alpha F$ where $\alpha>0$, then for $\lVert x\rVert\ge c_0t$ and $t\ge T$, $\wideubar{C}$ satisfies
\[
\begin{split}
\partial_t \wideubar{C}-\Delta \wideubar{C}&\le (1+s)C+s F-a\alpha F+a\alpha F^2 + a\alpha F C\\
&\le(1+s+\varepsilon a\alpha)C+(s-(1-\varepsilon)a\alpha)F\\
&\le(1+s+\varepsilon a\alpha)\wideubar{C}+((1+s+\varepsilon a \alpha)\alpha+(s-(1-\varepsilon)a\alpha))F.
\end{split}
\]
Thus by choosing $\alpha:=2s/(a-\varepsilon a-1-s+\sqrt{(a-\varepsilon a-1-s)^2-4\varepsilon a s})>s/(a-s-1)$, one  can obtain 
\[
\partial_t\wideubar{C}-\Delta \wideubar{C}\le(1+s+\varepsilon a \alpha)\wideubar{C}.
\]
On the other hand, for sufficiently large $A>0$, for all $e\in S^{N-1}$, 
\[
\overline{F}_{e}(t,x):=A e^{-\sqrt{a}(e\cdot x-2\sqrt{a}\,t)},\ \overline{C}_{e}(t,x):=\frac{sA}{a-s-1}e^{-\sqrt{a}(e\cdot x-2\sqrt{a}\,t)}
\]
are super-solutions of $F$ and $C$, respectively, and hence
\[
C(t,x)\le \inf_{e\in S^{N-1}}\overline{C}_e(t,x)=A e^{-\sqrt{a}(\lVert x\rVert-2\sqrt{a}\,t)}\ \ \mbox{for all}\ \  x\in{\mathbb R}^N\ \ \mbox{and}\ \ t\ge0.
\]
Hence, for $\lambda_1<\sqrt{a}$, there exists sufficiently large constant $B>0$ such that 
\[
\wideubar{C}(T,x)\le C(T,x)\le Ae^{-\sqrt{a}(\lVert x\rVert-2\sqrt{a}\,T)}\le B e^{-\lambda_1e\cdot x}\ \ \mbox{for all}\ \  x\in{\mathbb R}^N\ \ \mbox{and}\ \ e\in S^{N-1}.
\]
Therefore, if we take $\lambda_1$ such that $\max\{c_0/2,\sqrt{1+s+\varepsilon a \alpha}\}<\lambda_1<\sqrt{a}$, then 
\[
C_e(t,x):=A_1e^{-\lambda_1(e\cdot x-2\lambda_1t)}
\] 
satisfies 
\[
\partial_t C_e-\Delta C_e\ge (1+s+\varepsilon a\alpha)C_e\ \ \mbox{for all}\ \ x\in{\mathbb R}^N,\ t\ge0,\ e\in S^{N-1},
\]
and
\[
\wideubar{C}(t,x)\le C(t,x)\le 1+s\le A_1 e^{-\lambda_1(e\cdot x-2\lambda_1t)}=C_e(t,x)\ \ \mbox{for all}\ \ \lVert x\rVert\ge c_0t,\ t\ge T,\ e\in S^{N-1}.
\]
Hence, by applying the comparison principle, 
\[
\wideubar{C}(t,x)\le \inf_{e\in S^{N-1}}C_e(t,x)=A_1 e^{-\lambda_1(\lVert x\rVert-2\lambda_1t)}\ \ \mbox{for all}\ \ \lVert x\rVert\ge c_0t,\ t\ge T.
\]
Since $C(t,x)=\wideubar{C}(t,x)+\alpha F(t,x)$, for any $c\in(2\lambda_1,2\sqrt{a})$, one has
\[
\limsup_{t\rightarrow\infty}\sup_{ct\le\lVert x\rVert}C(t,x)\leq\lim_{t\rightarrow\infty} A_1e^{-\lambda_1(c-2\lambda_1)t}+\alpha \limsup_{t\rightarrow\infty}\sup_{ct\le\lVert x\rVert}F(t,x)
\le \alpha\varepsilon.
\]
Since $\varepsilon$ could be chosen arbitrarily small, one obtains
\[
\limsup_{t\rightarrow\infty}\sup_{ct\le\lVert x\rVert}C(t,x)=0\ \ \mbox{for}\ \ c\in(2\lambda_1,2\sqrt{a}).
\]
Hence, for $G(t,x)=F(t,x)+C(t,x)$, it holds
\[
\limsup_{t\rightarrow\infty}\sup_{ct\le\lVert x\rVert}G(t,x)=0\ \ \mbox{for}\ \ c\in(2\lambda_1,2\sqrt{a}).
\]
This contradicts the statement \eqref{log delay lower bound on F} and complete the proof of Proposition \ref{prop of small peak}.
\end{proof}

\vspace{10pt}

At the end of this section, we complete the proof of Theorem \ref{thm of log delay} by showing a lower estimate for the $C$-component at the position $c^*t-\frac{N+2}{2\lambda^*}\ln t$ in the case of $a< 1+s$.
\begin{prop}\label{prop 5 of log delay}
If $a< 1+s$ and $d=1$, then for any $R>0$, it holds:
\begin{equation}\label{log delay lower bound on C}
\underset{t\to+\infty}{\liminf}\inf_{|x|\le R,e\in S^{N-1}}\ C\Big(t,x+\Big(c^*t-\frac{N+2}{2\lambda^*}\ln t\Big)e\Big)>0,
\end{equation}
\begin{equation}\label{log delay upper bound 2 on H}
\underset{t\to+\infty}{\limsup}\sup_{|x|\le R,e\in S^{N-1}}\ H\Big(t,x+\Big(c^*t-\frac{N+2}{2\lambda^*}\ln t\Big)e\Big)<1.
\end{equation}
\end{prop}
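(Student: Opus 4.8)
The plan is to reduce, in a neighbourhood of the wavefront, the $C$-equation to a scalar KPP inequality whose reaction rate converges — fast enough — to the critical value $1+s=\lambda^{*2}$, and then to run the sub-solution argument of Hamel {\it et al.} \cite{log delay 3} and Ducrot \cite{AD}, whose linear ingredient is precisely Proposition \ref{Dirichlet eq}, to obtain the sharp logarithmic shift $\frac{N+2}{2\lambda^*}\ln t$. The estimate \eqref{log delay upper bound 2 on H} on $H$ will then be deduced from \eqref{log delay lower bound on C} by a standard compactness/strong maximum principle argument, exactly as in the proof of Proposition \ref{prop 2 of log delay}.

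First I would establish the reduction. Since $d=1$, the function $1-\max\{1,g\}(F+C)$ is a sub-solution of the $H$-component (this is the observation used in the proof of Proposition \ref{prop 3 of log delay 1}), so $H\ge 1-\max\{1,g\}(F+C)$ on $\mathbb R^N\times(0,\infty)$; inserting this into the $C$-equation and setting $\kappa:=1+s\max\{1,g\}$ gives $\partial_t C-\Delta C\ge(1+s)C-\kappa F C-\kappa C^2$ (the term $sHF\ge0$ being dropped). The hypothesis $a<1+s$ now enters decisively: by Remark \ref{remark of F leading edge}, $F(t,x)\le A^*e^{-\sqrt a(\lVert x\rVert-2\sqrt a\,t)}$, so on any region of the form $\{\lVert x\rVert\ge c^*t-O(\ln t)\}$ — in particular on the support of the sub-solution to be built — one has $\sup F(t,\cdot)\le\varepsilon_F(t)$ with $\varepsilon_F$ decaying exponentially in $t$, hence $\int_1^\infty\varepsilon_F<\infty$ (the exponential in $t$ beats both the logarithm and the polynomial factors). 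There the inequality becomes $\partial_t C-\Delta C\ge\rho_1(t)C-\kappa C^2$ with $\rho_1(t):=1+s-\kappa\varepsilon_F(t)\uparrow1+s$ and $\int_1^\infty(1+s-\rho_1)<\infty$: a KPP inequality for $C$ with an integrably small rate defect.

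Next I would build the sub-solution. Fixing $t_0$ large and $\delta^*=\frac{N+2}{2\lambda^*}$, Proposition \ref{Dirichlet eq} with $\delta=\delta^*$ gives $\gamma=\tfrac12$, so $z^{\delta^*}_{t_0}(t,\xi)\asymp\xi e^{-\lambda^*\xi}$ for $0\le\xi\lesssim\sqrt{t+t_0}$ — the self-similar, neither growing nor decaying, profile that forces exactly the coefficient $\delta^*$. As in \cite{log delay 3,AD} I would truncate $\eta z^{\delta^*}_{t_0}$ (near $\xi=0$, where it meets a Dirichlet condition at $\lVert x\rVert=\xi^{\delta^*}_{t_0}(t)$, and near $\xi\sim\sqrt t$) into a sub-solution $\chi$ of $\partial_t\chi=\Delta\chi+(1+s)\chi(1-\tilde\kappa\chi)$, $\tilde\kappa=\kappa/(1+s)$, whose support lies in the region of the previous step for $t$ large. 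Setting $\underline C:=\omega(t)\chi$ with $\omega(t)=\exp(-\kappa\int_1^t\varepsilon_F)\in(0,1]$, $\omega(\infty)>0$, one checks — using $\omega\le1$ to absorb the quadratic term and $\dot\omega/\omega=-\kappa\varepsilon_F(t)$ to cancel the rate defect — that $\underline C$ is a sub-solution of $\partial_t C-\Delta C\ge\rho_1(t)C-\kappa C^2$ where $\chi>0$; moreover $\underline C\le C$ on the parabolic boundary ($\underline C=0$ on $\{\lVert x\rVert=\xi^{\delta^*}_{t_0}(t)\}$ and outside the support of $\chi$, and $C(1,\cdot)>0$ everywhere by the strong maximum principle while $\chi(1,\cdot)$ is compactly supported of size $O(\eta)$, so $C(1,\cdot)\ge\underline C(1,\cdot)$ for $\eta$ small). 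The comparison principle then gives $C\ge\omega(\infty)\chi$, a fixed positive constant on the bulk of the support of $\chi$; adjusting the truncation parameters as in \cite{AD} yields the lower bound on the whole ball $B_R$ about $(c^*t-\frac{N+2}{2\lambda^*}\ln t)e$, uniformly in $e\in S^{N-1}$ and in large $t$, which is \eqref{log delay lower bound on C}. Finally, \eqref{log delay upper bound 2 on H} follows by contradiction: a sequence along which $H$ tends to $1$ at $x_n+(c^*t_n-\frac{N+2}{2\lambda^*}\ln t_n)e_n$ would, after shifting and extracting a locally uniform limit, give an entire solution of the autonomous system with $0\le H_\infty\le1$ touching the constant $1$; the strong maximum principle forces $H_\infty\equiv1$, hence $g(F_\infty+C_\infty)\equiv0$, contradicting $C_\infty>0$ at the origin supplied by \eqref{log delay lower bound on C}.

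The step I expect to be the main obstacle is the construction and verification of the sub-solution $\chi$ — i.e.\ extracting the {\it sharp} coefficient $\frac{N+2}{2\lambda^*}$ from the Hamel–Ducrot machinery in the present setting. Two features make this delicate: the equation obeyed by $C$ near the front is only asymptotically, not exactly, KPP with the critical rate, so one must carry the time-dependent rate defect $1+s-\rho_1(t)$ and kill it with the multiplier $\omega(t)$ (the same device used in Lemma \ref{lm:log delay lower bdd of F}); and the whole construction has to be carried out in the radial variable, for which Proposition \ref{Dirichlet eq} is tailored. It is worth stressing that this sharpness is available precisely because $a<1+s$: then $F$ is super-exponentially small at the front and the $C$-component genuinely decouples into a near-critical KPP equation, whereas for $a\ge1+s$ the component $F$ is of order one at the front and no such reduction is possible — which is exactly why the analogous estimate \eqref{log delay for F} for $a>1+s$ only carries the non-sharp constant $\frac{c^*(N+2)}{\min\{1,a\}}$.
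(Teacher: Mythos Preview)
Your reduction is exactly the paper's: use $H\ge 1-\max\{1,g\}(F+C)$ (valid when $d=1$) to get $\partial_t C-\Delta C\ge (1+s)C-\kappa(F+C)C$, then invoke $a<1+s$ via Remark~\ref{remark of F leading edge} so that $F$ is exponentially small in $t$ wherever $\lVert x\rVert\ge c_0 t$ for some $c_0\in(2\sqrt a,c^*)$, leaving a KPP inequality for $C$ with an integrably perturbed rate; the multiplier $\omega(t)=\exp(-\kappa\int\varepsilon_F)$ is the same device the paper uses. The gap is in the sub-solution that is supposed to deliver the sharp constant. Your $\chi$, a truncated $\eta z^{\delta^*}_{t_0}$ with Dirichlet condition at $\xi=0$, is supported in $\{\lVert x\rVert>\xi^{\delta^*}_{t_0}(t)\}$; but $\xi^{\delta^*}_{t_0}(t)=c^*t-\delta^*\ln t+\big(c^*t_0+\delta^*\ln t_0\big)+o(1)$, and since Proposition~\ref{Dirichlet eq} forces $t_0$ large, the constant $c^*t_0+\delta^*\ln t_0$ is large and positive. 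Thus your sub-solution lives a fixed large distance \emph{ahead} of the ball $B_R$ about $(c^*t-\delta^*\ln t)e$, and vanishes at its own left edge; no ``adjustment of truncation parameters'' slides it backward onto the target.

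The paper closes this gap with a traveling-wave sub-solution rather than a $z$-based one. First, by arguments parallel to Lemma~\ref{lm:log delay lower bdd of F} and Proposition~\ref{prop 3 of log delay}, it obtains two preliminary lower bounds on $C$: a decaying one of order $(t+t_0)^{-(N+2)/2}\xi e^{-\lambda^*\xi}$ on the strip $c^*(t+t_0)\le\lVert x\rVert\le c^*(t+t_0)+\sqrt{t+t_0}$, and an order-one bound at the \emph{non-sharp} position $\lVert x\rVert=c^*t-\frac{(N+2)c^*}{1+s}\ln t$. These serve as outer and inner boundary data for the function $u_{r_0}(t,x)=\omega(t)\,U\big(\lVert x\rVert-c^*t+\tfrac{N+2}{2\lambda^*}\ln t+r_0\big)$, where $U$ is the minimal-speed front of $c^*U'+U''+U(1+s-LU)=0$ with $U(-\infty)=(1+s)/L>0$, $U(+\infty)=0$. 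Because $U(-\infty)>0$, this sub-solution is uniformly positive on the whole ball $B_R$, and the comparison $C\ge u_{r_0}$ on the region between the two preliminary positions yields \eqref{log delay lower bound on C} directly. Replacing your $\chi$ by $u_{r_0}$ --- and supplying the two preliminary bounds needed to start it --- is the missing ingredient in your sketch; the deduction of \eqref{log delay upper bound 2 on H} from \eqref{log delay lower bound on C} by the limit/strong-maximum-principle argument is then exactly as you describe.
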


\begin{proof}{\it of Proposition \ref{prop 5 of log delay}.}
Let us denote $G(t,x):=F(t,x)+C(t,x)$ and $\wideubar{H}(t,x):=1-\max\{1,g\}G(t,x)$. Then, for $(t,x)\in\{(t,x)\mid G(t,x)\le1\}$, one has
\[
\begin{split}
\partial_t \wideubar{H}-\Delta\wideubar{H}-b\wideubar{H}(1-\wideubar{H}-g G)&=-\max\{1,g\}(\partial_t G-\Delta G)-b(\max\{1,g\}-g)\wideubar{H}G\\
&\le-\max\{1,g\}(aF+C)(1-G)\le0.
\end{split}
\]
On the other hand, for $(t,x)\in\{(t,x)\mid G(t,x)>1\}$, one has
\[
H(t,x)\ge0>\wideubar{H}(t,x).
\] 
Hence by applying the comparison principle, one can obtain that
\[
H(t,x)\ge\wideubar{H}(t,x)\ \ \mbox{for all}\ \ x\in{\mathbb R}^N\ \ \mbox{and}\ \ t\ge0.
\]
Thus, it follows immediately that
\[
\begin{split}
\partial_t C-\Delta C&\ge C(1-(F+C))+sHC\ge C(1-(F+C))+s\wideubar{H}C\\&=C(1+s-k(F+C))\ \ \mbox{for all}\ \  x\in{\mathbb R}^N\ \ \mbox{and}\ \  t>0,
\end{split}
\]
where $k:=1+s\max\{1,g\}$.

By applying the same argument as that in the proof of Proposition \ref{prop of small peak}, one may find
\[
F(t,x)\le A e^{-\sqrt{a}(\lVert x\rVert-2\sqrt{a}\,t)}\ \ \mbox{for all}\ \ x\in{\mathbb R}^N\ \ \mbox{and}\ \ t\ge0.
\]
Let us denote  $c_0:=\sqrt{a}+\sqrt{1+s}\in(2\sqrt{a},2\sqrt{1+s})$ and $\varepsilon_0:=\sqrt{a}(\sqrt{1+s}-\sqrt{a})$, then one can obtain 
\[
F(t,x)\le Ae^{-\varepsilon_0 t}\ \ \mbox{for all}\ \ \lVert x\rVert\ge c_0t\ \ \mbox{and}\ \ t\ge0.
\]
Therefore, for all $\lVert x\rVert\ge c_0t$ and $t>0$, one has 
\[
\partial_t C-\Delta C\ge C(1+s-k C)-Ak e^{-\varepsilon_0 t}C.
\]
By similar arguments to those in the proof of statements \eqref{eq:log delay lower bdd of F} and \eqref{log delay lower bound on F}, one find that
\begin{equation}\label{eq:log delay lower bdd of C}
\begin{split}
C(t,x)\ge &c_2(t+t_0)^{-\frac{N+2}{2}}(\lVert x\rVert-c^*(t+t_0))e^{-\lambda^*(\lVert x\rVert-c^*(t+t_0))}\\&{\rm for}\ \ c^*(t+t_0)\le\lVert x\rVert\le c^*(t+t_0)+\sqrt{t+t_0}\ \ \mbox{and}\ \ t>1,
\end{split}
\end{equation} 
\begin{equation}\label{log delay lower bound on C 0}
\liminf_{t\rightarrow\infty}\inf_{e\in S^{N-1}}C\Big(t,\Big(c^*t-\frac{(N+2)c^*}{1+s}\log t\Big)e\Big)\ (=:c_0)>0.
\end{equation}

Let $U(z)$ be a solution of 
\[
\left\{
\begin{aligned}
&c^* U'+U''+U(1+s-LU)=0,\quad z\in{\mathbb R},\\
&U(-\infty)=\frac{1+s}{L},\ U(+\infty)=0,
\end{aligned}
\right.
\]
where $L:=\max\{k,\frac{2(1+s)}{c_0}\}$. Then $U'(z)<0$ for all $z\in{\mathbb R}$ and 
\begin{equation}\label{eq 20}
\frac{U(z)}{z e^{-\lambda^* z}}\rightarrow B\quad{\rm as}\ z\rightarrow\infty\quad{\rm for\ some\ constant}\ B>0,
\end{equation}
which have been proved in \cite{AW}.
Next, let us introduce two functions
$$u_{r_0}(t,x):=\omega(t)U(\lVert x\rVert-c^*t+\frac{N+2}{2\lambda^*}\log t+r_0),$$ 
$$\omega(t):=e^{-Ak\int_0^{t}e^{-\varepsilon_0\tau}d\tau}.$$
There exists $T_0>0$ such that for $c^*t-\frac{(N+2)c^*}{1+s}\log t\le\lVert x\rVert$, $t\ge T_0$ and $r_0\ge0$, it holds
\[
\begin{split}
\partial_t u_{r_0}&-\Delta u_{r_0}-u_{r_0}(1+s-ku_{r_0})+Ake^{-\varepsilon_0t}u_{r_0}\\
&=\Big(\frac{N+2}{2\lambda^*t}-\frac{N-1}{\lVert x\rVert}\Big)\omega U'+\Big(\frac{\dot{\omega}}{\omega}+Ake^{-\varepsilon_0t}-(L-K\omega)U\Big)u_{r_0}\le0.
\end{split}
\]
From \eqref{eq:log delay lower bdd of C} and  \eqref{log delay lower bound on C 0}, there exists $T_1>0$ such that for $\lVert x\rVert=c^*t-\frac{(N+2)c^*}{1+s}\log t,\ t\ge T_1$ and  $\ r_0\ge0$, it holds
\[
C(t,x)>\frac{c_0}{2}\ge\frac{1+s}{L}>u_{r_0}(t,x).
\]
From the estimates (\ref{eq 20}) and \eqref{eq:log delay lower bdd of C}, there exists $T_2>0$ such that for $\lVert x\rVert=c^*t+\sqrt{t}$, $t\ge T_2$ and $r_0\ge0$, 
\[
\begin{split}
u_{r_0}(t,x)&=\omega(t)U(\sqrt{t}+\frac{N+2}{2\lambda^*}\log t+r_0)\leq 2 B (\sqrt{t}+\frac{N+2}{2\lambda^*}\log t+r_0)e^{-\lambda^*(\sqrt{t}+\frac{N+2}{2\lambda^*}\log t+r_0)}\\&\leq c_2(\sqrt{t}-c^*t_0)e^{-\lambda^*(\sqrt{t}+\frac{N+2}{2\lambda^*}\log (t+t_0)-c^*t_0)}\leq C(t,x).
\end{split}
\]
Moreover, for sufficiently large $r_0>0$, one has
\[
C(T_*,x)\ge u_{r_0}(T_*,x),\quad c^*T_*-\frac{(N+2)c^*}{1+s}\log T_*\le\lVert x\rVert\le c^*T_*+\sqrt{T_*},\ \ \mbox{where}\ \ T_*:=\max\{T_0,T_1,T_2\}.
\]
Therefore, by applying the comparison principle, for $c^*t-\frac{(N+2)c^*}{1+s}\log t\le\lVert x\rVert\le c^*t+\sqrt{t}$ and $t\ge T_*$, one can conclude that
\[
C(t,x)\ge u_{r_0}(t,x)\ge\omega(\infty)U(\lVert x\rVert-c^*t+\frac{N+2}{2\lambda^*}\log t+r_0).
\]
This implies that the statement \eqref{log delay lower bound on C} holds true. 
Further, the statement \eqref{log delay upper bound 2 on H} follows immediately from a simple limit argument. Thus, the proof of Proposition \ref{prop 5 of log delay}  is complete.
\end{proof}

\vspace{10pt}

In conclusion, Theorem \ref{thm of log delay} is an immediate result following from Proposition \ref{prop 2 of log delay}, Proposition \ref{prop 3 of log delay} and Proposition \ref{prop 5 of log delay}.

\section{Lower estimates on the spreading speed}
In this section, we deal with the lower estimate on the spreading speeds of solutions of the system (\ref{fch-equation}). The proof of Theorem \ref{thm final zone 1+s>a} and Theorem \ref{thm final zone 1+s<a}  will be performed through several subsections.  Since original hunter-gatherers, initial farmers and converted farmers heavily interact with each other in the final zone, one need very delicate analysis to get uniform upper estimate of the $H$-component and uniform lower estimate of the $F+C$-component.
Note that, our result only implies that the total population density of initial farmers and converted farmers is uniformly greater than $0$ in the final zone as $t\to+\infty$. However, we still do not how to investigate the population density of each populations of farmers separately.

Before stating our arguments, we would like to introduce some basic properties at first.
Denote $X$ as a Banach space of $\mathbb{R}^3$-valued bounded and uniformly continuous functions on $\mathbb{R}^N$ endowed with the usual sup-norm. Let us define $\Psi(r)\subset X$ as
$$\Psi(r)=\{(\psi_1,\psi_2,\psi_3)\in X:\ 0\le \psi_1,\ 0\le \psi_2,\ \psi_1+\psi_2\le r\ \mbox{and}\ 0\le\psi_3\le 1\}.$$
Although the comparison principle does not hold for the full system, one can apply it on each equation separately. Let us denote the  nonlinear semiflow  generated by the system (\ref{fch-equation}) by $Z(t)$ and add the both sides of the $F$-equation and $C$-equation of the system (\ref{fch-equation}), then one gets
\begin{equation}\label{F+C equation}
\partial_t(F+C)-\Delta(F+C)=(aF+C)(1-F-C)+sH(F+C).
\end{equation} 
Since $0\le H\le 1$, the right hand side of the equation (\ref{F+C equation}) is not greater than
$$\min\{a,1\}\Big(\frac{\max\{a,1\}+s}{\min\{a,1\}}-F-C\Big)(F+C).$$
By applying the comparison principle, one can conclude immediately that 
$$\sup_{x\in\mathbb{R}^N}\Big(F(t,x)+C(t,x)\Big)\le m(t),$$
where $m(t)$ is a function satisfying
$$\frac{dm}{dt}=\min\{a,1\}\Big(\frac{\max\{a,1\}+s}{\min\{a,1\}}-m\Big)m\ \ \mbox{and}\ \ m(0)=\sup_{x\in\mathbb{R}^N}(F(0,x)+C(0,x)).$$
Therefore, if we introduce a new function as
$$M(r):=\max\{r,\frac{\max\{a,1\}+s}{\min\{a,1\}}\},$$
then one may obtain that, for each $r>0$, it holds
$$Z(t)[\Psi(r)]\subset \Psi(M(r))\ \ \mbox{for all}\ \  t>0.$$
\begin{remark} 
If $H_0(x)\equiv 0$, then by applying the comparison principle, one can rewrite the system (\ref{fch-equation}) to a two-component competition system as
\begin{equation*}
\left\{
\begin{aligned}
&\partial_tF=\Delta F+aF(1-F-C),\\
&\partial_tC=\Delta C+C(1-F-C),
\end{aligned}
\right.
\end{equation*}
of which the spreading properties are partly studied by Girardin and Lam in \cite{cs}. 
\end{remark}

Furthermore, for the special case when the diffusion coefficient $d$ of the $H$-equation is equal to $1$, we have a direct observation as follows:
\begin{prop}\label{prop of F+C+H>0 2}
If $d=1$, then there exists $\varepsilon^*>0$ such that the solution $(F,C,H)$ of the system \textnormal{(\ref{fch-equation})} satisfies:
\begin{equation*}
\underset{t\ge 0,x\in\mathbb{R}^N}{\inf}(F+C+H)(t,x)\ge \varepsilon^*,
\end{equation*}  
\end{prop}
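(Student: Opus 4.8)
The plan is to exploit the special algebraic structure available when $d=1$, namely that $H$ bounds itself from below in terms of $F+C$. Concretely, I would first prove
\[
H(t,x)\ \ge\ 1-\max\{1,g\}\bigl(F(t,x)+C(t,x)\bigr)\qquad\text{for all }x\in\mathbb{R}^N,\ t\ge 0,
\]
and then read off the proposition from a one-variable estimate. Write $G:=F+C$ and $m:=\max\{1,g\}\ge 1$, and set $\underline H:=1-mG$. Adding the $F$- and $C$-equations of \eqref{fch-equation} gives $\partial_t G-\Delta G=(aF+C)(1-G)+sHG$, which is nonnegative on $\{G<1\}\supset\{\underline H>0\}$ because $F,C,H\ge 0$. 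Since $d=1$, on $\{\underline H>0\}$ we therefore have $\partial_t\underline H-\Delta\underline H=-m(\partial_t G-\Delta G)\le 0$, while $b\underline H(1-\underline H-gG)=b\underline H(m-g)G\ge 0$ there. Hence the truncation $\underline H^{+}:=\max\{\underline H,0\}$ is a (weak) subsolution of the scalar parabolic equation $\partial_t v-\Delta v=bv(1-v-gG)$, in which $G$ is regarded as a prescribed bounded continuous coefficient: on the open set where $\underline H<0$ one has $\underline H^{+}\equiv 0$, which solves the equation, on $\{\underline H>0\}$ this is the computation just made, and across $\{\underline H=0\}$ it is the Kato-type inequality $\partial_t\underline H^{+}-\Delta\underline H^{+}\le\mathbf 1_{\{\underline H>0\}}(\partial_t\underline H-\Delta\underline H)$.

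Since $\underline H^{+}(0,\cdot)=\max\{0,1-mF_0\}\le 1=H(0,\cdot)$ and both $H$ and $\underline H^{+}$ are bounded (recall $0\le H\le 1$), the comparison principle for this single equation yields $H\ge\underline H^{+}\ge\underline H=1-mG$. The conclusion is then immediate: for every $(t,x)$, with $r:=G(t,x)\ge 0$,
\[
(F+C+H)(t,x)\ \ge\ r+\max\{0,1-mr\}\ \ge\ \min_{\rho\ge 0}\bigl(\rho+\max\{0,1-m\rho\}\bigr)\ =\ \frac1m,
\]
since $\rho\mapsto\rho+\max\{0,1-m\rho\}$ equals $1-(m-1)\rho\ge 1/m$ on $[0,1/m]$ and equals $\rho\ge 1/m$ on $[1/m,\infty)$. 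Thus the proposition holds with $\varepsilon^{*}:=1/\max\{1,g\}$.

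The only genuinely delicate point is making rigorous the claim that $\underline H^{+}$ is a subsolution across the set where $1-\max\{1,g\}(F+C)$ changes sign; I expect this to be a routine invocation of Kato's inequality, the remaining sign computations and the final one-variable minimization being entirely elementary. I also note that $d=1$ is used in an essential way: if the $H$-equation carried $d\Delta H$ with $d\neq 1$, the identity for $\partial_t\underline H$ would pick up the extra term $m(d-1)\Delta G$ of indefinite sign, and the subsolution property would break down — in accordance with the remarks above that $\widebar H$ fails to be a subsolution when $d\neq 1$.
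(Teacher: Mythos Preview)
Your argument is correct for the initial data \eqref{intial data} and yields the explicit constant $\varepsilon^*=1/\max\{1,g\}$. The route, however, is genuinely different from the paper's. The paper adds all three equations and observes that, when $d=1$,
\[
\partial_t(F{+}C{+}H)-\Delta(F{+}C{+}H)\ \ge\ \varepsilon_2(F{+}C{+}H)-\varepsilon_3(F{+}C{+}H)^2
\]
for suitable constants $\varepsilon_2,\varepsilon_3>0$ depending only on $a,b,s,g$; a comparison with the scalar logistic equation then gives the uniform lower bound. Your approach instead first establishes the structural inequality $H\ge 1-\max\{1,g\}(F+C)$ via a subsolution argument and then reads off the bound by a one-variable minimization. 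Interestingly, this very inequality is derived and used by the paper elsewhere (see the proofs of Propositions~\ref{prop 3 of log delay 1} and~\ref{prop 5 of log delay}), so your proof essentially recycles a lemma from Section~4 to give an alternative proof of Proposition~\ref{prop of F+C+H>0 2}.

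One point worth noting: in the paper, Proposition~\ref{prop of F+C+H>0 2} is stated with the proviso that the initial data satisfy $F_0+C_0+H_0\ge\varepsilon^*$ (this clause sits just after the proposition environment), and the later limit arguments in Section~5 apply it to translated/limit solutions with general initial data in $\Psi(r)$, not just to \eqref{intial data}. The paper's logistic-sum proof delivers this uniformity for free, since the lower bound depends on the initial data only through $\inf(F_0+C_0+H_0)$. Your proof, by contrast, uses $H_0\equiv 1$ (and $C_0\equiv 0$) to verify $\underline H^{+}(0,\cdot)\le H(0,\cdot)$; for general data with $F_0+C_0+H_0\ge\varepsilon^*$ one need not have $H_0\ge 1-\max\{1,g\}(F_0+C_0)$, so your argument does not immediately extend. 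If you want your approach to serve the same purpose downstream, you would need to strengthen the subsolution step to cover that broader class of initial data.
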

provided that the initial data $(F_0,C_0,H_0)\in\Psi(r)$ satisfies $F_0+C_0+H_0\ge \varepsilon^*$.

\begin{proof}\it{of Proposition \ref{prop of F+C+H>0 2}.}
\normalfont
Let us first add the both sides of the $F$-equation, $C$-equation
and $H$-equation. One may find that, it holds 
\begin{align*}
\partial_t(F+C+H)-\Delta(F+C+H)=&aF+C+H-(aF+C)(F+C)\\
&+sH(F+C)-gbH(F+C)-bH^2\\
\ge &\varepsilon_2(F+C+H)-\varepsilon_3(F+C+H)^2,
\end{align*}
where $\varepsilon_2=\min\{1,a,b\}$, $\varepsilon_3=\max\{1,\varepsilon_1,\varepsilon_1b\}$ and $\varepsilon_1=\max\{1,a,s,g\}$.
Therefore, by applying the comparison principle, there exists $\varepsilon^*>0$ such that
\begin{equation*}
\underset{t\ge 0,x\in\mathbb{R}^N}{\inf}(F+C+H)(t,x)\ge \varepsilon^*,
\end{equation*}
provided that $F_0(x)+C_0(x)+H_0(x)\ge \varepsilon^*$.
\end{proof}
\vspace{10pt}

\subsection{Uniform spreading in the final zone ($1+s\ge a$)}
Throughout this paper, we denote $c^0$ as an arbitrarily chosen constant speed in 
$[0,c^*)$.
The proof of Theorem \ref{thm final zone 1+s>a} is split into three steps. We first deal with a weak spreading property which states that, in final zone, for any fixed speed $c\in[0,c^0]$ and direction $e$, $H(t,cte+x)$ does not uniformly converge to $1$ and $(F+C)(t,cte+x)$ does not uniformly converge to $0$. Then, in the second step, we prove that, $H(t,cte+x)$ is uniformly smaller than $1$ and $(F+C)(t,cte+x)$ is uniformly greater than $0$ with respect to $t$. 
At last, we conclude the proof by showing that these properties hold with respect to $\lVert x\lVert\le ct$.

\subsubsection{First step: pointwise weak spreading property}

The first step is to prove the following lemma, from which one can find that the $H$-component does not uniformly converge to $1$, and the $F+C$-component does not uniformly converge to $0$ in the final zone. Moreover, this property is in some sense uniform with respect to the initial data.   

\begin{lemma}\label{lemma of weak pointwise spreading c<c* H}
If $d=1$ and $1+s\ge a$,  
there exists $\varepsilon_1>0$  such that, for any given initial data $(F_0,C_0,H_0)\in\Psi(r)$ satisfying $F_0+C_0+H_0\ge \varepsilon^*$ and $F_0+C_0\not\equiv 0$, for all $c\in[0, c^0]$, $e\in S^{N-1}$ and $x\in\mathbb{R}^N$, the solution $(F,C,H)$ of the system \textnormal{(\ref{fch-equation})} satisfies:
\begin{equation}\label{eq of weak pointwise upper estimate of H c<c*}
\underset{t\to+\infty}{\liminf}\ H(t,x+cte)\le 1-\varepsilon_1,
\end{equation}
\begin{equation}\label{eq of weak pointwise lower estimate of F+C c<c*}
\underset{t\to+\infty}{\limsup}\ (F+C)(t,x+cte)\ge \varepsilon_1.
\end{equation}  
\end{lemma}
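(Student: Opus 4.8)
The plan is to argue by contradiction, using the compactness/limiting argument behind the ``pointwise spreading'' method of \cite{pp}. Suppose the statement is false. Taking $\varepsilon_1=1/n$, for every $n$ there are admissible data $(F_0^n,C_0^n,H_0^n)\in\Psi(r)$ with $F_0^n+C_0^n+H_0^n\ge\varepsilon^*$ and $F_0^n+C_0^n\not\equiv0$, together with $c_n\in[0,c^0]$, $e_n\in S^{N-1}$ and $x_n\in\mathbb R^N$, such that the corresponding solution $(F^n,C^n,H^n)$ violates one of the two inequalities with constant $1/n$. Passing to a subsequence, I assume the same inequality fails for all $n$, $c_n\to c_\infty\in[0,c^0]$ and $e_n\to e_\infty$. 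I would then pick $T_n\to+\infty$ realizing the failure and consider the shifted functions $(F^n,C^n,H^n)\big(T_n+t,\ x_n+c_nT_ne_n+c_nte_n+y\big)$. By the a priori inclusion $Z(t)[\Psi(r)]\subset\Psi(M(r))$ and interior parabolic estimates, these are relatively compact in $C^{1,2}_{\mathrm{loc}}(\mathbb R\times\mathbb R^N)$, and a further subsequence converges to an entire solution $(\mathcal F_\infty,\mathcal C_\infty,\mathcal H_\infty)$ of the full system \eqref{fch-equation} on $\mathbb R\times\mathbb R^N$ (the curvature terms dropping out since $\|x_n+c_nT_ne_n\|\to\infty$), with $\mathcal F_\infty,\mathcal C_\infty\ge0$, $\mathcal F_\infty+\mathcal C_\infty\le M(r)$, $0\le\mathcal H_\infty\le1$, and $\mathcal F_\infty+\mathcal C_\infty+\mathcal H_\infty\ge\varepsilon^*$ by Proposition \ref{prop of F+C+H>0 2}. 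The uniformity in the initial data costs nothing here: the compactness and the bound $\varepsilon^*$ depend on $r$ only, not on the particular data.

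The next step is to identify this limit as the homogeneous state $(0,0,1)$. If the failing inequality is the $H$-one, then $\mathcal H_\infty(t,0)\equiv1$, and the strong maximum principle for the $\mathcal H_\infty$-equation forces $\mathcal H_\infty\equiv1$, whence $\mathcal F_\infty+\mathcal C_\infty\equiv0$. If it is the $(F+C)$-one, then $\mathcal G_\infty:=\mathcal F_\infty+\mathcal C_\infty$ vanishes at an interior point; since $\partial_t\mathcal G_\infty-\Delta\mathcal G_\infty\ge-K\mathcal G_\infty$ for some $K=K(r)$, the strong maximum principle gives $\mathcal G_\infty\equiv0$, and then $\mathcal H_\infty$ solves the scalar Fisher--KPP equation $\partial_t\mathcal H_\infty-\Delta\mathcal H_\infty=b\mathcal H_\infty(1-\mathcal H_\infty)$ with $\mathcal H_\infty\ge\varepsilon^*>0$, so a Liouville-type argument forces $\mathcal H_\infty\equiv1$. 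Either way $(\mathcal F_\infty,\mathcal C_\infty,\mathcal H_\infty)\equiv(0,0,1)$.

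It remains to contradict this. For ``small'' speeds, namely $c^0<2\sqrt{\min\{1,a\}}$, there is nothing to boost: adding the $F$- and $C$-equations and using $0\le H$ together with $aF+C\ge\min\{1,a\}(F+C)$ gives $\partial_t(F+C)-\Delta(F+C)\ge h(F+C)$ with $h$ a Lipschitz monostable KPP nonlinearity satisfying $h'(0^+)=\min\{1,a\}$, so $F+C$ dominates the solution of a scalar KPP equation started from a compactly supported sub-datum of $F+C$ and hence spreads with speed $2\sqrt{\min\{1,a\}}$; this contradicts the failure outright (it forces $\liminf_tH(t,x+cte)<1$ through the $H$-equation as well). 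In the remaining range $2\sqrt{\min\{1,a\}}\le c^0<c^*=2\sqrt{1+s}$ one must exploit the $H$-boost of the converted-farmer equation: wherever $H\ge h_0$ and $F+C\le\delta$ one has $\partial_tC-\Delta C\ge(1+sh_0-\delta)C$, and for $h_0$ close enough to $1$ and $\delta$ small the fronts of this linear equation travel at a speed strictly larger than $c^0$. The idea is to run a compactly supported sub-solution of $C$ inside the ``high-$H$ corridor'': its inner edge is pinned near speed $2\sqrt{\min\{1,a\}}$ by the KPP sub-equation above, while Theorem \ref{thm leading edge} keeps $H$ close to $1$ and $F+C$ small ahead, up to speed $c^*$; seeding this sub-solution with the (exponentially small, but positive) trace of $C$ available in the corridor and invoking the comparison principle, one obtains $\liminf_{t\to\infty}(F+C)(t,x_n+c_nte_n)>0$, hence $\limsup_{t\to\infty}H(t,x_n+c_nte_n)<1$ through the $H$-equation --- contradicting the assumed failure.

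I expect this last step to be the main obstacle. One has to verify that the high-$H$ corridor lying between the baseline KPP front (speed $\simeq2\sqrt{\min\{1,a\}}$) and the leading edge (speed $c^*$) is genuinely of macroscopic width $\sim t$, and remains a ``boost zone'' (that is, $H$ bounded below by $h_0$ and $F+C$ bounded above by $\delta$ over a band of speeds in $(c^0,c^*)$) long enough for the sub-solution to overtake the speed-$c^0$ moving point; this needs a careful two-sided control of the corridor --- its lower edge from the sub-equation for $F+C$, its upper part from Theorem \ref{thm leading edge} and the super-solution estimates of Section 3 --- together with a logarithmic-shift estimate for the linear front to absorb the smallness of the $C$-seed. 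The remaining ingredients, namely the a priori bounds, the parabolic compactness, the strong maximum principle, and the scalar Liouville property, are standard.
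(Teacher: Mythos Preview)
The overall contradiction setup and the identification of the limit as $(0,0,1)$ mirror the paper's proof (this is essentially Claim~\ref{claim from pointwise converge H F+C c<c*} there). The gap is in your contradiction step, particularly the case $c^0\ge 2\sqrt{\min\{1,a\}}$.

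The ``high-$H$ corridor'' you describe does not exist as stated: Theorem~\ref{thm leading edge} gives $H\approx 1$ and $F+C\approx 0$ only for $\lVert x\rVert\ge ct$ with $c>c^*$, not on a band of speeds below $c^*$; and in the region between speeds $2\sqrt{\min\{1,a\}}$ and $c^*$ you have no a~priori control --- indeed, the very conclusion of the lemma is that $F+C$ is \emph{not} small there. What you actually have, from your own limit identification (equivalently the paper's Claim~\ref{claim from pointwise converge H F+C c<c*}), is far more local: in a single moving ball $B_R$ centered at $x_n+c_nte_n$, for all large $t$, one has $H_n\ge 1-\delta$ and $F_n\le\delta$.

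The paper exploits exactly this local information. In the frame moving at speed $c_n$, the $C_n$-equation dominates
\[
\partial_t\wideubar C_n=\Delta\wideubar C_n+c_n\nabla\wideubar C_n\cdot e_n+\wideubar C_n\big(1+s(1-\delta)-\delta-\wideubar C_n\big)\quad\text{in }B_R,
\]
and since $1+s\ge a$ forces $c^*=2\sqrt{1+s}$, one has $c_n^2/4<1+s$; hence for $\delta$ small and $R$ large the linearized growth rate $1+s(1-\delta)-\delta-c_n^2/4-\mu_R$ is positive, and a compactly supported stationary sub-solution $\psi_n=\eta\, e^{-c_nx\cdot e_n/2}\phi_R(x)$ exists. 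The crucial device you are missing is the paper's Claim~\ref{claim  of stationary solution}: the positive stationary state $p_{n,R,\delta}$ reached by the flow started from $\psi_n$ is \emph{independent} of $\eta\in(0,\eta_0]$. Thus, although one must take $\eta=\eta(n)$ arbitrarily small to slip under $C_n(t_n,\cdot)$ (which may be tiny, as the initial data vary with $n$), the eventual lower bound $p_{n,R,\delta}(0)\ge\eta_0\phi_R(0)$ is uniform in $n$, contradicting $(F_n+C_n)\to 0$. Your seeding-in-the-corridor idea faces precisely this uniformity problem and would need an analogous mechanism; the corridor framing obscures rather than supplies it. Note also that no case split on $c^0$ versus $2\sqrt{\min\{1,a\}}$ is needed: the hypothesis $1+s\ge a$ makes the local $C$-sub-solution work for every $c^0<c^*$ at once.
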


\begin{remark}
Note that, from the statement of Lemma \ref{lemma of weak pointwise spreading c<c* H}, it is immediately that $\varepsilon_1(c^0)$ can be chosen to be nonincreasing with respect to $c^0$. Hence, we slightly change our notation and denote it as $\varepsilon_1$.
\end{remark}

\begin{proof}\it{of Lemma \ref{lemma of weak pointwise spreading c<c* H}.}
\normalfont
For $H_0(x)\equiv 0$, since the $F$-component and $C$-component are nonnegative, the estimate (\ref{eq of weak pointwise upper estimate of H c<c*}) holds immediately with $\varepsilon_1=1$. Moreover, from Proposition \ref{prop of F+C+H>0 2}, the statement (\ref{eq of weak pointwise lower estimate of F+C c<c*}) also holds true. Hence, without loss of generality, we assume $H_0(x)\not\equiv 0$.
We argue by contradiction once again and assume there exist sequences
$$\{(F_{0,n},C_{0,n},H_{0,n})\}_{n\ge 0},\ \ \{c_n\}_{n\ge0}\subset[0,c^0],\ \ \{x_n\}_{n\ge0}\subset\mathbb{R}^N,$$
$$\{e_n\}_{n\ge 0}\subset S^{N-1}\ \ \mbox{and}\ \ \{t_n\}_{n\ge0}\subset[0,\infty)\ \ \mbox{such that}\ \ t_n\to+\infty, $$
such that one of the following statements hold true:
\begin{equation}\label{construction of F+C (lower estimate) c<c*}
\mbox{for all}\ \ t\ge t_n,\ (F_n+C_n)(t,x_n+c_nte_n)\le \frac{1}{n},
\end{equation}
\begin{equation}\label{construction of H (upper estimate) c<c*}
\mbox{for all}\ \ t\ge t_n,\ H_n(t,x_n+c_nte_n)\ge 1-\frac{1}{n},
\end{equation}
wherein $(F_n,C_n,H_n)$ denotes the solution with the initial data $(F_{0,n},C_{0,n},H_{0,n})$. Note without loss of generality that 
$$c_n\to c_{\infty}\in [0,c^0]\ \ \mbox{and}\ \ e_n\to e_{\infty}\in S^{N-1}.$$ 

Then, we first claim that 
\begin{claim}\label{claim from pointwise converge H F+C c<c*}
Either (\ref{construction of F+C (lower estimate) c<c*}) or (\ref{construction of H (upper estimate) c<c*}) holds true, there exists a sequence $\{t'_n\}_{n\ge0}$ satisfying $t'_n\ge t_n$ such that, for any $R>0$, it holds:
\begin{equation}\label{uniform converget to 0 of F+C c<c* in claim}
\lim_{n\to\infty}\sup_{t\ge0,x\in B_R}(F_n+C_n)(t'_n+t,x_n+c_n(t'_n+t)e_n+x)=0,
\end{equation}
\begin{equation}\label{uniform converget to 1 of H c<c* in claim}
\lim_{n\to\infty}\sup_{t\ge0,x\in B_R}|1-H_n(t'_n+t,x_n+c_n(t'_n+t)e_n+x)|=0.
\end{equation}
\end{claim}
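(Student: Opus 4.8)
The plan is to handle the two alternatives \eqref{construction of F+C (lower estimate) c<c*} and \eqref{construction of H (upper estimate) c<c*} by a single compactness argument on translated solutions, to identify the limiting entire solution with the constant state $(0,0,1)$, and then to promote the resulting local convergence to the uniform bounds \eqref{uniform converget to 0 of F+C c<c* in claim}--\eqref{uniform converget to 1 of H c<c* in claim}. Write $p^n_t:=x_n+c_nte_n$. For an arbitrary sequence $\sigma_n\ge t_n$ (so $\sigma_n\to+\infty$) consider
\[
(\mathcal F_n,\mathcal C_n,\mathcal H_n)(\tau,y):=(F_n,C_n,H_n)\bigl(\sigma_n+\tau,\ p^n_{\sigma_n+\tau}+y\bigr),\qquad \tau>-\sigma_n,\ y\in\mathbb R^N,
\]
which solves the system obtained from \eqref{fch-equation} by adding the drift $c_ne_n\cdot\nabla$. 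These functions lie in $\Psi(M(r))$, so interior parabolic estimates bound them and their derivatives uniformly in $n$; since $-\sigma_n\to-\infty$, a subsequence converges in $C^{1,2}_{\mathrm{loc}}(\mathbb R\times\mathbb R^N)$ to an entire solution $(F_\infty,C_\infty,H_\infty)$ of the same system with drift $c_\infty e_\infty\cdot\nabla$ (using $c_n\to c_\infty$, $e_n\to e_\infty$). This limit satisfies $F_\infty,C_\infty\ge0$, $0\le H_\infty\le1$, and, by Proposition \ref{prop of F+C+H>0 2}, $F_\infty+C_\infty+H_\infty\ge\varepsilon^*$; moreover, because $\sigma_n\ge t_n$, the hypothesis \eqref{construction of F+C (lower estimate) c<c*} forces $(F_\infty+C_\infty)(\tau,0)=0$ for all $\tau\ge0$, while \eqref{construction of H (upper estimate) c<c*} forces $H_\infty(\tau,0)=1$ for all $\tau\ge0$.

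I would then show the limit equals $(0,0,1)$ in either case. Under \eqref{construction of F+C (lower estimate) c<c*}, the function $G_\infty:=F_\infty+C_\infty\ge0$ solves a linear parabolic equation $\partial_\tau G_\infty-\Delta G_\infty-c_\infty e_\infty\cdot\nabla G_\infty=\gamma\,G_\infty$ with $\gamma$ bounded, since $(aF_\infty+C_\infty)/G_\infty\in[\min\{1,a\},\max\{1,a\}]$ and $H_\infty$ is bounded; as $G_\infty$ vanishes at the interior points $(\tau,0)$ with $\tau\ge0$, the strong maximum principle yields $G_\infty\equiv0$, i.e. $F_\infty\equiv C_\infty\equiv0$. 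Then $H_\infty$ is an entire solution of the KPP equation $\partial_\tau H-\Delta H-c_\infty e_\infty\cdot\nabla H=bH(1-H)$ with $\varepsilon^*\le H_\infty\le1$; comparing it from below with the spatially homogeneous solutions $\dot h=bh(1-h)$, $h(\tau_0)=\varepsilon^*$, and letting $\tau_0\to-\infty$, gives $H_\infty\equiv1$. Under \eqref{construction of H (upper estimate) c<c*}, the function $w:=1-H_\infty\ge0$ satisfies $\partial_\tau w-\Delta w-c_\infty e_\infty\cdot\nabla w+bH_\infty w=bgH_\infty G_\infty\ge0$ and vanishes at $(\tau,0)$ for $\tau\ge0$, so the strong maximum principle again gives $H_\infty\equiv1$; substituting into the $H$-equation forces $gG_\infty\equiv0$, hence $F_\infty\equiv C_\infty\equiv0$. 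Either way $(F_\infty,C_\infty,H_\infty)\equiv(0,0,1)$, and since this holds for every convergent subsequence and every choice of $\sigma_n\ge t_n$, the translated solutions converge to $(0,0,1)$ along the full sequence.

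To conclude I would take $t'_n:=t_n$ (any $t'_n\ge t_n$ works equally well) and argue by contradiction. If \eqref{uniform converget to 0 of F+C c<c* in claim}--\eqref{uniform converget to 1 of H c<c* in claim} failed for some $R_0>0$, there would exist $\delta_0>0$, a subsequence, times $\tau_k\ge0$, and points $y_k\in\overline{B_{R_0}}$ for which $(F_{n_k}+C_{n_k})+|1-H_{n_k}|$, evaluated at the time $\sigma_k:=t_{n_k}+\tau_k$ and position $p^{n_k}_{\sigma_k}+y_k$, exceeds $\delta_0$. Applying the previous two steps with this $\sigma_k\ (\ge t_{n_k})$, and passing to a further subsequence so that $y_k\to y_*\in\overline{B_{R_0}}$, the corresponding translated solutions converge in $C^{1,2}_{\mathrm{loc}}$ to $(0,0,1)$, in particular at $(\tau,y)=(0,y_k)\to(0,y_*)$, contradicting the lower bound $\delta_0$. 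This proves the claim.

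The main obstacle is exactly the feature emphasized in the introduction: the comparison principle fails for the full system, so the limiting entire solution cannot be classified in one stroke and must be peeled off one component at a time — the strong maximum principle first eliminates $F_\infty$ and $C_\infty$ (or pins $H_\infty=1$), and only afterwards, with Proposition \ref{prop of F+C+H>0 2} keeping $H_\infty$ bounded away from $0$, does the scalar KPP comparison force $H_\infty=1$ (or the $H$-equation force $F_\infty=C_\infty=0$). A minor point one must be careful with is that $G_\infty$ (resp. $1-H_\infty$) is only known to vanish at $(\tau,0)$ for $\tau\ge0$; but the parabolic strong maximum principle propagates such a zero backward in time, so letting the reference time tend to $+\infty$ recovers the identity on all of $\mathbb R\times\mathbb R^N$.
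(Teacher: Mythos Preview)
Your argument is correct and follows essentially the same route as the paper: pass to a limiting entire solution along translated sequences, use the strong maximum principle on $G_\infty=F_\infty+C_\infty$ (respectively on $1-H_\infty$) to pin one component, then use the remaining equation together with Proposition~\ref{prop of F+C+H>0 2} to identify the full limit as $(0,0,1)$, and finally contradict. Your write-up is in fact more explicit than the paper's in two places---you spell out the bounded-coefficient linear equation satisfied by $G_\infty$ and by $1-H_\infty$, and you make precise how the backward propagation of zeros combined with letting the reference time tend to $+\infty$ yields the identity on all of $\mathbb R\times\mathbb R^N$---but the underlying mechanism is identical, and your choice $t'_n=t_n$ is exactly what the paper uses implicitly in the sequel.
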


\begin{proof}\it{of Claim \ref{claim from pointwise converge H F+C c<c*}.}
\normalfont
We first prove that the statement (\ref{construction of H (upper estimate) c<c*}) implies the statements (\ref{uniform converget to 0 of F+C c<c* in claim}) and (\ref{uniform converget to 1 of H c<c* in claim}) hold true.
To proceed by contradiction, we assume that for any $R>0$, there exist $\delta>0$, $s_n>t_n$ and $x'_n\in B_R$ such that 
$$|1-H_n(s_n,x_n+c_ns_ne_n+x'_n)|\ge \delta.$$
Due to standard parabolic estimates, possibly along a subsequence, one may assume that 
\begin{equation*}
\left\{
\begin{aligned}
&\underset{n\to\infty}{\lim}F_n(s_n+t,x_n+c_n(s_n+t)e_n+x)= F_{\infty}(t,x),\\
&\underset{n\to\infty}{\lim}C_n(s_n+t,x_n+c_n(s_n+t)e_n+x)= C_{\infty}(t,x),\\
&\underset{n\to\infty}{\lim}H_n(s_n+t,x_n+c_n(s_n+t)e_n+x)= H_{\infty}(t,x).
\end{aligned}
\right.
\end{equation*}
The above convergences hold locally uniformly in $(t,x)\in\mathbb{R}\times \mathbb{R}^N$ and $(F_{\infty},C_{\infty},H_{\infty})$ is an entire solution of the following system
\begin{equation}\label{F+C infty system}
\left\{
\begin{aligned}
&\partial_tF_{\infty}=\Delta F_{\infty}+c_{\infty}\nabla F_{\infty}\cdot e_{\infty}+aF_{\infty}(1-C_{\infty}-F_{\infty}),\\
&\partial_tC_{\infty}=\Delta C_{\infty}+c_{\infty}\nabla C_{\infty}\cdot e_{\infty}+C_{\infty}(1-C_{\infty}-F_{\infty})+s(F_{\infty}+C_{\infty})H_{\infty},\\
&\partial_tH_{\infty}=\Delta H_{\infty}+c_{\infty}\nabla H_{\infty}\cdot e_{\infty}+bH_{\infty}(1-gF_{\infty}-gC_{\infty}-H_{\infty}).
\end{aligned}
\right.
\end{equation}

From the strong maximum principle and the construction (\ref{construction of H (upper estimate) c<c*}), one has $H_{\infty}\equiv 1$, and hence $F_{\infty}+C_{\infty}\equiv 0$ by considering the $H_{\infty}$-equation in the system (\ref{F+C infty system}). However, since the sequence $\{x'_n\}\subset B_R$ is relatively compact, $H_{\infty}\equiv 1$ contradicts the fact that $|1-H_{\infty}|(0,x'_{\infty})\ge \delta$. It proves that the statement (\ref{uniform converget to 1 of H c<c* in claim}) holds true. The  statement (\ref{uniform converget to 0 of F+C c<c* in claim}) follows from the same approach.

Next, we prove that the statement (\ref{construction of F+C (lower estimate) c<c*}) implies the statements (\ref{uniform converget to 0 of F+C c<c* in claim}) and (\ref{uniform converget to 1 of H c<c* in claim}) hold true.
To proceed by contradiction, we assume that for any $R>0$, there exist $\delta>0$, $s_n>t_n$ and $x'_n\in B_R$ such that 
$$(F_n+C_n)(s_n,x_n+c_ns_ne_n+x'_n)\ge \delta.$$
Due to standard parabolic estimates, possibly along a subsequence, one may assume that
\begin{equation*}
\left\{
\begin{aligned}
&\underset{n\to\infty}{\lim}F_n(s_n+t,x_n+c_n(s_n+t)e_n+x)= F_{\infty}(t,x),\\
&\underset{n\to\infty}{\lim}C_n(s_n+t,x_n+c_n(s_n+t)e_n+x)= C_{\infty}(t,x),\\
&\underset{n\to\infty}{\lim}H_n(s_n+t,x_n+c_n(s_n+t)e_n+x)= H_{\infty}(t,x).
\end{aligned}
\right.
\end{equation*}
The above convergences hold locally uniformly in $(t,x)\in\mathbb{R}\times \mathbb{R}^N$ and $(F_{\infty},C_{\infty},H_{\infty})$ is an entire solution of the system (\ref{F+C infty system}).

From the strong maximum principle and the construction (\ref{construction of F+C (lower estimate) c<c*}), one has $F_{\infty}+C_{\infty}\equiv 0$. Then, from Proposition \ref{prop of F+C+H>0 2}, one has $H_{\infty}\ge \varepsilon^*>0$,  and hence $H_{\infty}\equiv 1$. However, since the sequence $\{x'_n\}\subset B_R$ is relatively compact, $F_{\infty}+C_{\infty}\equiv 0$ contradicts the fact that $(F_{\infty}+C_{\infty})(0,x'_{\infty})\ge \delta$. It proves that the statement  (\ref{uniform converget to 0 of F+C c<c* in claim}) holds true. The  statement (\ref{uniform converget to 1 of H c<c* in claim}) follows from the same approach.
\end{proof}
\vspace{10pt}

Now, we can go back to the proof of Lemma \ref{lemma of weak pointwise spreading c<c* H}.  From the statements (\ref{uniform converget to 0 of F+C c<c* in claim}) and (\ref{uniform converget to 1 of H c<c* in claim}), for any $R>0$ and small enough $\delta>0$ , for any $n$ large enough, one has for all $t>0$ and $x\in\mathbb{R}^N$,
$$F_n(t_n+t,x_n+c_n(t_n+t)e_n+x)\le \chi_{\mathbb{R}^N\setminus B_R}+\delta\chi_{B_R}(x):=\widebar{F}(x),$$
and
$$H_n(t_n+t,x_n+c_n(t_n+t)e_n+x)\ge (1-\delta)\chi_{B_R}(x):=\wideubar{H}(x).$$
Then one infers from the comparison principle that
$$C_n(t_n+t,x_n+c_n(t_n+t)e_n+x)\ge \wideubar{C}_n(t,x)\ \mbox{for all}\ \ t\ge 0,\ x\in\mathbb{R}^N,$$
wherein $\wideubar{C}_n$ is the solution of 
\begin{equation}\label{equation of sub solution of C_n g>1}
\left\{
\begin{aligned}
&\partial_t\wideubar{C}_n=\Delta\wideubar{C}_n+c_n\nabla\wideubar{C}_n\cdot e_n+\wideubar{C}_n(1+s\wideubar{H}-\widebar{F}-\wideubar{C}_n),\\
&\wideubar{C}_n(0,x)=C_n(t_n,x_n+c_nt_ne_n+x).
\end{aligned}
\right.
\end{equation}

For each $R>0$, let $\phi_R$ to be the principal eigenfunction as 
\begin{equation}\label{eigenfunction}
\left\{
\begin{aligned}
&\Delta\phi_R=\mu_R\phi_R\ \ \mbox{in}\ \ B_R,\\
&\phi_R=0\ \ \mbox{on}\ \ \partial B_R,
\end{aligned}
\right.
\end{equation}
that is
normalized so that $\lVert\phi_R\lVert=1$, and extend it by $0$ outside of the ball $B_R$. Then, we construct a stationary sub-solution $\psi(t,x;\eta)$, for each $\eta>0$, as
$$\psi_n(t,x;\eta)=\eta e^{-c_nx\cdot e_n/2}\phi_R(x).$$
Since $c^0<c^*=2\sqrt{1+s}$, one can check that there exist $\eta_0$ depending only on $c^0$ such that for any $\delta$ small enough, $0<\eta\le \eta_0$ and $R$ large enough,  for each $n>0$, $\psi_n$ satisfies

\begin{equation*}
\begin{aligned}
&\partial_t\psi_n-\Delta\psi_n-c_n\nabla\psi_n\cdot e_n-\psi_n(1-\psi_n+s(1-\delta)-\delta)\\
=&\Big(c_n^2/4-\mu_R-(1+s(1-\delta)-\delta)\Big)\psi_n(t,x;\eta)+\psi_n(t,x;\eta)^2\\
\le&\ 0.
\end{aligned}
\end{equation*}
Moreover, since $\mbox{supp}\psi_n\subset B_R$, the function $\psi_n$ is  a  stationary sub-solution of the equation (\ref{equation of sub solution of C_n g>1}).
Therefore, the solution of the equation (\ref{equation of sub solution of C_n g>1}) associated with initial data $\psi_n(t,x;\eta)$ is increasing in time, and converges to a positive stationary solution that is denoted by $p_{n,R,\delta}(x)$. 

Moreover, we claim that:
\begin{claim}\label{claim  of stationary solution}
$p_{n,R,\delta}(x)$ does not depend on the choice of $\eta\in(0,\eta_0]$\ . 
\end{claim}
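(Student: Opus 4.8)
The plan is to reduce the claim to the uniqueness of the positive bounded stationary solution of the equation underlying \eqref{equation of sub solution of C_n g>1}, and to prove that uniqueness by a sliding (sweeping) argument. Write the stationary equation as $\Delta w+c_n\nabla w\cdot e_n+w(a_0(x)-w)=0$ on $\mathbb{R}^N$, with $a_0(x):=1+s\wideubar{H}(x)-\widebar{F}(x)$; for $\delta$ small this $a_0$ is bounded, positive on $B_R$, and equals $0$ outside $B_R$. Denote by $p^\eta$ the stationary solution $p_{n,R,\delta}$ constructed above, written this way to display its a priori dependence on $\eta$. Since $\psi_n(\cdot;\eta)\le\psi_n(\cdot;\eta')$ whenever $\eta\le\eta'$, the comparison principle applied to the scalar equation \eqref{equation of sub solution of C_n g>1} shows that $\eta\mapsto p^\eta$ is nondecreasing; hence it suffices to prove $p^{\eta'}\le p^{\eta}$ for $\eta<\eta'$, and since each $p^\eta$ is a positive bounded stationary solution, this follows once such solutions are known to be unique.

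For uniqueness, let $p_1,p_2$ be two positive bounded stationary solutions. The reaction $w\mapsto w(a_0(x)-w)$ is of KPP type (the quotient $w\mapsto a_0(x)-w$ is decreasing), so for every $\kappa\ge1$ the function $\kappa p_1$ is a stationary supersolution, with $\Delta(\kappa p_1)+c_n\nabla(\kappa p_1)\cdot e_n+\kappa p_1(a_0-\kappa p_1)=-\kappa(\kappa-1)p_1^2\le0$. Each $p_i$ is bounded (the constant $1+s$ is a supersolution), is bounded below by a positive constant on the compact set $\overline{B_R}$ by the strong maximum principle, and satisfies $p_i(x)=O(|x|^{-1})$ as $|x|\to\infty$, because outside $B_R$ the reaction is $-w^2$ and $K(1+\mathrm{dist}(x,B_R))^{-1}$ is an explicit supersolution there for $K$ large. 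In particular $p_2\le\kappa p_1$ for $\kappa$ large, so $\kappa^*:=\inf\{\kappa\ge1:\kappa p_1\ge p_2\text{ on }\mathbb{R}^N\}$ is finite and $\kappa^*p_1\ge p_2$. If $\kappa^*=1$ then $p_2\le p_1$, and $p_1=p_2$ by symmetry.

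It remains to rule out $\kappa^*>1$. Set $z:=(\kappa^*p_1-p_2)/p_1\ge0$, a bounded function valued in $[0,\kappa^*)$. A short computation from the two equations gives $\Delta z+(c_ne_n+2\nabla\log p_1)\cdot\nabla z+c_0(x)z=-\kappa^*(\kappa^*-1)p_1$, where $c_0(x)=p_1(x)(z(x)-(2\kappa^*-1))\le-(\kappa^*-1)p_1(x)<0$. Since the right-hand side is strictly negative and $c_0\le0$, the strong maximum principle forces $z>0$ everywhere, so $\inf_{\mathbb{R}^N}z=0$ (which holds by definition of $\kappa^*$) is approached only as $|x|\to\infty$. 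Taking $x_k\to\infty$ with $z(x_k)\to0$, translating the functions, and using interior elliptic estimates together with the Harnack inequality for $p_1$, one passes to a limit along the $x_k$ in which the $O(|x|^{-1})$ decay keeps the forcing term alive, producing a contradiction; hence $\kappa^*=1$, and applying this with $p_1=p^{\eta}$, $p_2=p^{\eta'}$ finishes the claim.

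The step I expect to be the main obstacle is precisely this last one: running the sweeping argument on the unbounded domain $\mathbb{R}^N$, i.e.\ excluding the scenario in which the ratio $p_2/p_1$ attains its supremum $\kappa^*>1$ only at infinity. This is the familiar difficulty in proving uniqueness of positive steady states for KPP-type equations on the whole space. Two routes are available: (a) sharpen the behavior at infinity of $p_1$ and $p_2$ — matched two-sided bounds, or first-order asymptotics coming from the exterior equation $\Delta w+c_n\nabla w\cdot e_n=w^2$ — so that the normalized blow-down limit along $x_k$ still records a strict inequality; or (b) localize, running the whole argument on large balls $B_\rho$ with zero Dirichlet data, where the stationary problem has a unique positive solution by the classical sweeping argument on bounded domains, and then letting $\rho\to\infty$. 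In both approaches the essential mechanism is the same: $\kappa p_1$ is a stationary supersolution by the KPP structure, and the strong maximum principle does the rest.
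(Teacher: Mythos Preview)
Your reduction to uniqueness of positive bounded steady states is more than the claim requires, and the gap you flag is real rather than cosmetic. With the drift $c_n e_n$ and the equation $\Delta w+c_n\nabla w\cdot e_n=w^2$ outside $B_R$, the decay of the $p_i$ is anisotropic; you have given an $O(|x|^{-1})$ upper bound but no matching lower bound, so there is no a priori reason $p_2/p_1$ is bounded on $\mathbb{R}^N$, and your $\kappa^*$ may not even be finite. Even granting that, in the blow-down along $x_k\to\infty$ both the forcing term $-\kappa^*(\kappa^*-1)p_1$ and the zeroth-order coefficient $c_0$ vanish while the first-order coefficient $2\nabla\log p_1$ need not stay bounded, so the limiting contradiction is not there without substantial further asymptotics. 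Your route (b) is workable in principle, but it requires identifying $p^\eta$ with the $\rho\to\infty$ limit of the Dirichlet solutions on $B_\rho$, which is extra work.

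The paper avoids all of this by sliding a different family: instead of scaling $\kappa p_1$, it slides the compactly supported subsolutions $\psi(\cdot;\eta)=\eta e^{-c_n x\cdot e_n/2}\phi_R$ themselves. Assuming $p_{\eta_1}<p_{\eta_0}$ for some $\eta_1<\eta_0$, one cannot have $\psi(\cdot;\eta_0)\le p_{\eta_1}$ (else the flow from $\psi(\cdot;\eta_0)$ would stay below $p_{\eta_1}$, forcing $p_{\eta_0}\le p_{\eta_1}$). Setting $\eta^*:=\sup\{\eta\ge\eta_1:\psi(\cdot;\eta)\le p_{\eta_1}\}\in[\eta_1,\eta_0)$, continuity in $\eta$ gives a touching point $x_0$, and since $\mathrm{supp}\,\psi(\cdot;\eta)\subset\overline{B_R}$ while $p_{\eta_1}>0$ on $\partial B_R$, necessarily $x_0\in B_R$. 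But the parabolic flow from $\psi(\cdot;\eta^*)$ is strictly increasing in time and stays below $p_{\eta_1}$ by comparison, so $\psi(\cdot;\eta^*)<p_{\eta_1}$ on $B_R$, a contradiction. The entire argument lives on the compact set $\overline{B_R}$; no behaviour at infinity enters. Your approach, if completed, would prove a genuinely stronger statement (uniqueness of all positive bounded steady states), but for the claim as stated the paper's sliding-in-$\eta$ trick is both shorter and free of the obstacle you identified.
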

\begin{proof}\it{of Claim \ref{claim  of stationary solution}.}
\normalfont
To check this, let us change our notation for simplicity and denote the stationary sub-solution and  this stationary solution as $\psi$ and $p_{\eta}$. 
We first note that the comparison principle implies that $p_{\eta}\le p_{\eta'}$ for any $\eta <\eta'$. Next, let us assume by contradiction that there exists $\eta_1<\eta_0$ with $p_{\eta_1}\not\equiv p_{\eta_0}$. Hence, infer from the strong maximum principle, one has $p_{\eta_1}< p_{\eta_0}$. Moreover, there exists a point $x_0\in B_R$ such that $\psi(0,x_0;\eta_0)>p_{\eta_1}(x_0)$. Indeed, if not, then $\psi(0,x;\eta_0)\le p_{\eta_1}(x)$ for all $x\in\mathbb{R}^N$, which yields $p_{\eta_1}\ge p_{\eta_0}$, and reaches a contradiction.

Then, we consider
$$\eta^*=\sup\{\eta\ge\eta_1\ ;\ \psi(0,x;\eta)\le p_{\eta_1}(x)\ \ \mbox{for all}\ \  x\in\mathbb{R}^N\}.$$
One can deduce from the comparison principle and the strong maximum principle that 
$$\psi(0,x;\eta^*)<\psi(t,x;\eta^*)<p_{\eta_1}(x)\ \ \mbox{for all}\ \ t>0,\ x\in\mathbb{R}^N. $$
On the other hand, from the definition of $\eta^*$ and recalling that the function $\psi$ has compact support $B_R$, there exists $x_0\in B_R$ such that $\psi(0,x_0;\eta^*)=p_{\eta_1}(x_0)$, which reaches a contradiction. 
\end{proof} 
\vspace{10pt}

Now, since the initial data satisfies $F_0(x)+C_0(x)\not\equiv 0$ and $H_0(x)\not\equiv 0$, the strong maximum principle implies that $C_n$ is not trivial. Hence, we can choose $\eta$ sufficiently small such that $\wideubar{C}_n(0,x)\ge\psi_n(0,x;\eta)$ for all $x\in\mathbb{R}^N$. Then, it follows from the comparison principle that for any
$R>0$ large enough and $\delta>0$ small enough and $n$ large enough, it holds
\begin{equation}\label{eq 1}
\liminf_{t\to\infty}C_n(t_n+t,x_n+c_n(t_n+t)e_n+x)\ge \liminf_{t\to\infty}\wideubar{C}_n(t,x)\ge p_{n,R,\delta}(x)\ \ \mbox{for all}\ \ x\in\mathbb{R}^N.
\end{equation}

To complete the proof of this lemma, it remains to check that $p_{n,R,\delta}$ is far way from $0$ as $n$ and $R$ are large enough and $\delta$ is small.
Since $p_{n,R,\delta}$ is bounded from above by $1+s$, one can use standard elliptic estimates to get that, as $n\to +\infty$, $R\to+\infty$ and $\delta\to 0$, the function $p_{n,R,\delta}(x)$ converges locally uniformly to a stationary solution $p_{\infty}(x)$ of the equation
\begin{equation*}
\Delta p_{\infty}+c_{\infty}\nabla p_{\infty}\cdot e_n+p_{\infty}(1+s-p_{\infty})=0.
\end{equation*}
Moreover, since the map $t\to\psi(t,x;\eta_0)$ is nondecreasing, one has $p_{n,R,\delta}(0)\ge\psi(0,0;\eta_0)\ge\eta_0\psi_R(0)$. Note that $\varphi_R\to 1$ locally uniformly as $R\to+\infty$, hence $p_{\infty}(0)\ge\eta_0$ and $p_{\infty}(x)>0$ for all $x\in\mathbb{R}^N$. Therefore, from the  statements (\ref{uniform converget to 0 of F+C c<c* in claim}) and (\ref{eq 1}), one can reach a contradiction and completes the proof of  Lemma \ref{lemma of weak pointwise spreading c<c* H}.
\end{proof}
\vspace{10pt}
\subsubsection{Second step: pointwise strong spreading property}
Next, we deal with the following improved result of Lemma \ref{lemma of weak pointwise spreading c<c* H}. 
\begin{lemma}\label{lemma of strong spreading of H c<c*}
If $d=1$ and $1+s\ge a$, there exists $\varepsilon_2>0$ such that, for any given initial data $(F_0,C_0,H_0)\in\Psi(r)$ satisfying $F_0+C_0+H_0\ge \varepsilon^*$ and $F_0+C_0\not\equiv 0$, for all $c\in[0, c^0]$, $e\in S^{N-1}$ and $x\in\mathbb{R}^N$, the solution $(F,C,H)$ of the system \textnormal{(\ref{fch-equation})} satisfies:
\begin{equation}\label{eq of strong estimate of H  c<c*}
\underset{t\to+\infty}{\limsup}\ H(t,x+cte)\le 1-\varepsilon_2,
\end{equation}
\begin{equation}\label{eq of strong estimate of F+C c<c*}
\underset{t\to+\infty}{\liminf}\ (F+C)(t,x+cte)\ge \varepsilon_2.
\end{equation}
\end{lemma}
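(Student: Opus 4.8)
The plan is to upgrade the weak pointwise spreading of Lemma~\ref{lemma of weak pointwise spreading c<c* H} to the strong version by a contradiction-and-compactness argument. Two structural facts drive everything: Lemma~\ref{lemma of weak pointwise spreading c<c* H} is \emph{uniform with respect to admissible initial data}, and, by Proposition~\ref{prop of F+C+H>0 2} (recall $d=1$) together with the strong maximum principle, every time slice $(F(s,\cdot),C(s,\cdot),H(s,\cdot))$ of a solution --- and any locally uniform limit of such slices --- is again an admissible datum. Hence the weak lemma may be applied to the solution restarted at any time and, after passing to a limit, to the time-$0$ slice of an entire solution. It is enough to prove \eqref{eq of strong estimate of F+C c<c*}: once it holds, \eqref{eq of strong estimate of H c<c*} follows because in any limit profile obtained below, if $H_\infty$ attains the value $1$ at an interior point then $1-H_\infty\equiv0$ for negative times by the strong maximum principle, whence the $H$-equation forces $F_\infty+C_\infty\equiv0$ there, contradicting the lower bound \eqref{eq of strong estimate of F+C c<c*}.

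Arguing by contradiction, if no uniform $\varepsilon_2>0$ works for \eqref{eq of strong estimate of F+C c<c*}, there are admissible data, speeds $c_n\in[0,c^0]$ with $c_n\to c_\infty$, directions $e_n\to e_\infty\in S^{N-1}$, points $x_n$, and times $t_n\to+\infty$ with $(F_n+C_n)(t_n,x_n+c_nt_ne_n)\to0$. Write $\gamma_n(\tau):=x_n+c_n\tau e_n$. By Lemma~\ref{lemma of weak pointwise spreading c<c* H}, $\limsup_{\tau\to+\infty}(F_n+C_n)(\tau,\gamma_n(\tau))\ge\varepsilon_1$, so for $n$ large the trace $\tau\mapsto(F_n+C_n)(\tau,\gamma_n(\tau))$ takes values $\ge\varepsilon_1/2$ and values $<1/n$ at arbitrarily large times. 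I may therefore pick a downward crossing: a time $\sigma_n\to+\infty$ with $(F_n+C_n)(\sigma_n,\gamma_n(\sigma_n))=\varepsilon_1/2$ and a time $\tau_n=\sigma_n+\rho_n>\sigma_n$ with $(F_n+C_n)(\tau_n,\gamma_n(\tau_n))\le1/n$ and $(F_n+C_n)(\tau,\gamma_n(\tau))\le\varepsilon_1/2$ for all $\tau\in[\sigma_n,\tau_n]$; a uniform Lipschitz-in-time bound along the frame forces $\rho_n$ to be bounded away from $0$.

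Now I would distinguish two cases. If $\rho_n\to+\infty$ along a subsequence, then translating \emph{along the frame}, the solutions $F_n(\sigma_n+t,\gamma_n(\sigma_n)+c_nte_n+z)$, etc., converge in $C^{1,2}_{loc}(\mathbb R\times\mathbb R^N)$ (by interior parabolic estimates and the uniform bounds on the solutions) to an entire solution $(F_\infty,C_\infty,H_\infty)$ of the drift system \eqref{F+C infty system}, with $F_\infty+C_\infty+H_\infty\ge\varepsilon^*$, $(F_\infty+C_\infty)(0,0)=\varepsilon_1/2$ and $(F_\infty+C_\infty)(t,0)\le\varepsilon_1/2$ for all $t\ge0$. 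Undoing the drift, $(\check F,\check C,\check H)(t,z):=(F_\infty,C_\infty,H_\infty)(t,z-c_\infty te_\infty)$ is an entire solution of \eqref{fch-equation} whose time-$0$ slice is admissible, and $(\check F+\check C)(t,c_\infty te_\infty)=(F_\infty+C_\infty)(t,0)\le\varepsilon_1/2$ for $t\ge0$; applying Lemma~\ref{lemma of weak pointwise spreading c<c* H} to $(\check F,\check C,\check H)$ restarted at time $0$ along the frame of speed $c_\infty\le c^0$ gives $\limsup_{t\to+\infty}(\check F+\check C)(t,c_\infty te_\infty)\ge\varepsilon_1$, a contradiction. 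If instead $\rho_n$ is bounded, extract $\rho_n\to\rho_\infty\in(0,\infty)$ and translate along the frame about $\tau_n$: the limit $(F_\infty,C_\infty,H_\infty)$ again solves \eqref{F+C infty system}, now with $(F_\infty+C_\infty)(0,0)=0$; since $F_\infty,C_\infty\ge0$ solve linear parabolic equations with bounded coefficients and vanish at $(0,0)$, the strong maximum principle gives $F_\infty\equiv C_\infty\equiv0$ for $t\le0$, whereas $(F_\infty+C_\infty)(-\rho_\infty,0)=\lim_n(F_n+C_n)(\sigma_n,\gamma_n(\sigma_n))=\varepsilon_1/2>0$, again a contradiction. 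This establishes \eqref{eq of strong estimate of F+C c<c*} and, by the first paragraph, \eqref{eq of strong estimate of H c<c*} as well, completing the proof of Lemma~\ref{lemma of strong spreading of H c<c*}.

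The step that I expect to require the most care is the first case: one must translate \emph{following the moving frame}, so that the limit solves the drift system \eqref{F+C infty system}, and then undo the drift to return to \eqref{fch-equation} while keeping the frame speed equal to $c_\infty\le c^0$, so that Lemma~\ref{lemma of weak pointwise spreading c<c* H} applies; one must also verify that the limiting data, being a locally uniform limit of admissible data, still satisfies $F_0+C_0+H_0\ge\varepsilon^*$ and $F_0+C_0\not\equiv0$ (the latter because $(\check F+\check C)(0,0)=\varepsilon_1/2>0$). Finally, the uniformity over all $x\in\mathbb R^N$ in the statement is absorbed into the uniformity of the weak lemma over initial data, since moving the centre of the frame amounts to translating $F_0$.
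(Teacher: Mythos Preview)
Your argument is correct and follows essentially the same route as the paper: a contradiction set-up, an excursion/crossing along the moving frame between the level $\varepsilon_1/2$ and a vanishing level, a bounded/unbounded dichotomy on the length of the excursion, and in the unbounded case an application of Lemma~\ref{lemma of weak pointwise spreading c<c* H} to the entire limit solution (whose admissibility is ensured by Proposition~\ref{prop of F+C+H>0 2} and the crossing value $\varepsilon_1/2$). The paper proves the $H$-estimate first and says the $(F+C)$-estimate ``follows from the same approach'', whereas you prove the $(F+C)$-estimate first and deduce the $H$-estimate by a short strong-maximum-principle argument; and the paper shows directly that the excursion length is unbounded (your Case~2 is exactly its argument for this), while you keep both cases explicit. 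These are cosmetic differences; the mechanism is the same.
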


\begin{proof}\it{of Lemma \ref{lemma of strong spreading of H c<c*}.}
\normalfont
We first deal with the proof of the statement (\ref{eq of strong estimate of H  c<c*}).
Without loss of generality, we assume $H_0(x)\not\equiv 0$.
We proceed by contradiction and  assume that there exists a sequences $\{x_n\}_{n>0}\subset\mathbb{R}^N$, $\{c_n\}_{n>0}\subset[0, c^0]$ and $\{e_n\}_{n>0}\subset S^{N-1}$ such that
\begin{equation*}
\limsup_{t\to+\infty}H_n(t,x_n+c_nte_n)\ge 1-\frac{1}{n}.
\end{equation*}
From Lemma \ref{lemma of weak pointwise spreading c<c* H}, there exist two sequences $\{t_n\}_{n\ge 0}$ with $t_n\to\infty$ and $\{s_n\}_{n\ge 0}\subset\mathbb{R}_+$ such that for each $n>0$,
\begin{equation*}
H_n(t_n+s_n,x_n+c_n(t_n+s_n)e_n)=1-\frac{1}{n},
\end{equation*}
\begin{equation*}
H_n(t,x_n+c_nte_n)\ge 1-\frac{\varepsilon_1}{2}\ \ \mbox{for all}\ \ t\in[t_n,t_n+s_n],
\end{equation*}
\begin{equation*}
H_n(t_n,x_n+c_nt_ne_n)=1-\frac{\varepsilon_1}{2}.
\end{equation*}

We assume as before, possibly along a subsequence, the functions 
$$(F_n, C_n, H_n)(t_n+t,x_n+c_n(t_n+s_n)e_n+x)$$
converge locally uniformly to $(F_{\infty}, C_{\infty}, H_{\infty})$, which is an entire solution of
\begin{equation}\label{eq F C H infty without drift 2}
\left\{
\begin{aligned}
&\partial_tF_{\infty}=\Delta F_{\infty}+aF_{\infty}(1-C_{\infty}-F_{\infty}),\\
&\partial_tC_{\infty}=\Delta C_{\infty}+C_{\infty}(1-C_{\infty}-F_{\infty})+s(F_{\infty}+C_{\infty})H_{\infty},\\
&\partial_tH_{\infty}=\Delta H_{\infty}+bH_{\infty}(1-gF_{\infty}-gC_{\infty}-H_{\infty}).
\end{aligned}
\right.
\end{equation}
From the choices of sequences $\{t_n\}_{n\ge 0}$ and $\{s_n\}_{n\ge 0}$, one has $H_{\infty}(0,0)=1$, and hence $H_{\infty}\equiv 1$. In particular, the sequence $\{s_n\}_{n\ge 0}$ is unbounded since it contradicts the fact that
\begin{equation*}
\lim_{n\to\infty}H_n(t_n,x_n+c_nt_ne_n)=1-\frac{\varepsilon_1}{2}<1.
\end{equation*}
Therefore, we assume that $s_n\to+\infty$ as $n\to\infty$.

Now let us consider the limit functions as follows:
\begin{equation*}
\widetilde{F}(t,x)=\lim_{n\to\infty}F_n(t_n+t,x_n+c_nt_ne_n+x),
\end{equation*}
\begin{equation*}
\widetilde{C}(t,x)=\lim_{n\to\infty}C_n(t_n+t,x_n+c_nt_ne_n+x),
\end{equation*}
\begin{equation*}
\widetilde{H}(t,x)=\lim_{n\to\infty}H_n(t_n+t,x_n+c_nt_ne_n+x),
\end{equation*}
which are well defined thanks to parabolic estimates. The pair $(\widetilde{F}, \widetilde{C}, \widetilde{H})$ is an entire solution of the system (\ref{fch-equation}). 
Then we look on $(\widetilde{F}, \widetilde{C}, \widetilde{H})$ as a solution of the system (\ref{fch-equation}) with initial data
\begin{equation*}
(\widetilde{F}_0, \widetilde{C}_0, \widetilde{H}_0):=\lim_{n\to\infty}(F_n(t_n,x_n+c_nt_ne_n+x), C_n(t_n,x_n+c_nt_ne_n+x), H_n(t_n,x_n+c_nt_ne_n+x)).
\end{equation*}
Note that, it follows from Proposition \ref{prop of F+C+H>0 2} that $\widetilde{F}_0(x)+\widetilde{C}_0(x)+\widetilde{H}_0(x)\ge \varepsilon^*$.

Since $\widetilde{H}_0(0)=1-\varepsilon_1/2$, by applying Proposition \ref{prop of F+C+H>0 2}, one may find $\widetilde{F}_0(x)+\widetilde{C}_0(x)\not\equiv 0$. Thus, by applying Lemma \ref{lemma of weak pointwise spreading c<c* H}, one has
\begin{equation}\label{eq 5}
\mbox{for all}\ \ x\in\mathbb{R}^N,\ \ \liminf_{t\to\infty}\widetilde{H}(t,x+c_{\infty}te)\le 1-\varepsilon_1. 
\end{equation}
One the other hand, for all $t\in[0,s_n)$, it holds
\begin{equation*}
H_n(t_n+t,x_n+c_nt_ne_n+c_nte_n)\ge 1-\frac{\varepsilon_1}{2}.
\end{equation*}
Since $s_n\to+\infty$, we get by the locally uniform convergence that
\begin{equation*}
\widetilde{H}(t,c_{\infty}te_{\infty})\ge 1-\frac{\varepsilon_1}{2} \ \ \mbox{for all}\ \  t\ge 0,
\end{equation*}
which contradicts the result (\ref{eq 5}) concluded from Lemma \ref{lemma of strong spreading of H c<c*}. Thus, the proof of the statement (\ref{eq of strong estimate of H c<c*}) is complete. The statement (\ref{eq of strong estimate of F+C c<c*}) follows immediately from the same approach.
\end{proof}
\vspace{10pt}

\subsubsection{Third step: uniform spreading property}
In this subsection,  we complete the proof of Theorem \ref{thm final zone 1+s>a} by showing that results of Lemma \ref{lemma of strong spreading of H c<c*} holds uniform on $\lVert x\lVert\le ct$ for all $0\le c< c^*$.

\begin{lemma}\label{lemma of uniform spreading of H and  F+C c<c* on x}
If $d=1$ and $1+s\ge a$, there exists $\varepsilon_3>0$ such that, for any $c\in[0.c^0]$, for any given initial data $(F_0,C_0,H_0)\in \Psi(r)$ satisfying $F_0+C_0+H_0\ge \varepsilon^*$ and $F_0+C_0\not\equiv 0$, the solution $(F,C,H)$ of the system \textnormal{(\ref{fch-equation})} satisfies:
\begin{equation*}
\begin{array}{cl}
&\underset{t\to+\infty}{\liminf}\underset{\lVert x\lVert\le ct}{\inf}(F+C)(t,x)\ge \varepsilon_3,\\
&\underset{t\to+\infty}{\limsup}\underset{\lVert x\lVert\le ct}{\sup}H(t,x)\le 1-\varepsilon_3.
\end{array}
\end{equation*}
\end{lemma}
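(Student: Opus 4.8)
The plan is to argue by contradiction and compactness, bootstrapping the pointwise statement of Lemma \ref{lemma of strong spreading of H c<c*} to the uniform one by \emph{restarting} the solution at a well‑chosen intermediate time and exploiting that the constant $\varepsilon_2$ there does not depend on the initial datum. First I would reduce the estimate on $H$ to the one on $F+C$. Suppose $\liminf_{t\to\infty}\inf_{\lVert x\rVert\le ct}(F+C)(t,x)\ge\varepsilon_3(c)>0$ has been shown for every $c<c^*$, and that $H(t_n,x_n)\to1$ for some $t_n\to\infty$, $\lVert x_n\rVert\le ct_n$. Translating by $(t_n,x_n)$ and passing to a locally uniform limit of $(F,C,H)(t_n+\cdot\,,x_n+\cdot)$ (parabolic estimates and the a priori bound $Z(t)[\Psi(r)]\subset\Psi(M(r))$), one gets an entire solution $(F_\infty,C_\infty,H_\infty)$ of (\ref{fch-equation}) with $H_\infty(0,0)=1$; since $H_\infty\le1$, the strong maximum principle for the $H$-equation forces $H_\infty\equiv1$ on $(-\infty,0]\times\mathbb R^N$, and then its own equation forces $F_\infty+C_\infty\equiv0$ there. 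But for any fixed $c''\in(c,c^*)$ and any fixed $(\sigma,y)$ with $\sigma<0$ one has $\lVert x_n+y\rVert\le c''(t_n+\sigma)$ for $n$ large, so the $F+C$-estimate applied with speed $c''$ gives $(F+C)(t_n+\sigma,x_n+y)\ge\varepsilon_3(c'')$, hence $F_\infty+C_\infty\ge\varepsilon_3(c'')>0$ on $(-\infty,0)\times\mathbb R^N$, a contradiction. So it is enough to prove the lower bound for $F+C$.

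For that bound it suffices to treat $c=c^0$, since $\{\lVert x\rVert\le ct\}\subset\{\lVert x\rVert\le c^0t\}$ for $c\le c^0$. Pick $c^1\in(c^0,c^*)$, so that Lemmas \ref{lemma of weak pointwise spreading c<c* H} and \ref{lemma of strong spreading of H c<c*} hold on $[0,c^1]$ with constants $\varepsilon_1,\varepsilon_2>0$ \emph{independent of the initial datum} in $\Psi(M(r))$, and fix $\theta\in(0,\,1-c^0/c^1)$. Assume by contradiction there are $t_n\to\infty$ and $x_n$ with $\lVert x_n\rVert\le c^0t_n$ and $(F+C)(t_n,x_n)\to0$; write $x_n=\lVert x_n\rVert e_n$ and $c_n:=\lVert x_n\rVert/t_n\le c^0$ (the case $x_n=0$ is immediate from Lemma \ref{lemma of strong spreading of H c<c*} with $c=0$). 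Restart the solution at time $\theta t_n$: by the invariance $Z(t)[\Psi(r)]\subset\Psi(M(r))$, by Proposition \ref{prop of F+C+H>0 2}, and by the strong maximum principle (so $F+C\not\equiv0$ at time $\theta t_n$), the datum $(F,C,H)(\theta t_n,\cdot)$ lies in the class to which Lemma \ref{lemma of strong spreading of H c<c*} applies, and since $c_n/(1-\theta)\le c^0/(1-\theta)<c^1$ that lemma gives
\[
\liminf_{\sigma\to\infty}\ (F+C)\Big(\theta t_n+\sigma,\ \tfrac{c_n}{1-\theta}\,\sigma\,e_n\Big)\ \ge\ \varepsilon_2 .
\]
Evaluating at $\sigma=(1-\theta)t_n$ would give $(F+C)(t_n,x_n)\ge\varepsilon_2$, contradicting $(F+C)(t_n,x_n)\to0$, \emph{provided} the bound in this $\liminf$ is reached before the time $(1-\theta)t_n$ for all large $n$.

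The whole difficulty — and what I expect to be the main obstacle — is exactly this last proviso: Lemmas \ref{lemma of weak pointwise spreading c<c* H} and \ref{lemma of strong spreading of H c<c*} are uniform in the initial datum but only guarantee that, for each datum, the spreading bound holds after \emph{some} unspecified time, and one must control these times along the family of restarts $\{(F,C,H)(\theta t_n,\cdot)\}_n$. A bare limiting argument does not suffice, since a limit of the restarts loses the non‑degeneracy precisely at the far point $x_n$ and degenerates to the homogeneous state $(0,0,1)$; one must instead produce an explicit sub‑solution. By Lemma \ref{lemma of strong spreading of H c<c*} applied at the fixed point $0$ together with the uniform $C^1$‑bounds on $\Psi(M(r))$, the restart datum satisfies $(F+C)(\theta t_n,\cdot)\ge\varepsilon_2/2$ on a ball $B(0,r_0)$ with $r_0$ independent of $n$; seeding on this ball a sub‑solution of the type used in the proof of Lemma \ref{lemma of weak pointwise spreading c<c* H}, whose speed can be taken strictly between $c^0/(1-\theta)$ and $c^*=2\sqrt{1+s}$ (here the regime $1+s\ge a$ is used), one shows that $(F+C)$ exceeds a fixed positive constant along the ray $\tfrac{c_n}{1-\theta}\sigma e_n$ as soon as $\sigma\ge(1-\theta)t_n-\omega_n$ for some $\omega_n\to+\infty$ — which is where the strict gaps $c^0<c^1<c^*$ are essential. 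This closes the contradiction and yields $\liminf_{t\to\infty}\inf_{\lVert x\rVert\le c^0t}(F+C)(t,x)\ge\varepsilon_2$, completing the proof.
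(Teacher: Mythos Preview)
Your reduction of the $H$–estimate to the $(F+C)$–estimate via a limit entire solution is clean and correct. The gap is exactly where you flag it yourself, and your proposed patch does not close it.

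The sub-solution you would need must spread at a speed that can be pushed arbitrarily close to $c^*=2\sqrt{1+s}$, since $c^0$ is an arbitrary element of $[0,c^*)$. But the only \emph{global} sub-solution you have available from a seed at the origin is the one for $G=F+C$: using $H\ge 1-\max\{1,g\}G$ (which is where $d=1$ enters) one gets $\partial_tG\ge\Delta G+\big(\min\{1,a\}+s\big)G-\text{(const)}\,G^2$ for small $G$, and this sub-equation only spreads at speed $2\sqrt{\min\{1,a\}+s}$. When $a<1$ (still inside the regime $1+s\ge a$ of the lemma) this speed is strictly below $c^*$, and for $c^0\in\big(2\sqrt{a+s},\,2\sqrt{1+s}\big)$ your sub-solution simply cannot reach $x_n$ in time. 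The bump sub-solution from Lemma~\ref{lemma of weak pointwise spreading c<c* H} is of no help here: it is built for the $C$–equation and works only in a ball where $F\le\delta$ and $H\ge1-\delta$, conditions that hold near the bad point $x_n$ (because $(F+C)(t_n,x_n)\to0$) but \emph{not} near the origin where you want to seed it. So for $a<1$ your argument does not close.

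The paper's device is different and sidesteps the ``how large must $\sigma$ be'' issue entirely. Instead of restarting at a fixed fraction $\theta t_n$ and then invoking Lemma~\ref{lemma of strong spreading of H c<c*} for the restarted datum (which yields only an asymptotic bound), the paper chooses the intermediate time $t'_n:=c_nt_n/(c_\infty+\delta)$ so that the spatial point $c_nt_n\,e_\infty$ lies on the \emph{fixed} ray of speed $c_\infty+\delta$ at time $t'_n$. Then Lemma~\ref{lemma of strong spreading of H c<c*} is applied to the \emph{original} solution along this fixed ray, and since $t'_n\to\infty$ it gives $H(t'_n,c_nt_n e_\infty)\le 1-\varepsilon_2$ directly, with no uniform-time issue. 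From $(t'_n,\,c_nt_n e_\infty)$ to the bad point $(t_n,\,c_nt_n e_n)$ the effective speed is $\widetilde c_n=c_nt_n\lVert e_n-e_\infty\rVert/(t_n-t'_n)\to0$, and one can now re-run the two-time contradiction machinery of the proof of Lemma~\ref{lemma of strong spreading of H c<c*} (using the uniformity of Lemma~\ref{lemma of weak pointwise spreading c<c* H} in the initial datum) to conclude. The point is that the pointwise lemma is used only along one \emph{fixed} direction and speed for the original solution, where the limsup really is a statement about all large $t$.
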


\begin{proof}\it{of Lemma \ref{lemma of uniform spreading of H and  F+C c<c* on x}.}
\normalfont
We proceed by contradiction and assume that there exist sequences $\{t_n\}_{n\ge 0}$ with $t_n\to+\infty$, $\{c_n\}_{n\ge 0}\subset[0,c_0]$ and $\{e_n\}_{n\ge 0}\subset S^{N-1}$ such that
\begin{equation}\label{1}
\lim_{n\to+\infty}H(t_n,c_nt_ne_n)=1.
\end{equation}
Without loss of generality, possibly along a subsequence, we assume that $c_n\to c_{\infty}$ and $e_n\to e_{\infty}$ as $n\to+\infty$. Choose some small $\delta>0$ such that $c_{\infty}+\delta<c^*$, and define the sequence 
\begin{equation*}
t'_n:=\frac{c_nt_n}{c_{\infty}+\delta}\in[0,t_n)\ \ \mbox{for all}\ \ n\ge0.
\end{equation*}

Let us first consider the case when the sequence $\{c_nt_n\}_{n\ge 0}$ is bounded, which may happen if $c_{\infty}=0$. Then one can infer from the strong maximum principle that as $n\to+\infty$ that $c_nt_ne_n\to x_{\infty}\in\mathbb{R}^N$ and 
$$H(t_n+t, c_nt_ne_n+x)\to 1$$
locally uniformly. Thus, one obtains that $H(t_n,0)\to 1$, which already contradicts the result of Lemma \ref{lemma of strong spreading of H c<c*} with $c=0$.
Therefore, one can assume that $t'_n\to+\infty$. Then, by applying Lemma \ref{lemma of strong spreading of H c<c*} again, one has
\begin{equation}\label{2}
H(t'_n,(c_{\infty}+\delta)t'_ne_{\infty})\le 1- \varepsilon
\end{equation}
for each $n$ large enough.

Next, let us consider the functions as follows:
\begin{equation*}
\left\{
\begin{aligned}
&\widetilde{F}_n(t,x)=F(t'_n+t,c_nt_ne_{\infty}+x),\\
&\widetilde{C}_n(t,x)=C(t'_n+t,c_nt_ne_{\infty}+x),\\
&\widetilde{H}_n(t,x)=H(t'_n+t,c_nt_ne_{\infty}+x),
\end{aligned}
\right.
\end{equation*}
and define the sequences
\begin{equation*}
\widetilde{c}_n:=\frac{c_nt_n\lVert e_n-e_{\infty}\lVert}{t_n-t'_n}\to 0\ \ \mbox{and}\ \ \widetilde{e}_n:=\frac{e_n-e_{\infty}}{\lVert e_n-e_{\infty}\lVert}.
\end{equation*}
Using the above notations, one can rewrite statements (\ref{1}) and (\ref{2}) as 
$$\widetilde{H}_n(0,0)\le 1-\varepsilon \ \ \mbox{and}\ \ \widetilde{H}_n(t_n-t'_n,\widetilde{c}_n(t_n-t'_n)\widetilde{e}_n)\to 1.$$
By introducing two time sequences
$$\widetilde{t}_n:=\sup\Big\{0\le t\le t_n-t'_n\ \Big|\ \widetilde{H}_n(t,\widetilde{c}_nt\widetilde{e}_n)> 1-\frac{\varepsilon}{2}\Big\}\in(0,t_n-t'_n),$$
$$\widetilde{s}_n:=t_n-t_n'-\widetilde{t}_n,$$
one may find  that the following properties hold true:
$$\widetilde{H}_n(\widetilde{t}_n,\widetilde{c}_n\widetilde{t}_n\widetilde{e}_n)=1-\frac{\varepsilon}{2},$$
$$\widetilde{H}_n(t,\widetilde{c}_nt\widetilde{e}_n)\le 1- \frac{\varepsilon}{2}\ \ \mbox{for all}\  \ t\in[\widetilde{t}_n,\widetilde{t}_n+\widetilde{s}_n],$$
$$\widetilde{H}_n(\widetilde{t}_n+\widetilde{s}_n,\widetilde{c}_n(\widetilde{t}_n+\widetilde{s}_n)\widetilde{e}_n)\to 1\ \ \mbox{as}\ \ n\to+\infty.$$
Proceeding the argument as in the proof of Lemma \ref{lemma of strong spreading of H c<c*}, one can  reach a contradiction.
The other statement follows from the same approach. The proof of Lemma \ref{lemma of uniform spreading of H and  F+C c<c* on x} and Theorem \ref{thm final zone 1+s>a} are complete.
\end{proof}
\subsection{Uniform spreading in the final zone ($1+s<a$)}
Before dealing with the proof of Theorem \ref{thm final zone 1+s<a}, we note that, by proceeding the same argument in subsection 5.1, one can obtain a lemma as follows:
\begin{lemma}\label{lemma of uniform spreading of H and  F+C c<1+s}
If $d=1$ and $1+s<a$, for any $c\in[0,c^{**})$, there exists $\varepsilon>0$ such that, for any given initial data $(F_0,C_0,H_0)\in \Psi(r)$ satisfying $F_0+C_0+H_0\ge \varepsilon^*$ and $F_0+C_0\not\equiv 0$, the solution $(F,C,H)$ of the system \textnormal{(\ref{fch-equation})} satisfies:
\begin{equation*}
\begin{array}{cl}
&\underset{t\to+\infty}{\liminf}\underset{\lVert x\lVert\le ct}{\inf}(F+C)(t,x)\ge \varepsilon,\\
&\underset{t\to+\infty}{\limsup}\underset{\lVert x\lVert\le ct}{\sup}H(t,x)\le 1-\varepsilon.
\end{array}
\end{equation*}
\end{lemma}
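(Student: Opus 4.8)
The plan is to reproduce, essentially line by line, the three-step limit argument of Section~5.1 (Lemmas~\ref{lemma of weak pointwise spreading c<c* H}, \ref{lemma of strong spreading of H c<c*} and \ref{lemma of uniform spreading of H and  F+C c<c* on x}): pointwise weak spreading, then pointwise strong spreading, then uniform spreading on $\lVert x\rVert\le ct$. The only change is notational: everywhere the threshold $c^{*}=2\sqrt{1+s}$ of Section~5.1 occurs, it is now replaced by $c^{**}=2\sqrt{1+s}$, because under the present hypothesis $1+s<a$ one has $c^{**}=2\sqrt{1+s}$ while $c^{*}=2\sqrt{a}>c^{**}$. So I would fix $c^{0}\in[0,c^{**})$, prove the pointwise estimates for every admissible datum $(F_0,C_0,H_0)\in\Psi(r)$ with $F_0+C_0+H_0\ge\varepsilon^*$ and $F_0+C_0\not\equiv 0$ and every $c\in[0,c^{0}]$, $e\in S^{N-1}$, upgrade them to $\lVert x\rVert\le ct$, and let $c^{0}\uparrow c^{**}$ at the end (the resulting constants being nonincreasing in $c^{0}$). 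The datum \eqref{intial data} itself lies in $\Psi(r)$ for $r$ large and satisfies $F_0+C_0+H_0\ge 1\ge\varepsilon^*$ and $F_0+C_0=F_0\not\equiv 0$, so the lemma then applies to the solution of \eqref{fch-equation}--\eqref{intial data}.

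For the weak pointwise step I would argue by contradiction exactly as in the proof of Lemma~\ref{lemma of weak pointwise spreading c<c* H} (treating the trivial case $H_0\equiv 0$ separately via Proposition~\ref{prop of F+C+H>0 2}): one extracts sequences of data, speeds, points, directions and times along which $(F_n+C_n)(t,x_n+c_nte_n)\to 0$ or $H_n(t,x_n+c_nte_n)\to 1$ for all $t\ge t_n$, and the analogue of Claim~\ref{claim from pointwise converge H F+C c<c*} upgrades this to local uniform convergence on balls $B_R$ around the moving frame. That claim uses only $0\le H\le 1$, the strong maximum principle for each scalar equation, and Proposition~\ref{prop of F+C+H>0 2} — which is exactly where $d=1$ enters — to force the limiting entire solution to satisfy $H_\infty\equiv 1$ once $F_\infty+C_\infty\equiv 0$; none of this distinguishes $1+s$ from $a$, so it transplants verbatim. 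The single place where the threshold matters is the stationary sub-solution $\psi_n(x)=\eta\, e^{-c_n x\cdot e_n/2}\phi_R(x)$ built for the $C$-component out of the principal Dirichlet eigenfunction $\phi_R$ of \eqref{eigenfunction}: in the moving frame the $C$-equation has effective linearized growth rate $1+s(1-\delta)-\delta$, so $\psi_n$ is a genuine sub-solution as soon as $c_n^{2}/4-\mu_R-\big(1+s(1-\delta)-\delta\big)+\psi_n\le 0$, and since $\mu_R\uparrow 0$ as $R\to\infty$ this holds for every $c_n\le c^{0}$ provided $R$ is large and $\delta$ small — precisely because $c^{0}<2\sqrt{1+s}=c^{**}$. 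The monotone iteration of $\psi_n$ to a positive stationary state $p_{n,R,\delta}$, its independence of $\eta$ (the analogue of Claim~\ref{claim  of stationary solution}), and the convergence of $p_{n,R,\delta}$ to a positive stationary solution as $n,R\to\infty$, $\delta\to 0$, all go through unchanged and yield the contradiction.

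The strong pointwise estimate then follows from the same device as in Lemma~\ref{lemma of strong spreading of H c<c*}: one records the last time before which $H_n$ exceeds $1-\varepsilon_1/2$ along the frame, extracts an entire solution of \eqref{fch-equation} with $H_\infty\equiv 1$ along a forward half-line of the frame (hence $F_\infty+C_\infty\equiv 0$ there, by the $H$-equation), views it as a solution with admissible initial data via Proposition~\ref{prop of F+C+H>0 2}, and applies the weak estimate just proved to the shifted solution to reach a contradiction. Finally, the uniform estimate on $\lVert x\rVert\le ct$ is obtained verbatim from the proof of Lemma~\ref{lemma of uniform spreading of H and  F+C c<c* on x}: if $H(t_n,c_nt_ne_n)\to 1$ with $c_n\to c_\infty\le c^{0}$, one picks $\delta$ with $c_\infty+\delta<c^{**}$, sets $t_n'=c_nt_n/(c_\infty+\delta)\in[0,t_n)$, applies the strong pointwise estimate at speed $c_\infty+\delta<c^{**}$, and reduces the residual change of direction to a one-dimensional spreading argument along $\widetilde e_n=(e_n-e_\infty)/\lVert e_n-e_\infty\rVert$ with vanishing speed $\widetilde c_n\to 0$ (the bounded-$c_nt_n$ subcase, i.e.\ $c_\infty=0$, being disposed of directly). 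These last two steps are pure parabolic compactness arguments and are insensitive to the sign of $1+s-a$.

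There is no genuinely new obstacle here — the hard analytic work (running a limit/Liouville argument in the absence of a comparison principle for the full system) was already done in Section~5.1. The points that require attention are bookkeeping: one must check that $\varepsilon^*$, the eigenvalue $\mu_R$, the constant $\eta_0$ in the sub-solution and the successive $\varepsilon_1,\varepsilon_2,\varepsilon_3$ depend only on $c^{0},s,b,r$ (and not on $n$ nor on the particular admissible datum), and one must recognise that the inequality $c^{0}<2\sqrt{1+s}=c^{**}$ is exactly what keeps $\psi_n$ a sub-solution of the $C$-equation. This last fact also explains the limitation of the method: because the compactness argument is run on the $C$-component, whose effective rate is $1+s$, it reaches only $c^{**}$ and not $c^{*}=2\sqrt{a}$ when $1+s<a$; the remaining range $c^{**}t\lesssim\lVert x\rVert\lesssim c^{*}t$ is covered, in the proof of Theorem~\ref{thm final zone 1+s<a}, by feeding in the logarithmic-drift estimates of Section~4.
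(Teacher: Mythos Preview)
Your proposal is correct and follows exactly the approach the paper takes: the paper itself states that this lemma is obtained ``by proceeding the same argument in subsection 5.1'', and you have accurately identified the one place where the threshold $2\sqrt{1+s}$ (now called $c^{**}$ rather than $c^{*}$) enters --- namely, in checking that the bump $\psi_n=\eta\,e^{-c_n x\cdot e_n/2}\phi_R$ is a stationary sub-solution of the $C$-equation in the moving frame. Your closing remark about why the method stops at $c^{**}$ also matches the paper's own explanation immediately following the lemma.
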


Indeed, for all $c\in[0,c^*)$, one could always conclude a similar weak pointwise spreading result as Lemma \ref{lemma of weak pointwise spreading c<c* H}. However, to obtain the strong pointwise spreading property for $c^{**}<c<c^*$, the argument in Lemma \ref{lemma of strong spreading of H c<c*} is not workable anymore. More precisely, one could not ensure $\widetilde{F}(0,x)\not\equiv 0$, which is a necessary condition to apply the weak pointwise spreading property to reach the contradiction.
In this section, we complete the proof of Theorem \ref{thm final zone 1+s<a} by applying a totally different approach.
\vspace{10pt}

\begin{proof}\it{of Theorem \ref{thm final zone 1+s<a}.}
\normalfont
We complete the proof by showing that for any $0<c_1<c_2<c^*$, there exists $\varepsilon>0$ such that
$$\liminf_{t\to+\infty}\inf_{c_1t\le\lVert x\lVert\le c_2t}(F+C)(t,x)\ge\varepsilon.$$
To do this, we start by assuming there exist sequences $\{t_n\}_{n\ge 0}\subset\mathbb{R}_+$ with $t_n\to+\infty$ and $\{x_n\}_{n\ge 0}\subset\mathbb{R}^N$ with $c_1t_n\le \lVert x_n\lVert\le c_2t_n$, such that $(F+C)(t_n,x_n)\le 1/n$.

We first add the both sides of the $F$-equation and $C$-equation, 
and find $G=F+C$ satisfies
$$\partial_tG\ge \Delta G+\min\{1,a\}G(1-G)\ \ \mbox{for all}\ \ (t,x)\in\mathbb{R}_+\times\mathbb{R}^N.$$
Then, let us consider a  stationary sub-solution of the $G$-equation as 
$\varphi_R(x):=\eta\phi_R(x)$, where $\phi_R(x)$ is the principal eigenfunction defined as (\ref{eigenfunction}). One can check that
$$\eta\Delta \phi_R(x)+\min\{1,a\}\eta\phi_R(x)(1-\eta\phi_R(x))\ge 0,$$
provided that $\eta$ is small enough and $R$ is large enough.
Recall that, in Theorem \ref{thm of log delay}, we proved that, for all $R'>R>0$, there exists $\varepsilon'>0$ such that
$$\liminf_{t\to+\infty}\inf_{x\in B_{R'},e\in S^{N-1}}\ G\Big(t,x+\Big(c^*t-\frac{c^*(N+2)}{\min\{1,a\}}\ln t\Big)e\Big)>\varepsilon'\ \ \mbox{if}\ \ 1+s<a.$$
Hence, for any $t'>T_0$, one can choose $\eta$ small enough such that
\begin{equation*}
G(t',x)\ge \varphi_R\Big(x-\Big(c^*t'-\frac{c^*(N+2)}{\min\{1,a\}}\ln t'\Big)e\Big)\ \ \mbox{for all}\ \ x\in\mathbb{R}^N,\ \ e\in S^{N-1}.
\end{equation*}
Since $\varphi_R(x)$ is a stationary sub-solution of the $G$-equation, by applying the comparison principle, one obtains
$$G(t,x)\ge \varphi_R\Big(x-\Big(c^*t'-\frac{c^*(N+2)}{\min\{1,a\}}\ln t'\Big)e\Big)\ \ \mbox{for all}\ \ t>t', \ \ x\in\mathbb{R}^N,\ \ e\in S^{N-1}.$$
This implies that, for any $t'\ge T_0$ and $t\ge t'$, it holds
\begin{equation}\label{eq 7}
G\Big(t,\Big(c^*t'-\frac{c^*(N+2)}{\min\{1,a\}}\ln t'\Big)e\Big)\ge \eta\phi_R(0).
\end{equation}
Moreover, since $0<c_1<c_2<2\sqrt{a}$, for each large enough $n$, one can find $t_n>t_n'\ge T_0$ such that 
$$x_n=\Big(c^*t_n'-\frac{c^*(N+2)}{\min\{1,a\}}\ln t_n'\Big)e.$$
Thus, from the estimate (\ref{eq 7}), one gets
$G(t_n,x_n)\ge \eta\phi_R(0)$,
which contradicts that
$$G(t_n,x_n)=F(t_n,x_n)+C(t_n,x_n)\le \frac{1}{n}\to 0 \ \ \mbox{as}\ \ n\to+\infty.$$ 
Therefore, the proof of Theorem \ref{thm final zone 1+s<a} is complete.
\end{proof}
\vspace{10pt}
\begin{remark}
Note that, one also can prove Theorem \ref{thm final zone 1+s>a} for the case $d=1$ by applying the same argument as above.
\end{remark}

\section{Asymptotic profiles in the final zone}

In section 5, we have already shown that the propagation of farmers occurs with the speed $c^*$. However, whether the profiles of solutions converges to the steady states $(\widehat{F},\widehat{C},0)$ or $(0,C^*,H^*)$ are still unknown.  In this section, we mainly deal with the asymptotic profiles of solutions in the final zone. From the numerical work of Aoki {\it et al.}, we expect that the profiles of solutions in the final zone are different between the high conversion rate case and the low conversion rate case. Therefore, we split the justification of the numerical results into two parts by dealing with the cases $g\ge 1$ and $g< 1$, respectively.
\subsection{Asymptotic profiles in the high conversion rate case ($g\ge 1$)}
Our first result comes from a direct observation on the $F$-equation of the system (\ref{fch-equation}).

\begin{prop}\label{thm profile c<1,a}
If $g\ge 1$, then for any  $0\le c<\min\{2,2\sqrt{a}\}$, the solution $(F,C,H)$ of the system \textnormal{(\ref{fch-equation})} with the initial data \textnormal{(\ref{intial data})} satisfies:
\begin{equation}\label{profile of H c<1,a}
\limsup_{ t \to \infty}\sup_{ \lVert x\rVert\le ct}H(t,x)=0,
\end{equation}
\begin{equation}\label{profile of F+C c<1,a}
\limsup_{ t \to \infty}\sup_{ \lVert x\rVert\le ct}|1-(F+C)|(t,x)=0.
\end{equation}
\end{prop}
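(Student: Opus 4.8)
The plan is to reduce the whole statement to the single quantity $G:=F+C$, which obeys a KPP-type differential inequality from below with reaction rate $\min\{1,a\}$, and then to read off the behaviour of $H$ and the matching upper bound for $G$ by a blow-up/entire-solution argument. Adding the first two equations of \eqref{fch-equation} and using $H\ge 0$ together with $\min\{1,a\}(F+C)\le aF+C\le\max\{1,a\}(F+C)$ gives
\begin{equation*}
\partial_t G\ge \Delta G+\min\{1,a\}\,G(1-G)\quad\text{on }\{G\le 1\},\qquad
\partial_t G\le \Delta G+\min\{1,a\}\,G(1-G)+sHG\quad\text{on }\{G\ge 1\}.
\end{equation*}
Since $G(0,\cdot)=F_0$ is nontrivial, compactly supported and continuous, a standard comparison of $G$ with the solution of the scalar KPP equation $\partial_t v=\Delta v+\min\{1,a\}\,v(1-v)$ started from $\min\{F_0,1\}$ (whose spreading speed is exactly $2\sqrt{\min\{1,a\}}=\min\{2,2\sqrt a\}$ by Proposition~\ref{prop of kpp equation}) yields, for every $c<\min\{2,2\sqrt a\}$,
\begin{equation*}
\liminf_{t\to\infty}\ \inf_{\lVert x\rVert\le ct}G(t,x)\ge 1 .
\end{equation*}
I would also record the a priori bounds $0\le G\le K_0:=\max\{\sup F_0,(\max\{1,a\}+s)/\min\{1,a\}\}$ and $0\le H\le 1$ from Section~5.

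Next I would prove \eqref{profile of H c<1,a}. Fix $c<c'<\min\{2,2\sqrt a\}$ and argue by contradiction: if $H(t_n,x_n)\ge\delta>0$ for some $t_n\to\infty$ and $\lVert x_n\rVert\le ct_n$, then parabolic estimates give, along a subsequence, locally uniform convergence of the shifts $(F,C,H)(t_n+\cdot\,,x_n+\cdot)$ to an entire solution $(\tilde F,\tilde C,\tilde H)$ of \eqref{fch-equation}. Because $c'>c$, for every fixed $(t,x)$ one has $\lVert x+x_n\rVert\le c'(t+t_n)$ for $n$ large, so the lower bound above transfers to $\tilde G:=\tilde F+\tilde C\ge 1$ on all of $\mathbb{R}\times\mathbb{R}^N$. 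As $g\ge 1$, the $\tilde H$-equation then yields $\partial_t\tilde H\le d\Delta\tilde H-b\tilde H^2$; comparing $\tilde H$ on $[\tau_0,s_*]\times\mathbb{R}^N$ with the spatially constant supersolution $t\mapsto(b(t-\tau_0)+1)^{-1}$ and letting $\tau_0\to-\infty$ forces $\tilde H\equiv 0$, contradicting $\tilde H(0,0)\ge\delta$. This proves $\limsup_{t\to\infty}\sup_{\lVert x\rVert\le ct}H(t,x)=0$.

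Finally, \eqref{profile of F+C c<1,a} follows from the same scheme; by the first step it suffices to show $\limsup_{t\to\infty}\sup_{\lVert x\rVert\le ct}G\le 1$. Assuming instead $G(t_n,x_n)\ge 1+\delta$ with $\lVert x_n\rVert\le ct_n$, pass again to an entire-solution limit $(\tilde F,\tilde C,\tilde H)$. This time the $H$-decay just established gives $\tilde H\equiv 0$ in the limit, so $\tilde G$ satisfies $\partial_t\tilde G=\Delta\tilde G+(a\tilde F+\tilde C)(1-\tilde G)$, whence $\partial_t\tilde G\le\Delta\tilde G+\min\{1,a\}\tilde G(1-\tilde G)$ on $\{\tilde G\ge 1\}$; comparing with the (decreasing, everywhere $>1$) solution of the logistic ODE $\dot\zeta=\min\{1,a\}\zeta(1-\zeta)$ with $\zeta(\tau_0)=K_0$ and letting $\tau_0\to-\infty$ gives $\tilde G\le 1$ on $\mathbb{R}\times\mathbb{R}^N$, a contradiction. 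Combined with the first step this gives $\lim_{t\to\infty}\sup_{\lVert x\rVert\le ct}\lvert 1-(F+C)(t,x)\rvert=0$.

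I expect the obstacles to be purely technical. The delicate point in the first step is that the favourable inequality for $G$ only holds on $\{G\le 1\}$, so the comparison with the KPP solution must be carried out via a maximum-principle argument for the difference (e.g.\ with an $e^{Kt}$ weight), exploiting that at any would-be negative minimum one automatically has $G<1$. The "ancient-solution" Liouville steps need the uniform bounds $G\le K_0$, $H\le 1$ and a comparison principle for bounded solutions of the scalar parabolic equations $\partial_t v=d\Delta v-bv^2$ and $\partial_t v=\Delta v+\min\{1,a\}v(1-v)$ on the whole space. No new idea beyond the KPP comparison and the limit argument is required, and since the first and third steps only involve the $F$- and $C$-equations while the second only needs $d\Delta(\mathrm{const})=0$, the proof works for every $d\ge 1$.
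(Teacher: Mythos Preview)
Your proposal is correct and follows essentially the same three-step route as the paper: first the KPP comparison for $G=F+C$ to obtain $\liminf_{t\to\infty}\inf_{\lVert x\rVert\le ct}G\ge 1$ (the paper's Claim~\ref{claim direct observation}), then the entire-solution limit argument to force $H_\infty\equiv 0$, and finally another limit argument to get $G_\infty\equiv 1$. The only cosmetic differences are that the paper splits the $H_\infty$ step into the cases $g>1$ (supersolution $e^{-(g-1)(t+t_0)}$) and $g=1$ (supersolution $1/(b(t+t_0+1))$), whereas your single inequality $\partial_t\tilde H\le d\Delta\tilde H-b\tilde H^2$ handles both at once; and your third step spells out the logistic-ODE comparison explicitly, while the paper dispatches it in one sentence.
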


\begin{proof}\it{of Proposition \ref{thm profile c<1,a}.}
\normalfont
Let us consider the solution $G_2(t,x)$ of the following equation
\begin{equation*}
\left\{
\begin{aligned}
&\partial_tG_2=\Delta G_2+\min\{1,a\}G_2(1-G_2),\\
&G_2(0,x)=G_{2,0}(x).
\end{aligned}
\right.
\end{equation*}
Note that, $G_2(t,x)\le 1$ for all $t\ge 0$ and $x\in\mathbb{R}^N$, provided that the initial data $G_{2,0}(x)\le 1$.
Moreover, the function $G=F+C$ satisfies
\begin{align*}
\partial_t(F+C)&=\Delta (F+C)+(aF+C)(1-F-C)+sH(F+C)\\
&\ge\Delta G+\min\{1,a\}G(1-G).
\end{align*}
The last inequality holds for all $(t,x)\in\{(t,x)\ |\ G(t,x)\le 1\}$. 
Thus, by applying the comparison principle and the spreading properties of KPP equation, one can claim that:
\begin{claim}\label{claim direct observation}
For both cases $g\ge 1$ and $g<1$, it holds:
\begin{equation}\label{lower estimate profile F+C c<1,a}
\liminf_{t\to+\infty}\inf_{\lVert x \lVert\le ct}(F+C)(t,x)\ge 1,
\end{equation}
for all $c\in[0,\min\{2,2\sqrt{a}\})$.
\end{claim}

Now, let us choose sequences $\{c_n\}_{n\ge 0}\subset[0,c_0]$ where $0\le c_0<\min\{2,2\sqrt{a}\}$, $\{t_n\}_{n\ge 0}\subset\mathbb{R}_+$ with $t_n\to+\infty$ and $\{x_n\}_{n\ge 0}\subset\mathbb{R}^N$ with $\lVert x_n\lVert\le c_nt_n$. Then, we consider the limit functions
$$\lim_{n\to+\infty}F(t_n+t,x_n+x)=F_{\infty}(t,x),$$
$$\lim_{n\to+\infty}C(t_n+t,x_n+x)=C_{\infty}(t,x),$$
$$\lim_{n\to+\infty}H(t_n+t,x_n+x)=H_{\infty}(t,x),$$
which converge locally uniformly to $(F_{\infty}, C_{\infty}, H_{\infty})$, an entire solution of the system 
\begin{equation*}
\left\{
\begin{aligned}
&\partial_tF_{\infty}=\Delta F_{\infty}+aF_{\infty}(1-C_{\infty}-F_{\infty}),\\
&\partial_tC_{\infty}=\Delta C_{\infty}+C_{\infty}(1-C_{\infty}-F_{\infty})+s(F_{\infty}+C_{\infty})H_{\infty},\\
&\partial_tH_{\infty}=d\Delta H_{\infty}+bH_{\infty}(1-gF_{\infty}-gC_{\infty}-H_{\infty}).
\end{aligned}
\right.
\end{equation*}
Note that, the lower estimate (\ref{lower estimate profile F+C c<1,a}) implies that $(F_{\infty}+C_{\infty})(t,x)\ge 1$ for all $(t,x)\in\mathbb{R}\times\mathbb{R}^N$. 

Next, we show that $H_{\infty}(t,x)\equiv 0$. Indeed, since $(F_{\infty}+C_{\infty})(t,x)\ge 1$ for all $(t,x)\in\mathbb{R}\times\mathbb{R}^N$, one may find $H_{\infty}$ is a sub-solution of $\widetilde{H}_{\infty}$ which satisfies
\begin{equation*}
\left\{
\begin{aligned}
&\partial_t\widetilde{H}_{\infty}=\Delta \widetilde{H}_{\infty}+b\widetilde{H}_{\infty}(1-g-\widetilde{H}_{\infty}),\\
&\widetilde{H}_{\infty}(0,x)=H_{\infty}(0,x).
\end{aligned}
\right.
\end{equation*}
It is clear that, if $g> 1$, the function $(t,x)\to e^{-(g-1)(t+t_0)}$ is a super-solution of the above equation for any $t>-t_0$. 
Since $H_{\infty}(-t_0,x)\le 1$ for any $t_0\in\mathbb{R}_+$, it follows from the comparison principle that 
$$H_{\infty}(0,x)\le e^{-(g-1)t_0}.$$
By passing the limit as $t_0\to+\infty$, one gets that $H_{\infty}\equiv 0$. Therefore, for any arbitrarily chosen sequences $\{t_n\}_{n\ge 0}\subset\mathbb{R}_+$ with $t_n\to+\infty$ and $\{x_n\}_{n\ge 0}\subset\mathbb{R}^N$ with $\lVert x_n\lVert\le ct_n$, one can obtain
$$\lim_{n\to+\infty}H(t_n,x_n)= 0,$$
which implies (\ref{profile of H c<1,a}) holds true. If $g=1$, one can consider the super-solution as $1/(bt+bt_0+b)$, for all $-t_0< t< +\infty$ where $t_0\in\mathbb{R}_+$. 

To complete the proof of this proposition, we consider the limit functions $(F_{\infty},C_{\infty},H_{\infty})$ again, for which  sequences $\{t_n\}_{n\ge 0}\subset\mathbb{R}_+$ with $t_n\to+\infty$ and $\{x_n\}_{n\ge 0}\subset\mathbb{R}^N$ with $\lVert x_n\lVert\le ct_n$ are chosen arbitrarily. The statement (\ref{profile of H c<1,a}) and the strong maximum principle implies that $H_{\infty}\equiv 0$. Then, by applying Claim \ref{claim direct observation} again, one can conclude that $(F_{\infty}+C_{\infty})\equiv 1$, which completes the proof of the statement (\ref{profile of F+C c<1,a}).
\end{proof}
\vspace{10pt}

However, to investigate the profiles of solutions in the region of $c^{**}t\le \lVert x\lVert \le c^0t$, one need to apply the uniform lower estimate of the $F+C$-component in Theorem \ref{thm final zone 1+s>a} and Theorem \ref{thm final zone 1+s<a}.  Since the arguments for proving Theorem \ref{thm profile g>1 1+s>a} and Theorem \ref{thm profile g>1 1+s<a} are almost same, here we just show the proof of Theorem \ref{thm profile g>1 1+s>a}.
\vspace{10pt}

\begin{proof}\it{of Theorem \ref{thm profile g>1 1+s>a}.}
\normalfont
Let us choose sequences  $\{c_n\}_{n\ge 0}\subset[0,c^0]$, $\{t_n\}_{n\ge 0}\subset\mathbb{R}_+$ with $t_n\to+\infty$ as $n\to+\infty$, and $\{x_n\}_{n\ge 0}\subset\mathbb{R}^N$ with $\lVert x_n\lVert\le c_nt_n$. Next, we prove
$$\lim_{n\to+\infty}H(t_n,x_n)=0\ \ \mbox{and}\ \ \lim_{n\to+\infty}(F+C)(t_n,x_n)=1.$$
We consider the limit functions
$$\lim_{n\to+\infty}F(t_n+t,x_n+x)=F_{\infty}(t,x),$$
$$\lim_{n\to+\infty}C(t_n+t,x_n+x)=C_{\infty}(t,x),$$
$$\lim_{n\to+\infty}H(t_n+t,x_n+x)=H_{\infty}(t,x),$$
which converge locally uniformly to $(F_{\infty}, C_{\infty}, H_{\infty})$, an entire solution of the system 
\begin{equation*}
\left\{
\begin{aligned}
&\partial_tF_{\infty}=\Delta F_{\infty}+aF_{\infty}(1-C_{\infty}-F_{\infty}),\\
&\partial_tC_{\infty}=\Delta C_{\infty}+C_{\infty}(1-C_{\infty}-F_{\infty})+s(F_{\infty}+C_{\infty})H_{\infty},\\
&\partial_tH_{\infty}=\Delta H_{\infty}+bH_{\infty}(1-gF_{\infty}-gC_{\infty}-H_{\infty}).
\end{aligned}
\right.
\end{equation*}
The result of Theorem \ref{thm final zone 1+s>a} implies that $H_{\infty}(x)\le1-\varepsilon$ and $(F_{\infty}+C_{\infty})\ge\varepsilon$ for all $(t,x)\in\mathbb{R}\times\mathbb{R}^N$. Adding the both sides of the $F_{\infty}$-equation and $C_{\infty}$-equation, one may find that $1-(1-\varepsilon)e^{-(1-\varepsilon)(t+t_0)}$ is a sub-solution of $(F_{\infty}+C_{\infty})(t,x)$ for all $x\in\mathbb{R}^N$ and $t>-t_0$ where $t_0\in\mathbb{R}_+$. By passing $t_0\to+\infty$, one obtains that $(F_{\infty}+C_{\infty})(0,x)\ge 1$. Since the sequences $\{t_n\}_{n\ge 0}$, $\{c_n\}_{n\ge 0}$ and $\{x_n\}_{n\ge 0}$ are chosen arbitrarily, one can conclude that 
\begin{equation*}
\liminf_{ t \to \infty}\inf_{ \lVert x\rVert\le ct}(F+C)(t,x)\ge 1\ \ \mbox{for all}\ \ c\in[0,c^0].
\end{equation*}
By applying the same argument as Proposition \ref{thm profile c<1,a}, the above statement implies that, if $g\ge 1$,
\begin{equation*}
\limsup_{ t \to \infty}\sup_{ \lVert x\rVert\le ct}H(t,x)= 0\ \ \mbox{for all}\ \ c\in[0,c^0].
\end{equation*}
Then, one can conclude that
\begin{equation*}
\limsup_{ t \to \infty}\sup_{ \lVert x\rVert\le ct}|1-(F+C)(t,x)|= 0\ \ \mbox{for all}\ \ c\in[0,c^0].
\end{equation*}
Since $c^0$ can be chosen arbitrary close to $c^*$, the proof of the statement (\ref{profile of F+C g>1 1+s>a}) is complete. Furthermore, by applying Remark \ref{remark of F leading edge}, the statement (\ref{profile of F and C g>1 1+s>a}) follows immediately.
\end{proof}
\vspace{10pt}

\subsection{Asymptotic profiles in the low conversion rate case ($g<1$)}
The key point of studying the asymptotic profiles in the low conversion rate case is to provide a uniform lower estimate of the $H$-component in the final zone. However, for the general case, it is hard to give the necessary and sufficient condition under which the $H$-component is uniform positive from below.
In this subsection, we first show two sufficient conditions for obtaining the uniform lower estimate of the $H$-component. Then, we show that the $C$-component and $H$-component would converge to $(C^*,H^*)$ as $t\to+\infty$ in the final zone. 

The first sufficient condition means if conversion rate $g$ is small enough, then the population density of hunter-gatherers alway stay uniform positive. 
\begin{lemma}\label{low estimate of H 1}
If $g<\min\{1,a\}/(\min\{1,a\}+s)$, there exists $\varepsilon>0$ such that, for any given initial data $(F_0,C_0,H_0)\in \Psi(r)$ satisfying $H_0\equiv 1$, the solution $(F,C,H)$ of the system \textnormal{(\ref{fch-equation})} satisfies:
\begin{equation*}
\underset{t\to+\infty}{\liminf}\underset{ x\in\mathbb{R}^N}{\inf}H(t,x)\ge \varepsilon.\\
\end{equation*}
\end{lemma}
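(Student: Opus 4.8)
The plan is to apply the comparison principle to the $H$-equation in isolation, after first controlling $g(F+C)$ from above. Recall from the preliminaries that $Z(t)[\Psi(r)]\subset\Psi(M(r))$, so that $0\le H\le1$ and $0\le F+C\le M(r)$ for all $t>0$ and $x\in\mathbb R^N$. Since $1-H-g(F+C)\ge-gM(r)$ and $H\ge0$, the function $e^{bgM(r)t}H$ is a supersolution of the heat equation, and because $H_0\equiv1$ this gives $H(t,x)\ge e^{-bgM(r)t}>0$; in particular $\delta_0(T):=\inf_{x}H(T,x)>0$ for every $T\ge0$. The whole point is then to stop $\delta_0(T)$ from decaying to $0$ as $T\to\infty$, and for that it suffices to know that $g(F+C)$ eventually stays uniformly below $1$.

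The key step is a sharpened long-time upper bound for $G:=F+C$. Writing $\partial_tG=\Delta G+(1-G)(aF+C)+sHG$ and using $0\le H\le1$ together with $aF+C\ge\min\{1,a\}(F+C)=\min\{1,a\}G$, one obtains at every point where $G(t,x)\ge1$ (so that $1-G\le0$)
\[
\partial_tG\le\Delta G+\min\{1,a\}G(1-G)+sG=\Delta G+\min\{1,a\}\,G\big(L-G\big),\qquad L:=\frac{\min\{1,a\}+s}{\min\{1,a\}}\ \ (\ge1).
\]
Since $G\le M(r)$ and $M(r)\ge L$, comparing $G$ with the solution $\beta(t)$ of $\beta'=\min\{1,a\}\beta(L-\beta)$ issued from $M(r)$ --- legitimate because this barrier decreases and stays $\ge L\ge1$, so $G$ can only touch it from below where $G\ge1$ --- gives $\limsup_{t\to\infty}\sup_{x}G(t,x)\le L$. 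This refinement is essential: the cruder estimate $\sup_xG(t,x)\le m(t)\to(\max\{1,a\}+s)/\min\{1,a\}$ recorded in the preliminaries, which uses only $H\le1$, would merely cover the range $g<\min\{1,a\}/(\max\{1,a\}+s)$, and keeping $(1-G)$ negative while replacing $aF+C$ by its \emph{lower} bound $\min\{1,a\}G$ in the regime $G\ge1$ is exactly what yields the sharp constant $L$.

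To conclude, observe that the hypothesis $g<\min\{1,a\}/(\min\{1,a\}+s)$ is precisely $gL<1$. Fix $\epsilon\in(0,1-gL)$ and choose $T$ with $g(F+C)(t,x)\le gL+\epsilon=:\kappa<1$ for all $t\ge T$ and $x\in\mathbb R^N$; then $\partial_tH\ge d\Delta H+bH(1-\kappa-H)$ on $[T,\infty)\times\mathbb R^N$. Letting $\eta$ solve $\eta'=b\eta(1-\kappa-\eta)$ with $\eta(T)=\delta_0(T)>0$, we have $\eta(t)\uparrow1-\kappa$, and the $x$-independent function $(t,x)\mapsto\eta(t)$ is a subsolution lying below $H(T,\cdot)$, so the comparison principle gives $H(t,x)\ge\eta(t)$ for $t\ge T$, hence $\liminf_{t\to\infty}\inf_{x}H(t,x)\ge1-\kappa$. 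Letting $\epsilon\downarrow0$ proves the lemma with $\varepsilon:=1-gL>0$, a constant depending only on $a$, $s$, $g$. The main obstacle is thus the sharpened long-time bound on $G=F+C$; the rest is a routine scalar comparison argument.
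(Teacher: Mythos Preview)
Your proof is correct and follows essentially the same route as the paper. Both arguments (i) establish the refined bound $\limsup_{t\to\infty}\sup_x(F+C)\le L:=(\min\{1,a\}+s)/\min\{1,a\}$ by exploiting $aF+C\ge\min\{1,a\}(F+C)$ together with $1-G\le0$ in the regime $G\ge1$, and then (ii) feed $g(F+C)\le gL+\epsilon<1$ into the $H$-equation and compare with a spatially constant logistic subsolution. Your presentation is slightly more careful in two places: you explicitly secure $\inf_xH(T,x)>0$ at the intermediate time $T$ (the paper's choice of $\underline H(0,x)=1$ glosses over this), and you flag that the differential inequality for $G$ is only claimed on $\{G\ge1\}$, noting that the barrier $\beta(t)\ge L\ge1$ makes this harmless.
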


\begin{proof}\it{of Lemma \ref{low estimate of H 1}.}
\normalfont
The proof of this lemma is rather straightforward. Adding the both sides of the $F$-equation and $C$-equation of the system (\ref{fch-equation}), the function $G=F+C$ satisfies
$$\partial_t(F+C)-\Delta (F+C)=(aF+C)(1-F-C)+sH(F+C).$$
The right hand of the above equation can be rewritten as
$$(F+C)(1+sH-F-C)+(a-1)F(1-F-C)\le G(1+s-G)\ \ \mbox{if}\ \ a\ge 1,\ \ G\ge 1, $$
$$(F+C)(a+sH-aF-aC)+(1-a)C(1-F-C)\le G(a+s-aG)\ \ \mbox{if}\ \ a\le 1,\ \ G\ge 1.$$
Since $H(t,x)\le 1$, $F(t,x)\ge 0$ and $C(t,x)\ge 0$ for all $(t,x)\in\mathbb{R}\times\mathbb{R}^N$, then by applying the comparison principle, one obtains
$$\underset{t\to+\infty}{\limsup}\underset{ x\in\mathbb{R}^N}{\sup}(F+C)(t,x)\le (\min\{1,a\}+s)/\min\{1,a\}.$$
Thus, for any sufficiently small $\varepsilon>0$, there exist $T>0$, such that
$$\sup_{x\in\mathbb{R}^N}\ G(t,x)\le \frac{\min\{1,a\}+s}{\min\{1,a\}}+\varepsilon\ \ \mbox{for all}\ \ t\ge T.$$
Then, by applying the comparison principle again, one has $H(t,x)\ge\wideubar{H}(t,x)$ for all $(t,x)\in\mathbb{R}\times\mathbb{R}^N$, which satisfies the equation as follows:
\begin{equation*}
\left\{
\begin{aligned}
&\partial_t\wideubar{H}=\Delta \wideubar{H}+b\wideubar{H}(1-g\varepsilon-g(\min\{1,a\}+s)/\min\{1,a\}-\wideubar{H}),\\
&\wideubar{H}(0,x)= 1.
\end{aligned}
\right.
\end{equation*}
Therefore, one can conclude that 
$$\underset{t\to+\infty}{\liminf}\underset{ x\in\mathbb{R}^N}{\inf}H(t,x)\ge 1-g(\min\{1,a\}+s)/\min\{1,a\}-\varepsilon g,$$
which completes the proof.
\end{proof}
\vspace{10pt}

The second sufficient condition states that if the diffusion speed $d$ and intrinsic growth rate $b$ of hunter-gatherers are large enough,  then the population density of hunter-gatherers alway stay uniform positive. 

\begin{lemma}\label{low estimate of H 2}
If $bd\ge c^*/(1-g)$, for any $c\in[0,c^*)$, there exists $\varepsilon>0$ such that,  for any given initial data $(F_0,C_0,H_0)\in \Psi(r)$ satisfying $F_0\not\equiv 0$ and $H_0\equiv 1$, the solution $(F,C,H)$ of the system \textnormal{(\ref{fch-equation})} satisfies:
\begin{equation*}
\underset{t\to+\infty}{\liminf}\underset{ \lVert x\lVert\le ct}{\inf}H(t,x)\ge \varepsilon.\\
\end{equation*}
\end{lemma}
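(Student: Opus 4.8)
The plan is to produce a positive sub-solution for the $H$-equation that, after a finite time, lies below $H$ and whose moving support eventually swallows every ball $B_{ct}$ with $c<c^{*}$. The device that makes this possible is that the reaction $bH(1-H-g(F+C))$ is nonincreasing in $G:=F+C$, so that any super-solution of the $G$-equation may be frozen into the $H$-equation without breaking the comparison. From the leading-edge barriers of Section~3 (Theorem~\ref{thm leading edge}) together with the a priori estimate $\sup_x G(t,\cdot)\le m(t)$ obtained at the start of Section~5, one has, for $t$ large, an explicit super-solution $\bar G(t,x)$ of the $G$-equation with $\bar G\le 1+s/\min\{1,a\}$ everywhere and $\bar G(t,x)\le A\,e^{-\lambda^{*}(\lVert x\rVert-c^{*}t)}$ for $\lVert x\rVert\ge c^{*}t$, $\lambda^{*}=c^{*}/2$. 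Since $F_{0}\not\equiv0$ and $H_{0}\equiv1$, at some fixed time $T_{0}>0$ one has $H(T_{0},x)\ge1-\delta$ for $\lVert x\rVert\ge c'T_{0}$ ($c'>c^{*}$) and $G(T_{0},\cdot)>0$ on an arbitrarily large ball, which provides the initial ordering, and it remains to build a sub-solution $\underline H$ of the \emph{scalar} equation $\partial_t\underline H=d\Delta\underline H+b\underline H\bigl(1-\underline H-g\bar G\bigr)$ lying below $H(T_{0},\cdot)$.

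A key structural point, used to organise the construction, is that the ``bad set'' $\{g(F+C)>1\}$ is confined to a band of bounded width around the sphere $\lVert x\rVert=c^{*}t$: eventually $G\le 1+s/\min\{1,a\}$, so outside the regime $g<\min\{1,a\}/(\min\{1,a\}+s)$ already treated in Lemma~\ref{low estimate of H 1} the overshoot $G>1/g$ is a transient that is self-limiting (once $G$ overshoots, $H$ decays, which shuts off the conversion term $sHG$ and brings $G$ back down below $1/g$), so it lasts an $O(1)$ time after the front passes each position, hence occupies an $O(1)$-wide annulus. Working in the radial variable $\xi=\lVert x\rVert-c^{*}t$, I would then look for $\underline H$ of (essentially) traveling form $\underline H(t,x)=\kappa\,\psi(\xi)$, $\kappa$ small, with $\psi\ge0$ decreasing, positive and bounded below behind the front and $\psi(+\infty)=0$: in the leading edge $\bar G$ is exponentially small, the equation for $\psi$ is dominated by the KPP nonlinearity $b\psi(1-\psi)$ with diffusivity $d$, while across the $O(1)$-wide bad band one only controls $g\bar G\le g(1+s/\min\{1,a\})$ and the reaction may be negative, forcing $\psi$ to decay there at a prescribed exponential rate. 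The hypothesis $bd\ge c^{*}/(1-g)$ is exactly what makes these two requirements compatible — it places the speed $c^{*}$ below the relevant linear spreading speed and makes the admissible ranges of decay exponents in the two zones overlap — so that a single decreasing $\psi$ satisfies $-c^{*}\psi'\le d\psi''+d\frac{N-1}{\xi+c^{*}t}\psi'+b\psi(1-\psi-g\bar G)$ for all $\xi$ (the geometric term, of size $O(1/t)$, being absorbed by a small slack after translating $t_{0}$ so the support sits at large radius). Choosing $\kappa$ small and applying the comparison principle to the $H$-equation alone (legitimate since $\bar G\ge F+C$) gives $H(t,x)\ge\kappa\,\psi(\lVert x\rVert-c^{*}t)$ for $t\ge T_{0}$, and since $\psi$ is bounded below behind a front of speed $c^{*}$ we obtain, for every $c<c^{*}$ and uniformly in $e\in S^{N-1}$ (the construction being radial and the system rotation invariant), $\liminf_{t\to\infty}\inf_{\lVert x\rVert\le ct}H(t,x)>0$, which is the claim. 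Alternatively the same sub-solution can be fed into a compactness/contradiction argument in the style of Section~5: if $H(t_n,x_n)\to0$ along the final zone one extracts an entire solution with $H_{\infty}\equiv0$, which is not contradictory in itself (the competition subsystem admits such solutions, cf.\ the corresponding remark in Section~5); the sub-solution is precisely what excludes this quantitatively.

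The main obstacle is traversing the bad band $\{g(F+C)>1\}$, where the $H$-reaction is genuinely negative and the only bound on $G$ is the crude one. Two facts must be combined there: first, that in the frame moving at speed $c^{*}$ this band has bounded width (by the exponential leading-edge bound and the $O(\ln t)$ lag of the $G$-front), so $\psi$ only has to drop across a bounded $\xi$-interval; second, that the diffusion is strong enough that the reservoir $H\approx1$ lying ahead of the front resupplies $\underline H$ through the band faster than the negative reaction can deplete it, the quantitative threshold being exactly $bd\ge c^{*}/(1-g)$. Verifying the differential inequality for $\psi$ simultaneously in the leading edge and the bad band under this hypothesis — i.e.\ checking that the fast decay rate demanded inside the band is still admissible for a profile traveling at speed $c^{*}$ in the KPP region — is the delicate computation; once it is in place, the remainder (parabolic compactness, the comparison principle, and the routine a priori bounds of Section~5) is standard.
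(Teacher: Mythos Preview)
Your approach is genuinely different from the paper's, and it contains a gap that I do not see how to close.

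The paper does \emph{not} build a global traveling sub-solution for $H$. Instead it runs the three-step contradiction/compactness scheme of Section~5.1 with the roles of $H$ and $F+C$ exchanged. Assuming $H(t_n,x_n+c_nt_ne_n)\to 0$ along a sequence, one extracts an entire limit $(F_\infty,C_\infty,H_\infty)$ with $H_\infty\equiv 0$; the point is that \emph{in this limit} the conversion term $sH(F+C)$ vanishes, so $F_\infty+C_\infty$ satisfies $\partial_t G\le \Delta G+\max\{1,a\}G(1-G)$ and hence $F_\infty+C_\infty\le 1$ everywhere. Only then does one freeze the (now harmless) bound $g\bar G\le g(1+\delta)<1$ into the $H$-equation and build a compactly supported stationary sub-solution $\eta e^{-c_n x\cdot e_n/2}\phi_R(x)$ in the moving frame; the hypothesis $bd\ge c^*/(1-g)$ enters exactly here, to make $c^0<2\sqrt{db(1-g)}$ so that the eigenvalue inequality $c_n^2/4-\mu_R-b(1-g(1+\delta))<0$ holds for large $R$. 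This yields $\limsup H>0$, contradicting $H_\infty\equiv 0$, and the strong and uniform upgrades follow verbatim from Lemmas~\ref{lemma of strong spreading of H c<c*} and~\ref{lemma of uniform spreading of H and  F+C c<c* on x}.

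Your plan instead replaces $G=F+C$ by a super-solution $\bar G$ directly in the $H$-equation and seeks a traveling profile $\kappa\psi(\lVert x\rVert-c^*t)$. The obstruction is your ``bounded bad band'' claim. The only a~priori global bound on $G$ (hence on any $\bar G$ you can actually control) in the final zone is the constant $M(r)=(\max\{1,a\}+s)/\min\{1,a\}$; when $g\ge \min\{1,a\}/(\min\{1,a\}+s)$ this gives $g\bar G\ge 1$ on the entire ball $\lVert x\rVert\le c^*t$, not on an $O(1)$ annulus. Your heuristic ``$G$ overshoots $\Rightarrow$ $H$ decays $\Rightarrow$ $sHG$ shuts off $\Rightarrow$ $G$ drops below $1/g$'' presupposes precisely the quantitative control on $H$ (and then on $G$) that the lemma is meant to establish; a super-solution $\bar G$ built from Section~3 and the $m(t)$-bound does not see this feedback and will not drop below $1/g$ behind the front. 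With $g\bar G\ge 1$ on a region of width $\sim c^*t$, no decreasing profile $\psi$ can satisfy $-c^*\psi'\le d\psi''+b\psi(1-\psi-g\bar G)$ there while staying bounded below by a positive constant. The paper's limit argument sidesteps this entirely: the inequality $F_\infty+C_\infty\le 1$ is obtained \emph{a posteriori} from $H_\infty\equiv 0$, so the ``bad band'' disappears in the limit and never has to be traversed.
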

The proof of this lemma is similar to that for Theorem \ref{thm final zone 1+s>a}. The first step is to show a weak pointwise property as follows:

\begin{lemma}\label{low estimate of H 3}
If $bd\ge c^*/(1-g)$, there exists $\varepsilon_1>0$ such that, for any given initial data $(F_0,C_0,H_0)\in \Psi(r)$ satisfying $H_0\not\equiv 0$, for all $c\in[0,c^0]$, $e\in S^{N-1}$ and $x\in\mathbb{R}^N$, the solution $(F,C,H)$ of the system \textnormal{(\ref{fch-equation})} satisfies:
\begin{equation*}
\underset{t\to+\infty}{\limsup}\ H(t,cte+x)\ge \varepsilon_1.\\
\end{equation*}
\end{lemma}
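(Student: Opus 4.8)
The plan is to argue by contradiction, following the scheme of the weak pointwise estimate in the proof of Lemma~\ref{lemma of weak pointwise spreading c<c* H}. Assume the conclusion fails: there are initial data $(F_{0,n},C_{0,n},H_{0,n})\in\Psi(r)$ with $H_{0,n}\not\equiv0$, speeds $c_n\in[0,c^0]$, directions $e_n\in S^{N-1}$, points $x_n\in\mathbb{R}^N$, and times $t_n\to+\infty$, so that the solution $(F_n,C_n,H_n)$ of \eqref{fch-equation} satisfies $H_n(t,x_n+c_nte_n)\le 1/n$ for all $t\ge t_n$; up to a subsequence $c_n\to c_\infty\in[0,c^0]$, $e_n\to e_\infty\in S^{N-1}$.

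\emph{First, a bootstrap to uniform smallness of $H$.} Exactly as in Claim~\ref{claim from pointwise converge H F+C c<c*}, one produces $t_n'\ge t_n$ with $\lim_{n\to\infty}\sup_{t\ge0,\,y\in B_R}H_n(t_n'+t,\,x_n+c_n(t_n'+t)e_n+y)=0$ for every $R>0$, since otherwise parabolic estimates would yield an entire limit whose $H$-component $H_\infty$ satisfies $0\le H_\infty\le1$, $H_\infty(0,0)=0$, $H_\infty\not\equiv0$, contradicting the strong maximum principle. Passing to the limit around the moving centre gives an entire solution $(F_\infty,C_\infty,H_\infty)$ of \eqref{fch-equation} with $H_\infty\equiv0$; then $(F_\infty,C_\infty)$ solves the two-species competition system obtained by dropping the $H$-coupling, and comparing $F_\infty+C_\infty$ with the scalar logistic equation (cf. the a priori bound at the start of Section~5, the term $sH(F+C)$ now being absent) gives $0\le F_\infty+C_\infty\le1$, hence $1-g(F_\infty+C_\infty)\ge1-g>0$.

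\emph{Second, a positive subsolution for $H$ --- this is the crux, and the only place $bd\ge c^*/(1-g)$ is used.} The aim is a subsolution $\underline H$ of the $H$-equation bounded below by a fixed positive constant on a region that, for large $t$, contains $x_n+c_nte_n$, which contradicts the first step. Two external inputs are at hand: the super-solutions of Section~3 give $F(t,x)+C(t,x)\le A\,e^{-\lambda^*(\lVert x\rVert-c^*t)}$ for all $x$, so $g(F+C)<1$ well ahead of $\lVert x\rVert=c^*t$; and Theorem~\ref{thm leading edge} gives $H\to1$ on $\{\lVert x\rVert\ge ct\}$ for every $c>c^*$. With these ``outer data'' one builds a radially symmetric, front-type subsolution $\underline H$ whose zero set advances outward at speed $c^*$ and which is positive behind the front; the differential inequality $\partial_t\underline H\le d\Delta\underline H+b\underline H\bigl(1-\underline H-g(F+C)\bigr)$ is easy well behind the front and also ahead of it, and the only nontrivial check is across the transient layer just behind $\lVert x\rVert=c^*t$, where $g(F+C)$ may exceed $1$ and the linear part of the $H$-reaction turns unfavourable; letting $\underline H$ degenerate there, the inequality survives precisely when the diffusion and growth can restore $\underline H$ at least as fast as the layer is swept past, i.e. when $bd(1-g)\ge c^*$. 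The radial drift terms $d\frac{N-1}{r}\partial_r\underline H$ are negligible since $\lVert x\rVert$ is of order $t$ on the relevant range (if needed, one shifts the front by $O(\ln t)$ as in Section~4). Then $\liminf_{t\to\infty}\inf_{\lVert x\rVert\le c^0t}H(t,x)>0$, contradicting the first step.

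\emph{The main obstacle} is the construction in the second step: unlike Lemma~\ref{low estimate of H 1}, where $g<\min\{1,a\}/(\min\{1,a\}+s)$ forces $g(F+C)<1$ everywhere (so a constant subsolution works), here $g(F+C)$ may overshoot $1$ in a layer trailing the front, and one must carefully quantify how the diffusion from the region ahead (where $H\approx1$) together with the favourable reaction behind compensate for it --- this is exactly what the inequality $bd\ge c^*/(1-g)$ encodes. Once Lemma~\ref{low estimate of H 3} is established, Lemma~\ref{low estimate of H 2} --- and then the $\liminf H\ge\varepsilon$ assertion of Theorem~\ref{thm profile g<1} --- follow by upgrading the pointwise bound to one uniform in $\lVert x\rVert\le ct$, via the same three-step scheme used for Theorem~\ref{thm final zone 1+s>a}.
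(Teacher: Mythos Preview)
Your first step is correct and mirrors the paper's analogue of Claim~\ref{claim from pointwise converge H F+C c<c*}: along the contradiction sequence one obtains, in the moving frame, $H_n\to0$ uniformly on every $B_R$ and hence, in the limit entire solution, $H_\infty\equiv0$ and $F_\infty+C_\infty\le1$. But you then abandon this local information and attempt a \emph{global}, front-type subsolution $\underline H$ for the $H$-equation, feeding in the exponential super-solutions of Section~3 and Theorem~\ref{thm leading edge}. This is where the argument breaks. Lemma~\ref{low estimate of H 3} is stated for \emph{arbitrary} initial data $(F_0,C_0,H_0)\in\Psi(r)$ with $H_0\not\equiv0$: the farmer components $F_0,C_0$ need not be compactly supported (nor even decaying), so neither the exponential bound $F+C\le A e^{-\lambda^*(\lVert x\rVert-c^*t)}$ nor the leading-edge estimate $H\to1$ outside $\{\lVert x\rVert\le ct\}$ is available. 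Moreover, your description of how $\underline H$ survives the layer where $g(F+C)$ may exceed $1$ (``letting $\underline H$ degenerate there, the inequality survives precisely when\ldots'') is a heuristic, not a construction.

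The paper's route is purely local and reuses the output of your first step. Since $F_\infty+C_\infty\le1$, for large $n$ one has $(F_n+C_n)\le1+\delta$ on $B_R$ in the moving frame; because $g<1$, the effective growth rate $b(1-g(1+\delta))$ is strictly positive there for small $\delta$. Outside $B_R$ one only needs the global invariant-region bound $F+C\le\frac{\min\{1,a\}+s}{\min\{1,a\}}$. One then takes the compactly supported eigenfunction ansatz $\psi_n(x;\eta)=\eta\,e^{-c_nx\cdot e_n/2}\phi_R(x)$ (zero outside $B_R$) and checks it is a stationary subsolution of
\[
\partial_t\underline H_n=d\Delta\underline H_n+c_n\nabla\underline H_n\cdot e_n+b\underline H_n\bigl(1-\underline H_n-g\overline G\bigr),
\]
with $\overline G=\frac{\min\{1,a\}+s}{\min\{1,a\}}\chi_{\mathbb{R}^N\setminus B_R}+(1+\delta)\chi_{B_R}$; the verification reduces to $c^0<2\sqrt{db(1-g)}$, which is exactly where the hypothesis $bd\ge c^*/(1-g)$ enters. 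The solution starting from $\psi_n$ increases to a positive stationary state $p_{n,R,\delta}$, contradicting $H_n\to0$ on $B_R$. This is precisely the mechanism of Lemma~\ref{lemma of weak pointwise spreading c<c* H} with the roles of $H$ and $F+C$ interchanged; no global front construction is needed, and no spatial decay of $F+C$ is assumed.
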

\begin{proof}\it{of Lemma \ref{low estimate of H 3}.}
\normalfont
For $F_0(x)+C_0(x)\equiv 0$, the lemma holds immediately with $\varepsilon_1=1.$ Hence, without loss of generality, we assume $F_0(x)+C_0(x)\not\equiv 0$.
We argue by contradiction once again and assume there exist sequences
$$\{(F_{0,n},C_{0,n},H_{0,n})\}_{n\ge 0},\ \ \{c_n\}_{n\ge0}\subset[0,c^0],\ \ \{x_n\}_{n\ge0}\subset\mathbb{R}^N,$$
$$\{e_n\}_{n\ge 0}\subset S^{N-1}\ \ \mbox{and}\ \ \{t_n\}_{n\ge0}\subset[0,\infty)\ \ \mbox{such that}\ \ t_n\to+\infty, $$
such that the following statement holds true
\begin{equation}\label{construction of H (lower estimate) c<c*}
\mbox{for all}\ \ t\ge t_n,\ H_n(t,x_n+c_nte_n)\le \frac{1}{n},
\end{equation}
wherein $(F_n,C_n,H_n)$ denotes the solution with the initial data $(F_{0,n},C_{0,n},H_{0,n})$. Note without loss of generality that 
$$c_n\to c_{\infty}\in [0,c^0]\ \ \mbox{and}\ \ e_n\to e_{\infty}\in S^{N-1}.$$ 

Then, by applying a similar argument to that for Claim \ref{claim from pointwise converge H F+C c<c*}, one can claim that 
\begin{claim}
If (\ref{construction of H (lower estimate) c<c*}) holds true, then there exists a sequence $\{t'_n\}_{n\ge0}$ satisfying $t'_n\ge t_n$ such that, for any $R>0$, it holds:
\begin{equation}\label{uniform  F+C<1 c<c* in claim}
\lim_{n\to\infty}\sup_{t\ge0,x\in B_R}(F_n+C_n)(t'_n+t,x_n+c_n(t'_n+t)e_n+x)\le 1,
\end{equation}
\begin{equation}\label{uniform converget to 0 of H c<c* in claim}
\lim_{n\to\infty}\sup_{t\ge0,x\in B_R}H_n(t'_n+t,x_n+c_n(t'_n+t)e_n+x)=0.
\end{equation}
\end{claim}

Now, we can go back to the proof of Lemma \ref{low estimate of H 3}.  From the statement (\ref{uniform F+C<1 c<c* in claim}), for any $R>0$ and small enough $\delta>0$ , for any $n$ large enough, one has for all $t>0$ and $x\in\mathbb{R}^N$,
$$(F_n+C_n)(t_n+t,x_n+c_n(t_n+t)e_n+x)\le \frac{\min\{1,a\}+s}{\min\{1,a\}}\chi_{\mathbb{R}^N\setminus B_R}+(1+\delta)\chi_{B_R}(x):=\widebar{G}(x).$$
Then one infers from the comparison principle that
$$H_n(t_n+t,x_n+c_n(t_n+t)e_n+x)\ge \wideubar{H}_n(t,x)\ \mbox{for all}\ \ t\ge 0,\ x\in\mathbb{R}^N,$$
wherein $\wideubar{H}_n$ is the solution of the equation
\begin{equation}\label{equation of sub solution of H_n g<1}
\left\{
\begin{array}{rl}
&\partial_t\wideubar{H}_n=d\Delta\wideubar{H}_n+c_n\nabla\wideubar{H}_n\cdot e_n+b\wideubar{H}_n(1-\wideubar{H}_n-g\widebar{G}),\\
&\wideubar{H}_n(0,x)=H_n(t_n,x_n+c_nt_ne_n+x).
\end{array}
\right.
\end{equation}

Then, we consider a stationary sub-solution $\psi(x;\eta)$, for each $\eta>0$, 
$$\psi(t,x;\eta)=\eta e^{-c_nx\cdot e_n/2}\phi_R(x).$$
Since $c^0<2\sqrt{db(1-g)}$, one can check that there exist $\eta_0$ depending only on $c^0$ such that for any $\delta$ small enough, $0<\eta\le \eta_0$ and $R$ large enough, the function $\psi(x)$ is a stationary sub-solution of the equation (\ref{equation of sub solution of H_n g<1}).
Therefore, the solution of the equation (\ref{equation of sub solution of H_n g<1}) associated with initial data $\psi(x;\eta)$ is increasing in time, and converges to some positive stationary solution that denote by $p_{n,R,\delta}(x)$. 
Moreover, the stationary state
$p_{n,R,\delta}(x)$ does not depend on the choice of $\eta\in(0,\eta_0]$\ . 
Then, we can choose $\eta$ sufficiently small such that $\wideubar{H}^n(0,x)\ge\psi(0,x;\eta)$ for all $x\in\mathbb{R}^N$. Then, it follows from the comparison principle that for any
$R>0$ large enough and $\delta>0$ small enough and $n$ large enough
\begin{equation}\label{eq 8}
\liminf_{t\to\infty}H_n(t_n+t,x_n+c_n(t_n+t)e_n+x)\ge \liminf_{t\to\infty}\wideubar{H}_n(t,x)\ge p_{n,R,\delta}(x)\ \ \mbox{for all}\ \ x\in\mathbb{R}^N.
\end{equation}

To complete the proof of lemma, it remains to check that $p_{n,R,\delta}$ is far way from $0$ as $n$ and $R$ are large enough and $\delta$ is small.
Since $p_{n,R,\delta}$ is bounded from above by $1$, one can use standard elliptic estimates to get that, as $n\to +\infty$, $R\to+\infty$ and $\delta\to 0$, the function $p_{n,R,\delta}(x)$ converges locally uniformly to a stationary solution $p_{\infty}(x)$ of the equation
\begin{equation*}
d\Delta p_{\infty}+c_{\infty}\nabla p_{\infty}\cdot e_n+bp_{\infty}(1-g-p_{\infty})=0.
\end{equation*}
Moreover, since the map $t\to\psi(t,x;\eta_0)$ is nondecreasing, one has $p_{n,R,\delta}(0)\ge\psi(0,0;\eta_0)\ge\eta_0\psi_R(0)$. Note that $\varphi_R\to 1$ locally uniformly as $R\to+\infty$, hence $p_{\infty}(0)\ge\eta_0$ and $p_{\infty}(x)>0$ for all $x\in\mathbb{R}^N$. Therefore, from the statements (\ref{uniform converget to 0 of H c<c* in claim}) and (\ref{eq 8}), we reached a contradiction and  proved the Lemma \ref{low estimate of H 3}.
\end{proof}
\vspace{10pt}

By applying the similar argument to that for Lemma \ref{lemma of strong spreading of H c<c*} and Lemma \ref{lemma of uniform spreading of H and  F+C c<c* on x}, one can complete the proof of Lemma \ref{low estimate of H 2}. Then, by proceeding the proof of Theorem \ref{thm final zone 1+s>a}, one can immediately conclude a lemma as follows:
\begin{lemma}\label{lemma profile F+C>0 g<1}
If $g<1$, then for any given $0\le c<c^*$, there exists $\varepsilon>0$, such that, for the solution $(F,C,H)$ of the system \textnormal{(\ref{fch-equation})} with the initial data \textnormal{(\ref{intial data})}, it holds:
\begin{equation}\label{estimate profile F+C>0 g<1}
\liminf_{ t \to \infty}\inf_{\lVert x\rVert\le ct}(F+C)(t,x)\ge \varepsilon,
\end{equation}
provided that
$$g<\frac{\min\{1,a\}}{\min\{1,a\}+s}\ \ \mbox{or}\ \ bd\ge \frac{c^*}{1-g}.$$
\end{lemma}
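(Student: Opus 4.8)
The plan is to split the proof according to the sign of $1+s-a$. When $1+s<a$, so that $c^*=2\sqrt a$, the bound \eqref{estimate profile F+C>0 g<1} is already Theorem \ref{thm final zone 1+s<a}; since the proof of that theorem only invokes the logarithmic lower bound \eqref{log delay lower bound on F} of Proposition \ref{prop 3 of log delay}, which by the Remark following that proposition holds for every $d\ge 1$ and requires no hypothesis on $g$, nothing new is needed in this regime (in particular the extra assumptions on $g$, $bd$ are only binding when $1+s\ge a$). It therefore remains to treat $1+s\ge a$, for which $c^*=2\sqrt{1+s}$, and here the strategy is to rerun, with only minor changes, the three-step ``weak pointwise $\Rightarrow$ strong pointwise $\Rightarrow$ uniform'' scheme of Section 5.1 (Lemmas \ref{lemma of weak pointwise spreading c<c* H}, \ref{lemma of strong spreading of H c<c*} and \ref{lemma of uniform spreading of H and  F+C c<c* on x}). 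The only structural change is that every appeal to Proposition \ref{prop of F+C+H>0 2} — which was the sole reason Section 5.1 was restricted to $d=1$ — is replaced by an appeal to Lemma \ref{low estimate of H 1} (under $g<\min\{1,a\}/(\min\{1,a\}+s)$) or Lemma \ref{low estimate of H 2} (under $bd\ge c^*/(1-g)$), both valid for $d\ge 1$; the limit systems that arise then carry $d\Delta H_\infty$ rather than $\Delta H_\infty$, which is immaterial.

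Concretely, for the analogue of Lemma \ref{lemma of weak pointwise spreading c<c* H} I would argue by contradiction from $(F_n+C_n)(t,x_n+c_nte_n)\le 1/n$ for $t\ge t_n$ (only this construction is needed, since we no longer claim anything about $H$). As in Claim \ref{claim from pointwise converge H F+C c<c*}, parabolic estimates and the strong maximum principle give $F_n+C_n\to 0$ locally uniformly around the moving base point and a limiting entire solution with $F_\infty+C_\infty\equiv 0$. Because these base points stay in a cone $\{\lVert x\rVert\le(c^0+o(1))t\}$ with $c^0<c^*$, Lemma \ref{low estimate of H 1}/\ref{low estimate of H 2} forces $H_\infty\ge\varepsilon_H>0$; since $H_\infty$ is then a bounded entire solution of $\partial_tH_\infty=d\Delta H_\infty+c_\infty\nabla H_\infty\cdot e_\infty+bH_\infty(1-H_\infty)$ bounded below by a positive constant, the Liouville property for this scalar KPP equation (already used in Section 5.1) gives $H_\infty\equiv 1$, i.e. $H_n\to 1$ and $F_n+C_n\to 0$ uniformly on balls. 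From here the argument is identical to Section 5.1: the compactly supported stationary sub-solution $\psi_n(x)=\eta e^{-c_nx\cdot e_n/2}\phi_R(x)$ is admissible for the $C$-equation because its effective growth rate there is $1+s$ and $c_n\le c^0<c^*=2\sqrt{1+s}$, so $c_n^2/4-\mu_R-(1+s)<0$ for $R$ large; letting its orbit increase to a positive stationary solution and then sending $n,R\to\infty$, $\delta\to 0$, one reaches a positive stationary solution of $\Delta p_\infty+c_\infty\nabla p_\infty\cdot e_\infty+p_\infty(1+s-p_\infty)=0$ with $p_\infty(0)\ge\eta_0>0$, contradicting $F_n+C_n\to 0$. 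The time-bootstrap of Lemma \ref{lemma of strong spreading of H c<c*} and the passage from a fixed speed $c$ to the whole disk $\lVert x\rVert\le ct$ of Lemma \ref{lemma of uniform spreading of H and  F+C c<c* on x} are then carried out with the same substitution, and \eqref{estimate profile F+C>0 g<1} is the endpoint.

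I expect the only real difficulty to be bookkeeping rather than conceptual. Since the scheme repeatedly passes to limits of space–time translates $(F,C,H)(\cdot+t_n,\,\cdot+y_n)$, the limiting solutions are no longer of the special form \eqref{intial data}, so one cannot quote Lemma \ref{low estimate of H 1}/\ref{low estimate of H 2} for them verbatim; one must instead check that the conclusion ``$H\ge\varepsilon_H$ in the final zone'' is inherited, which holds provided every translation vector $y_n$ satisfies $\lVert y_n\rVert\le(c^*-\rho)(t+t_n)$ for a fixed $\rho>0$ on the relevant time window. Under $g<\min\{1,a\}/(\min\{1,a\}+s)$ this is automatic, since Lemma \ref{low estimate of H 1} gives $H\ge\varepsilon$ on all of $\mathbb R^N$; under $bd\ge c^*/(1-g)$ one must track through each of the three steps that the base points remain within the region where Lemma \ref{low estimate of H 2} applies, which is precisely why one works throughout with an auxiliary speed $c^0\in(c,c^*)$. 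Once this is organized, the remaining estimates are routine.
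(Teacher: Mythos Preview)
Your proposal is correct and follows the same route the paper takes. The paper's own proof is just the sentence ``by proceeding the proof of Theorem~\ref{thm final zone 1+s>a}, one can immediately conclude'' the lemma; you have unpacked exactly what this means, including the case split (for $1+s<a$ one simply quotes Theorem~\ref{thm final zone 1+s<a}, whose proof via Proposition~\ref{prop 3 of log delay} needs no hypothesis on $d$ or $g$) and, crucially, the observation that in the three-step scheme of Section~5.1 every use of Proposition~\ref{prop of F+C+H>0 2} (the only place $d=1$ entered) is replaced by the uniform lower bound on $H$ coming from Lemma~\ref{low estimate of H 1} or Lemma~\ref{low estimate of H 2}. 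Your remark that under the first hypothesis on $g$ the bound on $H$ is global while under the second one must keep the moving base points inside a cone $\lVert x\rVert\le c^0 t$ with $c^0<c^*$ is exactly the bookkeeping the paper leaves implicit.
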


With the uniform lower estimate of the $H$-component, we can first prove the $F$-component converges to $0$ in the final zone.
\begin{lemma}\label{lemma profile F g<1}
If $g<1$, then for any given $0\le c<c^*$, the solution $(F,C,H)$ of the system \textnormal{(\ref{fch-equation})} with the initial data \textnormal{(\ref{intial data})} satisfies:
\begin{equation}\label{estimate profile F g<1}
\lim_{ t \to \infty}\sup_{\lVert x\rVert\le ct}F(t,x)=0,
\end{equation}
provided that
$$g<\frac{\min\{1,a\}}{\min\{1,a\}+s}\ \ \mbox{or}\ \ bd\ge \frac{c^*}{1-g}.$$
\end{lemma}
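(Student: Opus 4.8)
The plan is to argue by contradiction via a limiting entire solution, using the two lower bounds already in hand: the uniform positivity of $F+C$ in the final zone (Lemma \ref{lemma profile F+C>0 g<1}) and the uniform positivity of $H$ in the final zone (Lemma \ref{low estimate of H 1} under $g<\min\{1,a\}/(\min\{1,a\}+s)$, or Lemma \ref{low estimate of H 2} under $bd\ge c^*/(1-g)$). Suppose the conclusion fails, so that there are $\delta_0>0$, times $t_n\to+\infty$ and points $x_n$ with $\lVert x_n\rVert\le ct_n$ and $F(t_n,x_n)\ge\delta_0$. Fix $c'\in(c,c^*)$. Using the \emph{a priori} bounds $0\le F$, $0\le C$, $F+C\le\text{const}$, $0\le H\le1$ and parabolic estimates, pass (along a subsequence) to the locally uniform limit $(F_\infty,C_\infty,H_\infty)$ of $(F,C,H)(t_n+t,x_n+x)$, which is a bounded entire solution of \eqref{fch-equation}. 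For $(t,x)$ in any fixed compact set one has $\lVert x_n+x\rVert\le c'(t_n+t)$ for $n$ large, so Lemma \ref{lemma profile F+C>0 g<1} yields $(F_\infty+C_\infty)(t,x)\ge\varepsilon$ and Lemma \ref{low estimate of H 1} or \ref{low estimate of H 2} yields $H_\infty(t,x)\ge\varepsilon_H$, for some $\varepsilon,\varepsilon_H>0$, on all of $\mathbb{R}\times\mathbb{R}^N$.

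The key step is to upgrade this to $G_\infty:=F_\infty+C_\infty\ge\hat G>1$ with a definite $\hat G$. Adding the $F_\infty$- and $C_\infty$-equations gives
\[
\partial_tG_\infty-\Delta G_\infty=(aF_\infty+C_\infty)(1-G_\infty)+sH_\infty G_\infty .
\]
Since $aF_\infty+C_\infty\ge\min\{1,a\}G_\infty$ where $G_\infty\le1$ and $aF_\infty+C_\infty\le\max\{1,a\}G_\infty$ where $G_\infty>1$, while $1-G_\infty$ changes sign exactly at $G_\infty=1$, and $H_\infty\ge\varepsilon_H$, the function $G_\infty$ is a supersolution of $\partial_t w=\Delta w+q(w)$ with the continuous, piecewise-logistic nonlinearity $q(w)=\min\{1,a\}w(1-w)+s\varepsilon_H w$ for $w\le1$ and $q(w)=\max\{1,a\}w(1-w)+s\varepsilon_H w$ for $w>1$. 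One checks $q(0)=0$, $q>0$ on $(0,\hat G)$ and $q(\hat G)=0$, where $\hat G:=1+s\varepsilon_H/\max\{1,a\}>1$. Comparing $G_\infty$ with the spatially homogeneous solution $\underline g$ of the ODE $\underline g'=q(\underline g)$ issued at time $t_0$ from a fixed value in $(0,\varepsilon]$ (admissible since $G_\infty(t_0,\cdot)\ge\varepsilon$), and using $\underline g(t)\uparrow\hat G$ as $t\to+\infty$ together with the entire-solution structure (letting $t_0\to-\infty$ while evaluating at a fixed time), one obtains $G_\infty\ge\hat G$ on $\mathbb{R}\times\mathbb{R}^N$.

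With $\delta:=\hat G-1>0$ it then follows that $1-F_\infty-C_\infty\le-\delta$ everywhere, hence $\partial_tF_\infty\le\Delta F_\infty-a\delta F_\infty$; comparing the bounded nonnegative entire solution $F_\infty$ with $\lVert F_\infty\rVert_\infty\,e^{-a\delta(t-t_0)}$ and letting $t_0\to-\infty$ forces $F_\infty\equiv0$, contradicting $F_\infty(0,0)=\lim_n F(t_n,x_n)\ge\delta_0>0$. This contradiction proves the lemma. I expect the main obstacle to be exactly the middle step: one needs the \emph{strict} inequality $G_\infty>1$, not merely $G_\infty\ge1$, and this is where a ``hair-trigger''-type ODE comparison must be carried carefully across the kink of $q$ at $w=1$; the two alternative hypotheses on $g$ (resp.\ on $bd$) enter only through guaranteeing $\varepsilon_H>0$, which is precisely what makes $\hat G=1+s\varepsilon_H/\max\{1,a\}$ lie strictly above $1$.
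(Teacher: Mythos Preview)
Your proof is correct and follows essentially the same route as the paper's: pass to a limit entire solution $(F_\infty,C_\infty,H_\infty)$ along sequences in the final zone, invoke the uniform lower bounds $F_\infty+C_\infty\ge\varepsilon$ and $H_\infty\ge\varepsilon_H$ coming from Lemmas~\ref{lemma profile F+C>0 g<1} and~\ref{low estimate of H 1}/\ref{low estimate of H 2}, upgrade via an ODE comparison to $G_\infty=F_\infty+C_\infty>1$ everywhere, and conclude $F_\infty\equiv0$ from the resulting exponential decay in the $F$-equation. The paper phrases this via arbitrary sequences rather than by contradiction, but that is cosmetic.

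Your treatment of the middle step is actually more careful than the paper's. The paper asserts the differential inequality $\partial_t G_\infty-\Delta G_\infty\ge\min\{1,a\}\,G_\infty(1+s\varepsilon-G_\infty)$ on the whole set $\{G_\infty\le 1+s\varepsilon\}$, but when $a>1$ and $1<G_\infty<1+s\varepsilon$ one only has $(aF_\infty+C_\infty)(1-G_\infty)\ge aG_\infty(1-G_\infty)$, which gives $G_\infty(a(1-G_\infty)+s\varepsilon)$ and this is \emph{strictly smaller} than $G_\infty(1+s\varepsilon-G_\infty)$ in that range. Your piecewise nonlinearity $q$ handles the sign change of $1-G_\infty$ correctly, is continuous and Lipschitz at $w=1$ (both one-sided formulas give $q(1)=s\varepsilon_H$), and yields the slightly weaker but entirely sufficient bound $G_\infty\ge\hat G=1+s\varepsilon_H/\max\{1,a\}>1$. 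So your version in fact patches a small gap in the paper's argument while keeping the same overall strategy.
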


\begin{proof}\it{of Lemma \ref{lemma profile F g<1}.}
\normalfont
Let us choose sequences  $\{c_n\}_{n\ge 0}\subset[0,c^0]$, $\{t_n\}_{n\ge 0}\subset\mathbb{R}_+$ with $t_n\to+\infty$ as $n\to+\infty$, and $\{x_n\}_{n\ge 0}\subset\mathbb{R}^N$ with $\lVert x_n\lVert\le c_nt_n$. 
We consider the limit functions again
$$\lim_{n\to+\infty}F(t_n+t,x_n+x)=F_{\infty}(t,x),$$
$$\lim_{n\to+\infty}C(t_n+t,x_n+x)=C_{\infty}(t,x),$$
$$\lim_{n\to+\infty}H(t_n+t,x_n+x)=H_{\infty}(t,x),$$
which converge locally uniformly to $(F_{\infty}, C_{\infty}, H_{\infty})$, an entire solution of the system 
\begin{equation*}
\left\{
\begin{aligned}
&\partial_tF_{\infty}=\Delta F_{\infty}+aF_{\infty}(1-C_{\infty}-F_{\infty}),\\
&\partial_tC_{\infty}=\Delta C_{\infty}+C_{\infty}(1-C_{\infty}-F_{\infty})+s(F_{\infty}+C_{\infty})H_{\infty},\\
&\partial_tH_{\infty}=d\Delta H_{\infty}+bH_{\infty}(1-gF_{\infty}-gC_{\infty}-H_{\infty}).
\end{aligned}
\right.
\end{equation*}
The result of Lemma \ref{low estimate of H 2} and Lemma \ref{lemma profile F+C>0 g<1}
imply that $(F_{\infty}+C_{\infty})\ge\varepsilon$ and  $H_{\infty}\ge\varepsilon$
for all $(t,x)\in\mathbb{R}\times\mathbb{R}^N$. Adding the both sides of the $F_{\infty}$-equation and $C_{\infty}$-equation, one may find that $G_{\infty}=F_{\infty}+C_{\infty}$ satisfies
$$\partial_tG_{\infty}-\Delta G_{\infty}\ge \min\{1,a\}G_{\infty}(1+s\varepsilon-G_{\infty})\ \ \mbox{for all}\ \ (t,x)\in\{(t,x)\ |\ G_{\infty}(t,x)\le 1+s\varepsilon\},$$
and $G_{\infty}(t,x)\ge \varepsilon$ when $t=-t_0$. Then, by passing $t_0\to+\infty$, one has $G_{\infty}(t,x)\ge 1+s\varepsilon$ for all $(t,x)\in\mathbb{R}_+\times\mathbb{R}^N$.
Then, it implies that 
$$\partial_tF_{\infty}-\Delta F_{\infty}\le -as\varepsilon F_{\infty},$$
and hence $F_{\infty}\equiv 0$.
Since the sequences $\{t_n\}_{n\ge 0}$ and $\{x_n\}_{n\ge 0}$  are chosen arbitrarily, the proof of this lemma is complete.
\end{proof}
\vspace{10pt}

Then, for the special case $d=1$, we can investigate the profiles of the $C$-component and $H$-component in the finial zone by considering the dynamics of the underlying ODE system:
\begin{equation*}
\left\{
\begin{aligned}
&C_t=C(1-C)+sCH,\\
&H_t=bH(1-H-gC).
\end{aligned}
\right.
\end{equation*}
We expect the solution of the PDE system (\ref{fch-equation}) to converge uniformly to the equilibrium $(C^*,H^*)$ as $t\to+\infty$.

Let us introduce the set $\Sigma=\{(C,H)\in \mathbb{R}^2\ :\ 0<C<1+s,\ 0<H<1 \}$, in which 
$(C^*,H^*)$ is the unique singular point.
There exists
a strictly convex function $\Phi:\Sigma\to\mathbb{R}$ of class $C^2$ that attains its minimum point at $(C^*,H^*)$ and satisfies
$$(C(1-C+sH),bH(1-H-gC))\cdot \nabla\Phi(C,H)\le 0\ \mbox{for all}\ \ (C,H)\in\Sigma.$$
The function $\Phi$ is a strict Lyapunov function in the sense that:
if $(C,H)$ denotes the solution of corresponding Ode system (\ref{ode system}) with 
the initial data $(C_0,H_0)$, then
$$\Phi(C(t),H(t))=\Phi(C_0,H_0)\ \mbox{for all}\ \ t>0\Rightarrow (C_0,H_0)=(C^*,H^*).$$
As a matter of fact, we can consider the strictly convex functional as
$$\Phi(C,H):=bg\int_{C^*}^C\frac{\eta-C^*}{\eta}+s\int_{H^*}^H\frac{\xi-H^*}{\xi}.$$
It is not difficult to check that, for all $(C,H)\in \Sigma$, it holds
$$(C(1-C+sH),bH(1-H-gH))\cdot \nabla\Phi(C,H)=-bg(C-C^*)^2-bs(H-H^*)^2\le 0.$$
Furthermore, for any solution $(C,H)$ of the ODE system, one has
\begin{align*}
\Phi(C,H)_t&=bg(C-C^*)(1-C+sH)+bs(H-H^*)(1-H-gH)\\
&=-bg(C-C^*)^2-bs(H-H^*)^2,
\end{align*}
hence it is a strict Lyapunov function.
Since $\Phi$ is bounded from below, we assume without loss of generality that $\Phi\ge 0$, and the equality only holds at the unique
minimizer $(C^*,H^*)$.

Let us argue by contradiction and assume that there exist $c\in [\ 0,c^*)$ and a sequence $\{(t_k,x_k)\}_{k\ge0}\subset(0,\infty)\times\mathbb{R}^N$ such that $t_k\to+\infty$ and $\delta>0$ such that for all $k>0$,
\begin{equation*}
\lVert x_k\lVert\le ct_k\ \ \mbox{and}\ \ |C(t_k,x_k)-C^*|+|H(t_k,x_k)-H^*|\ge\delta.
\end{equation*}
Consider the sequence of functions $(F_k,C_k,H_k)(t,x)=(F,C,H)(t+t_k,x+x_k)$.
Now, let us fix $c'>0$ such that $c<c'<c^*$. Therefore, there exist $N>0$ large enough and $\varepsilon>0$ small enough such that, for $k\ge 0$ and $t\in \mathbb{R}$, one has  
\begin{equation*}
t+t_k\ge A\ \ \mbox{and}\ \ \lVert x\lVert\le c't+(c'-c)t_k\Rightarrow 
\left\{
\begin{aligned}
&F_k(t,x)\le \frac{1}{k},\\
&\varepsilon\le C_k(t,x)\le 1+s-\varepsilon,\\
&\varepsilon\le H_k(t,x)\le 1-\varepsilon.
\end{aligned}
\right.
\end{equation*}
Then, by parabolic estimates, possibly along a subsequence, one may assume that
$$(F_k,C_k,H_k)(t,x)\to (F_{\infty},C_{\infty},H_{\infty})(t,x)\ \ \mbox{locally uniformly for}\ \ (t,x)\in\mathbb{R}\times\mathbb{R}^N,$$
where $(F_{\infty},C_{\infty},H_{\infty})$ is a bounded entire solution and satisfies
$$\underset{(t,x)\in\mathbb{R}\times\mathbb{R}^N}{\sup}F_{\infty}(t,x)=0,$$
$$\underset{(t,x)\in\mathbb{R}\times\mathbb{R}^N}{\inf}C_{\infty}(t,x)>0\ \ \mbox{and}\ \ \underset{(t,x)\in\mathbb{R}\times\mathbb{R}}{\inf}H_{\infty}(t,x)>0,$$
$$\underset{(t,x)\in\mathbb{R}\times\mathbb{R}^N}{\sup}C_{\infty}(t,x)<1+s\ \ \mbox{and}\ \ \underset{(t,x)\in\mathbb{R}\times\mathbb{R}}{\sup}H_{\infty}(t,x)<1.$$
Moreover, it holds
$$|C_{\infty}(0,0)-C^*|+|H_{\infty}(0,0)-H^*|>0.$$
In order to reach a contradiction, we claim that
\begin{claim}\label{claim 4}
Let $(U,V)$ be a bounded entire solution satisfying the above estimates, then $(U,V)(t,x)\equiv(C^*,H^*)$.
\end{claim}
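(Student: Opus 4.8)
The plan is to turn the ODE Lyapunov functional $\Phi$ into a pointwise quantity for the PDE and to exploit that, when $d=1$, diffusion is compatible with the gradient structure of $\Phi$. First I would set $w(t,x):=\Phi(U(t,x),V(t,x))$; since $(U,V)$ stays in a compact subset of $\Sigma$ bounded away from $\partial\Sigma$, the function $w$ is nonnegative, bounded, smooth, and vanishes exactly where $(U,V)=(C^*,H^*)$. A direct computation, using that $\Phi$ is a sum of a function of $C$ and a function of $H$ (so that $\partial_C\partial_H\Phi\equiv0$) and that both diffusion coefficients equal $1$, gives
\[
\partial_t w-\Delta w=-\Phi_{CC}(U,V)\,|\nabla U|^2-\Phi_{HH}(U,V)\,|\nabla V|^2+\nabla\Phi(U,V)\cdot R(U,V),
\]
where $R$ denotes the reaction vector of the $(C,H)$ subsystem. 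The first two terms are $\le0$ by strict convexity of $\Phi$, and by the defining property of $\Phi$ the last term equals $-bg(U-C^*)^2-bs(V-H^*)^2\le0$. Hence $w$ is a bounded nonnegative entire subsolution of the heat equation, and in fact
\[
\partial_t w-\Delta w\le-bg(U-C^*)^2-bs(V-H^*)^2\le0.
\]

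The second step is to show $M:=\sup_{\mathbb R\times\mathbb R^N}w=0$. I would pick $(t_n,x_n)$ with $w(t_n,x_n)\to M$ and translate: $(U_n,V_n)(t,x):=(U,V)(t+t_n,x+x_n)$ solves the same system with the same uniform bounds, so by parabolic estimates a subsequence converges in $C^{2,1}_{\mathrm{loc}}$ to an entire solution $(\bar U,\bar V)$ obeying the same bounds; set $\bar w:=\Phi(\bar U,\bar V)$, so $\bar w\le M$, $\bar w(0,0)=M$, and $\bar w$ satisfies the same differential inequality. Since $\bar w$ is a subsolution of the heat equation attaining its supremum at the point $(0,0)$, which is interior in time, the parabolic strong maximum principle forces $\bar w\equiv M$ on $\{t\le0\}$. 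Plugging this back into $\partial_t\bar w-\Delta\bar w\le-bg(\bar U-C^*)^2-bs(\bar V-H^*)^2\le0$ yields $(\bar U,\bar V)\equiv(C^*,H^*)$ on $\{t\le0\}$, and then forward uniqueness for the parabolic Cauchy problem propagates this to all of $\mathbb R\times\mathbb R^N$; in particular $M=\bar w(0,0)=\Phi(C^*,H^*)=0$. Since $w\ge0$, this gives $w\equiv0$, i.e. $(U,V)\equiv(C^*,H^*)$, which is the claim.

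I expect the main obstacle — and the only point where $d=1$ is genuinely used — to be the algebraic identity $\Phi_C\Delta U+\Phi_H\Delta V=\Delta w-\Phi_{CC}|\nabla U|^2-\Phi_{HH}|\nabla V|^2$ combined with the verification $\nabla\Phi\cdot R=-bg(U-C^*)^2-bs(V-H^*)^2$; when $d\ne1$ the two Laplacians no longer assemble into $\Delta w$ plus a sign-definite quadratic form, which is exactly why part (2) of Theorem \ref{thm profile g<1} is stated only for $d=1$. The remaining ingredients — parabolic compactness of the translates, the strong maximum principle, and uniqueness for the Cauchy problem — are standard, as long as one records that the strict bounds $\inf U>0$, $\sup U<1+s$, $\inf V>0$, $\sup V<1$ persist under translation and passage to the limit, so that $w$, its derivatives, and the Hessian of $\Phi$ remain uniformly controlled and no boundary degeneracy of $\Sigma$ occurs.
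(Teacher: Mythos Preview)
Your proof is correct and follows essentially the same route as the paper: define $w=\Phi(U,V)$, show $\partial_t w-\Delta w\le 0$, translate along a maximizing sequence, and use the strong maximum principle plus the strict Lyapunov structure to force the limit to be $(C^*,H^*)$, whence $\sup w=0$. The only cosmetic differences are that you use the explicit dissipation identity $\nabla\Phi\cdot R=-bg(U-C^*)^2-bs(V-H^*)^2$ to go directly from $\bar w\equiv M$ to $(\bar U,\bar V)\equiv(C^*,H^*)$ (whereas the paper first deduces spatial constancy from the gradient terms and then invokes the ODE strict Lyapunov property), and you are a little more careful in noting that the strong maximum principle gives constancy only on $\{t\le 0\}$ before invoking forward uniqueness.
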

\begin{proof}\it{of Claim \ref{claim 4}.}
\normalfont
To prove this claim, we consider
$$W(t,x):=\Phi(U(t,x),V(t,x)).$$
Then one has 
\begin{align*}
W_t-\Delta W=&-(\Phi_{UU}|\nabla U|^2+2\Phi_{UV}\nabla U\cdot\nabla V+\Phi_{VV}|\nabla V|^2)\\
&+\Phi_UU(1-U+sV)+\Phi_VbV(1-V-gU)\\
\le& 0.
\end{align*}
Choose sequences $\{t_n\}_{n\ge0}$ and $\{x_n\}_{n\ge0}$ such that
$$\lim_{n\to \infty}W(t_n,x_n)=\sup_{(t,x)\in \mathbb{R}\times\mathbb{R}^N}W(t,x).$$
By considering the sequence 
$$W_n(t,x)=W(t+t_n,x+x_n)=\Phi(U_n(t,x),V_n(t,x)),$$
where
$U_n(t,x)=U(t+t_n,x+x_n)$ and $V_n(t,x)=V(t+t_n,x+x_n)$,
then one obtains, possibly along a subsequence, $(U_n,V_n)\to(U_{\infty},V_{\infty})$ locally uniformly and $W_n\to W_{\infty}:=\Phi(U_{\infty},V_{\infty})$ locally uniformly where $(U_{\infty},V_{\infty})$ is an entire solution of the system (\ref{ode system}). Note that $W_{\infty}$ satisfies
$$W_{\infty}(0,0)=\sup_{(t,x)}W(t,x)=\sup_{(t,x)}W_{\infty}(t,x),$$
and $W_{\infty}$ is a sub-solution of the heat equation, hence $W_{\infty}(t,x)\equiv W_{\infty}(0,0)$ is a constant function. The strict convexity of $\Phi$ provide that
\begin{equation*}
U_{\infty}(t,x)\equiv U_{\infty}(t),\ V_{\infty}(t,x)\equiv V_{\infty}(t),
\end{equation*}
$$\Big(U_{\infty}(1-U_{\infty}+sV_{\infty}),bV_{\infty}(1-V_{\infty}-gU_{\infty})\Big)\cdot\nabla\Phi(U_{\infty},V_{\infty})\equiv 0.$$
Also by using the fact that $W_{\infty}$ is a constant, we have $\Phi(U_{\infty}(t),V_{\infty}(t))=\Phi(U_{\infty}(0),V_{\infty}(0))$ for all $t\in\mathbb{R}$. Since the Lyapunov function is strict, one obtains that $U_{\infty}(t)=C^*,\ V_{\infty}(t)=H^*$. Hence, one can conclude that
$$0\le W[U,V](t,x)\le \Phi(C^*,H^*)=0,$$
which completes the proof.
\end{proof}

\section*{Acknowledgments}
The authors would like to express sincere thanks to Prof. Matano for many helpful suggestions. 


\section*{Appendix: Proof of Proposition \ref{Dirichlet eq}}
We consider the equation
\[
\partial_t z=\partial_{\xi}^2 z + \Big(c^*-\frac{\delta}{t+t_0}+\frac{N-1}{\xi+\xi^{\delta}_{t_0}(t)}\Big)\partial_{\xi} z + \lambda^{*2}z,\ \ t>0,\ \xi>0,\quad\quad\eqref{symmetric linear eq}
\]
where 
$$\lambda^*=c^*/2\ \ \mbox{and}\ \ \xi^{\delta}_{t_0}(t):=c^*(t+t_0)-\delta\ln\frac{t+t_0}{t_0}.$$
We prove the following proposition.

\vskip 10pt

\noindent
{\bf Proposition \ref{Dirichlet eq} (\cite{AD})}\ {\it
Let $z^{\delta}_{t_0}(\xi,t)$ be the solution of the equation \eqref{symmetric linear eq} with boundary condition 
$$z^{\delta}_{t_0}(t,0)=0\ \ \mbox{for all}\ \ t>0$$ 
and the initial data
$$z^{\delta}_{t_0}(0,\xi)=e^{-\lambda^*\xi}\zeta_0(t_0^{-1/2}\xi)\geq0\ \ \mbox{for all}\ \ \xi\geq0,$$
where $\zeta_0(\cdot)$ is an nontrivial compactly supported smooth function.
Then it holds: 
\[
z^{\delta}_{t_0}(t,\xi)=\frac{(t+t_0)^{\gamma-\frac{1}{2}}}{t_0^{\gamma}}\xi e^{-\lambda^*\xi}\Big\{\frac{\int_0^{\infty}\zeta_0(\rho)\rho d\rho+h_1(t,t_0)}{\sqrt{\pi}}e^{-\frac{\xi^2}{4(t+t_0)}}+h_2(t,\xi,t_0)\Big\},\ \ \xi\geq0,\ t\geq0,\quad\eqref{estimate Dirichlet eq}
\]
where $\gamma:=\delta\lambda^*-\frac{N+1}{2}$, $h_1$ and $h_2$ are smooth functions satisfying
\[
\left\{
\begin{aligned}
&|h_1(t,t_0)|\leq B_1t_0^{-1/2}\lVert\zeta_0\rVert_m,\\
&|h_2(t,\xi,t_0)|\leq B_2\Big\{\frac{t_0^{1/4}\lVert\zeta_0\rVert_m}{(t+t_0)^{1/2}}+\Big(\frac{t_0}{t+t_0}\Big)^{1-\frac{B_2}{\sqrt{t_0}}}\lVert\partial_{\rho}^2\zeta_0\rVert_m\Big\}e^{-\frac{\xi^2}{8(t+t_0)}},
\end{aligned}
\right.\ \ \ \ \xi\ge0,\ t\ge0,
\]
for some positive constants $B_1$ and $B_2$. Here, the norm $\lVert\cdot\rVert_m$ is defined as
\[
\lVert\zeta_0\rVert_m^2:=\int_0^{\infty}\zeta_0(\rho)^2e^{\frac{\rho^2}{4}}d\rho.
\]  
}

\begin{proof}:
Let $z(t,\xi)=z^{\delta}_{t_0}(t,\xi)$ be be the solution of the equation \eqref{symmetric linear eq} with boundary condition 
$$z^{\delta}_{t_0}(t,0)=0\ \ \mbox{for all}\ \ t>0$$ 
and the initial data
$$z^{\delta}_{t_0}(0,\xi)=e^{-\lambda^*\xi}\zeta_0(t_0^{-1/2}\xi)\geq0\ \ \mbox{for all}\ \ \xi\geq0,$$
where $\zeta_0(\cdot)$ is an nontrivial compactly supported smooth function. 
By using new coordinates $\rho=\frac{\xi}{\sqrt{t+t_0}}$, $\tau=\log\frac{t+t_0}{t_0}$ and a new unkown function $e^{\lambda^* \xi}z(t,\xi)=\widetilde{\zeta}(\tau,\rho)$, the equation \eqref{symmetric linear eq} could be rewritten as follows:
\begin{equation}\label{eq:spherical eq 2}
\left\{
\begin{aligned}
&\partial_{\tau}\widetilde{\zeta}=\partial_{\rho}^2\widetilde{\zeta}+\frac{\rho}{2}\partial_{\rho}\widetilde{\zeta}+\widetilde{\zeta}+\frac{e^{-\frac{\tau}{2}}}{\sqrt{t_0}}\{\alpha_{1,t_0}\partial_{\rho}\widetilde{\zeta}+(\rho+1)\alpha_{2,t_0}\widetilde{\zeta}\}+\gamma\widetilde{\zeta},\ \ \rho>0,\ \tau>0,\\
&\widetilde{\zeta}(\tau,0)=0,\ \ \tau>0,\\
&\widetilde{\zeta}(0,\xi)=\zeta_0(\xi),\ \ \xi\ge0,
\end{aligned}
\right.
\end{equation}
where 
$$\gamma:=\delta\lambda^*-\frac{N-1}{2}-1,$$
\[
\begin{split}
\alpha_{1,t_0}(\tau,\rho):=&\frac{N-1}{2\lambda^*+\frac{e^{-\frac{\tau}{2}}}{\sqrt{t_0}}\rho-\frac{\delta\tau}{t_0}e^{-\tau}}-\delta,\\
\alpha_{2,t_0}(\tau,\rho):=&\frac{N-1}{\rho+1}\frac{\rho-\frac{\delta\tau}{\sqrt{t_0}}e^{-\frac{\tau}{2}}}{2\lambda^*+\frac{e^{-\frac{\tau}{2}}}{\sqrt{t_0}}\rho-\frac{\delta\tau}{t_0}e^{-\tau}}.
\end{split}
\]
Define $\zeta(\tau,\rho):=e^{-\gamma\tau}\widetilde{\zeta}(\tau,\rho)$ $(\rho\geq0,\ \tau\geq0)$, $\mathcal{L}\varphi:=\frac{{\rm d}^2}{{\rm d}\rho^2}\varphi+\frac{\rho}{2}\frac{\rm d}{{\rm d}\rho}\varphi+\varphi$, 
\[
\begin{split}
&m(\rho):=e^{\frac{\rho^2}{4}},\ \ D(\mathcal{L}):=\{\phi\in L^2_m(0,\infty)\mid\phi',\phi''\in L^2_m(0,\infty),\ \phi(0)=0\},\\ 
&L^2_m(0,\infty):=\Big\{\phi\in L^2(0,\infty)\,\Big|\,\|\phi\|_m^2:=\langle\phi,\phi\rangle_m:=\int_0^{\infty}\phi(\rho)^2m(\rho)\,{\rm d}\rho<\infty\Big\}.
\end{split}
\]
Then, it holds 
\begin{equation}\label{eq:parabolic}
\frac{\rm d}{{\rm d}\tau}\zeta=\mathcal{L}\zeta+\frac{e^{-\frac{\tau}{2}}}{\sqrt{t_0}}\big\{\alpha_{1,t_0}\partial_{\rho}\zeta+(\rho+1)\alpha_{2,t_0}\zeta\big\},\ \ \tau>0.
\end{equation}
Here we remark that $\mathcal{L}$ is a self-adjoint operator whose resolvent is compact and eigenvalues and corresponding eigenfunctions are as follows:
\[
\lambda_k:=-(k-1),\ \ \varphi_k:=\Big\lVert\frac{{\rm d}^{2k-1}}{{\rm d}\rho^{2k-1}}\frac{1}{m}\Big\rVert_m^{-1}\frac{{\rm d}^{2k-1}}{{\rm d}\rho^{2k-1}}\frac{1}{m}\ \ \Big(\varphi_1(\rho)=\pi^{-\frac{1}{4}}\rho e^{-\frac{\rho^2}{4}}\Big).
\]
We also remark some useful inequalities:
\begin{eqnarray}
\label{eq:1}
&&\lVert\zeta\rVert_m^2\leq\lVert\partial_{\rho}\zeta\rVert_m^2,\\
\label{eq:2}
&&\lVert(\rho+1)\zeta\rVert_m\leq3\sqrt{2}\lVert\partial_{\rho}\zeta\rVert_m+\sqrt{6}\lVert\zeta\rVert_m,\\
\label{eq:3}
&&2\lVert Q\zeta\rVert_m^2\leq\lVert\partial_{\rho}Q\zeta\rVert_m^2,\\
\label{eq:4}
&&\frac{3}{4}\lVert\widetilde{\mathcal{L}}\zeta\rVert_m^2\leq\lVert\partial_{\rho}^2\zeta\rVert_m^2\leq\frac{5}{4}\lVert\widetilde{\mathcal{L}}\zeta\rVert_m^2,
\end{eqnarray}
where $\widetilde{\mathcal{L}}\varphi:=\mathcal{L}\varphi-\varphi=\frac{{\rm d}^2}{{\rm d}\rho^2}\varphi+\frac{\rho}{2}\frac{\rm d}{{\rm d}\rho}\varphi$ and  $Q\zeta:=\zeta-\langle\zeta,\varphi_1\rangle_m\varphi_1$ which is the component of $\zeta$ which orthogonal to $\varphi_1$. 

Then there exist $c>0$, $T_0>0$ depending only on 
\[
\underset{t_0\geq\frac{\delta}{2\lambda^*}}{\sup}\lVert\alpha_{i,t_0}\rVert_{L^{\infty}([0,\infty)^2)}<\infty,\ \ \underset{t_0\geq\frac{\delta}{2\lambda^*}}{\sup}\lVert\partial_{\rho}\alpha_{i,t_0}\rVert_{L^{\infty}([0,\infty)^2)}<\infty\ \ (i=1,2)
\] 
such that the following holds for any $t_0\geq T_0$:
\begin{eqnarray}
\label{eq:L^2 bdd}
&&\lVert\zeta(\tau)\rVert_{m}\leq c\lVert\zeta_0\rVert_{m},\\
\label{eq:principal term}
&&|\langle\zeta(\tau),\varphi_1\rangle_m-\langle\zeta_0,\varphi_1\rangle_m|\leq\frac{c}{\sqrt{t_0}}\lVert\zeta_0\rVert_m,\\
\label{eq:LO term}
&&\lVert Q\zeta(\tau)\rVert_m^2\leq\frac{ce^{-\tau}}{\sqrt{t_0}}\lVert\zeta_0\rVert_m^2+e^{-(2-\frac{c}{\sqrt{t_0}})\tau}\lVert Q\zeta_0\rVert_m^2,\\
\label{eq:LO term 2}
&&\lVert\widetilde{\mathcal{L}}Q\zeta(\tau)\rVert_m^2\leq\frac{ce^{-\tau}}{\sqrt{t_0}}\lVert\zeta_0\rVert_m^2+e^{-(2-\frac{c}{\sqrt{t_0}})\tau}\lVert\widetilde{\mathcal{L}}Q\zeta_0\rVert_m^2.
\end{eqnarray}
As a matter of fact, 
\[
\frac{\rm d}{{\rm d}\tau}\lVert\zeta\rVert_m^2=2\langle\zeta,\dot{\zeta}\rangle_m=I_1+I_2,
\]
where we denote $\dot{\zeta}:=\frac{\rm d}{{\rm d}\tau}\zeta$ and
\[
I_1:=2\langle\zeta,\mathcal{L}\zeta\rangle_m,\ \ I_2:=\frac{2e^{-\frac{\tau}{2}}}{\sqrt{t_0}}\langle\zeta,\alpha_{1,t_0}\partial_{\rho}\zeta+(\rho+1)\alpha_{2,t_0}\zeta\rangle_m.
\]
Integrating by part, one has
\[
I_1=-2(\lVert\partial_{\rho}\zeta\rVert_m-\lVert\zeta\rVert_m).
\]
By applying Schwarz inequality and \eqref{eq:2}, 
\[
\begin{split}
|\langle\zeta,\alpha_{1,t_0}\partial_{\rho}\zeta&+(\rho+1)\alpha_{2,t_0}\zeta\rangle_m|\leq\lVert\zeta\rVert_m(\lVert\alpha_{1,t_0}\rVert_{L^{\infty}}\lVert\partial_{\rho}\zeta\rVert_m
+\lVert\alpha_{2,t_0}\rVert_{L^{\infty}}\lVert(\rho+1)\zeta\rVert_m)\\
&\leq(\lVert\alpha_{1,t_0}\rVert_{L^{\infty}}
+3\sqrt{2}\lVert\alpha_{2,t_0}\rVert_{L^{\infty}})\lVert\zeta\rVert_m\lVert\partial_{\rho}\zeta\rVert_m
+\sqrt{6}\lVert\alpha_{2,t_0}\rVert_{L^{\infty}}\lVert\zeta\rVert_m^2.
\end{split}
\]
Hence, if we denote $C:=\lVert\alpha_{1,t_0}\rVert_{L^{\infty}}+3\sqrt{2}\lVert\alpha_{2,t_0}\rVert_{L^{\infty}}$, then it holds
\[
I_2\leq \frac{Ce^{-\frac{\tau}{2}}}{\sqrt{t_0}}(\lVert\partial_{\rho}\zeta\rVert_m^2-\lVert\zeta\rVert_m^2)+\frac{3Ce^{-\frac{\tau}{2}}}{\sqrt{t_0}}\lVert\zeta\rVert_m^2.
\]
Thus by \eqref{eq:1}, for $t_0\geq C^2/4$, one has
\[
\frac{\rm d}{{\rm d}\tau}\lVert\zeta\rVert_m^2\leq-(2-\frac{Ce^{-\frac{\tau}{2}}}{\sqrt{t_0}})(\lVert\partial_{\rho}\zeta\rVert_m^2-\lVert\zeta\rVert_m^2)+\frac{3Ce^{-\frac{\tau}{2}}}{\sqrt{t_0}}\lVert\zeta\rVert_m^2\leq\frac{3Ce^{-\frac{\tau}{2}}}{\sqrt{t_0}}\lVert\zeta\rVert_m^2.
\]
Then, one can obtain that
\[
\lVert\zeta\rVert_m^2\leq e^{\frac{6C}{\sqrt{t_0}}(1-e^{-\tau/2})}\lVert\zeta_0\rVert_m^2\leq e^{\frac{6C}{\sqrt{t_0}}}\lVert\zeta_0\rVert_m^2.
\]
Therefore the inequality \eqref{eq:L^2 bdd} holds true. By \eqref{eq:L^2 bdd}, $\mathcal{L}\varphi_1=0$, $\lVert\varphi_1\rVert_m=\lVert\partial_{\rho}\varphi_1\rVert_m=1$ and integrating by part, one has
\[
\begin{split}
\frac{\rm d}{{\rm d}\tau}\langle\zeta,\varphi_1\rangle_m&=\frac{2e^{-\frac{\tau}{2}}}{\sqrt{t_0}}\langle\alpha_{1,t_0}\partial_{\rho}\zeta+(1+\rho)\alpha_{2,t_0}\zeta,\varphi_1\rangle_m\\
&=-\frac{2e^{-\frac{\tau}{2}}}{\sqrt{t_0}}\langle\zeta,\alpha_{1,t_0}\partial_{\rho}\varphi_1+\partial_{\rho}\alpha_{1,t_0}\varphi_1+\alpha_{1,t_0}\frac{\rho}{2}\varphi_1-(\rho+1)\alpha_{2,t_0}\varphi_1\rangle_m\\
&\hspace{-20pt}\leq\frac{2e^{\frac{3C}{\sqrt{t_0}}}e^{-\frac{\tau}{2}}}{\sqrt{t_0}}\lVert\alpha_{1,t_0}\partial_{\rho}\varphi_1
+\partial_{\rho}\alpha_{1,t_0}\varphi_1+\alpha_{1,t_0}\frac{\rho}{2}\varphi_1-(\rho+1)\alpha_{2,t_0}\varphi_1\rVert_m\lVert\zeta_0\rVert_m.
\end{split}
\] 
Thus, if we denote $C_1:=4e^{\frac{3C}{\sqrt{t_0}}}\lVert\alpha_{1,t_0}\partial_{\rho}\varphi_1+\partial_{\rho}\alpha_{1,t_0}\varphi_1+\alpha_{1,t_0}\frac{\rho}{2}\varphi_1-(\rho+1)\alpha_{2,t_0}\varphi_1\rVert_m$, it holds 
\[
|\langle\zeta,\varphi_1\rangle_m-\langle\zeta_0,\varphi_1\rangle_m|\leq\frac{C_1}{\sqrt{t_0}}(1-e^{-\frac{\tau}{2}})\lVert\zeta_0\rVert_m.
\]
Therefore the inequality \eqref{eq:principal term} holds. Inferring from the facts that $\langle Q\psi,Q\varphi\rangle_m=\langle Q\psi,\varphi\rangle_m$, $Q\widetilde{\mathcal{L}}=\widetilde{\mathcal{L}}Q$, one has
\[
\frac{\rm d}{{\rm d}\tau}\lVert\widetilde{\mathcal{L}}Q\zeta\rVert_m^2=2\langle\widetilde{\mathcal{L}}Q\zeta,\widetilde{\mathcal{L}}Q\dot{\zeta}\rangle_m=\langle\widetilde{\mathcal{L}}^2Q\zeta,\dot{\zeta}\rangle_m=I_1+I_2+I_3,
\]
where 
\[
\begin{split}
I_1:=&2\langle\widetilde{\mathcal{L}}^2Q\zeta,\mathcal{L}\zeta\rangle_m=2\langle\widetilde{\mathcal{L}}^2Q\zeta,\widetilde{\mathcal{L}}Q\zeta+Q\zeta\rangle_m=-2(\lVert\partial_{\rho}\widetilde{\mathcal{L}}Q\zeta\rVert_m^2
-\lVert\widetilde{\mathcal{L}}Q\zeta\rVert_m^2),\\ 
I_2:=&\frac{2e^{-\frac{\tau}{2}}}{\sqrt{t_0}}\langle\widetilde{\mathcal{L}}^2Q\zeta, \alpha_{1,t_0}\partial_{\rho}Q\zeta+(\rho+1)\alpha_{2,t_0}Q\zeta\rangle_m,\\
I_3:=&\frac{2e^{-\frac{\tau}{2}}}{\sqrt{t_0}}\langle\widetilde{\mathcal{L}}^2Q\zeta, \alpha_{1,t_0}\partial_{\rho}P\zeta+(\rho+1)\alpha_{2,t_0}P\zeta\rangle_m\ \ (P\varphi:=\langle\varphi,\varphi_1\rangle_m\varphi_1=\varphi-Q\varphi).
\end{split}
\]
By \eqref{eq:2}, \eqref{eq:3}, \eqref{eq:4} and $\lVert\partial_{\rho}\zeta\rVert_m^2=-\langle\widetilde{\mathcal{L}}\zeta,\zeta\rangle_m\leq\lVert\widetilde{\mathcal{L}}\zeta\rVert_m\lVert\zeta\rVert_m$, one may find
\[
\begin{split}
&\langle\widetilde{\mathcal{L}}^2Q\zeta, \alpha_{1,t_0}\partial_{\rho}Q\zeta+(\rho+1)\alpha_{2,t_0}Q\zeta\rangle_m=
-\langle\partial_{\rho}\widetilde{\mathcal{L}}Q\zeta, \partial_{\rho}\alpha_{1,T}\partial_{\rho}Q\zeta+\alpha_{1,t_0}\partial_{\rho}^2Q\zeta\\
&\hspace{120pt}+(\rho+1)\alpha_{2,t_0}\partial_{\rho}Q\zeta+(\rho+1)\partial_{\rho}\alpha_{2,t_0}Q\zeta
+\alpha_{2,t_0}Q\zeta\rangle_m\\
&\leq C_2\lVert\partial_{\rho}\widetilde{\mathcal{L}}Q\zeta\rVert_m(
\lVert\partial_{\rho}\widetilde{\mathcal{L}}Q\zeta\rVert_m^{1/2}\lVert Q\zeta\rVert_m^{1/2}+\lVert\partial_{\rho}\widetilde{\mathcal{L}}Q\zeta\rVert_m+\lVert Q\zeta\rVert_m).
\end{split}
\]
Thus, one conclude that 
\[
I_2\leq\frac{C_2}{2\sqrt{t_0}}(3e^{-\frac{\tau}{3}}+4e^{-\frac{\tau}{2}}+2)\lVert\partial_{\rho}\widetilde{\mathcal{L}}Q\zeta\rVert_m^2+\frac{3C_2}{2\sqrt{t_0}}e^{-\tau}\lVert Q\zeta\rVert_m^2.
\]
By similar computation, 
\[
I_3\leq\frac{C_2'}{\sqrt{t_0}}\lVert\partial_{\rho}\widetilde{\mathcal{L}}Q\zeta\rVert_m^2+\frac{C_2' e^{-\tau}}{\sqrt{t_0}}\lVert\zeta\rVert_m^2,
\]
where 
\[
C_2':=\lVert \partial_{\rho}\alpha_{1,t_0}\partial_{\rho}\varphi_1+\alpha_{1,t_0}\partial_{\rho}^2\varphi_1
+(\rho+1)\alpha_{2,t_0}\partial_{\rho}\varphi_1+(\rho+1)\partial_{\rho}\alpha_{2,t_0}\varphi_1
+\alpha_{2,t_0}\varphi_1\rVert_m.
\]
Thus by applying the fact $2\lVert\widetilde{\mathcal{L}}Q\zeta\rVert_m^2\leq\lVert\partial_{\rho}\widetilde{\mathcal{L}}Q\zeta\rVert_m^2$ and \eqref{eq:L^2 bdd}, one has
\[
\frac{\rm d}{{\rm d}\tau}\lVert\widetilde{\mathcal{L}}Q\zeta\rVert_m^2\leq-\Big(2-\frac{C_2(3e^{-\frac{\tau}{3}}+4e^{-\frac{\tau}{4}}+2)+2C_2'}{\sqrt{t_0}}\Big)\lVert\widetilde{\mathcal{L}}Q\zeta\rVert_m^2+\frac{(3C_2+2C_2')e^{\frac{6C}{\sqrt{t_0}}}}{\sqrt{t_0}}e^{-\tau}\lVert\zeta_0\rVert_m^2.
\]
Therefore the inequality \eqref{eq:LO term 2} holds. Since the proof of \eqref{eq:LO term} is similar to that of \eqref{eq:LO term 2}, we omit it. 

Next, one can obtain that
\[
\begin{split}
|(Q\zeta(\tau,\rho))^2m(\rho)|&\leq\int_0^{\infty}|\partial_{\rho}((Q\zeta)^2m)|\,{\rm d}\rho\\&\leq2\int_0^{\infty}|Q\zeta\partial_{\rho}Q\zeta|m\,{\rm d}\rho+\int_0^{\infty}(Q\zeta)^2\frac{\rho}{2}m\,{\rm d}\rho\\&=4\int_0^{\infty}|Q\zeta\partial_{\rho}Q\zeta|m\,{\rm d}\rho\leq4\lVert Q\zeta\rVert_m\lVert\partial_{\rho}Q\zeta\rVert_m\leq4\lVert Q\zeta\rVert_m^{\frac{3}{2}}\lVert\widetilde{\mathcal{L}}Q\zeta\rVert_m^{\frac{1}{2}}\\&\leq\lVert\widetilde{\mathcal{L}}Q\zeta\rVert_m^2+3\lVert Q\zeta\rVert_m^2
\end{split}
\]
Similarly, it also holds
\[
|(\partial_{\rho}Q\zeta(\tau,\rho))^2m(\rho)|\leq4\lVert\partial_{\rho}Q\zeta\rVert_m\lVert\partial_{\rho}^2Q\zeta\rVert_m\leq2\sqrt{5}\lVert Q\zeta\rVert_m^{\frac{1}{2}}\lVert\widetilde{\mathcal{L}}Q\zeta\rVert_m^{\frac{3}{2}}\leq\frac{\sqrt{5}}{2}\lVert Q\zeta\rVert_m^2+\frac{3\sqrt{5}}{2}\lVert\widetilde{\mathcal{L}}Q\zeta\rVert_m^2.
\]
By \eqref{eq:LO term} and \eqref{eq:LO term 2}, one has
\[
\begin{split}
\lVert\widetilde{\mathcal{L}}Q\zeta\rVert_m^2+3\lVert Q\zeta\rVert_m^2&\leq\frac{4ce^{-\tau}}{\sqrt{t_0}}\lVert\zeta_0\rVert_m^2+e^{-(2-\frac{c}{\sqrt{t_0}})\tau}(3\lVert Q\zeta_0\rVert_m^2+\lVert\widetilde{\mathcal{L}}Q\zeta_0\rVert_m^2),\\
\frac{\sqrt{5}}{2}\lVert Q\zeta\rVert_m^2+\frac{3\sqrt{5}}{2}\lVert\widetilde{\mathcal{L}}Q\zeta\rVert_m^2&\leq\frac{2\sqrt{5}ce^{-\tau}}{\sqrt{t_0}}\lVert\zeta_0\rVert_m^2+\frac{\sqrt{5}}{2}e^{-(2-\frac{c}{\sqrt{t_0}})\tau}(\lVert Q\zeta_0\rVert_m^2+3\lVert\widetilde{\mathcal{L}}Q\zeta\rVert_m^2).
\end{split}
\]
Hence by $\lVert Q\zeta_0\rVert_m\leq\lVert\zeta_0\rVert_m$ and $\lVert\widetilde{\mathcal{L}}Q\zeta_0\rVert_m\leq\lVert\widetilde{\mathcal{L}}\zeta_0\rVert_m$, one obtains
\[
\begin{split}
|Q\zeta(\tau,\rho)|&\leq\sqrt{\lVert\widetilde{\mathcal{L}}Q\zeta\rVert_m^2+3\lVert Q\zeta\rVert_m^2}m(\rho)^{-\frac{1}{2}}\\&\leq\Big(\frac{\sqrt{4c}e^{-\frac{\tau}{2}}}{t_0^{1/4}}\lVert\zeta_0\rVert_m+\sqrt{3}e^{-(1-\frac{c}{2\sqrt{t_0}})}(\lVert\zeta_0\rVert_m+\lVert\widetilde{\mathcal{L}}\zeta_0\rVert_m)\Big)e^{-\frac{\rho^2}{8}},\\
|\partial_{\rho}Q\zeta(\tau,\rho)|&\leq\Big(\frac{\sqrt{2\sqrt{5}c}e^{-\frac{\tau}{2}}}{t_0^{1/4}}\lVert\zeta_0\rVert_m+\sqrt{\frac{3\sqrt{5}}{2}}e^{-(1-\frac{c}{2\sqrt{t_0}})\tau}(\lVert\zeta_0\rVert_m+\lVert\widetilde{\mathcal{L}}\zeta_0\rVert_m)\Big)e^{-\frac{\rho^2}{8}}.
\end{split}
\]
Thus if we denote $\widetilde{h}_1(\tau):=(4\pi)^{\frac{1}{4}}\{\langle\zeta(\tau),\varphi_1\rangle_m-\langle\zeta_0,\varphi_1\rangle_m\}$ and $\widetilde{h}_2(\tau,\rho):=\frac{Q\zeta(\tau,\rho)}{\rho}$, by \eqref{eq:principal term} and \eqref{eq:4}, it holds
\[
\begin{split}
|\widetilde{h}_1(\tau)|&\leq\frac{(4\pi)^{\frac{1}{4}}c}{\sqrt{t_0}}\lVert\zeta_0\rVert_m,\\
|\widetilde{h}_2(\tau,\rho)|&=\Big|\frac{Q\zeta(\tau,\rho)}{\rho}\Big|\leq2\Big(\frac{\sqrt{c}e^{-\frac{\tau}{2}}}{t_0^{1/4}}\lVert\zeta_0\rVert_m+e^{-(1-\frac{c}{2\sqrt{t_0}})\tau}(\lVert\zeta_0\rVert_m+\lVert\widetilde{\mathcal{L}}\zeta_0\rVert_m)\Big)e^{-\frac{\rho^2}{8}}\\&\leq2\Big(\frac{\sqrt{c}e^{-\frac{\tau}{2}}}{t_0^{1/4}}\lVert\zeta_0\rVert_m+e^{-(1-\frac{c}{2\sqrt{t_0}})\tau}(\lVert\zeta_0\rVert_m+\frac{2}{\sqrt{3}}\lVert\partial_{\rho}^2\zeta_0\rVert_m)\Big)e^{-\frac{\rho^2}{8}}.
\end{split}
\]
On the other hand, one has
\[
\begin{split}
\zeta(\tau,\rho)&=P\zeta(\tau,\rho)+Q\zeta(\tau,\rho)=\langle\zeta(\tau),\varphi_1\rangle_m\varphi_1(\rho)+Q\zeta(\tau,\rho)\\&=\rho\Big\{(\langle\zeta_0,\varphi_1\rangle_m+(4\pi)^{-\frac{1}{4}}\widetilde{h}_1(\tau))\frac{\varphi(\rho)}{\rho}+\widetilde{h}_2(\tau,\rho)\Big\}.
\end{split}
\]
Since 
\[
z(t,\xi)=\frac{(t+t_0)^{\gamma}}{t_0^{\gamma}}e^{-\lambda^*\xi}\zeta\Big(\log\frac{t+t_0}{t_0},\frac{\xi}{\sqrt{t+t_0}}\Big), 
\]
if we denote $B_1=(4\pi)^{1/4}$, $B_2=\max\{4\sqrt{\frac{c}{3}},\frac{c}{2}\}$ and
\[
h_1(t,t_0):=\widetilde{h}_1\Big(\log\frac{t+t_0}{t_0}\Big),\ \ h_2(t,\xi,t_0):=\widetilde{h}_2\Big(\log\frac{t+t_0}{t_0},\frac{\xi}{\sqrt{t+t_0}}\Big)
\] 
then we obtain the conclusion \eqref{estimate Dirichlet eq} of Proposition \ref{Dirichlet eq} and complete the proof.
\end{proof}





\begin{thebibliography}{11}
%
%
\bibitem{shigesada model}
{K. Aoki, M. Shida, and N. Shigesada},
Travelling wave solutions for the spread of farmers into a region occupied by hunter-gatherers, Theor. Popul. Biol. 50 (1996), 1--17.

\bibitem{AW}
{D. G. Aronson and H. F. Weinberger},
Multidimensional nonlinear diffusion arising
in population genetics,
Adv. Math., 30 (1978), 33--76.

\bibitem{log delay 1}
{M. D. Bramson},
Maximal displacement of branching Brownian motion, Comm. Pure Appl. Math., 31 (1978), 531--581.

\bibitem{log delay 2}
{M. D. Bramson},
Convergence of solutions of the Kolmogorov equation to travelling waves, Mem. Amer. Math. Soc., 44, 1983.
 

\bibitem{CC}
{R. S. Cantrell and C. Cosner},
Spatial ecology via reaction-diffusion equations, John Wiley\&Sons Ltd., Chichester, UK, 2003.

\bibitem{AD}
{A. Ducrot},
On the large time behaviour of the multi-dimensional Fisher-KPP equation with compactly supported initial data, London Math. Soc., 28(2015), 1043-1076.

\bibitem{pp}
{A. Ducrot, T. Giletti and H. Matano},
Spreading speeds for multidimensional reaction-diffusion systems of the prey-predator type, preprint.

\bibitem{mimura model}
{J. Elias, H. Kabir and M. Mimura},
On the well-posedness of a dispersal model for farmers and hunter-gatherers in the Neolithic transition, Math. Mod. Meth. Appl. S., 28 (2018), 195--222.

\bibitem{F}
{R. A. Fisher},
The wave of advance of advantageous genes,
Ann. Eugen., 7 (1937), 335--369.

\bibitem{cs}
{L. Girardin and K. Lam},
Invasion of an empty habitat by two competitors: spreading properties of monostable two-species competition-diffusion systems, (2018), arXiv:1083.00454v4.

\bibitem{KPP}
{A. N. Kolmogorov, I. G. Petrovskii and N. S. Piskunov},
A study of the equation of diffusion with increase in the quantity of matter, and its application to a biological problem,
Bull. Moscow State Univ.
Ser. A: Math. and Mech., 1 (1937), 1--25.


\bibitem{wp}
{D. Hilhorst, M. Mimura and R. Weidenfeld},
On a reaction-diffusion system for a population of hunters and farmers, Free Boundary Problems: Theory and Applications, (eds: P.~Colli, C.~Verdi and A.~Visintin), (2014), 189--196.

\bibitem{log delay 3}
{F. Hamel, J. Nolen, J. M. Roquejoffre and L. Ryzhik},
A short proof of the logarithmic Bramson correction in Fisher-KPP equations, Netw. Heterog. Media., 8 (2013), 261--289.




\bibitem{S}
{J. G. Skellam}, 
Random dispersal in theoretical populations,
Biometrika, 38 (1951), 196--218.



\end{thebibliography}
\end{document}